\title{Asymptotics of ACH-Einstein metrics}
\author{Yoshihiko Matsumoto}
\thanks{Partially supported by Grant-in-Aid for JSPS Fellows (22-6494)}
\address{Graduate School of Mathematical Sciences, The University of Tokyo}
\email{yoshim@ms.u-tokyo.ac.jp}
\subjclass[2010]{Primary 32V05, Secondary 53A55}
\keywords{ACH metrics; the Einstein equation; partially integrable almost CR manifolds; the CR obstruction tensor.}
\theoremstyle{plain}
\newtheorem{thm}{Theorem}[section]
\newtheorem{prop}[thm]{Proposition}
\newtheorem{lem}[thm]{Lemma}
\newtheorem{cor}[thm]{Corollary}
\theoremstyle{definition}
\newtheorem{dfn}[thm]{Definition}
\theoremstyle{remark}
\newtheorem{rem}[thm]{Remark}
\newtheorem{ex}[thm]{Example}
\newtheorem*{IndexRule}{Rule for the index notation}
\numberwithin{equation}{section}
\newcommand{\ThetaTangentX}{\tensor*[^{[\Theta]}]{TX}{}}
\newcommand{\ThetaTangentXFiber}{\tensor*[^{[\Theta]}]{T_pX}{}}
\newcommand{\ThetaCotangentX}{\tensor*[^{[\Theta]}]{T^*X}{}}
\newcommand{\CpxThetaCotangentX}{\tensor*[^{[\Theta]}]{T^*_\bbC X}{}}
\newcommand{\interior}[1]{\mathring{#1}}
\newcommand{\conjalpha}{{\conj\alpha}}
\newcommand{\conjbeta}{{\conj\beta}}
\newcommand{\conjgamma}{{\conj\gamma}}
\newcommand{\conjsigma}{{\conj\sigma}}
\newcommand{\conjtau}{{\conj\tau}}
\newcommand{\euler}{\rho\partial_\rho}
\DeclareMathOperator{\Fl}{Fl}
\newcommand{\sublaplacian}{\Delta_b}
\begin{document}

\begin{abstract}
	We study the boundary asymptotics of ACH metrics which are formally Einstein.
	In terms of the partially integrable almost CR structure induced on the boundary at infinity,
	existence and uniqueness of such formal asymptotic expansions are studied.
	It is shown that there always exist formal solutions to the Einstein equation if we allow logarithmic terms,
	and that a local CR-invariant tensor arises as the obstruction to the existence of a log-free solution.
	Some properties of this new CR invariant, the CR obstruction tensor, are discussed.
\end{abstract}

\maketitle

\tableofcontents

\section*{Introduction}

Asymptotically complex hyperbolic metrics, or ACH metrics for short, that we study here were
introduced by Epstein, Melrose and Mendoza in a study \cite{EpsteinMelroseMendoza} of the resolvent of
the Laplacian of complete K\"ahler metrics of the form $\partial\conj{\partial}\log(1/r)$ on
a bounded strictly pseudoconvex domain, where $r$ is a boundary defining function.
The purpose of this paper is to discuss the formal asymptotic expansion of ACH-Einstein metrics at the boundary
and geometry of their ``CR infinities.''
The integrability condition satisfied by those CR infinities are in general weaker than the classical one;
Tanno \cite{Tanno} called it the partial integrability condition.
This condition is also natural from the viewpoint of the theory of parabolic geometries
\cite{CapSchichl,CapSlovak}.

The relation between the boundary behavior of Cheng--Yau's complete K\"ahler--Einstein
metric \cite{ChengYau} on a bounded strictly pseudoconvex domain $\Omega\subset\bbC^{n+1}$
and the CR structure of the boundary is a classical object of CR geometry.
Following the pioneering work of Fefferman \cite{Fefferman} on the zero boundary value problem of
the complex Monge--Amp\`ere equation, to which construction of complete K\"ahler--Einstein metrics reduces,
Lee and Melrose \cite{LeeMelrose} proved that its solution admits an asymptotic expansion at $\bdry\Omega$
including logarithmic terms.
Graham \cite{Graham} showed that this expansion is determined by the local CR geometry of $\bdry\Omega$
up to the ambiguity of one real scalar-valued function on $\bdry\Omega$,
and identified the coefficient of the first logarithmic term as the only obstruction
to the existence of a log-free solution.

As for ACH metrics, a perturbation result on the existence and the boundary regularity of solutions of
the Einstein equation is obtained by Biquard \cite{Biquard},
and the existence of asymptotic expansions is studied by Biquard and Herzlich
\cite{BiquardHerzlichAsymptotics}.
Furthermore, in the 4-dimensional case, the asymptotics is investigated more closely by the same authors
\cite{BiquardHerzlichACHE} to obtain a Burns--Epstein type formula.
Our formal analysis applies to any dimension, and so it may serve as a tool to extend
\cite{BiquardHerzlichACHE} to the higher-dimensional case.

Another interesting way to see the study of ACH metrics is comparing it with that of
AH (asymptotically hyperbolic) metrics, which we recall briefly.
A Riemannian metric $g$ defined on the interior $\mathring{X}$
of an $(n+1)$-dimensional smooth manifold-with-boundary $X$ is said to be \emph{asymptotically hyperbolic} if
$\rho^2g$ smoothly and nondegenerately extends up to $\bdry X$, where $\rho$ is a boundary defining function,
and moreover if $\abs{d\rho}_{\rho^2g}=1$ at $\bdry X$.
An AH metric induces a conformal structure $[h]$ on $M=\bdry X$, namely,
the conformal class of the pullback of $\rho^2g$.
The $n$-dimensional conformal manifold $(M, [h])$ is called the \emph{conformal infinity} of
the AH metric $g$, or of the AH manifold $(X, g)$,
and its geometry is studied in connection with the boundary asymptotics of AH-Einstein metrics.

The odd- and even-dimensional conformal geometries have substantially different characters, and
a similarity to CR geometry occurs for the latter one.
Among the central objects studied in recent even-dimensional conformal geometry are
the Fefferman--Graham obstruction tensor $\tensor{\caO}{_i_j}$
\cite{FeffermanGrahamAsterisque,FeffermanGrahamAmbient} (for $n\ge 4$)
and Branson's $Q$-curvature \cite{Branson},
and our work brings their CR counterparts to light.
The conformal obstruction tensor $\tensor{\caO}{_i_j}$ arises as the obstruction to the existence of
a log-free formal solution of the Einstein equation for AH metrics.
In the same spirit we shall construct a CR version,
which will be denoted by $\tensor{\caO}{_\alpha_\beta}$.
On the other hand, our analysis of ACH-Einstein metrics is sufficient for doing the same
construction for CR geometries as the scattering-theoretic definition \cite{GrahamZworski} of the $Q$-curvature;
one has only to apply the discussion of Guillarmou and S\`a Barreto \cite{GuillarmouSaBarreto} to our metric.
This is already done for integrable CR manifolds by Hislop, Perry and Tang \cite{HislopPerryTang}
using the asymptotics of complete K\"ahler--Einstein metrics obtained in \cite{Fefferman},
but the ACH approach has an advantage that
it can extend the notion of CR $Q$-curvature naturally to the case of partially integrable almost CR manifolds.
On this line, just like in the conformal case \cite{GrahamHirachi},
one can show that the CR obstruction tensor $\tensor{\caO}{_\alpha_\beta}$ is
equal to the variation of the total CR $Q$-curvature with respect to modifications of partially integrable almost
CR structures preserving the contact distribution.
The details about the CR $Q$-curvature is discussed elsewhere.

By definition a $(2n+1)$-dimensional almost CR manifold $(M, T^{1,0})$, where $T^{1,0}$ is the CR holomorphic
tangent bundle, is \emph{partially integrable} if and only if
\begin{equation}
	[C^\infty(M,T^{1,0}),C^\infty(M,T^{1,0})]\subset C^\infty(M,T^{1,0}\oplus\conj{T^{1,0}}).
	\label{eq:PartialIntegrability}
\end{equation}
The sum $T^{1,0}\oplus\conj{T^{1,0}}$ is identical to the complexification $H_\bbC$ of a certain real subbundle
$H$ of $TM$.
(We may also describe an almost CR structure on $M$ as a pair $(H,J)$,
where $J\in\End H$, $J^2=-1$ and $T^{1,0}$ is the $i$-eigenbundle of $J$.)
The nonintegrability of $(M,T^{1,0})$ is measured by the \emph{Nijenhuis tensor}
$N\in C^\infty(M,H^*\otimes H^*\otimes H)$ defined by
\begin{equation*}
	N(X,Y):=\conj{\Pi^{1,0}}[\Pi^{1,0}X,\Pi^{1,0}Y]+\Pi^{1,0}[\conj{\Pi^{1,0}}X,\conj{\Pi^{1,0}}Y],
	\qquad\text{$X$, $Y\in C^\infty(M,H)$},
\end{equation*}
where $\Pi^{1,0}$ and $\conj{\Pi^{1,0}}$ are the projections
from $H_\bbC$ onto $T^{1,0}$ and $\conj{T^{1,0}}$, respectively.
We extend $N$ complex bilinearly (such extensions will be done in the sequel without notice).
Given a local frame $\set{Z_\alpha}$ of $T^{1,0}$,
we put $Z_\conjalpha=\conj{Z_\alpha}$ and write
$N(Z_\alpha,Z_\beta)=\tensor{N}{_\alpha_\beta^\conjgamma}Z_\conjgamma$.

A partially integrable almost CR manifold $(M,T^{1,0})$ is said to be \emph{nondegenerate} if
$H$ is a contact distribution.
In this case the conormal bundle $E\subset T^*M$ of $H$ is orientable,
and hence $E^\times:=E\setminus(\text{zero section})$ splits into two $\bbR^+$-bundles.
We fix one of them and call its sections \emph{pseudohermitian structures.}
A choice of a pseudohermitian structure $\theta$ defines the \emph{Levi form} $h$ by
\begin{equation}
	h(X,Y):=d\theta(X,JY),\qquad \text{$X$, $Y\in C^\infty(M,H)$}.
	\label{eq:LeviForm}
\end{equation}
Thanks to the nondegeneracy and the partial integrability, $h$ is a nondegenerate hermitian form.
For another pseudohermitian structure $\Hat{\theta}=e^{2u}\theta$, we have $\Hat{h}=e^{2u}h$.
In particular, the signature $(p,q)$ of the Levi form, $p+q=n$, is independent of the choice of $\theta$.
Once we fix a pseudohermitian structure,
$\tensor{h}{_\alpha_\conjbeta}=h(Z_\alpha,Z_\conjbeta)$
and its dual $\tensor{h}{^\alpha^\conjbeta}$ allows us to lower and raise indices of various tensors.

According to \cite{EpsteinMelroseMendoza}, ACH metrics are certain class of
$[\Theta]$-metrics\footnote{Originally, \cite{EpsteinMelroseMendoza} uses ``$\Theta$'' instead of ``$[\Theta]$.''
For example, they say ``$\Theta$-metrics'' rather than ``$[\Theta]$-metrics,''
``$\Theta$-tangent bundle'' than ``$[\Theta]$-tangent bundle,''
and write ``$\tensor*[^{\Theta}]{TX}{}$'' than ``$\ThetaTangentX$.''
However, we want to emphasize that they depend only on the conformal class of $\Theta$'s.}.
Although we shall detail the basics of ACH metrics in \S1, we include here a brief account of relevant definitions.
A \emph{$\Theta$-structure} on a smooth manifold-with-boundary $X$ is a smooth section of $T^*X|_{\bdry X}$
such that the 1-form $\iota^*\Theta$ on $\bdry X$ is nowhere vanishing,
where $\iota\colon\bdry X\hookrightarrow X$ is the inclusion map.
We call a conformal class $[\Theta]$ of $\Theta$-structures a \emph{$[\Theta]$-structure,}
and a pair $(X, [\Theta])$ a \emph{$[\Theta]$-manifold}.
Associated to it the \emph{$[\Theta]$-tangent bundle} $\ThetaTangentX$,
which is derived from the usual tangent bundle $TX$ by blowing up the zero section over $\bdry X$.
By definition \emph{$[\Theta]$-metrics} are nondegenerate fiber metrics of $\ThetaTangentX$.
Let $(X,[\Theta])$ be a $(2n+2)$-dimensional $[\Theta]$-manifold and suppose that
a nondegenerate partially integrable almost CR structure $T^{1,0}$ is given on $M=\bdry X$.
We assume that they are \emph{compatible} in the sense that $\iota^*[\Theta]$ agrees with the conformal class
$[\theta]$ of pseudohermitian structures of $(M,T^{1,0})$.
By the fact that $\ker\iota^*[\Theta]=H\subset TM$ is a contact distribution, there is a natural filtration
$K_2\subset K_1\subset\ThetaTangentX|_{\bdry X}$ by subbundles, where $K_1$ is of rank $2n+1$ and
$K_2$ of rank $1$.
Any $[\Theta]$-metric $g$ with some positivity condition induces an orthogonal decomposition
$\ThetaTangentX|_{\bdry X}=R\oplus K_1$, $K_1=K_2\oplus L$.
The bundle $L$ is identified with $H$ up to a conformal factor,
and thus we have another decomposition $L_\bbC=L^{1,0}\oplus\conj{L^{1,0}}$.
The definition of the notion of ACH metrics (with \emph{CR infinity} $(M,T^{1,0})$) is given in \S1
in terms of these ingredients.
By a \emph{distinguished local frame} $\set{W_\infty,W_0,W_\alpha,W_\conjalpha}$ for an ACH metric we mean
a local frame of $\ThetaTangentX$ near $\bdry X$ such that, if restricted to $\bdry X$,
$W_\infty$ generates $R$, $W_0$ generates $K_2$, and $W_1$, $\dotsc$, $W_n$ span $L^{1,0}$.
When we simply say ``ACH metrics,'' they are always smooth up to the boundary.

For example, if $(M,T^{1,0})$ is an arbitrary nondegenerate partially integrable almost CR manifold,
$X=M\times [0,\infty)_\rho$ carries a standard $[\Theta]$-structure which is compatible with $T^{1,0}$. Then
\begin{equation*}
	g_\theta=\dfrac{4}{\rho^2}d\rho^2+\dfrac{1}{\rho^4}\theta^2+\dfrac{1}{\rho^2}h
\end{equation*}
gives a standard model for ACH metrics with CR infinities $(M,T^{1,0})$,
where $\theta$ is any pseudohermitian structure on $(M,T^{1,0})$.
For arbitrary $[\Theta]$-manifold $(X,[\Theta])$ with $\bdry X=M$,
a $[\Theta]$-metric $g$ on $X$ is an ACH metric with CR infinity $(M,T^{1,0})$ if and only if,
when $g$ is seen as a Riemannian metric on $\interior{X}$, for some choice of $\theta$
and for some smooth $[\Theta]$-preserving diffeomorphism $\Phi$ between neighborhoods of the boundaries of
$M\times[0,\infty)$ and $X$, $\Phi^*g\sim g_\theta$ holds
in the sense that their difference is $O(\rho)$ with respect to $g_\theta$.
This is an immediate consequence of Proposition \ref{prop:NormalFormOfACHMetric}.
If $T$ is a vector field on $M$ which is transverse to $H$,
the set of vector fields $\set{\rho\partial_\rho,\rho^2 T,\rho Z_\alpha,\rho Z_\conjalpha}$ on $M\times(0,\infty)$
extends to a frame of $[\Theta]$-tangent bundle of $M\times[0,\infty)$ and gives a distinguished local frame for
$\Phi^*g$.
By pushing it forward, we have a distinguished local frame of $g$.

The first main theorem in this paper is on the existence of an approximate solution of the Einstein equation.
For any ACH metric $g$, its Ricci tensor is naturally defined as a symmetric 2-tensor over $\ThetaTangentX$.
We define the \emph{Einstein tensor} by $\Ein:=\Ric+\frac{1}{2}(n+2)g$.

\begin{thm}
	\label{thm:MainTheoremOnExistence}
	Let $(X,[\Theta])$ be a $(2n+2)$-dimensional $[\Theta]$-manifold
	and $T^{1,0}$ a compatible nondegenerate partially integrable almost CR structure on $M=\bdry X$.
	Then there exists an ACH metric $g$ with CR infinity $(M,T^{1,0})$ satisfying
	\begin{equation}
		\label{eq:ApproximateEinsteinACH_Intro}
		\begin{split}
			\tensor{\Ein}{_\infty_\infty}&=O(\rho^{2n+4}),\qquad
			\tensor{\Ein}{_\infty_0}=O(\rho^{2n+4}),\qquad
			\tensor{\Ein}{_\infty_\alpha}=O(\rho^{2n+3}),\\
			\tensor{\Ein}{_0_0}&=O(\rho^{2n+4}),\qquad
			\tensor{\Ein}{_0_\alpha}=O(\rho^{2n+3}),\\
			\tensor{\Ein}{_\alpha_{\conj{\beta}}}&=O(\rho^{2n+3}),\qquad
			\tensor{\Ein}{_\alpha_\beta}=O(\rho^{2n+2})
		\end{split}
	\end{equation}
	with respect to any distinguished local frame $\set{W_\infty,W_0,W_\alpha,W_\conjalpha}$ of
	$\ThetaTangentX$ near the boundary,
	where $\rho$ is any boundary defining function of $X$.
\end{thm}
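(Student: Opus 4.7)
The plan is to construct $g$ by a formal power series expansion in the boundary defining function $\rho$, solving the Einstein equation order by order via indicial-root analysis of the linearization at the model metric $g_\theta$.

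First, I would apply Proposition \ref{prop:NormalFormOfACHMetric} to put the ACH metric into a normal form in which the components involving the $R$-direction are fixed identically in a neighborhood of $\bdry X$, leaving only the $K_1\otimes K_1$ components $g_{00}$, $g_{0\alpha}$, $g_{\alpha\conj\beta}$, $g_{\alpha\beta}$ as unknowns. Fixing a pseudohermitian structure $\theta$ on $M$, one then writes
\begin{equation*}
g = g_\theta + \sum_{k\ge 1} \rho^k\, h^{(k)},
\end{equation*}
with each $h^{(k)}$ a section over $M$ of the appropriate bundle of symmetric 2-tensors consistent with the normal form.

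Substituting this ansatz into $\Ein$ and organizing by powers of $\rho$, at order $\rho^k$ one obtains an equation of the form $I(k)\,h^{(k)} = F_k\!\left(h^{(0)},\dotsc,h^{(k-1)}\right)$, where $I(k)$ is the \emph{indicial operator}: an algebraic, polynomial-in-$k$ operator on symmetric 2-tensors obtained as the leading $\rho$-symbol of the linearization of $\Ein$ at $g_\theta$. This operator decomposes block-diagonally according to the index type (among $\alpha\beta$, $\alpha\conj\beta$, $0\alpha$, $00$, and the $R$-direction blocks fixed by the normal form), and each block has its own indicial polynomial. The heart of the proof is to compute these polynomials explicitly from the connection and curvature of $g_\theta$ in a distinguished frame and to identify their first positive roots as $2n+2$ on the $\alpha\beta$-block, $2n+3$ on the $\alpha\conj\beta$- and $0\alpha$-blocks, and $2n+4$ on the $00$-block. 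For $k$ strictly below the respective critical value, $I(k)$ is invertible on that block, so each inductive step admits a unique solution $h^{(k)}$ in terms of the previously determined $h^{(j)}$, yielding vanishing of $\Ein_{\alpha\beta}$ to order $\rho^{2n+2}$, of $\Ein_{\alpha\conj\beta}$ and $\Ein_{0\alpha}$ to order $\rho^{2n+3}$, and of $\Ein_{00}$ to order $\rho^{2n+4}$.

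The higher vanishing of the mixed $R$-components $\Ein_{\infty\alpha}$, $\Ein_{\infty 0}$, $\Ein_{\infty\infty}$ at orders $\rho^{2n+3}$ and $\rho^{2n+4}$ then follows from the contracted Bianchi identity $\nabla^a\Ric_{ab} = \tfrac{1}{2}\nabla_b R$: in the chosen normal form, this identity expresses each $\Ein_{\infty\,\cdot}$ as a linear combination of weighted first derivatives of the already-solved $K_1\otimes K_1$ components, and a standard weight-shifting argument (using that the $R$-direction corresponds to $\rho\partial_\rho$-differentiation on $\ThetaTangentX$) gains one order in $\rho$ per $\infty$ index. The main obstacle is the explicit indicial-root computation on each tensor block: it is algorithmic but technically intricate, requiring careful tracking of the parabolic weights ($\rho$ and $\rho^2$) built into $\ThetaTangentX$ and the decomposition of symmetric 2-tensors under the $R\oplus K_2\oplus L^{1,0}\oplus\conj{L^{1,0}}$ splitting. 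The asymmetry between the critical orders $2n+2$, $2n+3$, $2n+4$ reflects both these differing weights and the dimension-dependent eigenvalues of the model curvature operator on each block.
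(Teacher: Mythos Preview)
Your overall strategy matches the paper's: reduce to a normal-form metric, expand in $\rho$, solve for the Taylor coefficients of $g_{ij}$ inductively by inverting the linearized Einstein operator at each order, and recover the $\Ein_{\infty\,\cdot}$ components from the contracted Bianchi identity. That is exactly how Theorem \ref{thm:ApproximateACHEMetric} is proved.

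However, two points in your sketch are not quite right and would cause the inductive step to stall if left uncorrected. First, the indicial operator is \emph{not} block-diagonal on the splitting $00\oplus 0\alpha\oplus \alpha\conjbeta\oplus\alpha\beta$. The paper's Proposition~\ref{prop:SmoothHigherOrderPerturbationAndEinsteinTensor} shows it is only block-triangular: one must first solve for $\psi_{0\alpha}$ and $\psi_{\alpha\beta}$, then for $\tf(\psi_{\alpha\conjbeta})$, and finally the scalar pair $(\psi_{00},\psi_\gamma{}^\gamma)$ is genuinely coupled in a $2\times 2$ system whose determinant,
\[
\tfrac{1}{64}(m+2)(m+4)(m-2n-2)(m-4n-2),
\]
must be checked to be nonzero for $1\le m\le 2n+1$ (equation~\eqref{eq:DeterminantOfCoefficients}). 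Treating $\psi_{00}$ and the trace of $\psi_{\alpha\conjbeta}$ as decoupled blocks with separate indicial polynomials would not close the argument.

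Second, your claim that the first positive indicial roots are the integers $2n+2$, $2n+3$, $2n+4$ on the respective blocks is only partly correct. It holds for $\psi_{\alpha\beta}$ (polynomial $m(m-2n-2)$) and $\psi_{0\alpha}$ (polynomial $(m+2)(m-2n-2)$), but the trace-free hermitian block has indicial polynomial $m^2-2nm-2n-9$, whose positive root $n+\sqrt{n^2+2n+9}$ is not an integer. What the proof actually needs, and what the paper checks, is simply that each indicial coefficient is nonzero for $m=1,\dots,2n+1$; the stopping orders in \eqref{eq:ApproximateEinsteinACH_Intro} come from the vanishing of the $\alpha\beta$ and $0\alpha$ coefficients at $m=2n+2$ and of the coupled determinant at $m=2n+2$, not from roots on every block.
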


Note that the condition \eqref{eq:ApproximateEinsteinACH_Intro} is independent of the choice of
a distinguished local frame and a boundary defining function.

Construction of better approximate solutions is obstructed by a tensor $\tensor{\caO}{_\alpha_\beta}$
on the boundary, which is called the \emph{CR obstruction tensor} and is defined as follows.
Let $g$ be any ACH metric satisfying \eqref{eq:ApproximateEinsteinACH_Intro} and
$\theta\in\iota^*[\Theta]$ a pseudohermitian structure on $\bdry X$.
Then there is a special boundary defining function $\rho$ for $\theta$, which satisfies
$\abs{d\rho/\rho}_g=1/2$ near $\bdry X$ and $\iota^*(\rho^4g)=\theta^2$.
We set
\begin{equation*}
	\tensor{\caO}{_\alpha_\beta}:=\left.\left(\rho^{-2n-2}\tensor{\Ein}{_\alpha_\beta}\right)\right|_{\bdry X}
\end{equation*}
in terms of the Einstein tensor of $g$.
This is well-defined, i.e., this does not depend on the choice of $g$,
and is a natural pseudohermitian invariant of $(M,T^{1,0},\theta)$.
We shall prove that for $\Hat{\theta}=e^{2u}\theta$ it holds that
\begin{equation}
	\label{eq:IntroTransformationLawOfObstructionTensor}
	\tensor{\Hat\caO}{_\alpha_\beta}=e^{-2nu}\tensor{\caO}{_\alpha_\beta}.
\end{equation}

Let $\zeta$ be the section of the CR canonical bundle $K=\bigwedge^{n+1}(\conj{T^{1,0}})^\perp$ of $M$
associated to $\theta$ in such a way that Farris' volume normalization condition \cite{Farris}
\begin{equation}
	\label{eq:FarrisSectionOfCanonicalBundle}
	\theta\wedge(d\theta)^{n}=i^{n^{2}}n!(-1)^{q}\theta\wedge(T\intprod\zeta)\wedge(T\intprod\conj{\zeta}),
\end{equation}
where the signature of the Levi form is $(p,q)$, is satisfied.
We define the density-weighted version of the CR obstruction tensor by
\begin{equation*}
	\tensor{\bm{\caO}}{_\alpha_\beta}:=\tensor{\caO}{_\alpha_\beta}\otimes\abs{\zeta}^{2n/(n+2)}
	\in\tensor{\caE}{_(_\alpha_\beta_)}(-n,-n).
\end{equation*}
Similarly we set $\tensor{\bm{A}}{_\alpha_\beta}=\tensor{A}{_\alpha_\beta}$,
$\tensor{\bm{N}}{_\alpha_\beta^\conjgamma}=\tensor{N}{_\alpha_\beta^\conjgamma}$,
where $A$ is the pseudohermitian torsion tensor associated to a choice of $\theta$, and
indices of such density-weighted tensors are lowered and raised using
$\tensor{\bm{h}}{_\alpha_\conjbeta}=\tensor{h}{_\alpha_\conjbeta}\otimes\abs{\zeta}^{-2/(n+2)}$
and its dual $\tensor{\bm{h}}{^\alpha^\conjbeta}$.
Then we have the following results, the first of which is just another expression of
\eqref{eq:IntroTransformationLawOfObstructionTensor}.

\begin{thm}
	\label{thm:OnObstructionTensor}
	(1) The density-weighted CR obstruction tensor $\tensor{\bm{\caO}}{_\alpha_\beta}$ is a CR invariant.

	(2) For an integrable CR manifold, $\tensor{\bm{\caO}}{_\alpha_\beta}$ vanishes.

	(3) Let $\tensor{\bm{D}}{^\alpha^\beta}$ be a differential operator
	$\tensor{\caE}{_(_\alpha_\beta_)}(-n,-n)\to\caE(-n-2,-n-2)$ defined by
	\begin{equation*}
		\tensor{\bm{D}}{^\alpha^\beta}
		=\tensor{\nabla}{^\alpha}\tensor{\nabla}{^\beta}-i\tensor{\bm{A}}{^\alpha^\beta}
		-\tensor{\bm{N}}{^\gamma^\alpha^\beta}\tensor{\nabla}{_\gamma}
		-\tensor{\bm{N}}{^\gamma^\alpha^\beta_,_\gamma}.
	\end{equation*}
	Then this is a CR-invariant operator and we have
	$\tensor{\bm{D}}{^\alpha^\beta}\tensor{\bm{\caO}}{_\alpha_\beta}
	-\tensor{\bm{D}}{^{\conj{\alpha}}^{\conj{\beta}}}\tensor{\bm{\caO}}{_{\conj{\alpha}}_{\conj{\beta}}}=0$.
\end{thm}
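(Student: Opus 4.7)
My plan addresses the three parts as follows.

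For part (1), the density-weighted claim reduces to the pointwise transformation law \eqref{eq:IntroTransformationLawOfObstructionTensor}: under $\theta\mapsto e^{2u}\theta$, the Farris normalization \eqref{eq:FarrisSectionOfCanonicalBundle} forces $\abs{\zeta}^{2n/(n+2)}\mapsto e^{2nu}\abs{\zeta}^{2n/(n+2)}$, so CR invariance of $\tensor{\bm{\caO}}{_\alpha_\beta}$ is equivalent to $\tensor{\caO}{_\alpha_\beta}$ carrying weight $e^{-2nu}$. To establish \eqref{eq:IntroTransformationLawOfObstructionTensor}, I would first verify well-definedness: the uniqueness side of the formal construction behind Theorem~\ref{thm:MainTheoremOnExistence} shows that two ACH metrics $g$ and $g'$ satisfying \eqref{eq:ApproximateEinsteinACH_Intro} attached to the same $\theta$ differ in the $\alpha\beta$-block only by $O(\rho^{2n+3})$, which does not affect $\rho^{-2n-2}\tensor{\Ein}{_\alpha_\beta}|_{\bdry X}$. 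Holding $g$ fixed, I would then relate the special defining functions $\rho$ for $\theta$ and $\Hat\rho$ for $\Hat\theta$: the conditions $\abs{d\rho/\rho}_g=1/2$ and $\iota^*(\rho^4g)=\theta^2$ together with their hatted versions yield $\Hat\rho=e^u\rho+O(\rho^2)$, and combining this with the conformal rescaling of the distinguished $\alpha\beta$-frame vectors delivers the claimed weight.

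For part (2), I would take as candidate ACH metric the one obtained by lifting the Cheng--Yau K\"ahler--Einstein metric, equivalently the one built from Fefferman's formal solution \cite{Fefferman} of the complex Monge--Amp\`ere equation. In the integrable setting, the failure of this lift to be Einstein at the obstruction order is concentrated in a scalar quantity (the Graham--Fefferman obstruction), so the tensorial $\alpha\beta$-block vanishes to order $O(\rho^{2n+3})$. The well-definedness from part (1) then forces $\tensor{\caO}{_\alpha_\beta}=0$. Since the question is local, local embeddability of an integrable $(M,T^{1,0})$ as the boundary of a strictly pseudoconvex domain is enough to deploy this candidate.

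For part (3), the identity should arise from the contracted Bianchi identity $\tensor{\nabla}{^i}\tensor{\Ein}{_i_j}=\frac{1}{2}\tensor{\nabla}{_j}(\operatorname{tr}_g\Ein)$ applied to an approximately Einstein ACH metric, followed by a further contraction with $\tensor{\nabla}{^\alpha}$ to produce the bidegree of $\tensor{\bm{D}}{^\alpha^\beta}$. All components of $\Ein$ other than $\tensor{\Ein}{_\alpha_\beta}$ and its conjugate vanish to sufficiently high order by \eqref{eq:ApproximateEinsteinACH_Intro} that they drop out of the leading boundary behavior. Restricting to $\bdry X$, the surviving terms organize as follows: the flat double divergence produces $\tensor{\nabla}{^\alpha}\tensor{\nabla}{^\beta}$, the pseudohermitian torsion entering the boundary values of the ACH Christoffel symbols produces $-i\tensor{\bm{A}}{^\alpha^\beta}$, and the commutators $[W_\alpha,W_\beta]$ along $\bdry X$, which encode the Nijenhuis tensor $\tensor{N}{_\alpha_\beta^\conjgamma}$, produce $-\tensor{\bm{N}}{^\gamma^\alpha^\beta}\tensor{\nabla}{_\gamma}-\tensor{\bm{N}}{^\gamma^\alpha^\beta_,_\gamma}$. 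This yields the asserted reality of $\tensor{\bm{D}}{^\alpha^\beta}\tensor{\bm{\caO}}{_\alpha_\beta}$. The CR invariance of $\tensor{\bm{D}}{^\alpha^\beta}$ as an operator $\tensor{\caE}{_(_\alpha_\beta_)}(-n,-n)\to\caE(-n-2,-n-2)$ is then verified separately from the Tanaka--Webster transformation laws of $\nabla$, $A$, and $N$ under $\theta\mapsto e^{2u}\theta$.

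The principal difficulty will be this last identification step in part (3): computing the ACH Levi--Civita connection coefficients in a distinguished frame up to the subleading order at which the torsion $A$ and the Nijenhuis tensor $N$ first enter, and checking that no other curvature contributions intrude. This amounts to extending the leading-order correspondence between ACH boundary data and Tanaka--Webster data of $\theta$ by one subleading order and verifying that the bookkeeping recovers exactly the three terms of $\tensor{\bm{D}}{^\alpha^\beta}$ with the correct signs.
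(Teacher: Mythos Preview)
Your plan matches the paper's approach throughout: part (1) is exactly Proposition~\ref{prop:TransformationLawOfObstructionTensor}, part (2) is Proposition~\ref{prop:ObstructionTensorForIntegrableStructure} via the lift of Fefferman's approximate Monge--Amp\`ere solution (where the K\"ahler Ricci tensor is of type $(1,1)$, so $\Ein(G)(\xi_\alpha,\xi_\beta)=0$ identically), and part (3) is extracted from the contracted Bianchi identity, with CR invariance of $\tensor{\bm D}{^\alpha^\beta}$ checked separately via the transformation formulas of \cite{GoverGraham}.

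Two points to adjust. In part (2) you rely on local embeddability, but that can fail (e.g.\ in dimension $3$, or in higher dimensions with indefinite Levi form); the paper sidesteps this by observing that $\tensor{\caO}{_\alpha_\beta}$ is a universal polynomial in Tanaka--Webster curvature and torsion derivatives, so it suffices to use the \emph{formal} embedding of \cite{Kuranishi} to reduce to a hypersurface in $\bbC^{n+1}$. In part (3) the paper's mechanism is slightly cleaner than tracking subleading Christoffel symbols: it evaluates two separate components of the Bianchi identity (Lemma~\ref{lem:ContractedBianchiIdentityForACHMetric}, the $K=0$ and $K=\alpha$ cases), obtaining $(\rho^{-2n-3}\tensor{\Ein}{_0_\alpha})|_M$ in terms of $\nabla^\beta\tensor{\caO}{_\alpha_\beta}$ and $N\cdot\caO$ from one, and $(\rho^{-2n-3}(\nabla^\alpha\tensor{\Ein}{_0_\alpha}+\nabla^{\conjalpha}\tensor{\Ein}{_0_{\conjalpha}}))|_M$ in terms of $A\cdot\caO$ from the other; combining the two gives \eqref{eq:ImaginaryPartOfDoubleDivergenceOfObstruction} directly, so the $A$- and $N$-contributions enter through the explicit torsion terms already present in the Bianchi formula rather than through a separate connection expansion.
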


In spite of (2), there certainly is a partially integrable almost CR manifold for which
$\tensor{\bm{\caO}}{_\alpha_\beta}$ is nonzero as we will see in \S6.
This indicates the importance of studying partially integrable almost CR structures.

We shall also investigate how well the solution is improved if we introduce logarithmic terms to ACH metrics.
A function $f\in C^0(X)\cap C^\infty(\interior{X})$ is said to be an element of $\caA(X)$
if it admits an asymptotic expansion of the form
\begin{equation}
	f\sim \sum_{q=0}^\infty f^{(q)}(\log\rho)^q,\qquad f^{(q)}\in C^\infty(X)
	\label{eq:GeneralLogarithmicExpansion}
\end{equation}
for any boundary defining function $\rho$.
If $f\in\caA(X)$, then the Taylor expansions of $f^{(q)}$ at $\bdry X$ are uniquely determined.
A \emph{singular ACH metric} is a $[\Theta]$-metric $g$ with $\tensor{g}{_I_J}\in\caA(X)$ satisfying
the same condition for usual ACH metrics.
Then the components of its Ricci tensor also belong to $\caA(X)$, and hence so do those of the Einstein tensor.
For any $p\in\bdry X$, we say that $f\in\caA(X)$ vanishes to the infinite order at $p$
if and only if all the coefficients $f^{(q)}$ have the vanishing Taylor expansions at $p$.
A tensor over $\ThetaTangentX$ vanishes to the infinite order at $p$ if and only if all of its components
vanish to the infinite order at $p$.

\begin{thm}
	\label{thm:SingularSolutionToInfiniteOrder}
	Let $(X, [\Theta])$ and $(M, T^{1,0})$ be as in Theorem \ref{thm:MainTheoremOnExistence} and $p\in\bdry X$.
	Then there exists a singular ACH metric whose Einstein tensor vanishes to the infinite order at $p$.
	Furthermore, if $\tensor{\bm{\caO}}{_\alpha_\beta}(p)=0$,
	then there exists such an ACH metric with no logarithmic terms.
\end{thm}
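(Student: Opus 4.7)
The plan is to start from the approximate ACH-Einstein metric $g_0$ provided by Theorem \ref{thm:MainTheoremOnExistence}, work in a chart around $p$ with a pseudohermitian structure $\theta\in\iota^*[\Theta]$ near $p$ and its associated special boundary defining function $\rho$, and add formal perturbations of the form
\begin{equation*}
	g = g_0 + \sum_{k\ge 2n+2}\sum_{q\ge 0} \rho^k(\log\rho)^q\,\phi_{k,q},
\end{equation*}
where each $\phi_{k,q}$ is a symmetric tensor over $\ThetaTangentX$ whose Taylor expansion at $p$ is to be constructed. Since only formal Taylor data at the single point $p$ are needed, Borel's lemma realizes the $\phi_{k,q}$ as genuine smooth tensors, producing a singular ACH metric whose Einstein tensor lies in $\caA(X)$ and vanishes to infinite order at $p$.

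The engine of the iteration is the indicial form of the linearized Einstein operator at $g_0$. Computing $d\Ein|_{g_0}(\rho^k(\log\rho)^q\phi)$ to leading order against the model $g_\theta$, the principal part reduces to a polynomial in $\rho\partial_\rho$ acting blockwise on the decomposition of $S^2\ThetaTangentX$ into the pieces $(\infty\infty),(\infty 0),(\infty\alpha),(00),(0\alpha),(\alpha\conjbeta),(\alpha\beta)$ matching \eqref{eq:ApproximateEinsteinACH_Intro}. This yields an indicial endomorphism $I_k$ on the fiber at $p$ in each block; its kernel encodes gauge freedom and its cokernel the potential obstructions at order $k$. For generic $k$, $I_k$ is invertible and $\phi_{k,0}$ is determined algebraically. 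Resonances occur at a few critical values of $k$; the critical one is in the $(\alpha\beta)$ block at $k=2n+2$, where the cokernel obstruction is, up to a universal normalization, exactly $\tensor{\caO}{_\alpha_\beta}(p)$, consistent with the definition of the CR obstruction tensor given in the introduction.

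Proving the second assertion first: when $\tensor{\bm{\caO}}{_\alpha_\beta}(p)=0$, the range condition at $k=2n+2$ is met and $\phi_{2n+2,0}$ is solved for algebraically with no logarithmic term. At every higher order the potential obstructions in the remaining blocks are controlled by the contracted Bianchi identity $\nabla^I\tensor{\Ein}{_I_J}\equiv 0$, which identifies each such obstruction with the $\rho$-derivative of a quantity already arranged to vanish, so no new obstruction arises and induction produces Taylor data for all $\phi_{k,0}$, $k\ge 2n+2$, with no $\log\rho$. For the first assertion, when $\tensor{\bm{\caO}}{_\alpha_\beta}(p)\ne 0$, one injects $\rho^{2n+2}(\log\rho)\phi_{2n+2,1}$ with $\phi_{2n+2,1}$ chosen so that the $(\alpha\beta)$-block log-contribution to $\Ein$ cancels the obstruction at $p$; subsequent orders may force further powers of $\log\rho$, but the same indicial--Bianchi analysis shows each step of the iteration is solvable, producing the expansion form of \eqref{eq:GeneralLogarithmicExpansion}.

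The main obstacle is the indicial--Bianchi bookkeeping: one must verify that the resonant orders of $I_k$ in the various blocks are compatible with the cross-block obstructions generated by the contracted Bianchi identity, so that $\tensor{\caO}{_\alpha_\beta}(p)$ at $k=2n+2$ really is the sole log-free obstruction and the logarithmic iteration in part (1) always closes. This is the ACH analogue of the Fefferman--Graham indicial analysis for AH-Einstein metrics, made more delicate by the anisotropic weightings $(\rho\partial_\rho,\rho^2 T,\rho Z_\alpha,\rho Z_\conjalpha)$ built into the $[\Theta]$-tangent framework and by the appearance of the Nijenhuis tensor of a merely partially integrable CR structure.
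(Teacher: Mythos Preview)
Your overall strategy---indicial analysis, Bianchi identity, Borel---matches the paper's. But there is a real gap at the critical order $k=2n+2$.

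You claim that once the $(\alpha\beta)$-block obstruction $\tensor{\caO}{_\alpha_\beta}(p)$ is handled, ``the potential obstructions in the remaining blocks are controlled by the contracted Bianchi identity.'' This fails precisely at the critical order. The Bianchi relation \eqref{eq:ContractedBianchiIdentityForACHMetric3} for the $(\infty\alpha)$-block has leading coefficient $2(m-2n-2)$, which vanishes at $m=2n+2$; likewise the coefficient $(m-2n-2)$ in \eqref{eq:ContractedBianchiIdentityForACHMetric2} for $(\infty 0)$ vanishes there. So Bianchi does \emph{not} kill $\tensor{\Ein}{_\infty_\alpha}$ or $\tensor{\Ein}{_\infty_0}$ at the critical order. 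What saves the iteration is that the indicial operator in the $(\alpha\beta)$-block has nontrivial kernel at this order, so there is free data $\tensor{\lambda}{_\alpha_\beta}$; the paper shows (equations \eqref{eq:PDEForOneAndTwoTensors} and \eqref{eq:PDEForTwoTensor}) that killing $\tensor{\Ein}{_\infty_\alpha}$ and $\tensor{\Ein}{_\infty_0}$ forces $\tensor{\lambda}{_\alpha_\beta}$ to satisfy a \emph{tangential} PDE on $M$, namely $\tensor{D}{^\alpha^\beta}\tensor{\lambda}{_\alpha_\beta}-\tensor{D}{^\conjalpha^\conjbeta}\tensor{\lambda}{_\conjalpha_\conjbeta}=iu$. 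This is only solvable formally at $p$ via Cauchy--Kovalevskaya, which is why the theorem is stated pointwise.

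In the log-free case the situation is worse than you indicate: even with $\tensor{\caO}{_\alpha_\beta}(p)=0$, a potential log appears in the $(00)$/$(\alpha{}^\alpha)$ combination (see Lemma \ref{lem:FirstHalfOfSolvingAtCriticalStep}), and avoiding it requires a \emph{second} tangential PDE on $\tensor{\lambda}{_\alpha_\beta}$ (the system \eqref{eq:SystemOfPDEsForTwoTensor}). You also omit the second resonance at $m=4n+2$ in the $(00)$/$(\alpha{}^\alpha)$ system (the determinant \eqref{eq:DeterminantOfCoefficients} vanishes there), which the paper handles by switching to the $(\infty\infty)$-equation at that step. Your sketch would need all of these ingredients to close.
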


When $M$ is the boundary of a bounded strictly pseudoconvex domain $\Omega$,
the result above has a somewhat peculiar implication.
Recall that the obstruction to the existence of a smooth solution to
the zero boundary value problem for the complex Monge--Amp\`ere equation on $\Omega$
is one scalar-valued function on $\bdry\Omega$.
On the other hand, by Theorem \ref{thm:OnObstructionTensor} (2),
the second assertion of Theorem \ref{thm:SingularSolutionToInfiniteOrder} applies to this case.
Our result says that in the ACH category, at any given point on $\bdry\Omega$,
we can always erase the logarithmic terms.
The author believes that there is some framework that can capture both
Graham's scalar-valued obstruction for integrable CR manifolds and the CR obstruction tensor at the same time.
This might be an interesting topic of further study.

Our result contradicts a work of Seshadri \cite{Seshadri}, which states that there are
a ``primary'' scalar-valued obstruction function and a ``secondary'' 1-tensor obstruction to
the existence of ACH-Einstein metrics without logarithmic terms.
Despite the fact that there is a difference in the definition of ACH metrics,
the conflict is not because of it.
The work \cite{Seshadri} contains some crucial calculation errors in \S4,
where the computation of the Ricci tensor is carried out.
Nevertheless, the influence of Seshadri's paper on our analysis is obvious;
if it were not for it, this work should have been much harder to complete.

The paper is organized as follows.
We first recall some basic facts about ACH metrics in \S\ref{sec:ThetaStructure}.
In \S\ref{sec:PseudohermitianGeometry} we quickly develop a theory of pseudohermitian geometry for
partially integrable almost CR manifolds.
After studying how the Ricci tensor depends on the metric in \S\ref{sec:RicciTensorAndSomeLowOrderTerms} and
\S\ref{sec:HigherOrderPerturbation}, we prove Theorem \ref{thm:MainTheoremOnExistence} and
Theorem \ref{thm:OnObstructionTensor} in \S\ref{sec:ApproximateSolution}.
In \S\ref{sec:FirstVariation}, we calculate the first variation of $\tensor{\caO}{_\alpha_\beta}$ with
respect to the modification of partially integrable almost CR structure on the sphere from the flat one
and verify that a generic small modification gives rise to nonvanishing CR obstruction tensor.
The last section \S\ref{sec:FormalSolution} is devoted to an investigation of singular ACH metrics
and the proof of Theorem \ref{thm:SingularSolutionToInfiniteOrder}.

In this paper the word ``smooth'' means infinite differentiability.
The Einstein summation convention is used throughout.
Parentheses surrounding indices indicate the symmetrization.
Our convention for the exterior product $\omega\wedge\eta$ of 1-forms is
$(\omega\wedge\eta)(X,Y)=\omega(X)\eta(Y)-\omega(Y)\eta(X)$,
while for the symmetric product $\omega\eta$ we observe
$(\omega\eta)(X,Y)=\tfrac{1}{2}(\omega(X)\eta(Y)+\omega(Y)\eta(X))$.

I would like to express my gratitude to Kengo Hirachi
for guidance to this interesting research area and continuous encouragement.
I also wish to thank Takao Akahori, Olivier Biquard, Charles Fefferman, Robin Graham, Colin Guillarmou,
Hiraku Nozawa, Rapha\"el Ponge and Neil Seshadri for helpful advice and discussions.


\section{$[\Theta]$-structures and ACH metrics}
\label{sec:ThetaStructure}

Let $X$ be a smooth manifold-with-boundary and $\Theta\in C^\infty(\bdry X,T^*X|_{\bdry X})$
such that $\iota^*\Theta$ is a nowhere vanishing $1$-form on $\bdry X$,
where $\iota\colon\bdry X\hookrightarrow X$ is the inclusion map.
Then a Lie subalgebra $\caV_{\Theta}$ of $C^\infty(X,TX)$ is defined as follows:
for any boundary defining function $\rho$, a vector field $V$ belongs to $\caV_\Theta$ if and only if
\begin{equation*}
	V\in\rho C^\infty(X,TX),\qquad
	\tilde\Theta(V)\in\rho^{2}C^\infty(X).
\end{equation*}
Here $\tilde\Theta\in C^\infty(X,T^*X)$ is any extension of $\Theta$.
It is clear that $\caV_\Theta$ depends only on the conformal class $[\Theta]$ of $\Theta$,
which we call a \emph{$[\Theta]$-structure} on $X$.
A pair $(X,[\Theta])$ is called a \emph{$[\Theta]$-manifold.}
We shall write $\caV_{[\Theta]}$ instead of $\caV_{\Theta}$.

Now consider a $(2n+2)$-dimensional $[\Theta]$-manifold $(X,[\Theta])$.
There is a canonical vector bundle $\ThetaTangentX$ of rank $2n+2$ over $X$, whose sections are
the elements of $\caV_{[\Theta]}$.
Over the interior of $X$ it is identified with the usual tangent bundle $TX$.
To illustrate the structure near $p\in\bdry X$, 
let $\set{N,T,Y_j}=\set{N, T, Y_1, \dots, Y_{2n}}$ be a local frame of $TX$ in a neighborhood of
$p$ dual to a certain coframe of the form $\set{d\rho,\tilde\Theta,\alpha^j}$,
where $\tilde\Theta$ is an extension of some $\Theta\in[\Theta]$.
Then any $V\in\caV_{[\Theta]}$ is, near $p$, expressed as
\begin{equation}
	V=a\rho N+b\rho^2T+c^{j}\rho Y_j,\qquad
	\text{$a$, $b$, $c^j\in C^\infty(X)$}.
	\label{eq:GeneralFormOfThetaVectorFieldAtBoundary}
\end{equation}
Hence $\set{\rho N,\rho^2 T,\rho Y_j}$ extends to a frame of $\ThetaTangentX$ near $p\in\bdry X$.
The dual frame of the bundle $\ThetaCotangentX:=(\ThetaTangentX)^*$ is
$\set{d\rho/\rho, \tilde\Theta/\rho^2, \alpha^j/\rho}$.
A fiber metric of $\ThetaTangentX$, which is not necessarily positive-definite, is called a
\emph{$[\Theta]$-metric.}

\begin{ex}
	\label{ex:CompleteKahlerMetric}
	Let $\Omega\subset\bbC^{n+1}$ be a bounded strictly pseudoconvex domain.
	Then the boundary $M=\bdry\Omega$ carries a natural strictly pseudoconvex CR structure.
	If $r\in C^\infty(\closure\Omega)$ is a boundary defining function and
	$\tilde\theta:=\tfrac{i}{2}(\partial r-\conj{\partial}r)$, then
	$\theta:=\iota^*\tilde\theta$ is a pseudohermitian structure on $M$,
	where $\iota\colon M\hookrightarrow\closure\Omega$ is the inclusion map.
	We consider the following complete K\"ahler metric $G$ on $\Omega$:
	\begin{equation*}
		G=4\sum_{j,k}\frac{\partial^2}{\partial z^j\partial\conj{z}^k}\left(\log\frac{1}{r}\right)dz^jd\conj{z}^k.
	\end{equation*}
	Let $\xi$ be the unique $(1,0)$ vector field satisfying
	$\xi\intprod\partial\conj{\partial}r=0\mod\conj\partial r$, $\partial r(\xi)=1$ and
	$\nu:=\re\xi$, $\tau:=2\im\xi$.
	Then $dr(\nu)=1$, $dr(\tau)=0$ and $\tilde\theta(\nu)=0$, $\tilde\theta(\tau)=1$.
	We set $\xi\intprod\partial\conj\partial r=\kappa\conj\partial r$, or
	$\kappa=\partial\conj\partial r(\xi,\conj\xi)$.
	Furthermore, we let $\xi_1$, $\dots$, $\xi_n$ be $(1,0)$ vector fields spanning
	$\ker\partial r\subset T^{1,0}\bbC^{n+1}$ near $M$ and $\xi_\conjalpha:=\conj{\xi_\alpha}$.
	The coframe $\set{dr,\tilde\theta,\tilde\theta^\alpha,\tilde\theta^\conjalpha}$ is defined
	as the dual of $\set{\nu,\tau,\xi_\alpha,\xi_\conjalpha}$.
	Then equations (1.3) and (1.4) of \cite{Graham} read, for some set of functions
	$\set{\tensor{\tilde{h}}{_\alpha_\conjbeta}}$,
	\begin{equation}
		G=4(1-r\kappa)\frac{\partial r\conj\partial r}{r^2}
		+4\tensor{\tilde{h}}{_\alpha_\conjbeta}\frac{\tilde\theta^\alpha\tilde\theta^\conjbeta}{r}
		=(1-r\kappa)\frac{dr^2+4\tilde\theta^2}{r^2}
		+4\tensor{\tilde{h}}{_\alpha_\conjbeta}\frac{\tilde\theta^\alpha\tilde\theta^\conjbeta}{r}
		\label{eq:CompleteKahlerMetric}
	\end{equation}
	and
	\begin{equation}
		\partial\conj\partial r=\kappa\partial r\wedge\conj\partial r
		-\tensor{\tilde{h}}{_\alpha_\conjbeta}\tilde\theta^\alpha\wedge\tilde\theta^\conjbeta
		=i\kappa dr\wedge\tilde\theta
		-\tensor{\tilde{h}}{_\alpha_\conjbeta}\tilde\theta^\alpha\wedge\tilde\theta^\conjbeta.
		\label{eq:ExtensionOfLeviForm}
	\end{equation}
	
	The \emph{square root} $X:=\closure\Omega_{1/2}$ of $\closure\Omega$ in the sense of
	\cite{EpsteinMelroseMendoza} is defined as in the following way.
	As a topological manifold, $X$ is identical with $\closure\Omega$.
	The smooth structure on $X$ is given in such a way that the identity maps $\interior{X}\to\Omega$,
	$\bdry X\to M$ are diffeomorphisms and $\rho:=\sqrt{r/2}$ is a boundary defining function for $X$.
	Let $i_{1/2}\colon X\to\overline{\Omega}$ be the identity map and
	$\tilde\Theta$, $\tilde\Theta^\alpha$, $\tilde\Theta^\conjalpha$ the pullbacks of
	$\tilde\theta$, $\tilde\theta^\alpha$, $\tilde\theta^\conjalpha$ by $i_{1/2}$.
	The conformal class $[\Theta]$ of $\Theta:=\tilde\Theta|_{\bdry X}$ is independent of the choice of $r$
	because that of $\tilde\theta|_{\bdry\Omega}$ is independent.
	Then $G$ lifts to the following metric on $\mathring{X}$:
	\begin{equation}
		g:=i_{1/2}^*G
		=4(1-2\rho^2\kappa)\frac{d\rho^2}{\rho^2}+(1-2\rho^2\kappa)\frac{\tilde{\Theta}^2}{\rho^4}
		+2\tensor{\tilde{h}}{_\alpha_\conjbeta}\frac{\tilde\Theta^\alpha}{\rho}\frac{\tilde\Theta^\conjbeta}{\rho}.
		\label{eq:CompleteKahlerMetricLiftedToX}
	\end{equation}
	Since $\set{d\rho/\rho,\tilde\Theta/\rho^2,\tilde\Theta^\alpha/\rho,\tilde\Theta^\conjalpha/\rho}$ is
	a local frame of $\CpxThetaCotangentX$, 
	this expression shows that $g$ extends to a positive-definite $[\Theta]$-metric on $(X,[\Theta])$.
\end{ex}

Let $F_p$, $p\in\bdry X$, be the set of vector fields of the form
\eqref{eq:GeneralFormOfThetaVectorFieldAtBoundary} with $a(p)=b(p)=c^j(p)=0$.
Then the fiber $\ThetaTangentXFiber$ is naturally identified with the quotient vector space $\caV_{[\Theta]}/F_p$.
Since $F_p$ is an ideal, $\ThetaTangentXFiber$ is a Lie algebra, which is called the \emph{tangent algebra} at $p$.
In the sequel we always further assume that
\begin{equation*}
	\text{$\ker\iota^*[\Theta]\subset T(\bdry X)$ is a contact distribution on $\bdry X$;}
\end{equation*}
then the derived series of $\ThetaTangentXFiber$ consists of the following subalgebras:
\begin{equation*}
	K_{1,p}:=\braket{\rho^2T,\rho Y_1,\dots,\rho Y_{2n}}/F_p,\qquad
	K_{2,p}:=\braket{\rho^2T}/F_p.
\end{equation*}
Collecting these subspaces we obtain the subbundles $K_1$ and $K_2$ of $\ThetaTangentX|_{\bdry X}$.

ACH metrics generalize the $[\Theta]$-metrics coming from complete K\"ahler metrics as illustrated
in Example \ref{ex:CompleteKahlerMetric}.
The characterizing features are completely described in terms of the boundary value of $g$.
Our first two assumptions are that
\begin{equation}
	\abs{\dfrac{d\rho}{\rho}}_g=\dfrac{1}{2}\qquad\text{at $\bdry X$}
	\label{eq:TransverseBoundaryValueAssumption}
\end{equation}
and
\begin{equation}
	\text{$g$ is positive-definite on $K_2$}.
	\label{eq:TangentialBoundaryValueAssumption}
\end{equation}
It is clear that \eqref{eq:TransverseBoundaryValueAssumption} is independent of the choice of a boundary
defining function $\rho$.
The condition \eqref{eq:TangentialBoundaryValueAssumption} implies that
if we pull $\rho^4g$, regarded as a section of $\Sym^2T^*X$, back to $\bdry X$ then it is equal to the square of
some contact form that belongs to $\iota^*[\Theta]$.
If there is a fixed $[\Theta]$-metric $g$ satisfying these two conditions,
then for any $p\in\bdry X$ there is a unique orthogonal decomposition
\begin{equation}
	\label{eq:OrthogonalDecompositionOfTangentAlgebra}
	\ThetaTangentXFiber=R_p\oplus K_{1,p},\qquad K_{1,p}=K_{2,p}\oplus L_p.
\end{equation}
The subbundle of $\ThetaTangentX|_{\bdry X}$ whose fiber at $p$ is $L_p$ is denoted by $L$.

Let $H\subset T(\bdry X)$ be the kernel of $\iota^*[\Theta]$.
Given a boundary defining function $\rho$, there is a vector-bundle isomorphism
\begin{equation}
	\lambda_\rho\colon H\to L,\qquad Y_p\mapsto \pi_p(\rho Y\text{ mod $F_p$}),
	\label{eq:LeviBundleIsomorphism}
\end{equation}
where $Y\in C^\infty(X,TX)$ is any extension of $Y_p\in H_p$ and
$\pi_p\colon K_{1,p}\to L_p$ is the projection with respect to the decomposition
\eqref{eq:OrthogonalDecompositionOfTangentAlgebra}.
By a \emph{compatible almost CR structure} for $[\Theta]$ we mean any nondegenerate partially integrable
almost CR structure $T^{1,0}$ on $\bdry X$
for which the conformal class $[\theta]$ of pseudohermitian structures is equal to $\iota^*[\Theta]$.

\begin{dfn}
	\label{dfn:ACHMetric}
	Let $(X,[\Theta])$ a $[\Theta]$-manifold.
	An \emph{ACH metric} on $(X,[\Theta])$ is
	a $[\Theta]$-metric $g$ satisfying \eqref{eq:TransverseBoundaryValueAssumption},
	\eqref{eq:TangentialBoundaryValueAssumption} and the following additional conditions:
	\begin{enumerate}
		\item For any $p\in\bdry X$, if $r_p\in R_p$ is the vector such that $(d\rho/\rho)_p(r_p)=1$,
			then the map $L_p\to \ThetaTangentXFiber$, $Z_p\mapsto [r_p,Z_p]$, is the identity map onto $L_p$;
			\label{item:ACHConditionOnDecompositionOfK1}
		\item There is a compatible almost CR structure $T^{1,0}$ such that,
			for some (hence for any) boundary defining function $\rho$
			and a pseudohermitian structure $\theta\in\iota^*[\Theta]$
			characterized by $\iota^*(\rho^4g)=\theta^2$,
			via \eqref{eq:LeviBundleIsomorphism} $g|_L$ agrees with the Levi form on $H$ determined by $\theta$.
			\label{item:ACHConditionOnLeviBundle}
	\end{enumerate}
\end{dfn}

The condition (\ref{item:ACHConditionOnDecompositionOfK1}) above is independent of the choice of $\rho$.
On (\ref{item:ACHConditionOnLeviBundle}), the assumptions of partial integrability and nondegeneracy for $T^{1,0}$
are not restrictive here, since if $\lambda_\rho^*(g|_L)=(d\theta)|_H(\paramdot,J\paramdot)$ holds for
an almost CR structure $(H,J)$ on $M=\bdry X$ satisfying $\ker\iota^*[\Theta]=H$,
then $(d\theta)|_H(\paramdot,J\paramdot)$ is symmetric and hence hermitian,
which implies that $(H,J)$ is partially integrable, and its nondegeneracy is nothing
but the contact condition for $\iota^*[\Theta]$ that we keep imposing.
Furthermore, because of the contact condition, $(H,J)$ is unique.
We say that $(M,T^{1,0})$ is the \emph{CR infinity} of the \emph{ACH manifold} $(X,[\Theta],g)$.

\begin{rem}
	\label{rem:ReformulationOfFirstConditionOfACH}
	Let $g$ be a $[\Theta]$-metric on $(X,[\Theta])$ satisfying \eqref{eq:TransverseBoundaryValueAssumption} and
	\eqref{eq:TangentialBoundaryValueAssumption}.
	We further assume that we have a local frame $\set{N,T,Y_j}$ of $TX$ around $p\in\bdry X$, which is dual to
	$\set{d\rho,\tilde\Theta,\alpha^j}$ for an extension $\tilde\Theta$ of some $\Theta\in[\Theta]$,
	such that $d\tilde\Theta(N,Y_j)=-\tilde\Theta([N,Y_j])=O(\rho)$ and $R_p=\braket{\rho N}/F_p$.
	Then, since $r_p=(\rho N)_p$ and $[\rho N,\rho^2T]=2\rho^2T$, $[\rho N,\rho Y_j]=\rho Y_j$ mod $F_p$,
	the map $L_p\to\ThetaTangentXFiber$, $Z_p\mapsto[r_p,Z_p]$ is the identity if and only if
	$L_p=\braket{\rho Y_1,\dots,\rho Y_{2n}}/F_p$.
\end{rem}

Let $L_\bbC=L^{1,0}\oplus\conj{L^{1,0}}$ be the decomposition corresponding to
$H_\bbC=T^{1,0}\oplus\conj{T^{1,0}}$.
A \emph{distinguished local frame} $\set{W_\infty,W_0,W_\alpha,W_\conjalpha}$ for an ACH metric $g$ is
a local frame of $\ThetaTangentX$ near a point on $\bdry X$ such that, if restricted to $\bdry X$,
$W_\infty$ generates $R$, $W_0$ generates $K_2$,
$W_1$, $\dots$, $W_n$ span $L^{1,0}$, and $W_\conjalpha=\conj{W_\alpha}$.

\begin{prop}
	Let $\Omega\subset\bbC^{n+1}$ be a bounded strictly pseudoconvex domain and $T^{1,0}$ the induced
	CR structure on $M=\bdry\Omega$.
	Then, for any choice of a boundary defining function $r\in C^\infty(\closure\Omega)$,
	the $[\Theta]$-metric \eqref{eq:CompleteKahlerMetricLiftedToX} on the square root of $\closure\Omega$
	is an ACH metric with CR infinity $(M,T^{1,0})$.
\end{prop}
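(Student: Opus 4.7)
The approach is to verify each condition of Definition \ref{dfn:ACHMetric} directly from the explicit form \eqref{eq:CompleteKahlerMetricLiftedToX}, exploiting its block-diagonal structure in the local coframe $\{d\rho/\rho, \tilde\Theta/\rho^2, \tilde\Theta^\alpha/\rho, \tilde\Theta^\conjalpha/\rho\}$ of $\CpxThetaCotangentX$, with diagonal coefficients $4(1-2\rho^2\kappa)$, $(1-2\rho^2\kappa)$, and $2\tilde h_{\alpha\conjbeta}$. To begin with, $\{d\rho, \tilde\Theta, \tilde\Theta^\alpha, \tilde\Theta^\conjalpha\}$ is a smooth coframe of $T^*_\bbC X$ near $\bdry X$ (since $d\rho$ is transverse and the remaining three restrict to a coframe of $T^*_\bbC M$), so the displayed coframe of $\CpxThetaCotangentX$ is a smooth bundle frame up to the boundary. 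Evaluating the diagonal coefficients at $\rho=0$ immediately yields $\abs{d\rho/\rho}_g=1/2$ and positivity of $g$ on $K_2$, establishing \eqref{eq:TransverseBoundaryValueAssumption} and \eqref{eq:TangentialBoundaryValueAssumption}. The dual $\ThetaTangentX$-frame $\{E_\infty, E_0, E_\alpha, E_\conjalpha\}$ is orthogonal at the boundary by the block structure, so the orthogonal decomposition \eqref{eq:OrthogonalDecompositionOfTangentAlgebra} is $R=\braket{E_\infty}$, $K_2=\braket{E_0}$ and $L_\bbC=\braket{E_\alpha, E_\conjalpha}$.

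Next, to check Definition \ref{dfn:ACHMetric}(\ref{item:ACHConditionOnDecompositionOfK1}), I would invoke Remark \ref{rem:ReformulationOfFirstConditionOfACH} with $\{N, T, Y_\alpha, Y_\conjalpha\}$ the frame of $T_\bbC X$ dual to $\{d\rho, \tilde\Theta, \tilde\Theta^\alpha, \tilde\Theta^\conjalpha\}$, whereby $\rho N = E_\infty$, $\rho^2 T = E_0$ and $\rho Y_\alpha = E_\alpha$ as $\Theta$-vector fields; the condition $R_p = \braket{\rho N}/F_p$ is then immediate from the orthogonality above. The derivative hypothesis is verified by pulling back \eqref{eq:ExtensionOfLeviForm} through $i_{1/2}$: since $d\tilde\theta = -i\partial\conj\partial r = \kappa\,dr\wedge\tilde\theta + i\tilde h_{\alpha\conjbeta}\tilde\theta^\alpha\wedge\tilde\theta^\conjbeta$ and $i_{1/2}^*(dr) = 4\rho\,d\rho$, one obtains
\begin{equation*}
d\tilde\Theta = 4\rho\kappa\,d\rho\wedge\tilde\Theta + i\tilde h_{\alpha\conjbeta}\tilde\Theta^\alpha\wedge\tilde\Theta^\conjbeta,
\end{equation*}
which has no $d\rho\wedge\tilde\Theta^\alpha$-term, so $d\tilde\Theta(N, Y_\alpha) = 0$ identically. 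The remark then delivers (\ref{item:ACHConditionOnDecompositionOfK1}).

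For (\ref{item:ACHConditionOnLeviBundle}) I take the compatible CR structure to be the integrable $T^{1,0}$ on $\bdry\Omega$; the pseudohermitian form selected by $\iota^*(\rho^4 g) = \theta^2$ is then $\theta = \iota^*\tilde\Theta = \tfrac{i}{2}(\partial r - \conj\partial r)|_M$. Restricting the above expression for $d\tilde\theta$ to $H\otimes H$ yields $d\theta(Z_\alpha, Z_\conjbeta) = i\tilde h_{\alpha\conjbeta}|_M$, whence $h(Z_\alpha, Z_\conjbeta) = \tilde h_{\alpha\conjbeta}|_M$ via \eqref{eq:LeviForm} together with $JZ_\conjbeta = -iZ_\conjbeta$. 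The $L$-block of $g$ produces $g(E_\alpha, E_\conjbeta) = \tilde h_{\alpha\conjbeta}$ (the factor $2$ in \eqref{eq:CompleteKahlerMetricLiftedToX} cancels with the symmetric-product convention), and $\lambda_\rho$ sends $Z_\alpha = \xi_\alpha|_M$ to $E_\alpha|_p$; hence $\lambda_\rho^*(g|_L) = h$ as required. The step requiring the most care is the computation of $d\tilde\Theta$: it hinges on the factor $4\rho$ arising from $i_{1/2}^*(dr)$, which is precisely what eliminates the would-be $d\rho\wedge\tilde\Theta^\alpha$-term and, more broadly, what translates the original complete K\"ahler metric into a genuine ACH metric after the substitution $\rho = \sqrt{r/2}$.
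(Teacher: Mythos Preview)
Your proof is correct and follows essentially the same route as the paper's: both read off \eqref{eq:TransverseBoundaryValueAssumption}, \eqref{eq:TangentialBoundaryValueAssumption} and condition~(\ref{item:ACHConditionOnLeviBundle}) from the block-diagonal form \eqref{eq:CompleteKahlerMetricLiftedToX}, and both reduce condition~(\ref{item:ACHConditionOnDecompositionOfK1}) via Remark~\ref{rem:ReformulationOfFirstConditionOfACH} to the vanishing of $d\tilde\Theta(N,Y_\alpha)$, which comes from the absence of a $dr\wedge\tilde\theta^\alpha$-term in \eqref{eq:ExtensionOfLeviForm}. One small quibble with your closing commentary: the absence of a $d\rho\wedge\tilde\Theta^\alpha$-term is inherited directly from \eqref{eq:ExtensionOfLeviForm} and has nothing to do with the factor $4\rho$ from $i_{1/2}^*(dr)$ (that factor only affects the $d\rho\wedge\tilde\Theta$-term), so the emphasis there is misplaced, though the argument itself is unaffected.
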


\begin{proof}
	We follow the same notation as in Example \ref{ex:CompleteKahlerMetric} and let
	$\set{N,\tilde T,\tilde Z_\alpha,\tilde Z_\conjalpha}$ be the dual of
	$\set{d\rho,\tilde\Theta,\tilde\Theta^\alpha,\tilde\Theta^\conjalpha}$.
	It is obvious from \eqref{eq:CompleteKahlerMetricLiftedToX} that
	\eqref{eq:TransverseBoundaryValueAssumption}, \eqref{eq:TangentialBoundaryValueAssumption}
	and (\ref{item:ACHConditionOnLeviBundle}) of Definition \ref{dfn:ACHMetric} are satisfied.
	Since \eqref{eq:CompleteKahlerMetricLiftedToX} also shows $R_p=\braket{\rho N}/F_p$ and
	$(L_p)_\bbC=\braket{\rho\tilde Z_1,\dots,\rho\tilde Z_n,\rho\tilde Z_{\conj{1}},\dots,\rho\tilde Z_{\conj{n}}}/F_p$,
	by Remark \ref{rem:ReformulationOfFirstConditionOfACH} we only have to check
	$d\tilde\Theta(N,\tilde Z_\alpha)=O(\rho)$ to prove that (\ref{item:ACHConditionOnDecompositionOfK1}) holds.
	It follows from \eqref{eq:ExtensionOfLeviForm} that $d\tilde\theta$ does not contain
	$dr\wedge\tilde\theta^\alpha$ term, which implies that $d\tilde\Theta(N,\tilde Z_\alpha)=0$.
\end{proof}

For any $[\Theta]$-metric on $X$ satisfying \eqref{eq:TransverseBoundaryValueAssumption},
\eqref{eq:TangentialBoundaryValueAssumption} and a choice of a contact form in $\iota^*[\Theta]$,
there is a special boundary defining function,
which is called a \emph{model boundary defining function,} as shown below.

\begin{lem}
	\label{lem:ModelBoundaryDefiningFunction}
	Let $(X,[\Theta])$ be a $[\Theta]$-manifold and
	$g$ a $[\Theta]$-metric satisfying \eqref{eq:TransverseBoundaryValueAssumption},
	\eqref{eq:TangentialBoundaryValueAssumption}.
	Then, for any $\theta\in\iota^*[\Theta]$, there exists a boundary defining function $\rho$ such that
	\begin{equation}
		\abs{\dfrac{d\rho}{\rho}}_g=\dfrac{1}{2} \qquad\text{near $\bdry X$}
		\label{eq:NormalizationCondition}
	\end{equation}
	and $\iota^*(\rho^4g)=\theta^2$. The germ of $\rho$ along $\bdry X$ is unique.
\end{lem}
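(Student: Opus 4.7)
The plan is to turn the two requirements into a first-order nonlinear PDE for a scalar correction, desingularize it by one factor of the defining function, and apply classical Hamilton--Jacobi theory.

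First, I would fix a boundary defining function $\rho_0$ with $\iota^*(\rho_0^4 g) = \theta^2$. Such $\rho_0$ exists: by \eqref{eq:TangentialBoundaryValueAssumption}, for any boundary defining function $\tilde\rho$ one has $\iota^*(\tilde\rho^4 g) = (\theta')^2$ for some $\theta' \in \iota^*[\Theta]$, and rescaling $\tilde\rho$ by $e^f$ with $f \in C^\infty(X)$ changes $\theta'$ to $e^{2f|_{\bdry X}}\theta'$, so we can hit any prescribed member of the conformal class. The desired $\rho$ must then take the form $\rho = e^w \rho_0$ with $w \in C^\infty(X)$ and $w|_{\bdry X}=0$, and \eqref{eq:NormalizationCondition} becomes the nonlinear equation
\begin{equation*}
	F(x, Dw) := \lvert d\rho_0/\rho_0 + dw\rvert_g^2 - \tfrac14 = 0 \qquad \text{near $\bdry X$,}
\end{equation*}
read through $\ThetaCotangentX$.

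Second, I would desingularize. In a local frame $\set{\rho_0 N, \rho_0^2 T, \rho_0 Y_j}$ of $\ThetaTangentX$ dual to $\set{d\rho_0/\rho_0, \tilde\Theta_0/\rho_0^2, \alpha^j/\rho_0}$, the $[\Theta]$-cotangent components of $dw$ are $(\rho_0 Nw, \rho_0^2 Tw, \rho_0 Y_j w)$, all $O(\rho_0)$; combined with $g^{NN}|_{\bdry X} = \tfrac14$ (which is exactly \eqref{eq:TransverseBoundaryValueAssumption}), one checks that $F$ vanishes identically on $\bdry X$ regardless of $Dw$. Hence $F = \rho_0 \tilde F(x, Dw)$ with $\tilde F \in C^\infty$, and a direct expansion shows that the coefficient of $Nw$ in $\tilde F$ equals $\tfrac12$ on $\bdry X$. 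Therefore $\tilde F = 0$ is a noncharacteristic first-order nonlinear PDE for $w$ on a one-sided neighborhood of $\bdry X$, with smooth Cauchy datum $w|_{\bdry X}=0$.

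Third, I would apply the classical existence theorem for noncharacteristic smooth Cauchy problems for first-order nonlinear PDEs (via the bicharacteristic flow) to $\tilde F = 0$ to obtain a unique smooth $w$ near $\bdry X$. Uniqueness of the germ follows because any two such solutions produce the same bicharacteristic flow from the same initial data and must coincide on a neighborhood of $\bdry X$; passing back from $\tilde F = 0$ to $F = \rho_0 \tilde F = 0$ recovers \eqref{eq:NormalizationCondition}.

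The main obstacle is the apparent singularity of $F = 0$ as an ordinary PDE: the 1-form $d\rho_0/\rho_0$ is unbounded at $\bdry X$, and the coefficient of $Nw$ in $F$ itself vanishes at $\bdry X$, so $\bdry X$ is characteristic for $F$. The $[\Theta]$-geometric framework makes the factorization $F = \rho_0 \tilde F$ natural, and after dividing by this $\rho_0$ the Fuchsian character disappears, leaving a textbook noncharacteristic Hamilton--Jacobi problem.
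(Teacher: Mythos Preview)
Your proposal is correct and is essentially the same argument as the paper's: write $\rho=e^{w}\rho_0$, observe that the resulting first-order equation for $w$ is singular but becomes a smooth noncharacteristic first-order PDE after dividing by one power of $\rho_0$ (the coefficient of the normal derivative being $\tfrac12$ at $\bdry X$, coming from \eqref{eq:TransverseBoundaryValueAssumption}), and then solve with the prescribed boundary value. The only cosmetic difference is that you arrange $\iota^*(\rho_0^4 g)=\theta^2$ first and take $w|_{\bdry X}=0$, whereas the paper starts from an arbitrary defining function and fixes $\psi|_{\bdry X}$ at the end.
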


\begin{proof}
	This is given in \cite{GuillarmouSaBarreto}, but for readers' convenience we include a proof.
	Let $\rho'$ be any boundary defining function and set $\rho=e^{\psi}\rho'$.
	Then $\abs{d\rho/\rho}_g=1/2$ is equivalent to
	\begin{equation}
		\dfrac{2X_{\rho'}}{\rho'}\psi+\rho\abs{\dfrac{d\psi}{\rho'}}_g^2
		=\dfrac{1}{\rho'}\left(\dfrac{1}{4}-\abs{\dfrac{d\rho'}{\rho'}}_g^2\right),
		\label{eq:PDEForModelDefiningFunction}
	\end{equation}
	where $X_{\rho'}=\sharp_g(d\rho'/\rho')$ is the dual of $d\rho'/\rho'$ with respect to $g$.
	If we express $X_{\rho'}$ in the form \eqref{eq:GeneralFormOfThetaVectorFieldAtBoundary}, then
	the assumption \eqref{eq:TransverseBoundaryValueAssumption} implies that $a=1/4$ on $\bdry X$.
	Hence \eqref{eq:PDEForModelDefiningFunction} is a noncharacteristic first-order PDE.
	After prescribing the boundary value of $\psi$ so that $\iota^*(\rho^4g)=\theta^2$ is satisfied,
	we obtain a unique solution of \eqref{eq:PDEForModelDefiningFunction} near $\bdry X$.
\end{proof}

Fix any contact form $\theta\in\iota^*[\Theta]$ on $M=\bdry X$.
Let $\rho$ be a model boundary defining function associated to $\theta$ and $X_{\rho}:=\sharp_g(d\rho/\rho)$.
We consider the smooth map induced by the flow $\Fl_t$ of the vector field $4X_\rho/\rho$, which is
transverse to $\bdry X$:
\begin{equation*}
	\Phi\colon (\text{an open neighborhood of $M\times\set{0}$ in $M\times[0,\infty)$})\to X,\qquad
	(p,t)\mapsto\Fl_t(p).
\end{equation*}
The manifold-with-boundary $M\times[0,\infty)_t$,
whose boundary $M\times\set{0}$ is identified with $M$, carries a standard $[\Theta]$-structure,
which is given by extending $[\theta]$ in such a way that $[\theta](\partial_t)=0$.
Since $\tilde\Theta(4X_\rho/\rho)=4\rho g(d\rho/\rho,\tilde\Theta/\rho^2)=O(\rho)$,
we conclude that $\Phi$ is a \emph{$[\Theta]$-diffeomorphism,} i.e., a diffeomorphism preserving
$[\Theta]$-structures, onto its image.
By the construction $t\partial_t$ is orthogonal to $\ker(dt/t)$ with respect to the induced $[\Theta]$-metric
$\Phi^*g$,
and we also see that $t=\Phi^*\rho$, which implies that $t$ is
a model boundary defining function for $\Phi^*g$ and $\theta$.

\begin{dfn}
	Let $(M,T^{1,0})$ be a nondegenerate partially integrable almost CR manifold and
	give $M\times[0,\infty)_\rho$ the standard $[\Theta]$-structure.
	Let $\theta$ a pseudohermitian structure on $(M,T^{1,0})$.
	Then a \emph{normal-form ACH metric} $g$ for $(M,T^{1,0})$ and $\theta$ is an ACH metric
	defined near the boundary of $M\times[0,\infty)_\rho$ satisfying the following conditions:
	\begin{enumerate}
		\item $\rho\partial_\rho$ is orthogonal to $\ker(d\rho/\rho)$ with respect to $g$;
		\item $\rho$ is a model boundary defining function for $g$ and $\theta$;
		\item its CR infinity is $(M,T^{1,0})$.
	\end{enumerate}
\end{dfn}

What we have proved is that, for any choice of $\theta\in\iota^*[\Theta]$,
any ACH metric is, if it is restricted to some neighborhood of the boundary,
identified with a normal-form ACH metric for $(M,T^{1,0})$ and $\theta$ via a boundary-fixing
$[\Theta]$-diffeomorphism, where $(M,T^{1,0})$ is the CR infinity of the original ACH manifold.

\begin{prop}
	\label{prop:NormalFormOfACHMetric}
	Let $(M,T^{1,0})$ be a nondegenerate partially integrable almost CR manifold and
	$X\subset M\times[0,\infty)_\rho$ an open neighborhood of $M=M\times\set{0}$ carrying the
	standard $[\Theta]$-structure.
	Let $\set{Z_\alpha}$ in general denote a local frame of $T^{1,0}$,
	$\set{\theta^\alpha}$ a family of 1-forms on $M$ satisfying
	$\theta^\beta(Z_\alpha)=\tensor{\delta}{_\alpha^\beta}$ and $\theta^\conjalpha=\conj{\theta^\alpha}$.
	We fix a pseudohermitian structure $\theta$.
	The 1-forms $\theta$, $\theta^\alpha$ and $\theta^\conjalpha$ are extended in such a way that
	they annihilate $\partial_\rho$ and are constant in the $\rho$-direction.
	Then a $[\Theta]$-metric $g$ on $X$ is a normal-form ACH metric for $(M,T^{1,0})$ and $\theta$
	if and only if it is of the form
	\begin{equation}
		\label{eq:ProductDecompositionOfACHMetric}
			g=4\left(\dfrac{d\rho}{\rho}\right)^2+\tensor{g}{_0_0}\left(\dfrac{\theta}{\rho^2}\right)^2
			+2\tensor{g}{_0_A}\dfrac{\theta}{\rho^2}\dfrac{\theta^A}{\rho}
			+\tensor{g}{_A_B}\dfrac{\theta^A}{\rho}\dfrac{\theta^B}{\rho},
	\end{equation}
	where the indices $A$, $B$ run $\set{1,\dots,n,\conj{1},\dots,\conj{n}}$, and satisfies
	\begin{equation}
		\label{eq:NormalFormConditionOnACHMetric}
		\tensor{g}{_0_0}|_M=1,\qquad
		\tensor{g}{_0_\alpha}|_M=0,\qquad
		\tensor{g}{_\alpha_\conjbeta}|_M=\tensor{h}{_\alpha_\conjbeta}\quad\text{and}\quad
		\tensor{g}{_\alpha_\beta}|_M=0,
	\end{equation}
	where $\tensor{h}{_\alpha_\conjbeta}$ is the Levi form associated with $\theta$.
\end{prop}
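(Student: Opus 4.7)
My plan is to work in the canonical $[\Theta]$-frame $\{W_\infty,W_0,W_A\}:=\{\rho\partial_\rho,\rho^2T,\rho Z_A\}$ (with $T$ the Reeb vector field of $\theta$ and $A$ ranging over $\{\alpha,\conjalpha\}$) dual to $\{d\rho/\rho,\theta/\rho^2,\theta^A/\rho\}$, and to expand $g=g_{IJ}e^Ie^J$. The proposition then reduces to the equivalence between the three normal-form conditions and the component identities $g_{\infty 0}=g_{\infty A}=0$, $g_{\infty\infty}=4$ in a neighborhood of $\bdry X$, together with the boundary-value relations \eqref{eq:NormalFormConditionOnACHMetric}.

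For the ``only if'' direction, the orthogonality $\rho\partial_\rho\perp\ker(d\rho/\rho)$ is exactly $g_{\infty 0}=g_{\infty A}=0$ near $\bdry X$. Given this, $|d\rho/\rho|_g^2=1/g_{\infty\infty}$, so the model-boundary-defining-function condition $|d\rho/\rho|_g=1/2$ forces $g_{\infty\infty}=4$, yielding the shape \eqref{eq:ProductDecompositionOfACHMetric}. The identity $\iota^*(\rho^4g)=\theta^2$ kills every term carrying a surviving factor of $\rho$ and leaves only $g_{00}|_M=1$. Next, because the ACH decomposition $K_1=K_2\oplus L$ at $\bdry X$ is orthogonal and $W_0$ spans $K_2$, we must have $g_{0A}|_M=0$ so that $W_A|_M$ lies in $L$. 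Finally, the Levi-form compatibility of Definition \ref{dfn:ACHMetric}(\ref{item:ACHConditionOnLeviBundle}) translates---via $\lambda_\rho(Z_A)=W_A$ mod $F_p$---into $g_{\alpha\conjbeta}|_M=\tensor{h}{_\alpha_\conjbeta}$ and $g_{\alpha\beta}|_M=0$.

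For the converse, I would start from \eqref{eq:ProductDecompositionOfACHMetric} and \eqref{eq:NormalFormConditionOnACHMetric} and verify each axiom in turn. The transverse and tangential boundary-value conditions \eqref{eq:TransverseBoundaryValueAssumption} and \eqref{eq:TangentialBoundaryValueAssumption} are manifest from the block form. To obtain Definition \ref{dfn:ACHMetric}(\ref{item:ACHConditionOnDecompositionOfK1}) I would invoke Remark \ref{rem:ReformulationOfFirstConditionOfACH}: since the forms $\theta$ and $\theta^A$ are extended to be constant in $\rho$, one has $d\theta(\partial_\rho,Z_A)=0$, so it suffices to check $R_p=\braket{\rho\partial_\rho}/F_p$ (immediate from the block form) and $L_p=\braket{\rho Z_A}/F_p$ (which is exactly the content of $g_{0A}|_M=0$). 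Condition (\ref{item:ACHConditionOnLeviBundle}) follows from \eqref{eq:NormalFormConditionOnACHMetric} after applying $\lambda_\rho(Z_A)=W_A$ mod $F_p$, and the three clauses defining a normal-form ACH metric are visible from the block form together with $g_{\infty\infty}=4$.

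The care point, rather than a genuine obstacle, is keeping straight which identities must hold on a full neighborhood of $\bdry X$ (the orthogonality of $\rho\partial_\rho$ to $\ker(d\rho/\rho)$ and the normalization $|d\rho/\rho|_g=1/2$, both inherited from the normal-form and model-defining-function conditions) versus which are imposed only at the boundary (the orthogonal decomposition of $K_1$ and the Levi-form compatibility). In particular, the pullback $\iota^*(\rho^4g)=\theta^2$ only constrains $g_{00}|_M$; the remaining conditions in \eqref{eq:NormalFormConditionOnACHMetric} come from the ACH orthogonality axioms, not from the pullback equation.
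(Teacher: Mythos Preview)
Your proof is correct and follows essentially the paper's route: orthogonality of $\rho\partial_\rho$ plus the model-defining-function normalization give the block form \eqref{eq:ProductDecompositionOfACHMetric}, condition (\ref{item:ACHConditionOnDecompositionOfK1}) of Definition \ref{dfn:ACHMetric} translates via Remark \ref{rem:ReformulationOfFirstConditionOfACH} into $g_{0\alpha}|_M=0$, and condition (\ref{item:ACHConditionOnLeviBundle}) gives the remaining boundary relations. One small tightening: in the ``only if'' direction, the reason $W_A|_M\in L$ is precisely condition (\ref{item:ACHConditionOnDecompositionOfK1}) through Remark \ref{rem:ReformulationOfFirstConditionOfACH}, not the orthogonality of the decomposition $K_1=K_2\oplus L$ alone---you invoke this correctly in the converse, so make it explicit in the forward direction as well.
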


\begin{proof}
	The condition $\rho\partial_\rho\perp_g\ker(d\rho/\rho)$,
	together with \eqref{eq:PDEForModelDefiningFunction},
	implies that $g$ is of the form \eqref{eq:ProductDecompositionOfACHMetric}.
	In order $\rho$ to be a model boundary defining function for $\theta$,
	$\tensor{g}{_0_0}$ must be $1$ at $M$.
	By Remark \ref{rem:ReformulationOfFirstConditionOfACH},
	the condition (\ref{item:ACHConditionOnDecompositionOfK1}) in
	Definition \ref{dfn:ACHMetric} is equivalent to $\tensor{g}{_0_\alpha}|_M=0$ in this situation.
	The given almost CR structure $T^{1,0}$ is the one in (\ref{item:ACHConditionOnLeviBundle}) of Definition
	\ref{dfn:ACHMetric} if and only if
	$\tensor{g}{_\alpha_\conjbeta}|_M=\tensor{h}{_\alpha_\conjbeta}$ and $\tensor{g}{_\alpha_\beta}|_M=0$.
\end{proof}


\section{Pseudohermitian geometry}
\label{sec:PseudohermitianGeometry}

Let $(M,T^{1,0})$ be a nondegenerate partially integrable almost CR manifold.
In the presence of a fixed pseudohermitian structure $\theta$, there is a canonical direct sum decomposition
of $T_\bbC M$:
\begin{equation*}
	T_\bbC M=\bbC T\oplus T^{1,0}\oplus T^{0,1}.
\end{equation*}
Here $T$, the \emph{Reeb vector field,} is characterized by
\begin{equation*}
	\theta(T)=1,\qquad
	T\intprod d\theta=0.
\end{equation*}
If $\set{Z_\alpha}$ is a local frame of $T^{1,0}$, the \emph{admissible coframe} $\set{\theta^\alpha}$
is defined in such a way that
$\theta^\alpha(Z_\beta)=\tensor{\delta}{_\beta^\alpha}$ and $\theta^\alpha|_{\bbC T\oplus T^{0,1}}=0$.
This makes $\set{\theta,\theta^\alpha,\theta^\conjalpha}$ into the dual coframe of
$\set{T,Z_\alpha,Z_\conjalpha}$.
The index $0$ is used for components corresponding with $T$ or $\theta$.

The \emph{Tanaka--Webster connection} can be defined as the following proposition shows.
The proof goes in the same manner as in the integrable case. See, e.g., Proposition 3.1 in \cite{Tanaka}.

\begin{prop}
	On a nondegenerate partially integrable almost CR manifold $(M, T^{1,0})$
	with a fixed pseudohermitian structure $\theta$, there is a unique connection $\nabla$ on $TM$
	satisfying the following conditions:
	\begin{enumerate}
		\item $H$, $T$, $J$, $h$ are all parallel with respect to $\nabla$;
		\item The torsion tensor $\Theta(X,Y):=\nabla_XY-\nabla_YX-[X,Y]$ satisfies
			\begin{equation}
				\begin{cases}
					\Theta(X,Y)+\Theta(JX,JY)=2\,d\theta(X, Y)T, & X, Y\in \Gamma(H),\\
					\Theta(T, JX)=-J\Theta(T, X), & X\in \Gamma(H).
				\end{cases}
				\label{eq:TorsionOfTWConnection}
			\end{equation}
	\end{enumerate}
	\label{prop:GeneralizedTanakaWebsterConnection}
\end{prop}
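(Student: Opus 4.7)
The plan is to mimic Tanaka's classical construction in the integrable setting, adapted so the Nijenhuis correction does not obstruct solvability. Fix $\theta$, the Reeb vector field $T$, a local frame $\set{Z_\alpha}$ of $T^{1,0}$, and the admissible coframe $\set{\theta,\theta^\alpha,\theta^\conjalpha}$. Parallelism of $T$, $H$, and $J$ forces $\nabla T=0$ and requires $\nabla$ to preserve the splitting $T_\bbC M=\bbC T\oplus T^{1,0}\oplus T^{0,1}$, so the connection is entirely encoded in complex 1-forms $\tensor{\omega}{_\alpha^\beta}$ defined by $\nabla Z_\alpha=\tensor{\omega}{_\alpha^\beta}\otimes Z_\beta$, with the $T^{0,1}$ block fixed by complex conjugation. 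Parallelism of $h$ then reduces to the hermitian compatibility $d\tensor{h}{_\alpha_\conjbeta}=\tensor{\omega}{_\alpha_\conjbeta}+\overline{\tensor{\omega}{_\beta_\conjalpha}}$ after lowering indices with $h$.

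Next I would translate the torsion conditions into structure equations. Writing $d\theta^\alpha=\theta^\beta\wedge\tensor{\omega}{_\beta^\alpha}+\tau^\alpha$ for the torsion 2-form $\tau^\alpha$ and expanding $\tau^\alpha$ in the admissible coframe, the first axiom applied to mixed-type pairs $(Z_\alpha,Z_\conjbeta)$ forces the $(1,1)$-part of $\tau^\alpha$ to vanish, because the full bracket $[Z_\alpha,Z_\conjbeta]$ modulo $H$ is already accounted for by $d\theta(Z_\alpha,Z_\conjbeta)T=i\tensor{h}{_\alpha_\conjbeta}T$. On $(0,2)$-type pairs the same axiom pins the $\theta^\conjbeta\wedge\theta^\conjgamma$ component of $\tau^\alpha$ to be determined by the Nijenhuis tensor $\tensor{N}{_\conjbeta_\conjgamma^\alpha}$. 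The second axiom $\Theta(T,JX)=-J\Theta(T,X)$ fixes the $\theta\wedge\theta^\conjbeta$ coefficient to be the pseudohermitian torsion $\tensor{A}{^\alpha_\conjbeta}$ and rules out a $\theta\wedge\theta^\beta$ term.

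Combining these constraints with hermitian compatibility, I would solve for $\tensor{\omega}{_\alpha^\beta}$ coefficient by coefficient in the decomposition $\tensor{\omega}{_\alpha^\beta}=\tensor{\omega}{_\alpha_0^\beta}\theta+\tensor{\omega}{_\alpha_\gamma^\beta}\theta^\gamma+\tensor{\omega}{_\alpha_\conjgamma^\beta}\theta^\conjgamma$. The parts antisymmetric in $(\alpha,\conjbeta)$ after lowering are read off directly from the structure equations for $d\theta^\alpha$ and $d\theta^\conjalpha$, while the symmetric parts are fixed by $d\tensor{h}{_\alpha_\conjbeta}$. Verifying that this algebraic system admits exactly one solution then yields both existence and uniqueness; alternatively, uniqueness is immediate from the fact that the difference of two admissible connections satisfies the homogeneous version of these constraints and so must vanish.

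The main obstacle is clerical rather than conceptual: carefully tracking the partial-integrability correction throughout. In the integrable case $d\theta^\alpha$ carries no $\theta^\conjbeta\wedge\theta^\conjgamma$ term, while here it does, and this piece must be absorbed into $\tau^\alpha$ rather than into $\tensor{\omega}{_\alpha^\beta}$. The key consistency point to verify is that this absorption is compatible with both torsion axioms and does not produce an unwanted $T$-valued term from $[Z_\alpha,Z_\beta]$; the latter is precisely the content of partial integrability. Once these compatibilities are in hand, the rest of the argument is formally identical to Tanaka's proof.
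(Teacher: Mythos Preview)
Your approach is correct and is precisely what the paper does: it simply defers to Tanaka's argument in the integrable case (Proposition~3.1 of \cite{Tanaka}), noting that the same reasoning goes through with the Nijenhuis term absorbed into the torsion. One minor imprecision worth flagging: on same-type pairs the first torsion axiom is vacuous (since $\Theta(JZ_{\conjalpha},JZ_{\conjbeta})=-\Theta(Z_{\conjalpha},Z_{\conjbeta})$ and $d\theta(Z_{\conjalpha},Z_{\conjbeta})=0$ by partial integrability), so the $(0,2)$ piece of $\tau^\gamma$ equalling $-\tfrac{1}{2}\tensor{N}{_{\conjalpha}_{\conjbeta}^\gamma}$ follows from type preservation of $\nabla$ alone rather than from that axiom; this does not affect the validity of your outline.
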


The components $\tensor{\Theta}{_\alpha_\beta^0}$, $\tensor{\Theta}{_\alpha_\beta^\gamma}$,
$\tensor{\Theta}{_\alpha_\beta^{\conj{\gamma}}}$ of the torsion
are not visible in \eqref{eq:TorsionOfTWConnection}.
Following the argument in the integrable case the first two are shown to be zero.
One immediately sees from the definition that the last one is related to the Nijenhuis tensor by
\begin{equation*}
	\tensor{\Theta}{_\alpha_\beta^{\conj{\gamma}}}=-\tensor{N}{_\alpha_\beta^{\conj{\gamma}}}
	\qquad\text{(and }
	\tensor{\Theta}{_{\conj{\alpha}}_{\conj{\beta}}^\gamma}=-\tensor{N}{_{\conj{\alpha}}_{\conj{\beta}}^\gamma}
	\text{)}.
\end{equation*}
The other nonzero components of the torsion are
\begin{equation*}
	\tensor{\Theta}{_\alpha_{\conj{\beta}}^0}=i\tensor{h}{_\alpha_{\conj{\beta}}},
	\qquad
	\tensor{\Theta}{_0_\alpha^{\conj{\beta}}}=-\tensor{\Theta}{_\alpha_0^{\conj{\beta}}}
	=:\tensor{A}{_\alpha^{\conj{\beta}}}
\end{equation*}
and their complex conjugates.
We call $\tensor{A}{_\alpha^{\conj{\beta}}}$ the \emph{Tanaka--Webster torsion tensor}.

\begin{rem}
	There is another generalization of the Tanaka--Webster connection to the partially integrable case
	given by Tanno \cite{Tanno}, which is also used in \cite{BarlettaDragomir}, \cite{BlairDragomir}
	and \cite{Seshadri}.
	Our generalization is different from it in that ours preserves $J$, which facilitates the whole argument
	below, and that $\tensor{\Theta}{_\alpha_\beta^{\conj{\gamma}}}$ is generally nonzero instead.
	It seems that our connection is first considered by Mizner~\cite{Mizner}.
\end{rem}

The first structure equation is as follows:
\begin{gather}
	\label{eq:FirstStructureEquation1}
	d\theta=i\tensor{h}{_\alpha_{\conj{\beta}}}\theta^{\alpha}\wedge\theta^{\conj{\beta}},\\
	\label{eq:FirstStructureEquation2}
	d\theta^{\gamma}
	=\theta^{\alpha}\wedge\tensor{\omega}{_\alpha^\gamma}
	-\tensor{A}{_{\conj{\alpha}}^\gamma}\theta^{\conj{\alpha}}\wedge\theta
	-\tfrac{1}{2}\tensor{N}{_{\conj{\alpha}}_{\conj{\beta}}^\gamma}\theta^{\conj{\alpha}}\wedge\theta^{\conj{\beta}}.
\end{gather}

Let $\set{\tensor{\omega}{_\alpha^\beta}}$ be the connection forms of the Tanaka--Webster connection.
Without any modification, the proof of Lemma 2.1 in \cite{LeePseudoEinstein} applies
to the partially integrable case and we obtain the following lemma.

\begin{lem}
	\label{lem:SpecialFrame}
	In a neighborhood of any point $p\in M$, there exists a frame $\set{Z_{\alpha}}$ of $T^{1,0}$
	for which $\tensor{\omega}{_\alpha^\beta}(p)=0$ holds.
\end{lem}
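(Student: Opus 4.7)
The plan is to start from an arbitrary local frame $\set{Z_\alpha}$ of $T^{1,0}$ defined on a neighborhood of $p$, compute its Tanaka--Webster connection forms $\tensor{\omega}{_\alpha^\beta}$, and then perform a pointwise gauge transformation on the frame to eliminate those forms at the single point $p$. The key observation is that $\nabla$ preserves $T^{1,0}$ (it preserves both $H$ and $J$ by Proposition~\ref{prop:GeneralizedTanakaWebsterConnection}), so it restricts to a linear connection on the complex rank-$n$ bundle $T^{1,0}\to M$. Consequently the classical transformation law for connection forms under a change of frame applies verbatim, and the partial-integrability hypothesis plays no role in this step.

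Concretely, I set $\tilde{Z}_\alpha=\tensor{U}{_\alpha^\beta}Z_\beta$ for a smooth matrix-valued function $\tensor{U}{_\alpha^\beta}$ with $U(p)=I$, and expand $\nabla\tilde{Z}_\alpha$ using the Leibniz rule to obtain
\begin{equation*}
	\tensor{\tilde{\omega}}{_\alpha^\beta}
	=\tensor{U}{_\alpha^\gamma}\tensor{\omega}{_\gamma^\delta}\tensor{(U^{-1})}{_\delta^\beta}
	+\tensor{U}{_\alpha^\gamma}\,d\tensor{(U^{-1})}{_\gamma^\beta}.
\end{equation*}
Evaluating at $p$, where $d(U^{-1})(p)=-dU(p)$ because $U(p)=I$, this reduces to
\begin{equation*}
	\tensor{\tilde{\omega}}{_\alpha^\beta}(p)=\tensor{\omega}{_\alpha^\beta}(p)-d\tensor{U}{_\alpha^\beta}(p).
\end{equation*}
Hence the task is reduced to producing a smooth $U$ with $U(p)=I$ and $dU(p)=\omega(p)$.

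For this I would pick any smooth local coordinates $\set{x^i}$ centered at $p$, expand $\tensor{\omega}{_\alpha^\beta}(p)=\tensor{a}{_i_\alpha^\beta}\,dx^i$ with constants $\tensor{a}{_i_\alpha^\beta}\in\bbC$, and simply set $\tensor{U}{_\alpha^\beta}(x):=\tensor{\delta}{_\alpha^\beta}+\tensor{a}{_i_\alpha^\beta}x^i$. This is smooth, invertible in a neighborhood of $p$ since $U(p)=I$, and satisfies $dU(p)=\omega(p)$ by construction; the new frame $\set{\tilde{Z}_\alpha}$ then has vanishing connection forms at $p$.

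I do not anticipate any real obstacle here: once the transformation rule is in hand, the remainder is elementary linear algebra, and this is why the author can assert that Lee's proof in the integrable setting goes through without change. One could, if desired, further restrict $U$ to take values in $U(p,q)$ so as to simultaneously preserve a Levi-form normalization, using that the connection matrix is already valued in $\mathfrak{u}(p,q)$ once $h$ is normalized (a consequence of $\nabla h=0$); but this refinement is not required for the statement as formulated.
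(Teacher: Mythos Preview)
Your proof is correct and is precisely the standard gauge-transformation argument that underlies Lee's Lemma 2.1; the paper does not give its own proof but simply remarks that Lee's argument carries over verbatim to the partially integrable case, which is exactly what you have verified.
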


With such a local frame, it is easy to relate exterior derivatives with covariant derivatives.
For example, one immediately sees that the exterior derivative of a $(1,0)$-form
$\sigma=\tensor{\sigma}{_\alpha}\tensor{\theta}{^\alpha}$ is given by
\begin{equation*}
	d\sigma
	=\tensor{\sigma}{_\alpha_,_\beta}\tensor{\theta}{^\beta}\wedge\tensor{\theta}{^\alpha}
	+\tensor{\sigma}{_\alpha_,_{\conj{\beta}}}\tensor{\theta}{^{\conj{\beta}}}\wedge\tensor{\theta}{^\alpha}
	+\tensor{\sigma}{_\alpha_,_0}\theta\wedge\tensor{\theta}{^\alpha}
	-\tensor{A}{_{\conj{\beta}}^\alpha}\tensor{\sigma}{_\alpha}\tensor{\theta}{^{\conj{\beta}}}\wedge\theta
	-\tfrac{1}{2}\tensor{N}{_{\conj{\beta}}_{\conj{\gamma}}^\alpha}
	\tensor{\sigma}{_\alpha}\tensor{\theta}{^{\conj{\beta}}}\wedge\tensor{\theta}{^{\conj{\gamma}}}.
\end{equation*}
Here covariant derivatives of tensors are denoted by indices after commas.
This notation will be used in the sequel.
In the case of covariant derivatives of a scalar-valued function we omit the comma;
e.g., $\tensor{\nabla}{_\alpha}u=\tensor{u}{_\alpha}$
and $\tensor{\nabla}{_{\conj{\beta}}}\tensor{\nabla}{_\alpha}u=\tensor{u}{_\alpha_{\conj{\beta}}}$.

\begin{prop}
	We have
	\begin{gather}
		\label{eq:SymmetryOfPseudohermitianTorsionTensor}
		\tensor{A}{_\alpha_\beta}=\tensor{A}{_\beta_\alpha},\\
		\label{eq:SymmetryOfNijenhuisTensor}
		\tensor{N}{_\alpha_\beta_\gamma}+\tensor{N}{_\beta_\alpha_\gamma}=0,\qquad
		\tensor{N}{_\alpha_\beta_\gamma}+\tensor{N}{_\beta_\gamma_\alpha}+\tensor{N}{_\gamma_\alpha_\beta}=0.
	\end{gather}
\end{prop}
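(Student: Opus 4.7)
The plan is to split the three assertions into two tasks. The antisymmetry (2) is purely formal, while the symmetry (1) and the cyclic identity (3) emerge together from applying $d^{2}=0$ to the first structure equation \eqref{eq:FirstStructureEquation1}.

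First I would verify (2) directly from the definition of $N$: since the Lie bracket is antisymmetric in its arguments, $N(X,Y) = -N(Y,X)$, so the complex-bilinear extension satisfies $\tensor{N}{_\alpha_\beta^\conjgamma} = -\tensor{N}{_\beta_\alpha^\conjgamma}$, and this antisymmetry is preserved upon lowering the conjugate upper index with the Levi form.

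For (1) and (3), I would fix $p\in M$, select a local frame $\set{Z_\alpha}$ via Lemma \ref{lem:SpecialFrame} so that $\tensor{\omega}{_\alpha^\beta}(p)=0$, and use the Tanaka--Webster parallelism of $h$ to deduce $dh_{\alpha\conjbeta}(p)=0$. Then $d^{2}\theta=0$, with \eqref{eq:FirstStructureEquation2} substituted for $d\theta^\alpha$ and its conjugate, becomes at $p$ a 3-form identity in $A$ and $N$ alone, since the connection-form contributions drop out by construction. Decomposing this identity by bidegree into the four linearly independent pieces of type $(2,0)\wedge\theta$, $(0,2)\wedge\theta$, $(3,0)$, and $(0,3)$, I expect the $\wedge\theta$ parts to force the antisymmetrization of $\tensor{A}{_\alpha_\beta}$ in $(\alpha,\beta)$ to vanish, establishing (1), and the pure $(3,0)$ part to force the totally antisymmetric part of $\tensor{N}{_\alpha_\beta_\gamma}$ to vanish, which together with (2) is equivalent to the cyclic identity in (3); the conjugate components just reproduce the complex conjugates of these identities.

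The step to watch is the separation by bidegree: one must ensure that $A$-contributions and $N$-contributions cannot cancel one another inside a fixed bidegree. This is immediate because every $A$-term carries an explicit wedge factor of $\theta$ whereas no $N$-term does, so the $\wedge\theta$ pieces and the pure $(p,q)$ pieces with $p+q=3$ are populated by $A$ and $N$ separately. Beyond this, the argument is routine index-and-type bookkeeping once the special frame has arranged $dh_{\alpha\conjbeta}(p)=0$.
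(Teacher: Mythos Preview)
Your proposal is correct and follows essentially the same route as the paper: differentiate \eqref{eq:FirstStructureEquation1}, substitute \eqref{eq:FirstStructureEquation2}, and separate by type to obtain $A_{\alpha\beta}=A_{\beta\alpha}$ and $N_{[\alpha\beta\gamma]}=0$, then combine the latter with the definitional antisymmetry $N_{\alpha\beta\gamma}=-N_{\beta\alpha\gamma}$ to get the cyclic identity. Your use of the special frame from Lemma~\ref{lem:SpecialFrame} is a harmless convenience rather than a genuine extra ingredient---even without it, the $dh_{\alpha\conjbeta}$ term and the connection-form contributions from $d\theta^\alpha$, $d\theta^{\conjbeta}$ cancel exactly by $\nabla h=0$.
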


\begin{proof}
	By differentiating \eqref{eq:FirstStructureEquation1} and considering types we obtain
	\eqref{eq:SymmetryOfPseudohermitianTorsionTensor} and $\tensor{N}{_[_\alpha_\beta_\gamma_]}=0$
	(where the square brackets denotes skew-symmetrization).
	The first identity of \eqref{eq:SymmetryOfNijenhuisTensor} is obvious from the definition of the Nijenhuis
	tensor, and it thereby proves the second one.
\end{proof}

\begin{lem}
	The second covariant derivatives of a scalar-valued function $u$ satisfy the following:
	\begin{equation}
		\label{eq:CommutatorOfCovariantDerivativeOnScalar}
		\tensor{u}{_\alpha_{\conj{\beta}}}-\tensor{u}{_{\conj{\beta}}_\alpha}
		=i\tensor{h}{_\alpha_{\conj{\beta}}}\tensor{u}{_0},\qquad
		\tensor{u}{_\alpha_\beta}-\tensor{u}{_\beta_\alpha}
		=-\tensor{N}{_\alpha_\beta^{\conj{\gamma}}}\tensor{u}{_{\conj{\gamma}}},\qquad
		\tensor{u}{_0_\alpha}-\tensor{u}{_\alpha_0}=\tensor{A}{_\alpha^{\conj{\beta}}}\tensor{u}{_{\conj{\beta}}}.
	\end{equation}
\end{lem}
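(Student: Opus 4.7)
The three identities have a common origin: they are all instances of the general commutation formula for the second covariant derivative of a scalar function on a manifold equipped with a linear connection with torsion. My plan is to prove this universal formula once and then read off the three cases from the already-computed torsion components of the generalized Tanaka--Webster connection.

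\emph{Step 1.} Establish, for arbitrary vector fields $X$, $Y$ on $M$ and any scalar $u$, the identity
\begin{equation*}
	\tensor{u}{_X_Y}-\tensor{u}{_Y_X}=\Theta(X,Y)\,u.
\end{equation*}
With the convention fixed just before the statement of the lemma (the later comma-subscript is the outer derivative), $\tensor{u}{_X_Y}=(\nabla_Y\nabla u)(X)=Y(Xu)-(\nabla_YX)u$, and similarly for $\tensor{u}{_Y_X}$. Subtracting, the first-order terms combine to $-[X,Y]u$ while the $\nabla$-terms contribute $(\nabla_XY-\nabla_YX)u=([X,Y]+\Theta(X,Y))u$ by the definition of the torsion tensor; the $[X,Y]u$ pieces cancel, leaving $\Theta(X,Y)u$.

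\emph{Step 2.} Plug in frame vector pairs and use the explicit list of torsion components of the generalized Tanaka--Webster connection collected after Proposition \ref{prop:GeneralizedTanakaWebsterConnection}. For $(X,Y)=(Z_\alpha,Z_{\conj\beta})$ the only nonvanishing component is $\tensor{\Theta}{_\alpha_{\conj\beta}^0}=i\tensor{h}{_\alpha_{\conj\beta}}$, so $\Theta(Z_\alpha,Z_{\conj\beta})u=i\tensor{h}{_\alpha_{\conj\beta}}Tu=i\tensor{h}{_\alpha_{\conj\beta}}\tensor{u}{_0}$, which is the first identity. For $(X,Y)=(Z_\alpha,Z_\beta)$ one has $\tensor{\Theta}{_\alpha_\beta^0}=\tensor{\Theta}{_\alpha_\beta^\gamma}=0$ and $\tensor{\Theta}{_\alpha_\beta^{\conj\gamma}}=-\tensor{N}{_\alpha_\beta^{\conj\gamma}}$, giving $-\tensor{N}{_\alpha_\beta^{\conj\gamma}}\tensor{u}{_{\conj\gamma}}$. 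For $(X,Y)=(T,Z_\alpha)$ the only surviving piece is $\tensor{\Theta}{_0_\alpha^{\conj\beta}}=\tensor{A}{_\alpha^{\conj\beta}}$, producing $\tensor{A}{_\alpha^{\conj\beta}}\tensor{u}{_{\conj\beta}}$.

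There is no serious obstacle; the argument is essentially a bookkeeping exercise once the torsion of the generalized Tanaka--Webster connection has been classified. The only care needed is to track signs and index-placement conventions consistently --- in particular to remember that the outer covariant derivative corresponds to the later comma-subscript, which controls the sign in the universal formula of Step 1.
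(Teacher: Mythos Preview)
Your proof is correct. The paper's own proof is simply the one-line remark that ``the same argument as the one in \cite{LeePseudoEinstein} applies to our case,'' and the argument you have written out---derive the universal commutation law $u_{XY}-u_{YX}=\Theta(X,Y)u$ for a connection with torsion, then read off the three cases from the torsion table after Proposition~\ref{prop:GeneralizedTanakaWebsterConnection}---is exactly that argument made explicit.
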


\begin{proof}
	The same argument as the one in \cite{LeePseudoEinstein} applies to our case.
\end{proof}

Next we shall study the curvature
$R^{\text{TW}}(X, Y)=\nabla_{X}\nabla_{Y}-\nabla_{Y}\nabla_{X}-\nabla_{[X, Y]}$.
If we set
$\tensor{\Pi}{_\alpha^\beta}=d\tensor{\omega}{_\alpha^\beta}-\tensor{\omega}{_\alpha^\gamma}\wedge\tensor{\omega}{_\gamma^\beta}$,
it holds that $R^{\text{TW}}(X, Y)Z_{\alpha}=\tensor{\Pi}{_\alpha^\beta}(X,Y)Z_{\beta}$.
We put
\begin{equation}
	\label{eq:CurvatureForm}
	\tensor{\Pi}{_\alpha_\conjbeta}
	=\tensor{R}{_\alpha_\conjbeta_\sigma_\conjtau}\theta^\sigma\wedge\theta^\conjtau
	+\tensor{W}{_\alpha_\conjbeta_\gamma}\theta^\gamma\wedge\theta
	+\tensor{W}{_\alpha_\conjbeta_\conjgamma}\theta^\conjgamma\wedge\theta
	+\tensor{V}{_\alpha_\conjbeta_\sigma_\tau}\theta^\sigma\wedge\theta^\tau
	+\tensor{V}{_\alpha_\conjbeta_\conjsigma_\conjtau}\theta^\conjsigma\wedge\theta^\conjtau,
\end{equation}
where
$\tensor{V}{_\alpha^\beta_(_\sigma_\tau_)}=\tensor{V}{_\alpha^\beta_(_{\conj{\sigma}}_{\conj{\tau}}_)}=0$.
Since $\nabla h=0$ we have
$\tensor{\Pi}{_\alpha_{\conj{\beta}}}+\tensor{\Pi}{_{\conj{\beta}}_\alpha}=0$,
and hence
\begin{equation}
	\label{eq:SymmetryOfPseudohermitianCurvature}
	\tensor{R}{_\alpha_\conjbeta_\sigma_\conjtau}=\tensor{R}{_\conjbeta_\alpha_\conjtau_\sigma},\qquad
	\tensor{W}{_\alpha_\conjbeta_\conjgamma}=-\tensor{W}{_\conjbeta_\alpha_\conjgamma},\qquad
	\tensor{V}{_\alpha_\conjbeta_\sigma_\tau}=-\tensor{V}{_\conjbeta_\alpha_\sigma_\tau}.
\end{equation}
We substitute \eqref{eq:CurvatureForm} into the exterior derivative of \eqref{eq:FirstStructureEquation2}
and compare the coefficients to obtain
\begin{subequations}
	\begin{gather}
		\label{eq:AsymmetryOfPseudohermitianCurvature}
		\tensor{R}{_\alpha_\conjbeta_\sigma_\conjtau}-\tensor{R}{_\sigma_\conjbeta_\alpha_\conjtau}
		=-\tensor{N}{_\alpha_\sigma^\conjgamma}\tensor{N}{_\conjtau_\conjgamma_\conjbeta},\\
		\tensor{W}{_\alpha_\conjbeta_\gamma}
		=\tensor{A}{_\alpha_\gamma_,_\conjbeta}
		-\tensor{N}{_\gamma_\sigma_\alpha}\tensor{A}{_\conjbeta^\sigma},\qquad
		\tensor{V}{_\alpha_\conjbeta_\sigma_\tau}
		=\tfrac{i}{2}(\tensor{h}{_\sigma_\conjbeta}\tensor{A}{_\alpha_\tau}
		-\tensor{h}{_\tau_\conjbeta}\tensor{A}{_\alpha_\sigma})
		+\tfrac{1}{2}\tensor{N}{_\sigma_\tau_\alpha_,_\conjbeta}.
\end{gather}
\end{subequations}

The component $\tensor{R}{_\alpha_{\conj{\beta}}_\rho_{\conj{\sigma}}}$ is called
the \emph{Tanaka--Webster curvature tensor}.
We put $\tensor{R}{_\alpha_{\conj{\beta}}}:=\tensor{R}{_\gamma^\gamma_\alpha_{\conj{\beta}}}$ and
$R:=\tensor{R}{_\alpha^\alpha}$.
It is seen from the first identity of \eqref{eq:SymmetryOfPseudohermitianCurvature}
that $\tensor{R}{_\alpha_{\conj{\beta}}}=\tensor{R}{_{\conj{\beta}}_\alpha}$,
and from \eqref{eq:AsymmetryOfPseudohermitianCurvature} we have
\begin{equation}
	\tensor{R}{_\alpha^\gamma_\gamma_{\conj{\beta}}}
	=\tensor{R}{_\alpha_{\conj{\beta}}}
	-\tensor{N}{_\alpha_\sigma_\tau}\tensor{N}{_{\conj{\beta}}^\tau^\sigma}.
	\label{eq:AnomalyOfTwoTWRicciTensors}
\end{equation}

As we have discussed above,
a choice of a pseudohermitian structure $\theta$ defines the Tanaka--Webster connection
and supplies various pseudohermitian invariants.
If a certain pseudohermitian invariant is also conserved by any change of pseudohermitian structure,
it is called a \emph{CR invariant}\footnote{Rigorously speaking we should say
``partially-integrable-almost-CR invariant,'' but we prefer this shorter expression.}.
To investigate such invariants, we need the transformation law of the connection and relevant quantities.

\begin{prop}
	\label{prop:TransformationOfPseudohermitianInvariants}
	Let $\theta$ and $\Hat{\theta}=e^{2u}\theta$, $u\in C^{\infty}(M)$, be two pseudohermitian structures
	on a nondegenerate partially integrable almost CR manifold $(M,T^{1,0})$.
	Then, the Tanaka--Webster connection forms, the torsions and the Ricci tensors are related as follows:
	\begin{gather}
		\label{eq:TransformationOfConnectionForms}
		\tensor{\Hat{\omega}}{_\alpha^\beta}
		=\tensor{\omega}{_\alpha^\beta}
		+2(\tensor{u}{_\alpha}\tensor{\theta}{^\beta}-\tensor{u}{^\beta}\tensor{\theta}{_\alpha})
		+2\tensor{\delta}{_\alpha^\beta}\tensor{u}{_\gamma}\tensor{\theta}{^\gamma}
		+2i(\tensor{u}{^\beta_\alpha}+2\tensor{u}{_\alpha}\tensor{u}{^\beta}
		+2\tensor{\delta}{_\alpha^\beta}\tensor{u}{_\gamma}\tensor{u}{^\gamma})\theta,\\
		\label{eq:TransformationOfPseudohermitianTorsion}
		\tensor{\Hat{A}}{_\alpha_\beta}
		=\tensor{A}{_\alpha_\beta}+i(\tensor{u}{_\alpha_\beta}+\tensor{u}{_\beta_\alpha})
		-4i\tensor{u}{_\alpha}\tensor{u}{_\beta}
		+i(\tensor{N}{_\gamma_\alpha_\beta}+\tensor{N}{_\gamma_\beta_\alpha})\tensor{u}{^\gamma},\\
		\label{eq:TransformationOfPseudohermitianRicciTensor}
		\tensor{\Hat{R}}{_\alpha_\conjbeta}
		=\tensor{R}{_\alpha_\conjbeta}
		-(n+2)(\tensor{u}{_\alpha_\conjbeta}+\tensor{u}{_\conjbeta_\alpha})
		-\left(\tensor{u}{_\gamma^\gamma}+\tensor{u}{^\gamma_\gamma}
		+4(n+1)\tensor{u}{_\gamma}\tensor{u}{^\gamma}\right)\tensor{h}{_\alpha_\conjbeta}.
	\end{gather}
\end{prop}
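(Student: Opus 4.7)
The plan is to invoke the uniqueness of the Tanaka--Webster connection from Proposition \ref{prop:GeneralizedTanakaWebsterConnection}: it suffices to take the right-hand side of \eqref{eq:TransformationOfConnectionForms} as an ansatz for $\tensor{\Hat{\omega}}{_\alpha^\beta}$ and verify that these 1-forms satisfy the two characterizing properties with respect to $\Hat{\theta}$. As preparation I first need the new Reeb field and admissible coframe. Substituting $d\Hat{\theta} = e^{2u}(d\theta + 2\,du\wedge\theta)$ into the Reeb conditions $\Hat{\theta}(\Hat{T}) = 1$ and $\Hat{T}\intprod d\Hat{\theta} = 0$ and using \eqref{eq:FirstStructureEquation1} leads to
\begin{equation*}
\Hat{T} = e^{-2u}(T + 2iu^\alpha Z_\alpha - 2iu^{\conjalpha}Z_{\conjalpha}),
\end{equation*}
where one invokes $u_\alpha u^\alpha = u_{\conjalpha} u^{\conjalpha}$ to make the $\theta$-component cancel; then solving $\Hat{\theta}^\alpha(\Hat{T}) = 0$ with $\Hat{\theta}^\alpha(Z_\beta) = \tensor{\delta}{_\beta^\alpha}$ gives $\Hat{\theta}^\alpha = \theta^\alpha - 2iu^\alpha\theta$.

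Next, I would define $\tensor{\Hat{\omega}}{_\alpha^\beta}$ as the RHS of \eqref{eq:TransformationOfConnectionForms} and verify (i) the hermitian condition $d\Hat{h}_{\alpha\conjbeta} = \Hat{h}_{\gamma\conjbeta}\tensor{\Hat{\omega}}{_\alpha^\gamma} + \Hat{h}_{\alpha\conjgamma}\tensor{\Hat{\omega}}{_{\conjbeta}^{\conjgamma}}$ for $\Hat{h} = e^{2u}h$, and (ii) a structure equation of the form
\begin{equation*}
d\Hat{\theta}^\gamma = \Hat{\theta}^\alpha\wedge\tensor{\Hat{\omega}}{_\alpha^\gamma} - \tensor{\Hat{A}}{_\conjalpha^\gamma}\Hat{\theta}^{\conjalpha}\wedge\Hat{\theta} - \tfrac{1}{2}\tensor{N}{_\conjalpha_\conjbeta^\gamma}\Hat{\theta}^{\conjalpha}\wedge\Hat{\theta}^{\conjbeta},
\end{equation*}
where the Nijenhuis tensor is unchanged because $N$ depends only on $T^{1,0}$. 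Item (i) is direct substitution. For (ii), I would expand $d\Hat{\theta}^\gamma = d\theta^\gamma - 2i\,du^\gamma\wedge\theta - 2iu^\gamma\,d\theta$ using \eqref{eq:FirstStructureEquation1}, \eqref{eq:FirstStructureEquation2}, and the identity $du^\gamma = \tensor{u}{^\gamma_,_\beta}\theta^\beta + \tensor{u}{^\gamma_,_\conjbeta}\theta^{\conjbeta} + \tensor{u}{^\gamma_,_0}\theta - u^\beta\tensor{\omega}{_\beta^\gamma}$, and then match coefficients. The $\theta^\alpha\wedge\theta^\beta$, $\theta^\alpha\wedge\theta^{\conjbeta}$, and $\theta^\beta\wedge\theta$ parts pin down $\tensor{\Hat{\omega}}{_\alpha^\beta}$, yielding \eqref{eq:TransformationOfConnectionForms}, while the $\theta^{\conjalpha}\wedge\theta$ part determines $\tensor{\Hat{A}}{_\alpha_\beta}$. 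The Nijenhuis-dependent term in \eqref{eq:TransformationOfPseudohermitianTorsion} arises when enforcing the symmetry \eqref{eq:SymmetryOfPseudohermitianTorsionTensor} via the commutator $u_{\alpha\beta} - u_{\beta\alpha} = -\tensor{N}{_\alpha_\beta^\conjgamma}u_{\conjgamma}$ of \eqref{eq:CommutatorOfCovariantDerivativeOnScalar}.

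For the Ricci transformation \eqref{eq:TransformationOfPseudohermitianRicciTensor}, I would write $\tensor{\Hat{\omega}}{_\alpha^\beta} = \tensor{\omega}{_\alpha^\beta} + \tensor{\Delta}{_\alpha^\beta}$ with $\tensor{\Delta}{_\alpha^\beta}$ read off from \eqref{eq:TransformationOfConnectionForms}, and compute
\begin{equation*}
\tensor{\Hat{\Pi}}{_\alpha^\beta} - \tensor{\Pi}{_\alpha^\beta} = d\tensor{\Delta}{_\alpha^\beta} - \tensor{\omega}{_\alpha^\gamma}\wedge\tensor{\Delta}{_\gamma^\beta} - \tensor{\Delta}{_\alpha^\gamma}\wedge\tensor{\omega}{_\gamma^\beta} - \tensor{\Delta}{_\alpha^\gamma}\wedge\tensor{\Delta}{_\gamma^\beta}.
\end{equation*}
Contracting over $\alpha = \beta$ and extracting the coefficient of $\theta^\rho\wedge\theta^{\conjsigma}$ (noting that $\Hat{\theta}^\rho\wedge\Hat{\theta}^{\conjsigma} = \theta^\rho\wedge\theta^{\conjsigma}$ modulo terms containing $\theta$, so this is the correct $(1,1)$-component of the curvature form) produces the stated formula after collection. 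The main technical obstacle will be the careful bookkeeping of the second-order-in-$u$ terms arising from $d\tensor{\Delta}{_\alpha^\beta}$ and from the quadratic piece $\tensor{\Delta}{_\alpha^\gamma}\wedge\tensor{\Delta}{_\gamma^\beta}$, and organizing them into the symmetric combination $u_\gamma{}^\gamma + u^\gamma{}_\gamma$ together with the quadratic $4(n+1)u_\gamma u^\gamma$. A conceptual point is that the final formula has the same shape as in the integrable case treated in \cite{LeePseudoEinstein}: the anomaly \eqref{eq:AnomalyOfTwoTWRicciTensors} between the two natural Ricci traces does not enter here, because the trace $\tensor{R}{_\gamma^\gamma_\alpha_\conjbeta}$ is used consistently on both sides.
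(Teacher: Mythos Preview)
Your approach is essentially the same as the paper's: compute the new Reeb field and admissible coframe, then verify the hermitian condition and the first structure equation \eqref{eq:FirstStructureEquation2} for the ansatz $\tensor{\Hat{\omega}}{_\alpha^\beta}$, which by uniqueness in Proposition~\ref{prop:GeneralizedTanakaWebsterConnection} establishes \eqref{eq:TransformationOfConnectionForms} and \eqref{eq:TransformationOfPseudohermitianTorsion}; then differentiate to get \eqref{eq:TransformationOfPseudohermitianRicciTensor}.

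Two remarks. First, there is a sign slip in your Reeb field and coframe: substituting into $\Hat{T}\intprod d\Hat{\theta}=0$ with $d\theta=i\tensor{h}{_\alpha_\conjbeta}\theta^\alpha\wedge\theta^{\conjbeta}$ actually gives
\[
\Hat{T}=e^{-2u}(T-2i\tensor{u}{^\alpha}Z_\alpha+2i\tensor{u}{^\conjalpha}Z_{\conjalpha}),
\qquad
\Hat{\theta}^\alpha=\theta^\alpha+2i\tensor{u}{^\alpha}\theta,
\]
the opposite sign from what you wrote. Carried through, your signs would make the structure-equation check fail, so fix this before proceeding.

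Second, for \eqref{eq:TransformationOfPseudohermitianRicciTensor} the paper takes a small shortcut you might adopt: since the quadratic term $\tensor{\omega}{_\gamma^\sigma}\wedge\tensor{\omega}{_\sigma^\gamma}$ vanishes upon tracing, one has $\tensor{\Hat{\Pi}}{_\gamma^\gamma}=d\tensor{\Hat{\omega}}{_\gamma^\gamma}$ directly, so it suffices to differentiate the trace of \eqref{eq:TransformationOfConnectionForms} and read off the $\theta^\alpha\wedge\theta^{\conjbeta}$ coefficient, rather than computing the full $\tensor{\Hat{\Pi}}{_\alpha^\beta}-\tensor{\Pi}{_\alpha^\beta}$ as you propose. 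This avoids the bookkeeping of $\tensor{\Delta}{_\alpha^\gamma}\wedge\tensor{\Delta}{_\gamma^\beta}$ entirely.
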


\begin{proof}
	The new Reeb vector field is
	$\Hat{T}=e^{-2u}(T-2i\tensor{u}{^\alpha}\tensor{Z}{_\alpha}+2i\tensor{u}{^{\conj{\alpha}}}\tensor{Z}{_{\conj{\alpha}}})$
	and the admissible coframe dual to $\set{Z_{\alpha}}$ is
	$\set{\Hat{\theta}{^\alpha}=\tensor{\theta}{^\alpha}+2i\tensor{u}{^\alpha}\theta}$.
	To establish \eqref{eq:TransformationOfConnectionForms} and \eqref{eq:TransformationOfPseudohermitianTorsion},
	it is enough to check that
	\begin{equation*}
		d\tensor{\Hat{h}}{_\alpha_\conjbeta}
		=\tensor{\Hat{h}}{_\gamma_\conjbeta}\tensor{\Hat{\omega}}{_\alpha^\gamma}
		+\tensor{\Hat{h}}{_\alpha_\conjgamma}\tensor{\Hat{\omega}}{_\conjbeta^\conjgamma}
	\end{equation*}
	and
	\begin{equation*}
		d\Hat{\theta}^\gamma
		=\Hat{\theta}^\alpha\wedge\tensor{\Hat{\omega}}{_\alpha^\gamma}
		-\tensor{\Hat{h}}{^\gamma^\conjbeta}\tensor{\Hat{A}}{_\conjalpha_\conjbeta}\Hat{\theta}^\conjalpha\wedge\Hat{\theta}
		-\tfrac{1}{2}\tensor{N}{_\conjalpha_\conjbeta^\gamma}\Hat{\theta}^\conjalpha\wedge\Hat{\theta}^\conjbeta.
	\end{equation*}
	They are shown straightforward using \eqref{eq:CommutatorOfCovariantDerivativeOnScalar}.
	
	We compute $\tensor{\Hat{\Pi}}{_\gamma^\gamma}=d\tensor{\Hat{\omega}}{_\gamma^\gamma}$
	modulo $\tensor{\Hat{\theta}}{^\alpha}\wedge\tensor{\Hat{\theta}}{^\beta}$,
	$\tensor{\Hat{\theta}}{^{\conj{\alpha}}}\wedge\tensor{\Hat{\theta}}{^{\conj{\beta}}}$, $\Hat{\theta}$,
	or equivalently, modulo $\tensor{\theta}{^\alpha}\wedge\tensor{\theta}{^\beta}$,
	$\tensor{\theta}{^{\conj{\alpha}}}\wedge\tensor{\theta}{^{\conj{\beta}}}$, $\theta$.
	By the first identity of \eqref{eq:CommutatorOfCovariantDerivativeOnScalar} we obtain that,
	modulo $\tensor{\Hat{\theta}}{^\alpha}\wedge\tensor{\Hat{\theta}}{^\beta}$,
	$\tensor{\Hat{\theta}}{^{\conj{\alpha}}}\wedge\tensor{\Hat{\theta}}{^{\conj{\beta}}}$, $\Hat{\theta}$,
	\begin{equation*}
		\begin{split}
			\tensor{\Hat{\Pi}}{_\gamma^\gamma}
			&\equiv\tensor{\Pi}{_\gamma^\gamma}
			-\left[(n+2)(\tensor{u}{_\alpha_{\conj{\beta}}}+\tensor{u}{_{\conj{\beta}}_\alpha})
			+\left(\tensor{u}{_\gamma^\gamma}+\tensor{u}{_{\conj{\gamma}}^{\conj{\gamma}}}
			+4(n+1)\tensor{u}{_\gamma}\tensor{u}{^\gamma}\right)\tensor{h}{_\alpha_{\conj{\beta}}}\right]
			\tensor{\theta}{^\alpha}\wedge\tensor{\theta}{^{\conj{\beta}}}\\
			&\equiv\left[\tensor{R}{_\alpha_{\conj{\beta}}}
			-(n+2)(\tensor{u}{_\alpha_{\conj{\beta}}}+\tensor{u}{_{\conj{\beta}}_\alpha})
			-\left(\tensor{u}{_\gamma^\gamma}+\tensor{u}{_{\conj{\gamma}}^{\conj{\gamma}}}
			+4(n+1)\tensor{u}{_\gamma}\tensor{u}{^\gamma}\right)\tensor{h}{_\alpha_{\conj{\beta}}}\right]
			\tensor{\Hat{\theta}}{^\alpha}\wedge\tensor{\Hat{\theta}}{^{\conj{\beta}}}.
		\end{split}
	\end{equation*}
	This proves \eqref{eq:TransformationOfPseudohermitianRicciTensor}.
\end{proof}

Finally we sketch the concept of density bundles following \cite{GoverGraham}.
Let us assume that we have fixed a complex line bundle $E(1,0)$ over $M$
together with a duality between $E(1,0)^{\otimes(n+2)}$ and the CR canonical bundle $K$.
Such a choice may not exist globally, but locally it does;
when we use density bundles we restrict our scope to the local theory.
Then $E(w,0)$ is the $w^\mathrm{th}$ tensor power of $E(1,0)$, and we set
\begin{equation*}
	E(w,w')=E(w,0)\otimes E(0,w'),\qquad\text{$w$, $w'\in\bbZ$},
\end{equation*}
where $E(0,w'):=\conj{E(w',0)}$.
We call $E(w,w')$ the \emph{density bundle of biweight $(w,w')$}.
Since there is a specified isomorphism $E(-n-2,0)\cong K$, we can define a connection $\nabla$ on $E(w,w')$
so that it is compatible with the Tanaka--Webster connection on $K$.
The space of local sections of $E(w,w')$ is denoted by $\mathcal{E}(w,w')$.

Farris \cite{Farris} observed that,
if $\zeta$ is a locally defined nonvanishing section of $K$,
there is a unique pseudohermitian structure $\theta$ satisfying \eqref{eq:FarrisSectionOfCanonicalBundle}.
If we replace $\zeta$ with $\lambda\zeta$, $\lambda\in C^\infty(M,\bbC^{\times})$,
then $\theta$ is replaced by $\lvert\lambda\rvert^{2/(n+2)}\theta$.
We set
\begin{equation*}
	\abs{\zeta}^{2/(n+2)}=\zeta^{1/(n+2)}\otimes\conj{\zeta}^{1/(n+2)}\in\mathcal{E}(-1,-1),
\end{equation*}
which is independent of the choice of the $(n+2)^\mathrm{nd}$ root of $\zeta$ and
is in one-to-one correspondence with $\theta$,
and define $\lvert\zeta\rvert^{-2/(n+2)}\in\mathcal{E}(1,1)$ as its dual.
Then we obtain a CR-invariant section
$\bm{\theta}:=\theta\otimes\lvert\zeta\rvert^{-2/(n+2)}$ of $T^{*}M\otimes E(1,1)$.

The Levi form $h$ is a section of the bundle $(T^{1,0})^{*}\otimes(T^{0,1})^{*}$,
which is also denoted by $\tensor{E}{_\alpha_\conjbeta}$ using abstract indices $\alpha$ and $\conjbeta$.
Since $\tensor{h}{_\alpha_\conjbeta}$ and $\theta$ have the same scaling factor,
$\tensor{\bm{h}}{_\alpha_\conjbeta}:=\tensor{h}{_\alpha_\conjbeta}\otimes\lvert\zeta\rvert^{-2/(n+2)}\in\tensor{\caE}{_\alpha_\conjbeta}(1,1)$
is a CR-invariant section of
$\tensor{E}{_\alpha_\conjbeta}(1,1):=\tensor{E}{_\alpha_\conjbeta}\otimes E(1,1)$.
Its dual is $\tensor{\bm{h}}{^\alpha^\conjbeta}\in\tensor{\caE}{^\alpha^\conjbeta}(-1,-1)$.
Indices of density-weighted tensors are lowered and raised by $\tensor{\bm{h}}{_\alpha_\conjbeta}$
and $\tensor{\bm{h}}{^\alpha^\conjbeta}$.

One can show that $\nabla{\bm{\theta}}$ and $\nabla{\bm{h}}$ are both zero.
To see this it is enough to show that $\nabla\lvert\zeta\rvert^{2}=0$,
which follows from $\nabla h=0$.
For details see the proof of Proposition 2.1 in \cite{GoverGraham}.

The density-weighted versions of the Nijenhuis tensor, the Tanaka--Webster torsion tensor
and the curvature tensor are defined by
\begin{gather*}
	\tensor{\bm{N}}{_\alpha_\beta^{\conj{\gamma}}}:=\tensor{N}{_\alpha_\beta^{\conj{\gamma}}}
	\in\tensor{\caE}{_\alpha_\beta^\conjgamma},\qquad
	\tensor{\bm{A}}{_\alpha_\beta}:=\tensor{A}{_\alpha_\beta}
	\in\tensor{\caE}{_\alpha_\beta},\\
	\tensor{\bm{R}}{_\alpha_\conjbeta_\sigma_\conjtau}:=\tensor{R}{_\alpha_\conjbeta_\sigma_\conjtau}
	\otimes\abs{\zeta}^{-2/(n+2)}\in\tensor{\caE}{_\alpha_\conjbeta_\sigma_\conjtau}(1,1).
\end{gather*}

When dealing with density-weighted tensors,
we let $\tensor{\nabla}{_\alpha}$, $\tensor{\nabla}{_{\conj{\alpha}}}$ and $\tensor{\nabla}{_{\bm{0}}}$ denote
the components of $\nabla$ relative to $\theta^{\alpha}$, $\theta^{\conj{\alpha}}$ and $\bm{\theta}$.
Since the transformation law \eqref{eq:TransformationOfConnectionForms} of the Tanaka--Webster connection forms
does not contain the Nijenhuis tensor,
equation (2.7) and Proposition 2.3 in \cite{GoverGraham} also hold in the partially integrable case.
Using them we can derive the transformation law of any covariant derivative of any density-weighted tensor.


\section{Some low-order terms of Einstein metrics}
\label{sec:RicciTensorAndSomeLowOrderTerms}

Let $(M,T^{1,0})$ be a nondegenerate partially integrable almost CR manifold with a fixed pseudohermitian structure $\theta$
and $X\subset M\times[0,\infty)_\rho$ an open neighborhood of $M=M\times\set{0}$. We take a local frame
\begin{equation}
	\set{\euler,\rho^2T,\rho Z_\alpha,\rho Z_\conjalpha}
	\label{eq:LocalFrameOfThetaTangentBundle}
\end{equation}
of $\ThetaTangentX$, where $T$ is the Reeb vector field associated with $\theta$ and
$\set{Z_\alpha}$ is a local frame of $T^{1,0}$, both extended constantly in the $\rho$-direction.
The corresponding indices are $\infty$, $0$, $1$, $\dots$, $n$, $\conj{1}$, $\dots$, $\conj{n}$.
The local frame \eqref{eq:LocalFrameOfThetaTangentBundle} is denoted by $\set{W_I}$ if needed.

\begin{IndexRule}
	The following rule is observed in the sequel, except in the proof of Proposition
	\ref{prop:ObstructionTensorForIntegrableStructure}:
	\begin{itemize}
		\item $\alpha$, $\beta$, $\gamma$, $\sigma$, $\tau$ run $\set{1,\dots,n}$
			and $\conjalpha$, $\conjbeta$, $\conjgamma$, $\conjsigma$, $\conjtau$
			run $\set{\conj{1},\dots,\conj{n}}$;
		\item $i$, $j$, $k$ run $\set{0,1,\dots,n,\conj{1},\dots,\conj{n}}$;
		\item $I$, $J$, $K$, $L$ run $\set{\infty,0,1,\dots,n,\conj{1},\dots,\conj{n}}$.
	\end{itemize}
	Lowercase Greek indices and their complex conjugates are raised and lowered by the Levi form
	unless otherwise stated.
\end{IndexRule}

We consider a normal-form ACH metric $g$ on $X$.
By Proposition \ref{prop:NormalFormOfACHMetric}, the ACH condition for $g$ is equivalent to
\begin{equation}
	\label{eq:NormalizationOfACHMetricInComponents}
	\begin{split}
		\tensor{g}{_\infty_\infty}&=4,\qquad
		\tensor{g}{_\infty_0}=0,\qquad
		\tensor{g}{_\infty_\alpha}=0,\\
		\tensor{g}{_0_0}&=1+O(\rho),\qquad
		\tensor{g}{_0_\alpha}=O(\rho),\qquad
		\tensor{g}{_\alpha_\conjbeta}=\tensor{h}{_\alpha_\conjbeta}+O(\rho),\qquad
		\tensor{g}{_\alpha_\beta}=O(\rho),
	\end{split}
\end{equation}
where $\tensor{h}{_\alpha_\conjbeta}$ is the Levi form.
Note that $\set{W_I}$ is a distinguished local frame for $g$.
We shall compute the Ricci tensor of $g$ and the Einstein tensor $\Ein:=\Ric+\tfrac{1}{2}(n+2)g$.
Our goal in this section is the following proposition.
By abuse of notation, in what follows we use the same symbol for a tensor on $M$
and its constant extension in the $\rho$-direction.

\begin{prop}
	\label{prop:ACHEToThirdOrder}
	The Einstein tensor $\tensor{\Ein}{_I_J}$ of a normal-form ACH metric $g$ is $O(\rho^3)$ if and only if
	\begin{equation}
		\label{eq:LowestTermsConditionOfACHMetric}
	\begin{split}
		\tensor{g}{_0_0}&=1+O(\rho^3),\qquad
		\tensor{g}{_0_\alpha}=O(\rho^3),\\
		\tensor{g}{_\alpha_\conjbeta}&=\tensor{h}{_\alpha_\conjbeta}+\rho^2\tensor{\Phi}{_\alpha_\conjbeta}
		+O(\rho^3),\qquad
		\tensor{g}{_\alpha_\beta}=\rho^2\tensor{\Phi}{_\alpha_\beta}+O(\rho^3),
	\end{split}
	\end{equation}
	where
	\begin{equation}
		\label{eq:PhiTensorInLowestTerms}
	\begin{split}
		\tensor{\Phi}{_\alpha_\conjbeta}
		&=-\dfrac{2}{n+2}\left(\tensor{R}{_\alpha_\conjbeta}
		-2\tensor{N}{_\alpha_\sigma_\tau}\tensor{N}{_\conjbeta^\tau^\sigma}
		-\dfrac{1}{2(n+1)}(R
		-2\tensor{N}{_\gamma_\sigma_\tau}\tensor{N}{^\gamma^\tau^\sigma})\tensor{h}{_\alpha_\conjbeta}\right),\\
		\tensor{\Phi}{_\alpha_\beta}
		&=-2i\tensor{A}{_\alpha_\beta}
		-\dfrac{2}{n}(\tensor{N}{_\gamma_\alpha_\beta_,^\gamma}+\tensor{N}{_\gamma_\beta_\alpha_,^\gamma}).
	\end{split}
	\end{equation}
\end{prop}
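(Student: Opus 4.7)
The plan is to expand the free components of a normal-form ACH metric to second order in $\rho$ and match the Einstein tensor order by order. By Proposition \ref{prop:NormalFormOfACHMetric} the only components to determine are $\tensor{g}{_0_0}$, $\tensor{g}{_0_\alpha}$, $\tensor{g}{_\alpha_\conjbeta}$, $\tensor{g}{_\alpha_\beta}$ (and their conjugates), with boundary values fixed by \eqref{eq:NormalFormConditionOnACHMetric}. Write $\tensor{g}{_0_0} = 1 + \rho a + \rho^2 b + O(\rho^3)$, $\tensor{g}{_0_\alpha} = \rho\tensor{c}{_\alpha} + \rho^2\tensor{d}{_\alpha} + O(\rho^3)$, $\tensor{g}{_\alpha_\conjbeta} = \tensor{h}{_\alpha_\conjbeta} + \rho\tensor{E}{_\alpha_\conjbeta} + \rho^2\tensor{F}{_\alpha_\conjbeta} + O(\rho^3)$, and $\tensor{g}{_\alpha_\beta} = \rho\tensor{P}{_\alpha_\beta} + \rho^2\tensor{Q}{_\alpha_\beta} + O(\rho^3)$, with the coefficients being tensors on $M$ extended constantly in $\rho$. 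The task is to determine which unknowns must vanish and which are pinned down by the pseudohermitian data $(\tensor{h}{_\alpha_\conjbeta}, \tensor{A}{_\alpha_\beta}, \tensor{N}{_\alpha_\beta^\conjgamma}, \tensor{R}{_\alpha_\conjbeta})$.

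First I would compute $\nabla^g$ in the distinguished frame $\{W_I\}$. The frame is non-holonomic, but its brackets are explicit: $[\rho\partial_\rho, \rho^2 T] = 2\rho^2 T$, $[\rho\partial_\rho, \rho Z_\alpha] = \rho Z_\alpha$, $[\rho^2 T, \rho Z_\alpha] = \rho^3[T, Z_\alpha]$, $[\rho Z_\alpha, \rho Z_\conjbeta] = \rho^2[Z_\alpha, Z_\conjbeta]$, and so on, with the inner brackets $[T, Z_\alpha]$, $[Z_\alpha, Z_\beta]$, $[Z_\alpha, Z_\conjbeta]$ expanded via the Tanaka--Webster structure equations \eqref{eq:FirstStructureEquation1}, \eqref{eq:FirstStructureEquation2} in terms of $\tensor{A}{_\alpha_\beta}$, $\tensor{N}{_\alpha_\beta^\conjgamma}$, and $\tensor{h}{_\alpha_\conjbeta}$. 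Koszul's formula then yields the connection coefficients as formal power series in $\rho$ whose coefficients are linear in the unknowns and in the pseudohermitian data, from which the Ricci tensor follows.

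Next I would impose $\tensor{\Ein}{_I_J} = O(\rho^3)$ block by block. The leading ($\rho^0$) order recovers the fact that the model $g_\theta$ is Einstein with $\Ric = -\tfrac{1}{2}(n+2)g_\theta$, so $\tensor{\Ein}{_I_J}|_{\bdry X} = 0$ automatically. At the next order the ``normal'' components $\tensor{\Ein}{_\infty_\infty}, \tensor{\Ein}{_\infty_0}, \tensor{\Ein}{_\infty_\alpha}$, $\tensor{\Ein}{_0_0}, \tensor{\Ein}{_0_\alpha}$ produce linear equations in the first-order unknowns $a, \tensor{c}{_\alpha}, \tensor{E}{_\alpha_\conjbeta}, \tensor{P}{_\alpha_\beta}$; combined with the $\alpha\conjbeta$ and $\alpha\beta$ components at the same order these force $a = 0$, $\tensor{c}{_\alpha} = 0$, $\tensor{E}{_\alpha_\conjbeta} = 0$, $\tensor{P}{_\alpha_\beta} = 0$, which accounts for the absence of $\rho^1$ terms in \eqref{eq:LowestTermsConditionOfACHMetric}. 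At the following order the $\alpha\conjbeta$ and $\alpha\beta$ components become affine equations for $\tensor{F}{_\alpha_\conjbeta}$ and $\tensor{Q}{_\alpha_\beta}$ with inhomogeneous parts coming from $\tensor{R}{_\alpha_\conjbeta}$, $\tensor{A}{_\alpha_\beta}$, and derivatives and quadratic contractions of $\tensor{N}{_\alpha_\beta^\conjgamma}$; solving them (with the scalar trace handled separately to recover the $\tensor{h}{_\alpha_\conjbeta}$-coefficient) gives precisely \eqref{eq:PhiTensorInLowestTerms}, the Nijenhuis square in $\tensor{\Phi}{_\alpha_\conjbeta}$ appearing through the contraction recorded in \eqref{eq:AnomalyOfTwoTWRicciTensors}.

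The main obstacle is organizational rather than conceptual: keeping careful track of which coefficient of which unknown enters which component of Ricci at each order, given the mixing between the radial direction $\rho\partial_\rho$, the Reeb-type direction $\rho^2 T$, and the horizontal directions $\rho Z_\alpha$, and the systematic appearance of Nijenhuis terms that distinguish the partially integrable case from the integrable one. The cautionary remark in the introduction about the miscalculations in \cite{Seshadri} indicates that this is precisely where errors tend to occur. To minimize them I would carry out the computation at an arbitrary point using the special frame of Lemma \ref{lem:SpecialFrame}, in which the Tanaka--Webster connection forms vanish at the basepoint, and use the symmetries \eqref{eq:SymmetryOfPseudohermitianTorsionTensor}, \eqref{eq:SymmetryOfNijenhuisTensor} as consistency checks between blocks.
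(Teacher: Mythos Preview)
Your approach is correct and is essentially the paper's: compute the Levi-Civita connection and Ricci tensor of $g$ in the distinguished frame $\{W_I\}$, then read off the constraints on the Taylor coefficients of $\tensor{g}{_i_j}$ order by order, arriving at exactly the linear systems \eqref{eq:FirstOrderTermOfACHMetric} and \eqref{eq:SecondOrderTermOfACHMetric}. Two remarks on execution. First, rather than applying Koszul's formula with the raw Lie brackets, the paper introduces an auxiliary connection $\overline{\nabla}$ on $TX$ extending the Tanaka--Webster connection trivially in $\rho$, writes $\nabla^g=\overline{\nabla}+D$, and computes $\tensor{D}{_I_K_J}$ exactly via $\tensor{D}{_I_K_J}=\tfrac{1}{2}(\tensor{\overline\nabla}{_I}\tensor{g}{_J_K}+\tensor{\overline\nabla}{_J}\tensor{g}{_I_K}-\tensor{\overline\nabla}{_K}\tensor{g}{_I_J}+\tensor{\overline\Theta}{_I_K_J}+\tensor{\overline\Theta}{_J_K_I}+\tensor{\overline\Theta}{_I_J_K})$; this is Koszul in disguise, but the Tanaka--Webster structure constants now enter through the known torsion $\overline\Theta$ (equation \eqref{eq:TorsionOfExtendedTWConnection}) rather than through bracket computations, and the resulting tables are reusable. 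Second, the paper does not stop at $O(\rho^3)$: it records $\tensor{\Ein}{_I_J}$ modulo the much finer error classes (N1)--(N3) in Lemma~\ref{lem:RicciTensorModuloHighOrderTerm}, and that sharper form is what drives the perturbation analysis of \S\ref{sec:HigherOrderPerturbation} and the first-variation calculation of \S\ref{sec:FirstVariation}. Your second-order truncation suffices for Proposition~\ref{prop:ACHEToThirdOrder} alone, but you would have to redo the calculation at higher precision for the rest of the argument; also do not forget that at order $\rho^2$ the $(0,0)$ and $(0,\alpha)$ components of $\Ein$ are needed to kill your coefficients $b$ and $\tensor{d}{_\alpha}$, not just the $(\alpha,\conjbeta)$ and $(\alpha,\beta)$ blocks.
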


The functions $\tensor{\varphi}{_i_j}$ are defined by
\begin{equation}
	\tensor{g}{_0_0}=1+\tensor{\varphi}{_0_0},\qquad
	\tensor{g}{_0_\alpha}=\tensor{\varphi}{_0_\alpha},\qquad
	\tensor{g}{_\alpha_\conjbeta}=\tensor{h}{_\alpha_\conjbeta}+\tensor{\varphi}{_\alpha_\conjbeta},\qquad
	\tensor{g}{_\alpha_\beta}=\tensor{\varphi}{_\alpha_\beta}.
	\label{eq:NonconstantTermsOfACHMetric}
\end{equation}
The totality of $(\tensor{\varphi}{_i_j})$ is seen as a symmetric 2-tensor on $M$ with
coefficients in $C^\infty(X)$ using the frame $\set{T,Z_\alpha,Z_\conjalpha}$.
Hence the action of the Tanaka--Webster connection operator $\nabla$ on $(\tensor{\varphi}{_i_j})$ is
naturally defined.

We define a connection $\overline{\nabla}$ on $TX$ by setting
$\overline{\nabla}_ZW=\nabla_ZW$ for vector fields $Z$, $W$ on $M$ and
\begin{equation*}
	\overline{\nabla}\partial_\rho=0,\qquad
	\overline{\nabla}_{\partial_\rho}T=\overline{\nabla}_{\partial_\rho}Z_\alpha=0.
\end{equation*}
The connection forms of $\overline{\nabla}$ with respect to the frame $\set{Z_I}$ are given by
\begin{equation}
	\tensor{\overline{\omega}}{_\infty^\infty}=\dfrac{d\rho}{\rho},\qquad
	\tensor{\overline{\omega}}{_0^0}=2\dfrac{d\rho}{\rho},\qquad
	\tensor{\overline{\omega}}{_\alpha^\beta}=\tensor{\omega}{_\alpha^\beta}
	+\tensor{\delta}{_\alpha^\beta}\dfrac{d\rho}{\rho},
	\label{eq:ConnectionFormsOfExtendedTWConnection}
\end{equation}
where $\tensor{\omega}{_\alpha^\beta}$ are the connection forms of $\nabla$ with respect to $\set{Z_\alpha}$.
The torsion $\overline\Theta$ of $\overline\nabla$ is
\begin{equation}
	\label{eq:TorsionOfExtendedTWConnection}
	\begin{split}
		\tensor{\overline\Theta}{_I_J^\infty}&=\tensor{\overline\Theta}{_\infty_\infty^0}
		=\tensor{\overline\Theta}{_0_I^0}=\tensor{\overline\Theta}{_\infty_I^\gamma}
		=\tensor{\overline\Theta}{_0_0^\gamma}
		=\tensor{\overline\Theta}{_0_\alpha^\beta}
		=\tensor{\overline\Theta}{_\alpha_\conjbeta^\gamma}
		=\tensor{\overline\Theta}{_\alpha_\beta^\gamma}=0,\\
		\tensor{\overline\Theta}{_\alpha_\conjbeta^0}&=i\tensor{h}{_\alpha_\conjbeta},\qquad
		\tensor{\overline\Theta}{_0_\alpha^\conjbeta}=\rho^2\tensor{A}{_\alpha^\conjbeta},\qquad
		\tensor{\overline\Theta}{_\alpha_\beta^\conjgamma}=-\rho\tensor{N}{_\alpha_\beta^\conjgamma};
	\end{split}
\end{equation}
the Ricci tensor of $\overline\nabla$, defined by
$\tensor{\overline{R}}{_I_J}:=\tensor{\overline{R}}{_I^K_K_J}$, is given by
\begin{equation}
	\begin{split}
		\tensor{\overline{R}}{_\infty_I}&=\tensor{\overline{R}}{_I_\infty}
		=\tensor{\overline{R}}{_0_I}=\tensor{\overline{R}}{_I_0}=0,\\
		\tensor{\overline{R}}{_\alpha_\conjbeta}
		&=\rho^2
		(\tensor{R}{_\alpha_\conjbeta}-\tensor{N}{_\alpha_\sigma_\tau}\tensor{N}{_\conjbeta^\tau^\sigma}),
		\qquad
		\tensor{\overline{R}}{_\alpha_\beta}
		=\rho^2\left(i(n-1)\tensor{A}{_\alpha_\beta}+\tensor{N}{_\gamma_\beta_\alpha_,^\gamma}\right).
	\end{split}
	\label{eq:RicciOfExtendedTWConnection}
\end{equation}

We sometimes reinterpret a tensor on $X$ as a set of tensors on $M$ with coefficient in $C^\infty(X)$.
For example, a symmetric 2-tensor $\tensor{S}{_I_J}$ is also regarded as the composed object of
a scalar-valued function $\tensor{S}{_\infty_\infty}$, a 1-tensor $\tensor{S}{_\infty_i}$
and a 2-tensor $\tensor{S}{_i_j}$, with coefficients in $C^\infty(X)$.
Thus $\nabla$ can be applied to
$\tensor{S}{_I_J}=(\tensor{S}{_\infty_\infty},\tensor{S}{_\infty_i},\tensor{S}{_i_j})$.
Let $\#(I_1,\dots,I_N):=N+(\text{the number of $0$'s in $I_1$, $\dots$, $I_N$})$.
Then, from \eqref{eq:ConnectionFormsOfExtendedTWConnection} we have the following formulae:
\begin{equation}
	\label{eq:CalculationOfTrivialExtensionOfTWConnection}
	\tensor{\overline{\nabla}}{_\infty}\tensor{S}{_I_J}=(\euler-\#(I,J))\tensor{S}{_I_J},\qquad
	\tensor{\overline{\nabla}}{_0}\tensor{S}{_I_J}=\rho^2\tensor{\nabla}{_0}\tensor{S}{_I_J},\qquad
	\tensor{\overline{\nabla}}{_\alpha}\tensor{S}{_I_J}=\rho\tensor{\nabla}{_\alpha}\tensor{S}{_I_J};
\end{equation}
on the left-hand sides of the equalities
$\set{\euler,\rho^2T,\rho Z_\alpha,\rho Z_\conjalpha}$ is used for covariant differentiation,
while on the right-hand sides $\set{T,Z_\alpha,Z_\conjalpha}$ is used.

We set $\nabla^g_{W_J}W_I=\overline{\nabla}_{W_J}W_I+\tensor{D}{_I^K_J}W_K$,
where $\nabla^g$ is the Levi-Civita connection of $g$. Then the Ricci tensor of $g$ is given by
\begin{equation}
	\tensor{\Ric}{_I_J}=\tensor{\overline{R}}{_I_J}
	+\tensor{\overline\nabla}{_K}\tensor{D}{_I^K_J}-\tensor{\overline\nabla}{_J}\tensor{D}{_I^K_K}
	-\tensor{D}{_I^L_K}\tensor{D}{_J^K_L}+\tensor{D}{_I^L_J}\tensor{D}{_L^K_K}.
	\label{eq:RicciTensorOfACHMetric}
\end{equation}
Thus the calculation of the Ricci tensor essentially reduces to that of $\tensor{D}{_I^K_J}$.
We can compute $\tensor{D}{_I_K_J}:=\tensor{g}{_K_L}\tensor{D}{_I^L_J}$ by the formula
\begin{equation*}
	\tensor{D}{_I_K_J}
	=\tfrac{1}{2}(\tensor{\overline\nabla}{_I}\tensor{g}{_J_K}+\tensor{\overline\nabla}{_J}\tensor{g}{_I_K}
	-\tensor{\overline\nabla}{_K}\tensor{g}{_I_J}
	+\tensor{\overline\Theta}{_I_K_J}+\tensor{\overline\Theta}{_J_K_I}+\tensor{\overline\Theta}{_I_J_K}),
\end{equation*}
where $\tensor{\overline\Theta}{_I_J_K}:=\tensor{g}{_K_L}\tensor{\overline\Theta}{_I_J^L}$.
The result is given in Table \ref{tbl:ConnectionFormsOfACHMetricWithLowerIndices}.

\begin{table}[htbp]
	\small
\begin{tabular}{ll}
	\toprule
	\multicolumn{1}{c}{Type} & \multicolumn{1}{c}{Value} \\ \midrule
	$\tensor{D}{_\infty_\infty_\infty}$ & $-4$ \\ \midrule
	$\tensor{D}{_\infty_0_\infty}$ & $0$ \\ \midrule
	$\tensor{D}{_\infty_\alpha_\infty}$ & $0$ \\ \midrule
	$\tensor{D}{_\infty_\infty_0}$ & $0$ \\ \midrule
	$\tensor{D}{_\infty_0_0}$ & $-2+\tfrac{1}{2}(\euler-4)\tensor{\varphi}{_0_0}$ \\ \midrule
	$\tensor{D}{_\infty_\alpha_0}$ & $\tfrac{1}{2}(\euler-3)\tensor{\varphi}{_0_\alpha}$ \\ \midrule
	$\tensor{D}{_\infty_\infty_\alpha}$ & $0$ \\ \midrule
	$\tensor{D}{_\infty_0_\alpha}$ & $\tfrac{1}{2}(\euler-3)\tensor{\varphi}{_0_\alpha}$ \\ \midrule
	$\tensor{D}{_\infty_\conjbeta_\alpha}$
	& $-\tensor{h}{_\alpha_\conjbeta}+\tfrac{1}{2}(\euler-2)\tensor{\varphi}{_\alpha_\conjbeta}$ \\ \midrule
	$\tensor{D}{_\infty_\beta_\alpha}$ & $\tfrac{1}{2}(\euler-2)\tensor{\varphi}{_\alpha_\beta}$ \\ \midrule
	$\tensor{D}{_0_\infty_0}$ & $2-\tfrac{1}{2}(\euler-4)\tensor{\varphi}{_0_0}$ \\ \midrule
	$\tensor{D}{_0_0_0}$ & $\tfrac{1}{2}\rho^2\tensor{\nabla}{_0}\tensor{\varphi}{_0_0}$ \\ \midrule
	$\tensor{D}{_0_\alpha_0}$
	& $-\tfrac{1}{2}\rho\tensor{\nabla}{_\alpha}\tensor{\varphi}{_0_0}
		+\rho^2(\tensor{\nabla}{_0}\tensor{\varphi}{_0_\alpha}
			+\tensor{A}{_\alpha^\conjbeta}\tensor{\varphi}{_0_\conjbeta})$ \\ \midrule
	$\tensor{D}{_0_\infty_\alpha}$ & $-\tfrac{1}{2}(\euler-3)\tensor{\varphi}{_0_\alpha}$ \\ \midrule
	$\tensor{D}{_0_0_\alpha}$ & $\tfrac{1}{2}\rho\tensor{\nabla}{_\alpha}\tensor{\varphi}{_0_0}$ \\ \midrule
	$\tensor{D}{_0_\conjbeta_\alpha}$
	& $\tfrac{i}{2}\tensor{h}{_\alpha_\conjbeta}
		+\tfrac{i}{2}\tensor{h}{_\alpha_\conjbeta}\tensor{\varphi}{_0_0}
		+\tfrac{1}{2}\rho
			(\tensor{\nabla}{_\alpha}\tensor{\varphi}{_0_\conjbeta}
			-\tensor{\nabla}{_\conjbeta}\tensor{\varphi}{_0_\alpha})
		+\tfrac{1}{2}\rho^2
			(\tensor{\nabla}{_0}\tensor{\varphi}{_\alpha_\conjbeta}
			+\tensor{A}{_\alpha^\conjgamma}\tensor{\varphi}{_\conjbeta_\conjgamma}
			+\tensor{A}{_\conjbeta^\gamma}\tensor{\varphi}{_\alpha_\gamma})$ \\ \midrule
	$\tensor{D}{_0_\beta_\alpha}$
	& $\rho^2\tensor{A}{_\alpha_\beta}
		+\tfrac{1}{2}\rho
			(\tensor{\nabla}{_\alpha}\tensor{\varphi}{_0_\beta}
			-\tensor{\nabla}{_\beta}\tensor{\varphi}{_0_\alpha}
			-\tensor{N}{_\alpha_\beta^\conjgamma}\tensor{\varphi}{_0_\conjgamma})
		+\tfrac{1}{2}\rho^2
			(\tensor{\nabla}{_0}\tensor{\varphi}{_\alpha_\beta}
			+\tensor{A}{_\alpha^\conjgamma}\tensor{\varphi}{_\beta_\conjgamma}
			+\tensor{A}{_\beta^\conjgamma}\tensor{\varphi}{_\alpha_\conjgamma})$ \\ \midrule
	$\tensor{D}{_\alpha_\infty_0}$ & $-\tfrac{1}{2}(\euler-3)\tensor{\varphi}{_0_\alpha}$ \\ \midrule
	$\tensor{D}{_\alpha_0_0}$
	& $\tfrac{1}{2}\rho\tensor{\nabla}{_\alpha}\tensor{\varphi}{_0_0}
		-\rho^2\tensor{A}{_\alpha^\conjbeta}\tensor{\varphi}{_0_\conjbeta}$ \\ \midrule
	$\tensor{D}{_\alpha_\conjbeta_0}$
	& $\tfrac{i}{2}\tensor{h}{_\alpha_\conjbeta}
		+\tfrac{i}{2}\tensor{h}{_\alpha_\conjbeta}\tensor{\varphi}{_0_0}
		+\tfrac{1}{2}\rho
			(\tensor{\nabla}{_\alpha}\tensor{\varphi}{_0_\conjbeta}
			-\tensor{\nabla}{_\conjbeta}\tensor{\varphi}{_0_\alpha})
		+\tfrac{1}{2}\rho^2
			(\tensor{\nabla}{_0}\tensor{\varphi}{_\alpha_\conjbeta}
			-\tensor{A}{_\alpha^\conjgamma}\tensor{\varphi}{_\conjbeta_\conjgamma}
			+\tensor{A}{_\conjbeta^\gamma}\tensor{\varphi}{_\alpha_\gamma})$ \\ \midrule
	$\tensor{D}{_\alpha_\beta_0}$
	& $\tfrac{1}{2}\rho
			(\tensor{\nabla}{_\alpha}\tensor{\varphi}{_0_\beta}
			-\tensor{\nabla}{_\beta}\tensor{\varphi}{_0_\alpha}
			-\tensor{N}{_\alpha_\beta^\conjgamma}\tensor{\varphi}{_0_\conjgamma})
		+\tfrac{1}{2}\rho^2
			(\tensor{\nabla}{_0}\tensor{\varphi}{_\alpha_\beta}
			-\tensor{A}{_\alpha^\conjgamma}\tensor{\varphi}{_\beta_\conjgamma}
			+\tensor{A}{_\beta^\conjgamma}\tensor{\varphi}{_\alpha_\conjgamma})$ \\ \midrule
	$\tensor{D}{_\alpha_\infty_\conjbeta}$
	& $\tensor{h}{_\alpha_\conjbeta}-\tfrac{1}{2}(\euler-2)\tensor{\varphi}{_\alpha_\conjbeta}$ \\ \midrule
	$\tensor{D}{_\alpha_0_\conjbeta}$
	& $\tfrac{i}{2}\tensor{h}{_\alpha_\conjbeta}
		+\tfrac{i}{2}\tensor{h}{_\alpha_\conjbeta}\tensor{\varphi}{_0_0}
		+\tfrac{1}{2}\rho
			(\tensor{\nabla}{_\alpha}\tensor{\varphi}{_0_\conjbeta}
			+\tensor{\nabla}{_\conjbeta}\tensor{\varphi}{_0_\alpha})
		-\tfrac{1}{2}\rho^2
			(\tensor{\nabla}{_0}\tensor{\varphi}{_\alpha_\conjbeta}
			+\tensor{A}{_\alpha^\conjgamma}\tensor{\varphi}{_\conjbeta_\conjgamma}
			+\tensor{A}{_\conjbeta^\gamma}\tensor{\varphi}{_\alpha_\gamma})$ \\ \midrule
	$\tensor{D}{_\alpha_\conjgamma_\conjbeta}$
	& $\tfrac{i}{2}(\tensor{h}{_\alpha_\conjgamma}\tensor{\varphi}{_0_\conjbeta}
			+\tensor{h}{_\alpha_\conjbeta}\tensor{\varphi}{_0_\conjgamma})
		+\tfrac{1}{2}\rho
			(\tensor{\nabla}{_\alpha}\tensor{\varphi}{_\conjbeta_\conjgamma}
			+\tensor{\nabla}{_\conjbeta}\tensor{\varphi}{_\alpha_\conjgamma}
			-\tensor{\nabla}{_\conjgamma}\tensor{\varphi}{_\alpha_\conjbeta}
			-\tensor{N}{_\conjbeta_\conjgamma^\sigma}\tensor{\varphi}{_\alpha_\sigma})$ \\ \midrule
	$\tensor{D}{_\alpha_\gamma_\conjbeta}$
	& $-\tfrac{i}{2}(\tensor{h}{_\gamma_\conjbeta}\tensor{\varphi}{_0_\alpha}
			-\tensor{h}{_\alpha_\conjbeta}\tensor{\varphi}{_0_\gamma})
		+\tfrac{1}{2}\rho
			(\tensor{\nabla}{_\alpha}\tensor{\varphi}{_\gamma_\conjbeta}
			+\tensor{\nabla}{_\conjbeta}\tensor{\varphi}{_\alpha_\gamma}
			-\tensor{\nabla}{_\gamma}\tensor{\varphi}{_\alpha_\conjbeta}
			-\tensor{N}{_\alpha_\gamma^\conjsigma}\tensor{\varphi}{_\conjbeta_\conjsigma})$ \\ \midrule
	$\tensor{D}{_\alpha_\infty_\beta}$ & $-\tfrac{1}{2}(\euler-2)\tensor{\varphi}{_\alpha_\beta}$ \\ \midrule
	$\tensor{D}{_\alpha_0_\beta}$
	& $-\rho^2\tensor{A}{_\alpha_\beta}
			+\tfrac{1}{2}\rho
			(\tensor{\nabla}{_\alpha}\tensor{\varphi}{_0_\beta}
			+\tensor{\nabla}{_\beta}\tensor{\varphi}{_0_\alpha}
			+\tensor{N}{_\alpha_\beta^\conjgamma}\tensor{\varphi}{_0_\conjgamma})
			-\tfrac{1}{2}\rho^2
			(\tensor{\nabla}{_0}\tensor{\varphi}{_\alpha_\beta}
			+\tensor{A}{_\alpha^\conjgamma}\tensor{\varphi}{_\beta_\conjgamma}
			+\tensor{A}{_\beta^\conjgamma}\tensor{\varphi}{_\alpha_\conjgamma})$ \\ \midrule
	$\tensor{D}{_\alpha_\conjgamma_\beta}$
	& $\tfrac{i}{2}(\tensor{h}{_\alpha_\conjgamma}\tensor{\varphi}{_0_\beta}
			+\tensor{h}{_\beta_\conjgamma}\tensor{\varphi}{_0_\alpha})
		+\tfrac{1}{2}\rho
			(\tensor{\nabla}{_\alpha}\tensor{\varphi}{_\beta_\conjgamma}
			+\tensor{\nabla}{_\beta}\tensor{\varphi}{_\alpha_\conjgamma}
			-\tensor{\nabla}{_\conjgamma}\tensor{\varphi}{_\alpha_\beta}
			-\tensor{N}{_\alpha_\beta^\conjsigma}\tensor{\varphi}{_\conjgamma_\conjsigma})$ \\ \midrule
	$\tensor{D}{_\alpha_\gamma_\beta}$
	& $-\rho\tensor{N}{_\alpha_\gamma_\beta}
		+\tfrac{1}{2}\rho
			(\tensor{\nabla}{_\alpha}\tensor{\varphi}{_\beta_\gamma}
			+\tensor{\nabla}{_\beta}\tensor{\varphi}{_\alpha_\gamma}
			-\tensor{\nabla}{_\gamma}\tensor{\varphi}{_\alpha_\beta}
			-\tensor{N}{_\alpha_\beta^\conjsigma}\tensor{\varphi}{_\gamma_\conjsigma}
			-\tensor{N}{_\alpha_\gamma^\conjsigma}\tensor{\varphi}{_\beta_\conjsigma}
			-\tensor{N}{_\beta_\gamma^\conjsigma}\tensor{\varphi}{_\alpha_\conjsigma})$ \\ \bottomrule
	\end{tabular}
	\vspace{0.5em}
	\caption{$\tensor{D}{_I_K_J}$ for a normal-form ACH metric $g$.
	$\tensor{D}{_0_K_\infty}$ and $\tensor{D}{_\alpha_K_\infty}$ are omitted;
	we have $\tensor{D}{_0_K_\infty}=\tensor{D}{_\infty_K_0}$
	and $\tensor{D}{_\alpha_K_\infty}=\tensor{D}{_\infty_K_\alpha}$}
	\label{tbl:ConnectionFormsOfACHMetricWithLowerIndices}
\end{table}

To prove Proposition \ref{prop:ACHEToThirdOrder}, it is enough to calculate everything modulo $O(\rho^3)$.
However, for later use, we shall carry out more precise computation.
What we allow ourselves to neglect are
\begin{itemize}
	\item[(N1)] any term at least quadratic in $\tensor{\varphi}{_i_j_,_k_\dots}$ with $O(1)$ coefficients,
	\item[(N2)] any term linear in $\tensor{\varphi}{_0_0_,_k_\dots}$, $\tensor{\varphi}{_0_\alpha_,_k_\dots}$,
		$\tensor{\varphi}{_0_\conjalpha_,_k_\dots}$ or $\tensor{\varphi}{_\alpha_\conjbeta_,_k_\dots}$
		with $O(\rho)$ coefficients which vanish in the case of the CR sphere with standard
		pseudohermitian structure, and
	\item[(N3)] any term linear in $\tensor{\varphi}{_\alpha_\beta_,_k_\dots}$ or
		$\tensor{\varphi}{_\conjalpha_\conjbeta_,_k_\dots}$
		with $O(\rho^2)$ coefficients which vanish in the case of the CR sphere with standard
		pseudohermitian structure.
\end{itemize}
Modulo terms of type (N1), $\tensor{g}{^I^J}$ is given by
\begin{equation}
	\label{eq:InverseOfNormalACH}
	\begin{split}
		\tensor{g}{^\infty^\infty}&\equiv\tfrac{1}{4},\qquad
		\tensor{g}{^\infty^0}\equiv\tensor{g}{^\infty^\alpha}\equiv 0,\\
		\tensor{g}{^0^0}&\equiv 1-\tensor{\varphi}{_0_0},\qquad
		\tensor{g}{^0^\alpha}\equiv -\tensor{\varphi}{_0^\alpha},\qquad
		\tensor{g}{^\alpha^\conjbeta}\equiv\tensor{h}{^\alpha^\conjbeta}-\tensor{\varphi}{^\alpha^\conjbeta},\qquad
		\tensor{g}{^\alpha^\beta}\equiv -\tensor{\varphi}{^\alpha^\beta}.
	\end{split}
\end{equation}
By these formulae and Table \ref{tbl:ConnectionFormsOfACHMetricWithLowerIndices},
we compute $\tensor{D}{_I^K_J}$ modulo terms of type (N1)--(N3) using the equality
$\tensor{D}{_I^K_J}=\tensor{g}{^K^L}\tensor{D}{_I_L_J}$.
Table \ref{tbl:ConnectionFormsOfACHMetricWithUpperIndices} is the result.

\begin{table}[htbp]
	\small
\begin{tabular}{ll}
	\toprule
	\multicolumn{1}{c}{Type} & \multicolumn{1}{c}{Value (modulo terms of type (N1)--(N3))} \\ \midrule
	$\tensor{D}{_\infty^\infty_\infty}$ & $-1$ \\ \midrule
	$\tensor{D}{_\infty^0_\infty}$ & $0$ \\ \midrule
	$\tensor{D}{_\infty^\alpha_\infty}$ & $0$ \\ \midrule
	$\tensor{D}{_\infty^\infty_0}$ & $0$ \\ \midrule
	$\tensor{D}{_\infty^0_0}$ & $-2+\tfrac{1}{2}\euler\tensor{\varphi}{_0_0}$ \\ \midrule
	$\tensor{D}{_\infty^\alpha_0}$ & $\tfrac{1}{2}(\euler+1)\tensor{\varphi}{_0^\alpha}$ \\ \midrule
	$\tensor{D}{_\infty^\infty_\alpha}$ & $0$ \\ \midrule
	$\tensor{D}{_\infty^0_\alpha}$ & $\tfrac{1}{2}(\euler-1)\tensor{\varphi}{_0_\alpha}$ \\ \midrule
	$\tensor{D}{_\infty^\beta_\alpha}$
	& $-\tensor{\delta}{_\alpha^\beta}+\tfrac{1}{2}\euler\tensor{\varphi}{_\alpha^\beta}$ \\ \midrule
	$\tensor{D}{_\infty^\conjbeta_\alpha}$ & $\tfrac{1}{2}\euler\tensor{\varphi}{_\alpha^\conjbeta}$ \\ \midrule
	$\tensor{D}{_0^\infty_0}$ & $\tfrac{1}{2}-\tfrac{1}{8}(\euler-4)\tensor{\varphi}{_0_0}$ \\ \midrule
	$\tensor{D}{_0^0_0}$ & $\tfrac{1}{2}\rho^2\tensor{\nabla}{_0}\tensor{\varphi}{_0_0}$ \\ \midrule
	$\tensor{D}{_0^\alpha_0}$
	& $-\tfrac{1}{2}\rho\tensor{\nabla}{^\alpha}\tensor{\varphi}{_0_0}
		+\rho^2\tensor{\nabla}{_0}\tensor{\varphi}{_0^\alpha}$ \\ \midrule
	$\tensor{D}{_0^\infty_\alpha}$ & $-\tfrac{1}{8}(\euler-3)\tensor{\varphi}{_0_\alpha}$ \\ \midrule
	$\tensor{D}{_0^0_\alpha}$
	& $-\tfrac{i}{2}\tensor{\varphi}{_0_\alpha}+\tfrac{1}{2}\rho\tensor{\nabla}{_\alpha}\tensor{\varphi}{_0_0}$
	\\ \midrule
	$\tensor{D}{_0^\beta_\alpha}$
	& $\tfrac{i}{2}\tensor{\delta}{_\alpha^\beta}
		+\tfrac{i}{2}\tensor{\delta}{_\alpha^\beta}\tensor{\varphi}{_0_0}
		-\tfrac{i}{2}\tensor{\varphi}{_\alpha^\beta}
		+\tfrac{1}{2}\rho(\tensor{\nabla}{_\alpha}\tensor{\varphi}{_0^\beta}
		-\tensor{\nabla}{^\beta}\tensor{\varphi}{_0_\alpha})
		+\tfrac{1}{2}\rho^2\tensor{\nabla}{_0}\tensor{\varphi}{_\alpha^\beta}$ \\ \midrule
	$\tensor{D}{_0^\conjbeta_\alpha}$
	& $\rho^2\tensor{A}{_\alpha^\conjbeta}-\tfrac{i}{2}\tensor{\varphi}{_\alpha^\conjbeta}
		+\tfrac{1}{2}\rho(\tensor{\nabla}{_\alpha}\tensor{\varphi}{_0^\conjbeta}
			-\tensor{\nabla}{^\conjbeta}\tensor{\varphi}{_0_\alpha})
		+\tfrac{1}{2}\rho^2\tensor{\nabla}{_0}\tensor{\varphi}{_\alpha^\conjbeta}$ \\ \midrule
	$\tensor{D}{_\alpha^\infty_0}$ & $-\tfrac{1}{8}(\euler-3)\tensor{\varphi}{_0_\alpha}$ \\ \midrule
	$\tensor{D}{_\alpha^0_0}$
	& $-\tfrac{i}{2}\tensor{\varphi}{_0_\alpha}
		+\tfrac{1}{2}\rho\tensor{\nabla}{_\alpha}\tensor{\varphi}{_0_0}$ \\ \midrule
	$\tensor{D}{_\alpha^\beta_0}$
	& $\tfrac{i}{2}\tensor{\delta}{_\alpha^\beta}
		+\tfrac{i}{2}\tensor{\delta}{_\alpha^\beta}\tensor{\varphi}{_0_0}
		-\tfrac{i}{2}\tensor{\varphi}{_\alpha^\beta}
		+\tfrac{1}{2}\rho(\tensor{\nabla}{_\alpha}\tensor{\varphi}{_0^\beta}
			-\tensor{\nabla}{^\beta}\tensor{\varphi}{_0_\alpha})
		+\tfrac{1}{2}\rho^2\tensor{\nabla}{_0}\tensor{\varphi}{_\alpha^\beta}$ \\ \midrule
	$\tensor{D}{_\alpha^\conjbeta_0}$
	& $-\tfrac{i}{2}\tensor{\varphi}{_\alpha^\conjbeta}
		+\tfrac{1}{2}\rho(\tensor{\nabla}{_\alpha}\tensor{\varphi}{_0^\conjbeta}
			-\tensor{\nabla}{^\conjbeta}\tensor{\varphi}{_0_\alpha})
		+\tfrac{1}{2}\rho^2\tensor{\nabla}{_0}\tensor{\varphi}{_\alpha^\conjbeta}$ \\ \midrule
	$\tensor{D}{_\alpha^\infty_\conjbeta}$
	& $\tfrac{1}{4}\tensor{h}{_\alpha_\conjbeta}-\tfrac{1}{8}(\euler-2)\tensor{\varphi}{_\alpha_\conjbeta}$
	\\ \midrule
	$\tensor{D}{_\alpha^0_\conjbeta}$
	& $\tfrac{i}{2}\tensor{h}{_\alpha_\conjbeta}
		+\tfrac{1}{2}\rho(\tensor{\nabla}{_\alpha}\tensor{\varphi}{_0_\conjbeta}
		+\tensor{\nabla}{_\conjbeta}\tensor{\varphi}{_0_\alpha})
		-\tfrac{1}{2}\rho^2\tensor{\nabla}{_0}\tensor{\varphi}{_\alpha_\conjbeta}$ \\ \midrule
	$\tensor{D}{_\alpha^\gamma_\conjbeta}$
	& $\tfrac{i}{2}\tensor{\delta}{_\alpha^\gamma}\tensor{\varphi}{_0_\conjbeta}
		+\tfrac{1}{2}\rho(\tensor{\nabla}{_\alpha}\tensor{\varphi}{_\conjbeta^\gamma}
			+\tensor{\nabla}{_\conjbeta}\tensor{\varphi}{_\alpha^\gamma}
			-\tensor{\nabla}{^\gamma}\tensor{\varphi}{_\alpha_\conjbeta})
			-\tfrac{1}{2}\rho\tensor{N}{_\conjbeta^\gamma^\sigma}\tensor{\varphi}{_\alpha_\sigma}$ \\ \midrule
	$\tensor{D}{_\alpha^\conjgamma_\conjbeta}$
	& $-\tfrac{i}{2}\tensor{\delta}{_\conjbeta^\conjgamma}\tensor{\varphi}{_0_\alpha}
		+\tfrac{1}{2}\rho(\tensor{\nabla}{_\alpha}\tensor{\varphi}{_\conjbeta^\conjgamma}
			+\tensor{\nabla}{_\conjbeta}\tensor{\varphi}{_\alpha^\conjgamma}
			-\tensor{\nabla}{^\conjgamma}\tensor{\varphi}{_\alpha_\conjbeta})
		-\tfrac{1}{2}\rho\tensor{N}{_\alpha^\conjgamma^\conjsigma}\tensor{\varphi}{_\conjbeta_\conjsigma}$
		\\ \midrule
	$\tensor{D}{_\alpha^\infty_\beta}$ & $-\tfrac{1}{8}(\euler-2)\tensor{\varphi}{_\alpha_\beta}$ \\ \midrule
	$\tensor{D}{_\alpha^0_\beta}$
	& $-\rho^2\tensor{A}{_\alpha_\beta}
		+\tfrac{1}{2}\rho(\tensor{\nabla}{_\alpha}\tensor{\varphi}{_0_\beta}
			+\tensor{\nabla}{_\beta}\tensor{\varphi}{_0_\alpha})
		-\tfrac{1}{2}\rho\tensor{N}{_\alpha_\beta^\conjgamma}\tensor{\varphi}{_0_\conjgamma}
		-\tfrac{1}{2}\rho^2\tensor{\nabla}{_0}\tensor{\varphi}{_\alpha_\beta}$ \\ \midrule
	$\tensor{D}{_\alpha^\gamma_\beta}$
	& $\tfrac{i}{2}(\tensor{\delta}{_\alpha^\gamma}\tensor{\varphi}{_0_\beta}
			+\tensor{\delta}{_\beta^\gamma}\tensor{\varphi}{_0_\alpha})
		+\tfrac{1}{2}\rho(\tensor{\nabla}{_\alpha}\tensor{\varphi}{_\beta^\gamma}
			+\tensor{\nabla}{_\beta}\tensor{\varphi}{_\alpha^\gamma}
			-\tensor{\nabla}{^\gamma}\tensor{\varphi}{_\alpha_\beta})
			-\tfrac{1}{2}\rho\tensor{N}{_\alpha_\beta^\conjsigma}\tensor{\varphi}{_\conjsigma^\gamma}$ \\ \midrule
	$\tensor{D}{_\alpha^\conjgamma_\beta}$
	& $-\rho\tensor{N}{_\alpha^\conjgamma_\beta}
		+\tfrac{1}{2}\rho(\tensor{\nabla}{_\alpha}\tensor{\varphi}{_\beta^\conjgamma}
			+\tensor{\nabla}{_\beta}\tensor{\varphi}{_\alpha^\conjgamma}
			-\tensor{\nabla}{^\conjgamma}\tensor{\varphi}{_\alpha_\beta})$ \\ \bottomrule
\end{tabular}
	\vspace{0.5em}
	\caption{$\tensor{D}{_I^K_J}$ for a normal-form ACH metric $g$.
	$\tensor{D}{_0^K_\infty}$ and $\tensor{D}{_\alpha^K_\infty}$ are omitted;
	we have $\tensor{D}{_0^K_\infty}=\tensor{D}{_\infty^K_0}$
	and $\tensor{D}{_\alpha^K_\infty}=\tensor{D}{_\infty^K_\alpha}$}
	\label{tbl:ConnectionFormsOfACHMetricWithUpperIndices}
\end{table}

Finally we can show the following formulae for the Einstein tensor.
We define the sublaplacian by
$\sublaplacian:=-(\tensor{\nabla}{^\alpha}\tensor{\nabla}{_\alpha}
+\tensor{\nabla}{^\conjalpha}\tensor{\nabla}{_\conjalpha})$.

\begin{lem}
	\label{lem:RicciTensorModuloHighOrderTerm}
	The Einstein tensor of an ACH metric $g$ is, modulo terms of type (N1)--(N3),
	\allowdisplaybreaks
	\begin{align*}
		\tensor{\Ein}{_\infty_\infty}
		&\equiv -\tfrac{1}{2}\euler(\euler-4)\tensor{\varphi}{_0_0}
		-\euler(\euler-2)\tensor{\varphi}{_\alpha^\alpha},\\
		\tensor{\Ein}{_\infty_0}
		&\equiv\tfrac{1}{2}\rho(\euler+1)
		(\tensor{\nabla}{^\alpha}\tensor{\varphi}{_0_\alpha}
		+\tensor{\nabla}{^\conjalpha}\tensor{\varphi}{_0_\conjalpha})
		-\rho^2(\euler+1)\tensor{\nabla}{_0}\tensor{\varphi}{_\alpha^\alpha},\\
		\begin{split}
			\tensor{\Ein}{_\infty_\alpha}
			&\equiv -\tfrac{1}{2}i(\euler+1)\tensor{\varphi}{_0_\alpha}
			-\tfrac{1}{2}\rho(\euler-1)\tensor{\nabla}{_\alpha}\tensor{\varphi}{_0_0}
			-\rho^2\partial_\rho\tensor{\nabla}{_\alpha}\tensor{\varphi}{_\beta^\beta}\\
			&\phantom{=\;}
			+\tfrac{1}{2}\rho^2\partial_\rho(\tensor{\nabla}{^\conjbeta}\tensor{\varphi}{_\alpha_\conjbeta}
			+\tensor{\nabla}{^\beta}\tensor{\varphi}{_\alpha_\beta})
			+\tfrac{1}{2}\rho^2
				\partial_\rho\tensor{N}{_\alpha^\conjbeta^\conjgamma}\tensor{\varphi}{_\conjbeta_\conjgamma}
			+\tfrac{1}{2}\rho^2(\euler-1)\tensor{\nabla}{_0}\tensor{\varphi}{_0_\alpha},
		\end{split}\\
		\begin{split}
			\tensor{\Ein}{_0_0}
			&\equiv -\tfrac{1}{8}\left( (\euler)^2-(2n+4)\euler-4n \right)\tensor{\varphi}{_0_0}
			+\tfrac{1}{2}(\euler-2)\tensor{\varphi}{_\alpha^\alpha}\\
			&\phantom{=\;}
			+i\rho(\tensor{\nabla}{^\alpha}\tensor{\varphi}{_0_\alpha}
			-\tensor{\nabla}{^\conjalpha}\tensor{\varphi}{_0_\conjalpha})
			+\tfrac{1}{2}\rho^2\sublaplacian\tensor{\varphi}{_0_0}
			+\rho^3(\tensor{\nabla}{_0}\tensor{\nabla}{^\alpha}\tensor{\varphi}{_0_\alpha}
			+\tensor{\nabla}{_0}\tensor{\nabla}{^\conjalpha}\tensor{\varphi}{_0_\conjalpha})
			-\rho^4\tensor{\nabla}{_0}\tensor{\nabla}{_0}\tensor{\varphi}{_\alpha^\alpha}
		\end{split}\\
		\begin{split}
			\tensor{\Ein}{_0_\alpha}
			&\equiv\rho^3\tensor{A}{_\alpha_\beta_,^\beta}
			+\rho^3\tensor{N}{_\alpha^\conjbeta^\conjgamma}\tensor{A}{_\conjbeta_\conjgamma}
			-\tfrac{1}{8}(\euler+1)(\euler-2n-3)\tensor{\varphi}{_0_\alpha}\\
			&\phantom{=\;}
				+\tfrac{3}{4}i\rho\tensor{\nabla}{_\alpha}\tensor{\varphi}{_0_0}
				+\tfrac{1}{2}i\rho\tensor{\nabla}{_\alpha}\tensor{\varphi}{_\beta^\beta}
				-i\rho\tensor{\nabla}{^\conjbeta}\tensor{\varphi}{_\alpha_\conjbeta}
				+\tfrac{1}{2}\rho^2\sublaplacian\tensor{\varphi}{_0_\alpha}
				-\tfrac{1}{2}i\rho^2\tensor{\nabla}{_0}\tensor{\varphi}{_0_\alpha}\\
			&\phantom{=\;}
				+\tfrac{1}{2}\rho^2(\tensor{\nabla}{_\alpha}\tensor{\nabla}{^\beta}\tensor{\varphi}{_0_\beta}
					+\tensor{\nabla}{_\alpha}\tensor{\nabla}{^\conjbeta}\tensor{\varphi}{_0_\conjbeta})
				-\rho^3\tensor{\nabla}{_0}\tensor{\nabla}{_\alpha}\tensor{\varphi}{_\beta^\beta}
				+\tfrac{1}{2}\rho^3(\tensor{\nabla}{_0}\tensor{\nabla}{^\conjbeta}
					\tensor{\varphi}{_\alpha_\conjbeta}
					+\tensor{\nabla}{_0}\tensor{\nabla}{^\beta}\tensor{\varphi}{_\alpha_\beta}),
		\end{split}\\
		\begin{split}
			\tensor{\Ein}{_\alpha_\conjbeta}
			&\equiv \rho^2\tensor{R}{_\alpha_\conjbeta}
			-2\rho^2\tensor{N}{_\alpha^\conjgamma_\rho}\tensor{N}{_\conjbeta^\rho_\conjgamma}
			-\tfrac{1}{8}\left( (\euler)^2-(2n+2)\euler-8 \right)\tensor{\varphi}{_\alpha_\conjbeta}\\
			&\phantom{=\;}
			+\tfrac{1}{8}\tensor{h}{_\alpha_\conjbeta}(\euler-4)\tensor{\varphi}{_0_0}
			+\tfrac{1}{4}\tensor{h}{_\alpha_\conjbeta}\euler\tensor{\varphi}{_\gamma^\gamma}
			+i\rho(\tensor{\nabla}{_\alpha}\tensor{\varphi}{_0_\conjbeta}
				-\tensor{\nabla}{_\conjbeta}\tensor{\varphi}{_0_\alpha})\\
			&\phantom{=\;}
			-\tfrac{1}{4}i\rho^2\tensor{h}{_\alpha_\conjbeta}\tensor{\nabla}{_0}\tensor{\varphi}{_0_0}
			-\tfrac{1}{2}i\rho^2\tensor{h}{_\alpha_\conjbeta}\tensor{\nabla}{_0}\tensor{\varphi}{_\gamma^\gamma}
			-\tfrac{1}{2}\rho^2\tensor{\nabla}{_\alpha}\tensor{\nabla}{_\conjbeta}\tensor{\varphi}{_0_0}
			-\rho^2\tensor{\nabla}{_\alpha}\tensor{\nabla}{_\conjbeta}\tensor{\varphi}{_\gamma^\gamma}\\
			&\phantom{=\;}
			+\tfrac{1}{2}\rho^2(\sublaplacian\tensor{\varphi}{_\alpha_\conjbeta}
				+\tensor{\nabla}{_\alpha}\tensor{\nabla}{^\gamma}\tensor{\varphi}{_\conjbeta_\gamma}
				+\tensor{\nabla}{_\alpha}\tensor{\nabla}{^\conjgamma}\tensor{\varphi}{_\conjbeta_\conjgamma}
				+\tensor{\nabla}{_\conjbeta}\tensor{\nabla}{^\conjgamma}\tensor{\varphi}{_\alpha_\conjgamma}
				+\tensor{\nabla}{_\conjbeta}\tensor{\nabla}{^\gamma}\tensor{\varphi}{_\alpha_\gamma})\\
			&\phantom{=\;}
			+\tfrac{1}{2}\rho^3(\tensor{\nabla}{_0}\tensor{\nabla}{_\alpha}\tensor{\varphi}{_0_\conjbeta}
				+\tensor{\nabla}{_0}\tensor{\nabla}{_\conjbeta}\tensor{\varphi}{_0_\alpha})
			-\tfrac{1}{2}\rho^4\tensor{\nabla}{_0}\tensor{\nabla}{_0}\tensor{\varphi}{_\alpha_\conjbeta},
		\end{split}\\
		\begin{split}
			\tensor{\Ein}{_\alpha_\beta}
			&\equiv in\rho^2\tensor{A}{_\alpha_\beta}
			+\rho^2(\tensor{N}{_\gamma_\alpha_\beta_,^\gamma}+\tensor{N}{_\gamma_\beta_\alpha_,^\gamma})
			-\rho^4\tensor{A}{_\alpha_\beta_,_0}
			-\tfrac{1}{8}\euler(\euler-2n-2)\tensor{\varphi}{_\alpha_\beta}\\
			&\phantom{=\;}
			-\tfrac{1}{2}\rho^2\tensor{\nabla}{_\alpha}\tensor{\nabla}{_\beta}\tensor{\varphi}{_0_0}
			-\rho^2\tensor{\nabla}{_\alpha}\tensor{\nabla}{_\beta}\tensor{\varphi}{_\gamma^\gamma}\\
			&\phantom{=\;}
			+\tfrac{1}{2}\rho^2(\sublaplacian\tensor{\varphi}{_\alpha_\beta}
				+\tensor{\nabla}{_\alpha}\tensor{\nabla}{^\conjgamma}\tensor{\varphi}{_\beta_\conjgamma}
				+\tensor{\nabla}{_\alpha}\tensor{\nabla}{^\gamma}\tensor{\varphi}{_\beta_\gamma}
				+\tensor{\nabla}{_\beta}\tensor{\nabla}{^\conjgamma}\tensor{\varphi}{_\alpha_\conjgamma}
				+\tensor{\nabla}{_\beta}\tensor{\nabla}{^\gamma}\tensor{\varphi}{_\alpha_\gamma}
				+2i\tensor{\nabla}{_0}\tensor{\varphi}{_\alpha_\beta})\\
			&\phantom{=\;}
			+\tfrac{1}{2}\rho^3(\tensor{\nabla}{_0}\tensor{\nabla}{_\alpha}\tensor{\varphi}{_0_\beta}
				+\tensor{\nabla}{_0}\tensor{\nabla}{_\beta}\tensor{\varphi}{_0_\alpha})
			-\tfrac{1}{2}\rho^4\tensor{\nabla}{_0}\tensor{\nabla}{_0}\tensor{\varphi}{_\alpha_\beta}.
		\end{split}
	\end{align*}
	\allowdisplaybreaks[0]%
\end{lem}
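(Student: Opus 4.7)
The strategy is to apply formula \eqref{eq:RicciTensorOfACHMetric} component-by-component using the data already assembled: the Ricci tensor $\tensor{\overline{R}}{_I_J}$ of the auxiliary connection $\overline{\nabla}$ from \eqref{eq:RicciOfExtendedTWConnection}, the difference tensor $\tensor{D}{_I^K_J}$ from Table \ref{tbl:ConnectionFormsOfACHMetricWithUpperIndices} (already reduced modulo (N1)--(N3)), and the rules \eqref{eq:CalculationOfTrivialExtensionOfTWConnection} that convert $\tensor{\overline\nabla}{_\infty}$ into the Euler-type weight $(\euler-\#)$ and $\tensor{\overline\nabla}{_0}$, $\tensor{\overline\nabla}{_\alpha}$ into $\rho^2\tensor{\nabla}{_0}$, $\rho\tensor{\nabla}{_\alpha}$ respectively. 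Adding the universal shift $\tfrac{1}{2}(n+2)\tensor{g}{_I_J}$ then produces $\tensor{\Ein}{_I_J}$; its constant piece is precisely what cancels the model Ricci contribution, leaving the linearised expressions in $\tensor{\varphi}{_i_j}$ together with the curvature-geometric terms recorded in the lemma.

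Concretely, for each pair $(I,J)$ I proceed by (i) computing the trace $\tensor{D}{_I^K_K}$, (ii) applying $\tensor{\overline\nabla}{_K}$ and $\tensor{\overline\nabla}{_J}$ to the tabulated $D$-entries via \eqref{eq:CalculationOfTrivialExtensionOfTWConnection} and the Leibniz rule, and (iii) expanding the quadratic terms $\tensor{D}{_I^L_K}\tensor{D}{_J^K_L}$ and $\tensor{D}{_I^L_J}\tensor{D}{_L^K_K}$ keeping only the cross-products that pair a leading constant piece of one factor (the $\pm\tfrac{1}{2}$, $\pm 2$, $\pm\tfrac{i}{2}\tensor{\delta}{_\alpha^\beta}$, $\tensor{h}{_\alpha_\conjbeta}$ at the top of each table entry) with a $\tensor{\varphi}$-piece of the other, since all genuinely quadratic $\tensor{\varphi}$-contributions are killed by (N1). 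The quadratic polynomials in $\euler$ such as $\euler(\euler-2)$, $(\euler)^2-(2n+2)\euler-8$ and $\euler(\euler-2n-2)$ decorating the statement arise by composing the $\euler$-weight of $\tensor{\overline\nabla}{_\infty}$ with the $\tfrac{1}{2}(\euler-\#)\tensor{\varphi}{_I_J}$ factors inside $D$, while the curvature-geometric pieces $\tensor{R}{_\alpha_\conjbeta}-2\tensor{N}{_\alpha^\conjgamma_\rho}\tensor{N}{_\conjbeta^\rho_\conjgamma}$ and $in\tensor{A}{_\alpha_\beta}+\tensor{N}{_\gamma_\alpha_\beta_,^\gamma}+\tensor{N}{_\gamma_\beta_\alpha_,^\gamma}$ come from the nonzero entries of $\tensor{\overline{R}}{_I_J}$ combined with $\overline{\nabla}$-derivatives of the constant $-\rho\tensor{N}{_\alpha_\gamma_\beta}$ and $\rho^2\tensor{A}{_\alpha_\beta}$ pieces of $D$; the factor of $n$ in $in\tensor{A}{_\alpha_\beta}$ traces to a summation over one horizontal index, and the $\rho^4\tensor{A}{_\alpha_\beta_,_0}$ term comes from $\tensor{\overline\nabla}{_0}$ acting on the $\rho^2\tensor{A}{_\alpha_\beta}$ entry.

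The main obstacle is sheer bookkeeping: with roughly thirty $D$-entries, the quadratic contractions produce dozens of candidate cross-terms per component. Two systematic simplifications keep this tractable. First, (N1) eliminates every $\tensor{\varphi}\cdot\tensor{\varphi}$ product outright, so only ``constant $\times\tensor{\varphi}$'' pairings contribute to the quadratic part of \eqref{eq:RicciTensorOfACHMetric}. Second, whenever reordering covariant derivatives forces an invocation of the commutator identities \eqref{eq:CommutatorOfCovariantDerivativeOnScalar}, the curvature or torsion factor generated comes with a coefficient of order $\rho$ or $\rho^2$ that vanishes on the flat CR sphere, so these corrections are absorbed into (N2)--(N3) without explicit computation. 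With these reductions applied uniformly, the seven formulas in the lemma drop out by a long but routine component-wise check; the constants can be cross-verified against the flat CR sphere case, where all curvature-geometric pieces vanish and only the polynomial structure in $\euler$ survives, providing a strong consistency test on the final coefficients.
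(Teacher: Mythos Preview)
Your approach is correct and essentially identical to the paper's: the paper also computes each of the four pieces $\tensor{\overline\nabla}{_K}\tensor{D}{_I^K_J}$, $\tensor{\overline\nabla}{_J}\tensor{D}{_I^K_K}$, $\tensor{D}{_I^L_K}\tensor{D}{_J^K_L}$, $\tensor{D}{_I^L_J}\tensor{D}{_L^K_K}$ from Table~\ref{tbl:ConnectionFormsOfACHMetricWithUpperIndices} modulo (N1)--(N3), records them in four intermediate tables, and then assembles the result via \eqref{eq:RicciTensorOfACHMetric} together with $\tensor{\overline{R}}{_I_J}$ from \eqref{eq:RicciOfExtendedTWConnection} and the shift $\tfrac{1}{2}(n+2)\tensor{g}{_I_J}$. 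The simplifications you highlight---that (N1) kills genuine $\varphi\cdot\varphi$ products so only constant-times-$\varphi$ cross-terms survive in the quadratic pieces, and that (N2)--(N3) absorb the commutator corrections---are exactly what makes the tabulated computation tractable.
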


\begin{proof}
	Using Table \ref{tbl:ConnectionFormsOfACHMetricWithUpperIndices} we compute,
	modulo terms of type (N1)--(N3),
	\begin{equation*}
		\tensor{\overline\nabla}{_K}\tensor{D}{_I^K_J},\qquad
		\tensor{\overline\nabla}{_J}\tensor{D}{_I^K_K},\qquad
		\tensor{D}{_I^L_K}\tensor{D}{_J^K_L}\qquad\text{and}\qquad
		\tensor{D}{_I^L_J}\tensor{D}{_L^K_K}
	\end{equation*}
	to obtain Tables \ref{tbl:DerivativesOfConnectionForms1}--\ref{tbl:ProductOfConnectionForms2}.
	Then, from \eqref{eq:RicciOfExtendedTWConnection} and \eqref{eq:RicciTensorOfACHMetric}, the lemma follows.
\end{proof}

\begin{table}[htbp]
	\small
\begin{tabular}{ll}
	\toprule
	\multicolumn{1}{c}{Type} & \multicolumn{1}{c}{Value (modulo terms of type (N1)--(N3))} \\ \midrule
	$\tensor{\overline\nabla}{_K}\tensor{D}{_\infty^K_\infty}$ & $1$ \\ \midrule
	$\tensor{\overline\nabla}{_K}\tensor{D}{_\infty^K_0}$
	& $\tfrac{1}{2}\rho^3\partial_\rho\tensor{\nabla}{_0}\tensor{\varphi}{_0_0}
		+\tfrac{1}{2}\rho(\euler+1)
		(\tensor{\nabla}{^\alpha}\tensor{\varphi}{_0_\alpha}
			+\tensor{\nabla}{^\conjalpha}\tensor{\varphi}{_0_\conjalpha})$ \\ \midrule
	$\tensor{\overline\nabla}{_K}\tensor{D}{_\infty^K_\alpha}$
	& $\tfrac{1}{2}\rho^2(\euler-1)\tensor{\nabla}{_0}\tensor{\varphi}{_0_\alpha}
		+\tfrac{1}{2}\rho^2\partial_\rho(\tensor{\nabla}{^\conjbeta}\tensor{\varphi}{_\alpha_\conjbeta}
		+\tensor{\nabla}{^\beta}\tensor{\varphi}{_\alpha_\beta})$ \\ \midrule
	$\tensor{\overline\nabla}{_K}\tensor{D}{_0^K_0}$
	& $-\tfrac{3}{2}-\tfrac{1}{8}(\euler-3)(\euler-4)\tensor{\varphi}{_0_0}
		+\tfrac{1}{2}\rho^2\sublaplacian\tensor{\varphi}{_0_0}
		+\rho^3(\tensor{\nabla}{_0}\tensor{\nabla}{^\alpha}\tensor{\varphi}{_0_\alpha}
			+\tensor{\nabla}{_0}\tensor{\nabla}{^\conjalpha}\tensor{\varphi}{_0_\conjalpha})$ \\
	& $+\tfrac{1}{2}\rho^4\tensor{\nabla}{_0}\tensor{\nabla}{_0}\tensor{\varphi}{_0_0}$ \\ \midrule
	$\tensor{\overline\nabla}{_K}\tensor{D}{_0^K_\alpha}$
	& $-\tfrac{1}{8}(\euler-2)(\euler-3)\tensor{\varphi}{_0_\alpha}
		+\tfrac{i}{2}\rho\tensor{\nabla}{_\alpha}\tensor{\varphi}{_0_0}
		-\tfrac{i}{2}\rho(\tensor{\nabla}{^\conjbeta}\tensor{\varphi}{_\alpha_\conjbeta}
		+\tensor{\nabla}{^\beta}\tensor{\varphi}{_\alpha_\beta})$ \\
	& $+\tfrac{1}{2}\rho^2\sublaplacian\tensor{\varphi}{_0_\alpha}
		+\tfrac{1}{2}\rho^2(\tensor{\nabla}{_\alpha}\tensor{\nabla}{^\beta}\tensor{\varphi}{_0_\beta}
			+\tensor{\nabla}{_\alpha}\tensor{\nabla}{^\conjbeta}\tensor{\varphi}{_0_\conjbeta})$ \\
	& $+\rho^3\tensor{A}{_\alpha_\beta_,^\beta}
		+\tfrac{1}{2}\rho^3\tensor{\nabla}{_0}\tensor{\nabla}{_\alpha}\tensor{\varphi}{_0_0}
		+\tfrac{1}{2}\rho^3(\tensor{\nabla}{_0}\tensor{\nabla}{^\beta}\tensor{\varphi}{_\alpha_\beta}
			+\tensor{\nabla}{_0}\tensor{\nabla}{^\conjbeta}\tensor{\varphi}{_\alpha_\conjbeta})$ \\ \midrule
	$\tensor{\overline\nabla}{_K}\tensor{D}{_\alpha^K_\conjbeta}$
	& $-\tfrac{1}{4}\tensor{h}{_\alpha_\conjbeta}
		-\tfrac{1}{8}(\euler-1)(\euler-2)\tensor{\varphi}{_\alpha_\conjbeta}
		+\tfrac{i}{2}\rho(\tensor{\nabla}{_\alpha}\tensor{\varphi}{_0_\conjbeta}
		-\tensor{\nabla}{_\conjbeta}\tensor{\varphi}{_0_\alpha})$ \\
	& $+\tfrac{1}{2}\rho^2(\sublaplacian\tensor{\varphi}{_\alpha_\conjbeta}
		+\tensor{\nabla}{_\alpha}\tensor{\nabla}{^\gamma}\tensor{\varphi}{_\conjbeta_\gamma}
		+\tensor{\nabla}{_\alpha}\tensor{\nabla}{^\conjgamma}\tensor{\varphi}{_\conjbeta_\conjgamma}
		+\tensor{\nabla}{_\conjbeta}\tensor{\nabla}{^\conjgamma}\tensor{\varphi}{_\alpha_\conjgamma}
		+\tensor{\nabla}{_\conjbeta}\tensor{\nabla}{^\gamma}\tensor{\varphi}{_\alpha_\gamma})$ \\
	& $+\tfrac{1}{2}\rho^3
		(\tensor{\nabla}{_0}\tensor{\nabla}{_\alpha}\tensor{\varphi}{_0_\conjbeta}
		+\tensor{\nabla}{_0}\tensor{\nabla}{_\conjbeta}\tensor{\varphi}{_0_\alpha})
		-\tfrac{1}{2}\rho^4\tensor{\nabla}{_0}\tensor{\nabla}{_0}\tensor{\varphi}{_\alpha_\conjbeta}$ \\ \midrule
	$\tensor{\overline\nabla}{_K}\tensor{D}{_\alpha^K_\beta}$
	& $\rho^2\tensor{N}{_\gamma_\alpha_\beta_,^\gamma}-\rho^4\tensor{A}{_\alpha_\beta_,_0}
		-\tfrac{1}{8}(\euler-1)(\euler-2)\tensor{\varphi}{_\alpha_\beta}
		+\tfrac{i}{2}\rho(\tensor{\nabla}{_\alpha}\tensor{\varphi}{_0_\beta}
		+\tensor{\nabla}{_\beta}\tensor{\varphi}{_0_\alpha})$ \\
	& $+\tfrac{1}{2}\rho^2(\sublaplacian\tensor{\varphi}{_\alpha_\beta}
		+\tensor{\nabla}{_\alpha}\tensor{\nabla}{^\conjgamma}\tensor{\varphi}{_\beta_\conjgamma}
		+\tensor{\nabla}{_\alpha}\tensor{\nabla}{^\gamma}\tensor{\varphi}{_\beta_\conjgamma}
		+\tensor{\nabla}{_\beta}\tensor{\nabla}{^\conjgamma}\tensor{\varphi}{_\alpha_\conjgamma}
		+\tensor{\nabla}{_\beta}\tensor{\nabla}{^\gamma}\tensor{\varphi}{_\alpha_\gamma})
		+i\rho^2\tensor{\nabla}{_0}\tensor{\varphi}{_\alpha_\beta}$ \\
	& $+\tfrac{1}{2}\rho^3
		(\tensor{\nabla}{_0}\tensor{\nabla}{_\alpha}\tensor{\varphi}{_0_\beta}
		+\tensor{\nabla}{_0}\tensor{\nabla}{_\beta}\tensor{\varphi}{_0_\alpha})
		-\tfrac{1}{2}\rho^4\tensor{\nabla}{_0}\tensor{\nabla}{_0}\tensor{\varphi}{_\alpha_\beta}$ \\ \bottomrule
\end{tabular}
	\vspace{0.5em}
	\caption{$\tensor{\overline\nabla}{_K}\tensor{D}{_I^K_J}$ for a normal-form ACH metric $g$}
	\label{tbl:DerivativesOfConnectionForms1}
\end{table}

\begin{table}[htbp]
	\small
\begin{tabular}{ll}
	\toprule
	\multicolumn{1}{c}{Type} & \multicolumn{1}{c}{Value (modulo terms of type (N1)--(N3))} \\ \midrule
	$\tensor{\overline\nabla}{_\infty}\tensor{D}{_\infty^K_K}$
	& $2n+3+\tfrac{1}{2}\euler(\euler-1)\tensor{\varphi}{_0_0}+\euler(\euler-1)\tensor{\varphi}{_\alpha^\alpha}$
	\\ \midrule
	$\tensor{\overline\nabla}{_0}\tensor{D}{_\infty^K_K}$
	& $\tfrac{1}{2}\rho^3\partial_\rho\tensor{\nabla}{_0}\tensor{\varphi}{_0_0}
	+\rho^3\partial_\rho\tensor{\nabla}{_0}\tensor{\varphi}{_\alpha^\alpha}$ \\ \midrule
	$\tensor{\overline\nabla}{_\alpha}\tensor{D}{_\infty^K_K}$
	& $\tfrac{1}{2}\rho^2\partial_\rho\tensor{\nabla}{_\alpha}\tensor{\varphi}{_0_0}
	+\rho^2\partial_\rho\tensor{\nabla}{_\alpha}\tensor{\varphi}{_\beta^\beta}$ \\ \midrule
	$\tensor{\overline\nabla}{_0}\tensor{D}{_0^K_K}$
	& $\tfrac{1}{2}\rho^4\tensor{\nabla}{_0}\tensor{\nabla}{_0}\tensor{\varphi}{_0_0}
	+\rho^4\tensor{\nabla}{_0}\tensor{\nabla}{_0}\tensor{\varphi}{_\alpha^\alpha}$ \\ \midrule
	$\tensor{\overline\nabla}{_\alpha}\tensor{D}{_0^K_K}$
	& $\tfrac{1}{2}\rho^3\tensor{\nabla}{_0}\tensor{\nabla}{_\alpha}\tensor{\varphi}{_0_0}
	+\rho^3\tensor{\nabla}{_0}\tensor{\nabla}{_\alpha}\tensor{\varphi}{_\beta^\beta}$ \\ \midrule
	$\tensor{\overline\nabla}{_\conjbeta}\tensor{D}{_\alpha^K_K}$
	& $\tfrac{1}{2}\rho^2\tensor{\nabla}{_\alpha}\tensor{\nabla}{_\conjbeta}\tensor{\varphi}{_0_0}
	+\rho^2\tensor{\nabla}{_\alpha}\tensor{\nabla}{_\conjbeta}\tensor{\varphi}{_\gamma^\gamma}
	+\tfrac{i}{2}\rho^2\tensor{h}{_\alpha_\conjbeta}\tensor{\nabla}{_0}\tensor{\varphi}{_0_0}
	+i\rho^2\tensor{h}{_\alpha_\conjbeta}\tensor{\nabla}{_0}\tensor{\varphi}{_\gamma^\gamma}$ \\ \midrule
	$\tensor{\overline\nabla}{_\beta}\tensor{D}{_\alpha^K_K}$
	& $\tfrac{1}{2}\rho^2\tensor{\nabla}{_\alpha}\tensor{\nabla}{_\beta}\tensor{\varphi}{_0_0}
	+\rho^2\tensor{\nabla}{_\alpha}\tensor{\nabla}{_\beta}\tensor{\varphi}{_\gamma^\gamma}$ \\ \bottomrule
\end{tabular}
	\vspace{0.5em}
	\caption{$\tensor{\overline\nabla}{_J}\tensor{D}{_I^K_K}$ for a normal-form ACH metric $g$}
	\label{tbl:DerivativesOfConnectionForms2}
\end{table}

\begin{table}[htbp]
	\small
\begin{tabular}{ll}
	\toprule
	\multicolumn{1}{c}{Type} & \multicolumn{1}{c}{Value (modulo terms of type (N1)--(N3))} \\ \midrule
	$\tensor{D}{_\infty^L_K}\tensor{D}{_\infty^K_L}$
	& $2n+5-2\euler\tensor{\varphi}{_0_0}-2\euler\tensor{\varphi}{_\alpha^\alpha}$ \\ \midrule
	$\tensor{D}{_\infty^L_K}\tensor{D}{_0^K_L}$
	& $-\rho^2\tensor{\nabla}{_0}\tensor{\varphi}{_0_0}-\rho^2\tensor{\nabla}{_0}\tensor{\varphi}{_\alpha^\alpha}$
	\\ \midrule
	$\tensor{D}{_\infty^L_K}\tensor{D}{_\alpha^K_L}$
	& $\tfrac{i}{2}(\euler+1)\tensor{\varphi}{_0_\beta}-\rho\tensor{\nabla}{_\alpha}\tensor{\varphi}{_0_0}
	-\rho\tensor{\nabla}{_\alpha}\tensor{\varphi}{_\beta^\beta}
	-\tfrac{1}{2}\rho(\euler-2)
	\tensor{N}{_\alpha^\conjbeta^\conjgamma}\tensor{\varphi}{_\conjbeta_\conjgamma}$ \\ \midrule
	$\tensor{D}{_0^L_K}\tensor{D}{_0^K_L}$
	& $-\tfrac{1}{2}(n+4)+(\euler-n-2)\tensor{\varphi}{_0_0}+\tensor{\varphi}{_\alpha^\alpha}
	-i\rho(\tensor{\nabla}{^\alpha}\tensor{\varphi}{_0_\alpha}
		-\tensor{\nabla}{^\conjalpha}\tensor{\varphi}{_0_\conjalpha})$ \\ \midrule
	$\tensor{D}{_0^L_K}\tensor{D}{_\alpha^K_L}$
	& $-\rho^3\tensor{N}{_\alpha^\conjbeta^\conjgamma}\tensor{A}{_\conjbeta_\conjgamma}
		+\tfrac{1}{4}(3\euler-2n-5)\tensor{\varphi}{_0_\alpha}
		+\tfrac{i}{2}(\tensor{\nabla}{^\conjbeta}\tensor{\varphi}{_\alpha_\conjbeta}
			-\tensor{\nabla}{^\beta}\tensor{\varphi}{_\alpha_\beta})
		-\tfrac{i}{2}\rho\tensor{N}{_\alpha^\conjbeta^\conjgamma}\tensor{\varphi}{_\conjbeta_\conjgamma}$ \\
	& $+\tfrac{i}{2}\rho^2\tensor{\nabla}{_0}\tensor{\varphi}{_0_\alpha}$ \\ \midrule
	$\tensor{D}{_\alpha^L_K}\tensor{D}{_\conjbeta^K_L}$
	& $\rho^2\tensor{N}{_\alpha^\conjgamma_\rho}\tensor{N}{_\conjbeta^\rho_\conjgamma}
	+\tfrac{1}{2}(\euler-2)\tensor{\varphi}{_\alpha_\conjbeta}
	+\tfrac{1}{2}\tensor{h}{_\alpha_\conjbeta}\tensor{\varphi}{_0_0}
	-\tfrac{i}{2}\rho(\tensor{\nabla}{_\alpha}\tensor{\varphi}{_0_\conjbeta}
		-\tensor{\nabla}{_\conjbeta}\tensor{\varphi}{_0_\alpha})$ \\ \midrule
	$\tensor{D}{_\alpha^L_K}\tensor{D}{_\beta^K_L}$
	& $\tfrac{1}{2}\euler\tensor{\varphi}{_\alpha_\beta}+i\rho^2\tensor{A}{_\alpha_\beta}
		+\tfrac{i}{2}\rho(\tensor{\nabla}{_\alpha}\tensor{\varphi}{_0_\beta}
		+\tensor{\nabla}{_\beta}\tensor{\varphi}{_0_\alpha})$ \\ \bottomrule
\end{tabular}
	\vspace{0.5em}
	\caption{$\tensor{D}{_I^L_K}\tensor{D}{_J^K_L}$ for a normal-form ACH metric $g$}
	\label{tbl:ProductOfConnectionForms1}
\end{table}

\begin{table}[htbp]
	\small
\begin{tabular}{ll}
	\toprule
	\multicolumn{1}{c}{Type} & \multicolumn{1}{c}{Value (modulo terms of type (N1)--(N3))} \\ \midrule
	$\tensor{D}{_\infty^L_\infty}\tensor{D}{_L^K_K}$
	& $2n+3-\tfrac{1}{2}\euler\tensor{\varphi}{_0_0}-\euler\tensor{\varphi}{_\alpha^\alpha}$ \\ \midrule
	$\tensor{D}{_\infty^L_0}\tensor{D}{_L^K_K}$
	& $-\rho^2\tensor{\nabla}{_0}\tensor{\varphi}{_0_0}-2\rho^2\tensor{\nabla}{_0}\tensor{\varphi}{_\alpha^\alpha}$
	\\ \midrule
	$\tensor{D}{_\infty^L_\alpha}\tensor{D}{_L^K_K}$
	& $-\tfrac{1}{2}\rho\tensor{\nabla}{_\alpha}\tensor{\varphi}{_0_0}
	-\rho\tensor{\nabla}{_\alpha}\tensor{\varphi}{_\beta^\beta}
	+\rho\tensor{N}{_\alpha^\conjbeta^\conjgamma}\tensor{\varphi}{_\conjbeta_\conjgamma}$ \\ \midrule
	$\tensor{D}{_0^L_0}\tensor{D}{_L^K_K}$
	& $-\tfrac{1}{2}(2n+3)+\tfrac{1}{4}\euler\tensor{\varphi}{_0_0}
	+\tfrac{1}{2}\euler\tensor{\varphi}{_\alpha^\alpha}
	+\tfrac{1}{8}(2n+3)(\euler-4)\tensor{\varphi}{_0_0}$ \\ \midrule
	$\tensor{D}{_0^L_\alpha}\tensor{D}{_L^K_K}$
	& $\tfrac{1}{8}(2n+3)(\euler-3)\tensor{\varphi}{_0_\alpha}
	+\tfrac{i}{4}\rho\tensor{\nabla}{_\alpha}\tensor{\varphi}{_0_0}
	+\tfrac{i}{2}\rho\tensor{\nabla}{_\alpha}\tensor{\varphi}{_\beta^\beta}
	-\tfrac{i}{2}\rho\tensor{N}{_\alpha^\conjbeta^\conjgamma}\tensor{\varphi}{_\conjbeta_\conjgamma}$ \\ \midrule
	$\tensor{D}{_\alpha^L_\conjbeta}\tensor{D}{_L^K_K}$
	& $-\tfrac{1}{4}(2n+3)\tensor{h}{_\alpha_\conjbeta}
		+\tfrac{1}{8}(2n+3)(\euler-2)\tensor{\varphi}{_\alpha_\conjbeta}
		+\tfrac{1}{8}\tensor{h}{_\alpha_\conjbeta}\euler\tensor{\varphi}{_0_0}
		+\tfrac{1}{4}\tensor{h}{_\alpha_\conjbeta}\euler\tensor{\varphi}{_\gamma^\gamma}$ \\
	& $+\tfrac{i}{4}\rho^2\tensor{h}{_\alpha_\conjbeta}\tensor{\nabla}{_0}\tensor{\varphi}{_0_0}
		+\tfrac{i}{2}\rho^2
		\tensor{h}{_\alpha_\conjbeta}\tensor{\nabla}{_0}\tensor{\varphi}{_\gamma^\gamma}$ \\ \midrule
	$\tensor{D}{_\alpha^L_\beta}\tensor{D}{_L^K_K}$
	& $\tfrac{1}{8}(2n+3)(\euler-2)\tensor{\varphi}{_\alpha_\beta}$ \\ \bottomrule
\end{tabular}
	\vspace{0.5em}
	\caption{$\tensor{D}{_I^L_J}\tensor{D}{_L^K_K}$ for a normal-form ACH metric $g$}
	\label{tbl:ProductOfConnectionForms2}
\end{table}

Since by definition $\tensor{\varphi}{_i_j}$ is $O(\rho)$, from Lemma \ref{lem:RicciTensorModuloHighOrderTerm}
we have
\begin{equation}
	\label{eq:FirstOrderTermOfACHMetric}
\begin{split}
	\tensor{\Ein}{_\infty_\infty}
	&=\tfrac{3}{2}\tensor{\varphi}{_0_0}+\tensor{\varphi}{_\alpha^\alpha}+O(\rho^2),\qquad
	\tensor{\Ein}{_\infty_0}=O(\rho^2),\qquad
	\tensor{\Ein}{_\infty_\alpha}=-i\tensor{\varphi}{_0_\alpha}+O(\rho^2),\\
	\tensor{\Ein}{_0_0}
	&=\tfrac{3}{8}(2n+1)\tensor{\varphi}{_0_0}-\tfrac{1}{2}\tensor{\varphi}{_\alpha^\alpha}+O(\rho^2),\qquad
	\tensor{\Ein}{_0_\alpha}=\tfrac{1}{2}(n+1)\tensor{\varphi}{_0_\alpha}+O(\rho^2),\\
	\tensor{\Ein}{_\alpha_\conjbeta}
	&=\tfrac{1}{8}(2n+9)\tensor{\varphi}{_\alpha_\conjbeta}
	-\tfrac{3}{8}\tensor{h}{_\alpha_\conjbeta}\tensor{\varphi}{_0_0}
	+\tfrac{1}{4}\tensor{h}{_\alpha_\conjbeta}\tensor{\varphi}{_\gamma^\gamma}+O(\rho^2),\\
	\tensor{\Ein}{_\alpha_\beta}&=\tfrac{1}{8}(2n+1)\tensor{\varphi}{_\alpha_\beta}+O(\rho^2).
\end{split}
\end{equation}
These identities show that all $\tensor{\varphi}{_i_j}$ must be $O(\rho^2)$ in order $\tensor{\Ein}{_I_J}$
to be $O(\rho^2)$.
If $\tensor{\varphi}{_i_j}=O(\rho^2)$, by repeating this process we obtain the following,
which immediately show Proposition \ref{prop:ACHEToThirdOrder}.
\begin{equation}
	\label{eq:SecondOrderTermOfACHMetric}
\begin{split}
	\tensor{\Ein}{_\infty_\infty}&=2\tensor{\varphi}{_0_0}+O(\rho^3),\qquad
	\tensor{\Ein}{_\infty_0}=O(\rho^3),\qquad
	\tensor{\Ein}{_\infty_\alpha}=-\tfrac{3}{2}i\tensor{\varphi}{_0_\alpha}+O(\rho^3),\\
	\tensor{\Ein}{_0_0}&=\tfrac{1}{2}(2n+1)\tensor{\varphi}{_0_0}+O(\rho^3),\qquad
	\tensor{\Ein}{_0_\alpha}=\tfrac{3}{8}(2n+1)\tensor{\varphi}{_0_\alpha}+O(\rho^3),\\
	\tensor{\Ein}{_\alpha_\conjbeta}&=\rho^2\tensor{R}{_\alpha_\conjbeta}
		-2\rho^2\tensor{N}{_\alpha^\conjgamma_\rho}\tensor{N}{_\conjbeta^\rho_\conjgamma}
		+\tfrac{1}{2}(n+2)\tensor{\varphi}{_\alpha_\conjbeta}
		-\tfrac{1}{4}\tensor{h}{_\alpha_\conjbeta}\tensor{\varphi}{_0_0}
		+\tfrac{1}{2}\tensor{h}{_\alpha_\conjbeta}\tensor{\varphi}{_\gamma^\gamma}+O(\rho^3),\\
	\tensor{\Ein}{_\alpha_\beta}&=in\rho^2\tensor{A}{_\alpha_\beta}
		+\rho^2(\tensor{N}{_\gamma_\alpha_\beta_,^\gamma}+\tensor{N}{_\gamma_\beta_\alpha_,^\gamma})
		+\tfrac{1}{2}n\tensor{\varphi}{_\alpha_\beta}+O(\rho^3).
\end{split}
\end{equation}

\section{Higher-order perturbation}
\label{sec:HigherOrderPerturbation}

Taking over the setting from the last section, we
next introduce a perturbation in $g$ and see what happens to the Einstein tensor.
Let $m\ge 1$ be a fixed integer
and $\tensor{\psi}{_i_j}$ a $2$-tensor on $M$ with coefficients in $C^\infty(X)$ such that
\begin{align*}
	\tensor{\psi}{_0_0}&=O(\rho^{m+2}),\qquad
	\tensor{\psi}{_0_\alpha}=O(\rho^{\max\set{m+1,3}}),\\
	\tensor{\psi}{_\alpha_\conjbeta}&=O(\rho^{m+2}),\qquad
	\tensor{\psi}{_\alpha_\beta}=O(\rho^{\max\set{m,3}}).
\end{align*}
Let $g$ be a normal-form ACH metric satisfying \eqref{eq:LowestTermsConditionOfACHMetric} and
consider another metric $g'$ with the following components with respect to
$\set{W_I}=\set{\euler,\rho^2T,\rho Z_\alpha,\rho Z_\conjalpha}$:
\begin{equation}
	\tensor{{g'}}{_i_j}=\tensor{g}{_i_j}+\tensor{\psi}{_i_j}.
	\label{eq:PerturbedMetric}
\end{equation}
Note that $g'$ also satisfies \eqref{eq:LowestTermsConditionOfACHMetric}.
We can read off from Lemma \ref{lem:RicciTensorModuloHighOrderTerm}
the amount to which the Einstein tensor changes, which is denoted by $\tensor{\delta\Ein}{_I_J}$.
For example we have
\begin{equation}
	\label{eq:EinPerturbationLowerOrder}
	\begin{split}
		\tensor{\delta\Ein}{_\infty_\alpha}
		&=-\tfrac{1}{2}i(\euler+1)\tensor{\psi}{_0_\alpha}
			+\tfrac{1}{2}\rho^2\partial_\rho\tensor{\nabla}{^\beta}\tensor{\psi}{_\alpha_\beta}
			+\tfrac{1}{2}\rho
				\tensor{N}{_\alpha^\conjbeta^\conjgamma}\euler\tensor{\psi}{_\conjbeta_\conjgamma}
			+O(\rho^{m+2}),\\
		\tensor{\delta\Ein}{_0_\alpha}
		&=-\tfrac{1}{8}\left( (\euler)^2-(2n+2)\euler-2n-3 \right)\tensor{\psi}{_0_\alpha}+O(\rho^{m+2}),\\
		\tensor{\delta\Ein}{_\alpha_\beta}
		&=-\tfrac{1}{8}\euler(\euler-2n-2)\tensor{\psi}{_\alpha_\beta}+O(\rho^{m+1}).
\end{split}
\end{equation}
In the same way we can compute $\tensor{\delta\Ein}{_\infty_\infty}$, $\tensor{\delta\Ein}{_\infty_0}$,
$\tensor{\delta\Ein}{_0_0}$ and $\tensor{\delta\Ein}{_\alpha_\conjbeta}$ modulo $O(\rho^{m+2})$.
But we want them to be given modulo one order higher.
In this section we shall prove the following.

\begin{prop}
	\label{prop:HigherOrderPerturbationAndEinsteinTensor}
	The components $\tensor{\delta\Ein}{_\infty_\infty}$, $\tensor{\delta\Ein}{_\infty_0}$,
	$\tensor{\delta\Ein}{_0_0}$, $\tensor{\delta\Ein}{_\alpha_\conjbeta}$ of the difference
	$\delta\Ein=\Ein'-\Ein$ between the Einstein tensors of $g$ and $g'$ are given by,
	modulo $O(\rho^{m+3})$,
	\allowdisplaybreaks
	\begin{subequations}
		\label{eq:EinPerturbationHigherOrder}
	\begin{align}
		\label{eq:EinPerturbationHigherOrderInftyInfty}
		\begin{split}
			\tensor{\delta\Ein}{_\infty_\infty}
			&\equiv -\tfrac{1}{2}\euler(\euler-4)\tensor{\psi}{_0_0}
			-\euler(\euler-2)\tensor{\psi}{_\alpha^\alpha}\\
			&\phantom{\equiv \;}
				+\tfrac{1}{2}\rho^2(\euler)^2(\tensor{\Phi}{^\alpha^\beta}\tensor{\psi}{_\alpha_\beta}
					+\tensor{\Phi}{^\conjalpha^\conjbeta}\tensor{\psi}{_\conjalpha_\conjbeta}),
		\end{split}\\
		\label{eq:EinPerturbationHigherOrderInftyZero}
		\tensor{\delta\Ein}{_\infty_0}
		&\equiv \tfrac{1}{2}\rho(\euler+1)(\tensor{\nabla}{^\alpha}\tensor{\psi}{_0_\alpha}
			+\tensor{\nabla}{^\conjalpha}\tensor{\psi}{_0_\conjalpha})
			-\tfrac{1}{2}\rho^3\partial_\rho(\tensor{A}{^\alpha^\beta}\tensor{\psi}{_\alpha_\beta}
				+\tensor{A}{^\conjalpha^\conjbeta}\tensor{\psi}{_\conjalpha_\conjbeta}),\\
		\label{eq:EinPerturbationHigherOrderZeroZero}
		\begin{split}
			\tensor{\delta\Ein}{_0_0}
			&\equiv -\tfrac{1}{8}\left( (\euler)^2-(2n+4)\euler-4n \right)\tensor{\psi}{_0_0}
				+\tfrac{1}{2}(\euler-2)\tensor{\psi}{_\alpha^\alpha}\\
				&\phantom{\equiv \;}
				+i\rho(\tensor{\nabla}{^\alpha}\tensor{\psi}{_0_\alpha}
					-\tensor{\nabla}{^\conjalpha}\tensor{\psi}{_0_\conjalpha})
				-\tfrac{1}{4}\rho^3\partial_\rho(\tensor{\Phi}{^\alpha^\beta}\tensor{\psi}{_\alpha_\beta}
					+\tensor{\Phi}{^\conjalpha^\conjbeta}\tensor{\psi}{_\conjalpha_\conjbeta}),
		\end{split}\\
		\label{eq:EinPerturbationHigherOrderTrace}
		\begin{split}
			\tensor{\delta\Ein}{_\alpha^\alpha}
			&\equiv \tfrac{1}{8}n(\euler-4)\tensor{\psi}{_0_0}
			-\tfrac{1}{8}\left( (\euler)^2-(4n+2)\euler-8 \right)\tensor{\psi}{_\alpha^\alpha}\\
			&\phantom{\equiv \;}
			-i\rho(\tensor{\nabla}{^\alpha}\tensor{\psi}{_0_\alpha}
			-\tensor{\nabla}{^\conjalpha}\tensor{\psi}{_0_\conjalpha})\\
			&\phantom{\equiv \;}
			-\tfrac{1}{8}\rho^2\left( (n-2)\euler+(2n+4) \right)
			(\tensor{\Phi}{^\alpha^\beta}\tensor{\psi}{_\alpha_\beta}
			+\tensor{\Phi}{^\conjalpha^\conjbeta}\tensor{\psi}{_\conjalpha_\conjbeta})\\
			&\phantom{\equiv \;}
			+\tfrac{1}{2}\rho^2(\tensor{\nabla}{^\alpha}\tensor{\nabla}{^\beta}\tensor{\psi}{_\alpha_\beta}
			+\tensor{\nabla}{^\conjalpha}\tensor{\nabla}{^\conjbeta}\tensor{\psi}{_\conjalpha_\conjbeta})
			+\tfrac{1}{2}\rho^2(\tensor{N}{^\gamma^\alpha^\beta_,_\gamma}\tensor{\psi}{_\alpha_\beta}
			+\tensor{N}{^\conjgamma^\conjalpha^\conjbeta_,_\conjgamma}\tensor{\psi}{_\conjalpha_\conjbeta})\\
			&\phantom{\equiv \;}
			+\tfrac{1}{2}\rho^2
			(\tensor{N}{^\gamma^\alpha^\beta}\tensor{\nabla}{_\gamma}\tensor{\psi}{_\alpha_\beta}
			+\tensor{N}{^\conjgamma^\conjalpha^\conjbeta}\tensor{\nabla}{_\conjgamma}
			\tensor{\psi}{_\conjalpha_\conjbeta}),
		\end{split}\\
		\label{eq:EinPerturbationHigherOrderHermitianTraceFree}
		\begin{split}
			\tf(\tensor{\delta\Ein}{_\alpha_\conjbeta})
			&\equiv -\tfrac{1}{8}\left( (\euler)^2-(2n+2)\euler-8 \right)\tf(\tensor{\psi}{_\alpha_\conjbeta})\\
			&\phantom{\equiv \;}
			+i\rho\tf(\tensor{\nabla}{_\alpha}\tensor{\psi}{_0_\conjbeta}
			-\tensor{\nabla}{_\conjbeta}\tensor{\psi}{_0_\alpha})+\rho^2\tf(\tensor{\Psi}{_\alpha_\conjbeta}),
		\end{split}
	\end{align}
	\end{subequations}
	\allowdisplaybreaks[0]%
	where $\tensor{\delta\Ein}{_\alpha^\alpha}$ is the trace of $\tensor{\delta\Ein}{_\alpha_\conjbeta}$
	with respect to $\tensor{h}{_\alpha_\conjbeta}$, $\tf$ denotes the trace-free part, and
	\begin{equation*}
		\begin{split}
			\tensor{\Psi}{_\alpha_\conjbeta}
			&=\tfrac{1}{4}(\euler-2)(\tensor{\Phi}{_\alpha^\conjgamma}\tensor{\psi}{_\conjbeta_\conjgamma}
			+\tensor{\Phi}{_\conjbeta^\gamma}\tensor{\psi}{_\alpha_\gamma})
			+\tfrac{1}{2}(\tensor{\nabla}{^\conjgamma}\tensor{\nabla}{_\alpha}\tensor{\psi}{_\conjbeta_\conjgamma}
			+\tensor{\nabla}{^\gamma}\tensor{\nabla}{_\conjbeta}\tensor{\psi}{_\alpha_\gamma})\\
			&\phantom{=\;}
			-\tensor{N}{_\alpha^\conjgamma^\conjsigma_,_\conjgamma}\tensor{\psi}{_\conjbeta_\conjsigma}
			-\tensor{N}{_\conjbeta^\gamma^\sigma_,_\gamma}\tensor{\psi}{_\alpha_\sigma}
			+\tensor{N}{_\alpha^\conjgamma^\conjsigma}
			(\tensor{\nabla}{_\conjbeta}\tensor{\psi}{_\conjgamma_\conjsigma}
			-\tensor{\nabla}{_\conjsigma}\tensor{\psi}{_\conjbeta_\conjgamma})
			+\tensor{N}{_\conjbeta^\gamma^\sigma}
			(\tensor{\nabla}{_\alpha}\tensor{\psi}{_\gamma_\sigma}
			-\tensor{\nabla}{_\sigma}\tensor{\psi}{_\alpha_\gamma}).
		\end{split}
	\end{equation*}
\end{prop}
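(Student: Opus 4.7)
The strategy is to revisit the computations of Lemma~\ref{lem:RicciTensorModuloHighOrderTerm} with $g$ replaced by $g'=g+\psi$, carefully restoring the contributions that were dropped under conventions (N1)--(N3). For the difference $\delta\Ein=\Ein'-\Ein$, each convention must be re-examined: substituting $\varphi+\psi$ for $\varphi$ in an (N1)-type quadratic expression produces a cross term $2\varphi\cdot\psi$ that is higher order in $\rho$ than the pure $\psi\cdot\psi$ part; the (N2) and (N3) pieces, linear in $\varphi$ with Nijenhuis-- or torsion-coupled coefficients, become linear in $\psi$ of the corresponding components, and they must be tracked at the stated accuracy because the diagonal-type bounds in \eqref{eq:EinPerturbationHigherOrder} are one order tighter than in \eqref{eq:EinPerturbationLowerOrder}.

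First I would run the computations of $\tensor{{g'}}{^I^J}$ and $\tensor{{D'}}{_I^K_J}$ as in Tables~\ref{tbl:ConnectionFormsOfACHMetricWithLowerIndices} and \ref{tbl:ConnectionFormsOfACHMetricWithUpperIndices}, but now with the replacement $\varphi\mapsto\varphi+\psi$ and without dropping the Nijenhuis-- or torsion-coupled coefficients. The linear-in-$\psi$ part, once inserted into \eqref{eq:RicciTensorOfACHMetric}, delivers the direct $\psi$-terms appearing in \eqref{eq:EinPerturbationLowerOrder} and the main scalar linear operators in \eqref{eq:EinPerturbationHigherOrder}. Commutator identities \eqref{eq:CommutatorOfCovariantDerivativeOnScalar} then bring in the Nijenhuis-derivative couplings $\tensor{N}{^\gamma^\alpha^\beta}\tensor{\nabla}{_\gamma}\tensor{\psi}{_\alpha_\beta}$ and $\tensor{N}{^\gamma^\alpha^\beta_,_\gamma}\tensor{\psi}{_\alpha_\beta}$ seen in \eqref{eq:EinPerturbationHigherOrderTrace}, and the $\tensor{\nabla}{_\alpha}\tensor{\nabla}{_\conjbeta}-\tensor{\nabla}{_\conjbeta}\tensor{\nabla}{_\alpha}$ commutators produce the $i\rho\,\tf(\tensor{\nabla}{_\alpha}\tensor{\psi}{_0_\conjbeta}-\tensor{\nabla}{_\conjbeta}\tensor{\psi}{_0_\alpha})$ piece of \eqref{eq:EinPerturbationHigherOrderHermitianTraceFree}.

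Second I would read off the genuine cross terms $\varphi\cdot\psi$ from the quadratic products $\tensor{D}{_I^L_K}\tensor{D}{_J^K_L}$ and $\tensor{D}{_I^L_J}\tensor{D}{_L^K_K}$ of \eqref{eq:RicciTensorOfACHMetric}, together with the cross contributions from the inverse-metric expansion $\tensor{g}{^\alpha^\beta}\equiv-\tensor{\varphi}{^\alpha^\beta}$, $\tensor{g}{^\alpha^\conjbeta}\equiv\tensor{h}{^\alpha^\conjbeta}-\tensor{\varphi}{^\alpha^\conjbeta}$ of \eqref{eq:InverseOfNormalACH} applied to linear-in-$\psi$ pieces of Table~\ref{tbl:ConnectionFormsOfACHMetricWithLowerIndices}. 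Using $\tensor{\varphi}{_\alpha_\beta}=\rho^2\tensor{\Phi}{_\alpha_\beta}+O(\rho^3)$, $\tensor{\varphi}{_\alpha_\conjbeta}=\rho^2\tensor{\Phi}{_\alpha_\conjbeta}+O(\rho^3)$ from Proposition~\ref{prop:ACHEToThirdOrder}, together with $\tensor{\varphi}{_0_0},\tensor{\varphi}{_0_\alpha}=O(\rho^3)$, one sees that only $\tensor{\Phi}{^\alpha^\beta}\tensor{\psi}{_\alpha_\beta}$-type couplings (and their complex conjugates) survive at the targeted order: the couplings $\varphi_{00}\cdot\psi$ and $\varphi_{0\alpha}\cdot\psi$ are suppressed to $O(\rho^{m+3})$. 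The $\tensor{A}{^\alpha^\beta}\tensor{\psi}{_\alpha_\beta}$ coupling in \eqref{eq:EinPerturbationHigherOrderInftyZero} emerges by substituting $\tensor{\Phi}{_\alpha_\beta}=-2i\tensor{A}{_\alpha_\beta}+\cdots$ from \eqref{eq:PhiTensorInLowestTerms} in the appropriate cross term, while $\tensor{\Psi}{_\alpha_\conjbeta}$ in \eqref{eq:EinPerturbationHigherOrderHermitianTraceFree} collects the $\Phi$-cross contributions together with the restored (N2)-type Nijenhuis-- and torsion-coupled linear pieces.

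The main obstacle is the sheer bookkeeping: for each target component one must enumerate the surviving cross entries across Tables~\ref{tbl:DerivativesOfConnectionForms1}--\ref{tbl:ProductOfConnectionForms2}, verify that the suppressed (N2)--(N3) terms, when applied to $\psi$, stay within the claimed remainders (so that, e.g., no Nijenhuis term appears in the $O(\rho^{m+1})$ approximation of $\tensor{\delta\Ein}{_\alpha_\beta}$ in \eqref{eq:EinPerturbationLowerOrder}), and finally simplify using \eqref{eq:CommutatorOfCovariantDerivativeOnScalar} and \eqref{eq:PhiTensorInLowestTerms}. Decomposing $\tensor{\delta\Ein}{_\alpha_\conjbeta}$ into its hermitian trace and trace-free part, as done in the statement, helps separate the scalar curvature-type contributions \eqref{eq:EinPerturbationHigherOrderTrace} from the primitive tensor contributions \eqref{eq:EinPerturbationHigherOrderHermitianTraceFree}, and parallels the scalar/trace-free split already present in Lemma~\ref{lem:RicciTensorModuloHighOrderTerm}.
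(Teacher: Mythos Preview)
Your approach is essentially the same as the paper's: linearize the Ricci-tensor computation by tracking the first-order change in $D$ and in the inverse metric, then assemble $\delta\Ein$ from the linearized version of \eqref{eq:RicciTensorOfACHMetric}. The paper organizes this somewhat differently from your description: rather than ``restoring'' the (N1)--(N3) terms in Lemma~\ref{lem:RicciTensorModuloHighOrderTerm}, it computes $\tensor{\delta D}{_I_K_J}$ exactly from Table~\ref{tbl:ConnectionFormsOfACHMetricWithLowerIndices}, then uses the key observation that $\tensor{\delta g}{^K^L}\cdot\tensor{\delta D}{_I_L_J}=O(\rho^{m+3})$ to write $\tensor{\delta D}{_I^K_J}\equiv \tensor{g}{^K^L}\tensor{\delta D}{_I_L_J}+\tensor{\delta g}{^K^L}\tensor{D}{_I_L_J}$, where only the $O(\rho^3)$-level data of the base $g$ (listed explicitly in the text and in \eqref{eq:ACHConnectionFormsToSecondOrder}) are needed. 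The five quantities $\tensor{\overline\nabla}{_K}(\tensor{\delta D}{_I^K_J})$, $\tensor{\overline\nabla}{_J}(\tensor{\delta D}{_I^K_K})$, $\tensor{D}{_I^L_K}\cdot\tensor{\delta D}{_J^K_L}$, $\tensor{D}{_I^K_L}\cdot\tensor{\delta D}{_K^L_L}$, $\tensor{D}{_K^L_L}\cdot\tensor{\delta D}{_I^K_J}$ are tabulated separately and summed.

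One small correction to your narrative: the $\tensor{A}{^\alpha^\beta}\tensor{\psi}{_\alpha_\beta}$ coupling in \eqref{eq:EinPerturbationHigherOrderInftyZero} does \emph{not} arise by substituting $\tensor{\Phi}{_\alpha_\beta}=-2i\tensor{A}{_\alpha_\beta}+\cdots$ into a $\Phi$-cross term; it comes directly from the torsion piece $\tensor{D}{_0^\conjbeta_\alpha}\equiv\rho^2\tensor{A}{_\alpha^\conjbeta}+\cdots$ of the base connection (see \eqref{eq:ACHConnectionFormsToSecondOrder}) multiplied against $\tensor{\delta D}{_\infty^\alpha_\conjbeta}$. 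The $\Phi$-couplings and the $A$-coupling have separate origins in the paper's bookkeeping.
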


First, let
\begin{equation*}
	\nabla^{g'}_{W_J}W_I=\overline{\nabla}_{W_J}W_I+\tensor{ {D'} }{_I^K_J}W_K
\end{equation*}
and $\tensor{ {D'} }{_I_K_J}:=\tensor{ {g'} }{_K_L}\tensor{ {D'} }{_I^L_J}$.
Then $\tensor{\delta D}{_I_K_J}=\tensor{ {D'} }{_I_K_J}-\tensor{D}{_I_K_J}$ is given in Table
\ref{tbl:PerturbationOfConnectionFormsWithLowerIndices}, which is read off immediately from
Table \ref{tbl:ConnectionFormsOfACHMetricWithLowerIndices}.

\begin{table}[htbp]
	\small
\begin{tabular}{ll}
	\toprule
	\multicolumn{1}{c}{Type} & \multicolumn{1}{c}{Value (modulo $O(\rho^{m+3})$)} \\ \midrule
	$\tensor{\delta D}{_\infty_\infty_\infty}$ & $0$ \\ \midrule
	$\tensor{\delta D}{_\infty_0_\infty}$ & $0$ \\ \midrule
	$\tensor{\delta D}{_\infty_\alpha_\infty}$ & $0$ \\ \midrule
	$\tensor{\delta D}{_\infty_\infty_0}$ & $0$ \\ \midrule
	$\tensor{\delta D}{_\infty_0_0}$ & $\tfrac{1}{2}(\euler-4)\tensor{\psi}{_0_0}$ \\ \midrule
	$\tensor{\delta D}{_\infty_\alpha_0}$ & $\tfrac{1}{2}(\euler-3)\tensor{\psi}{_0_\alpha}$ \\ \midrule
	$\tensor{\delta D}{_\infty_\infty_\alpha}$ & $0$ \\ \midrule
	$\tensor{\delta D}{_\infty_0_\alpha}$ & $\tfrac{1}{2}(\euler-3)\tensor{\psi}{_0_\alpha}$ \\ \midrule
	$\tensor{\delta D}{_\infty_\conjbeta_\alpha}$
	& $\tfrac{1}{2}(\euler-2)\tensor{\psi}{_\alpha_\conjbeta}$ \\ \midrule
	$\tensor{\delta D}{_\infty_\beta_\alpha}$ & $\tfrac{1}{2}(\euler-2)\tensor{\psi}{_\alpha_\beta}$ \\ \midrule
	$\tensor{\delta D}{_0_\infty_0}$ & $-\tfrac{1}{2}(\euler-4)\tensor{\psi}{_0_0}$ \\ \midrule
	$\tensor{\delta D}{_0_0_0}$ & $0$ \\ \midrule
	$\tensor{\delta D}{_0_\alpha_0}$ & $0$ \\ \midrule
	$\tensor{\delta D}{_0_\infty_\alpha}$ & $-\tfrac{1}{2}(\euler-3)\tensor{\psi}{_0_\alpha}$ \\ \midrule
	$\tensor{\delta D}{_0_0_\alpha}$ & $0$ \\ \midrule
	$\tensor{\delta D}{_0_\conjbeta_\alpha}$
	& $\tfrac{i}{2}\tensor{h}{_\alpha_\conjbeta}\tensor{\psi}{_0_0}
		+\tfrac{1}{2}\rho
			(\tensor{\nabla}{_\alpha}\tensor{\psi}{_0_\conjbeta}
			-\tensor{\nabla}{_\conjbeta}\tensor{\psi}{_0_\alpha})
		+\tfrac{1}{2}\rho^2
			(\tensor{A}{_\alpha^\conjgamma}\tensor{\psi}{_\conjbeta_\conjgamma}
			+\tensor{A}{_\conjbeta^\gamma}\tensor{\psi}{_\alpha_\gamma})$ \\ \midrule
	$\tensor{\delta D}{_0_\beta_\alpha}$
	& $\tfrac{1}{2}\rho
			(\tensor{\nabla}{_\alpha}\tensor{\psi}{_0_\beta}-\tensor{\nabla}{_\beta}\tensor{\psi}{_0_\alpha}
			-\tensor{N}{_\alpha_\beta^\conjgamma}\tensor{\psi}{_0_\conjgamma})
		+\tfrac{1}{2}\rho^2\tensor{\nabla}{_0}\tensor{\psi}{_\alpha_\beta}$ \\ \midrule
	$\tensor{\delta D}{_\alpha_\infty_\conjbeta}$
	& $-\tfrac{1}{2}(\euler-2)\tensor{\psi}{_\alpha_\conjbeta}$ \\ \midrule
	$\tensor{\delta D}{_\alpha_0_\conjbeta}$
	& $\tfrac{i}{2}\tensor{h}{_\alpha_\conjbeta}\tensor{\psi}{_0_0}
		+\tfrac{1}{2}\rho
			(\tensor{\nabla}{_\alpha}\tensor{\psi}{_0_\conjbeta}
			+\tensor{\nabla}{_\conjbeta}\tensor{\psi}{_0_\alpha})
		-\tfrac{1}{2}\rho^2
			(\tensor{A}{_\alpha^\conjgamma}\tensor{\psi}{_\conjbeta_\conjgamma}
			+\tensor{A}{_\conjbeta^\gamma}\tensor{\psi}{_\alpha_\gamma})$ \\ \midrule
	$\tensor{\delta D}{_\alpha_\conjgamma_\conjbeta}$
	& $\tfrac{i}{2}\tensor{h}{_\alpha_\conjbeta}\tensor{\psi}{_0_\conjgamma}
		+\tfrac{i}{2}\tensor{h}{_\alpha_\conjgamma}\tensor{\psi}{_0_\conjbeta}
		+\tfrac{1}{2}\rho(\tensor{\nabla}{_\alpha}\tensor{\psi}{_\conjbeta_\conjgamma}
			-\tensor{N}{_\conjbeta_\conjgamma^\sigma}\tensor{\psi}{_\alpha_\sigma})$ \\ \midrule
	$\tensor{\delta D}{_\alpha_\gamma_\conjbeta}$
	& $\tfrac{i}{2}\tensor{h}{_\alpha_\conjbeta}\tensor{\psi}{_0_\gamma}
		-\tfrac{i}{2}\tensor{h}{_\gamma_\conjbeta}\tensor{\psi}{_0_\alpha}
		+\tfrac{1}{2}\rho(\tensor{\nabla}{_\conjbeta}\tensor{\psi}{_\alpha_\gamma}
			-\tensor{N}{_\alpha_\gamma^\conjsigma}\tensor{\psi}{_\conjbeta_\conjsigma})$ \\ \midrule
	$\tensor{\delta D}{_\alpha_\infty_\beta}$ & $-\tfrac{1}{2}(\euler-2)\tensor{\psi}{_\alpha_\beta}$ \\ \midrule
	$\tensor{\delta D}{_\alpha_0_\beta}$
	& $\tfrac{1}{2}\rho
			(\tensor{\nabla}{_\alpha}\tensor{\psi}{_0_\beta}+\tensor{\nabla}{_\beta}\tensor{\psi}{_0_\alpha}
			-\tensor{N}{_\alpha_\beta^\conjgamma}\tensor{\psi}{_0_\conjgamma})
		-\tfrac{1}{2}\rho^2\tensor{\nabla}{_0}\tensor{\psi}{_\alpha_\beta}$ \\ \midrule
	$\tensor{\delta D}{_\alpha_\conjgamma_\beta}$
	& $\tfrac{i}{2}\tensor{h}{_\alpha_\conjgamma}\tensor{\psi}{_0_\beta}
		+\tfrac{i}{2}\tensor{h}{_\beta_\conjgamma}\tensor{\psi}{_0_\alpha}
		-\tfrac{1}{2}\rho(\tensor{\nabla}{_\conjgamma}\tensor{\psi}{_\alpha_\beta}
			+\tensor{N}{_\alpha_\beta^\conjsigma}\tensor{\psi}{_\conjgamma_\conjsigma})$ \\ \midrule
	$\tensor{\delta D}{_\alpha_\gamma_\beta}$
	& $\tfrac{1}{2}\rho(\tensor{\nabla}{_\alpha}\tensor{\psi}{_\beta_\gamma}
			+\tensor{\nabla}{_\beta}\tensor{\psi}{_\alpha_\gamma}
			-\tensor{\nabla}{_\gamma}\tensor{\psi}{_\alpha_\beta})$ \\ \bottomrule
\end{tabular}
	\vspace{0.5em}
	\caption{$\tensor{\delta D}{_I_K_J}=\tensor{ {D'} }{_I_K_J}-\tensor{D}{_I_K_J}$
	for a perturbation \eqref{eq:PerturbedMetric}}
	\label{tbl:PerturbationOfConnectionFormsWithLowerIndices}
\end{table}

Next we compute $\tensor{\delta D}{_I^K_J}:=\tensor{ {D'} }{_I^K_J}-\tensor{D}{_I^K_J}$.
To do this we need the knowledge of the following quantities:
$\tensor{D}{_I_K_J}$ modulo $O(\rho^3)$,
$\tensor{g}{^I^J}$ modulo $O(\rho^3)$ and
$\tensor{\delta g}{^I^J}:=\tensor{ {g'} }{^I^J}-\tensor{g}{^I^J}$ modulo $O(\rho^{m+3})$.
They can be read off from Table \ref{tbl:ConnectionFormsOfACHMetricWithLowerIndices},
\eqref{eq:LowestTermsConditionOfACHMetric} and \eqref{eq:InverseOfNormalACH}.
Namely, $\tensor{D}{_I_K_J}$ mod $O(\rho^3)$ are given by
\allowdisplaybreaks
\begin{alignat*}{4}
	\tensor{D}{_\infty_\infty_\infty}&\equiv -4,&\qquad
	\tensor{D}{_\infty_0_\infty}&\equiv 0,&\qquad
	\tensor{D}{_\infty_\alpha_\infty}&\equiv 0,\\
	\tensor{D}{_\infty_\infty_0}&\equiv 0,&\qquad
	\tensor{D}{_\infty_0_0}&\equiv -2,&\qquad
	\tensor{D}{_\infty_\alpha_0}&\equiv 0,\\
	\tensor{D}{_\infty_\infty_\alpha}&\equiv 0,&\qquad
	\tensor{D}{_\infty_0_\alpha}&\equiv 0,&\qquad
	\tensor{D}{_\infty_\conjbeta_\alpha}&\equiv -\tensor{h}{_\alpha_\conjbeta},&\qquad
	\tensor{D}{_\infty_\beta_\alpha}&\equiv 0,\\
	\tensor{D}{_0_\infty_0}&\equiv 2,&\qquad
	\tensor{D}{_0_0_0}&\equiv 0,&\qquad
	\tensor{D}{_0_\alpha_0}&\equiv 0,\\
	\tensor{D}{_0_\infty_\alpha}&\equiv 0,&\qquad
	\tensor{D}{_0_0_\alpha}&\equiv 0,&\qquad
	\tensor{D}{_0_\conjbeta_\alpha}&\equiv \tfrac{1}{2}i\tensor{h}{_\alpha_\conjbeta},&\qquad
	\tensor{D}{_0_\beta_\alpha}&\equiv \rho^2\tensor{A}{_\alpha_\beta},\\
	\tensor{D}{_\alpha_\infty_0}&\equiv 0,&\qquad
	\tensor{D}{_\alpha_0_0}&\equiv 0,&\qquad
	\tensor{D}{_\alpha_\conjbeta_0}&\equiv \tfrac{1}{2}i\tensor{h}{_\alpha_\conjbeta},&\qquad
	\tensor{D}{_\alpha_\beta_0}&\equiv 0,\\
	\tensor{D}{_\alpha_\infty_\conjbeta}&\equiv \tensor{h}{_\alpha_\conjbeta},&\qquad
	\tensor{D}{_\alpha_0_\conjbeta}&\equiv \tfrac{1}{2}i\tensor{h}{_\alpha_\conjbeta},&\qquad
	\tensor{D}{_\alpha_\conjgamma_\conjbeta}&\equiv 0,&\qquad
	\tensor{D}{_\alpha_\gamma_\conjbeta}&\equiv 0,\\
	\tensor{D}{_\alpha_\infty_\beta}&\equiv 0,&\qquad
	\tensor{D}{_\alpha_0_\beta}&\equiv -\rho^2\tensor{A}{_\alpha_\beta},&\qquad
	\tensor{D}{_\alpha_\conjgamma_\beta}&\equiv 0,&\qquad
	\tensor{D}{_\alpha_\gamma_\beta}&\equiv -\rho\tensor{N}{_\alpha_\gamma_\beta};
\end{alignat*}
\allowdisplaybreaks[0]%
$\tensor{g}{^I^J}$ mod $O(\rho^3)$ are
\begin{align*}
	\tensor{g}{^\infty^\infty}&\equiv\tfrac{1}{4},\qquad
	\tensor{g}{^\infty^0}\equiv\tensor{g}{^\infty^\alpha}\equiv 0,\qquad
	\tensor{g}{^0^0}\equiv 1,\qquad
	\tensor{g}{^0^\alpha}\equiv 0,\\
	\tensor{g}{^\alpha^\conjbeta}&\equiv\tensor{h}{^\alpha^\conjbeta}-\rho^2\tensor{\Phi}{^\alpha^\conjbeta},\qquad
	\tensor{g}{^\alpha^\beta}\equiv -\rho^2\tensor{\Phi}{^\alpha^\beta};
\end{align*}
$\delta\tensor{g}{^I^J}$ mod $O(\rho^{m+3})$ are
\begin{equation}
	\label{eq:HighOrderPerturbationOfInverseOfACH}
	\begin{split}
		\tensor{\delta g}{^\infty^\infty}&\equiv\tensor{\delta g}{^\infty^0}
		\equiv\tensor{g}{^\infty^\alpha}\equiv 0,\qquad
		\tensor{\delta g}{^0^0}\equiv -\tensor{\psi}{_0_0},\qquad
		\tensor{\delta g}{^0^\alpha}\equiv -\tensor{\psi}{_0^\alpha},\\
		\tensor{\delta g}{^\alpha^\conjbeta}
		&\equiv -\tensor{\psi}{^\alpha^\conjbeta}
		+\rho^2(\tensor{\Phi}{^\alpha_\conjgamma}\tensor{\psi}{^\conjbeta^\conjgamma}
			+\tensor{\Phi}{^\conjbeta_\gamma}\tensor{\psi}{^\alpha^\gamma}),\\
		\tensor{\delta g}{^\alpha^\beta}&\equiv -\tensor{\psi}{^\alpha^\beta}
		+\rho^2(\tensor{\Phi}{^\alpha_\gamma}\tensor{\psi}{^\beta^\gamma}
			+\tensor{\Phi}{^\beta_\gamma}\tensor{\psi}{^\alpha^\gamma}).
	\end{split}
\end{equation}
Since Table \ref{tbl:PerturbationOfConnectionFormsWithLowerIndices} and
\eqref{eq:HighOrderPerturbationOfInverseOfACH} shows that $\delta\tensor{g}{^I^J}$ and $\delta\tensor{D}{_I_K_J}$
are both $O(\rho^{\max\set{m,3}})$,
we have $\delta\tensor{D}{^K^L}\cdot\delta\tensor{D}{_I_L_J}=O(\rho^{m+3})$ and hence
\begin{equation*}
	\delta\tensor{D}{_I^K_J}
	\equiv\tensor{g}{^K^L}\cdot\delta\tensor{D}{_I_L_J}+\delta\tensor{g}{^K^L}\cdot\tensor{D}{_I_L_J}
	\mod O(\rho^{m+3}),
\end{equation*}
where $\tensor{\delta D}{_I^K_J}:=\tensor{ {D'} }{_I^K_J}-\tensor{D}{_I^K_J}$.
Thus we obtain Table \ref{tbl:PerturbationOfConnectionForms}.

\begin{table}[htbp]
	\small
\begin{tabular}{ll}
	\toprule
	\multicolumn{1}{c}{Type} & \multicolumn{1}{c}{Value (modulo $O(\rho^{m+3})$)} \\ \midrule
	$\tensor{\delta D}{_\infty^\infty_\infty}$ & $0$ \\ \midrule
	$\tensor{\delta D}{_\infty^0_\infty}$ & $0$ \\ \midrule
	$\tensor{\delta D}{_\infty^\alpha_\infty}$ & $0$ \\ \midrule
	$\tensor{\delta D}{_\infty^\infty_0}$ & $0$ \\ \midrule
	$\tensor{\delta D}{_\infty^0_0}$ & $\tfrac{1}{2}\euler\tensor{\psi}{_0_0}$ \\ \midrule
	$\tensor{\delta D}{_\infty^\alpha_0}$ & $\tfrac{1}{2}(\euler+1)\tensor{\psi}{_0^\alpha}$ \\ \midrule
	$\tensor{\delta D}{_\infty^\infty_\alpha}$ & $0$ \\ \midrule
	$\tensor{\delta D}{_\infty^0_\alpha}$ & $\tfrac{1}{2}(\euler-1)\tensor{\psi}{_0_\alpha}$ \\ \midrule
	$\tensor{\delta D}{_\infty^\beta_\alpha}$
	& $\tfrac{1}{2}\euler\tensor{\psi}{_\alpha^\beta}
		-\tfrac{1}{2}\rho^3\partial_\rho\tensor{\Phi}{^\beta^\gamma}\tensor{\psi}{_\alpha_\gamma}
		-\rho^2\tensor{\Phi}{_\alpha_\gamma}\tensor{\psi}{^\beta^\gamma}$ \\ \midrule
	$\tensor{\delta D}{_\infty^\conjbeta_\alpha}$
	& $\tfrac{1}{2}\euler\tensor{\psi}{_\alpha^\conjbeta}
	-\tfrac{1}{2}\rho^3\partial_\rho\tensor{\Phi}{^\gamma^\conjbeta}\tensor{\psi}{_\alpha_\gamma}
	-\rho^2\tensor{\Phi}{_\alpha_\conjgamma}\tensor{\psi}{^\conjbeta^\conjgamma}$ \\ \midrule
	$\tensor{\delta D}{_0^\infty_0}$ & $-\tfrac{1}{8}(\euler-4)\tensor{\psi}{_0_0}$ \\ \midrule
	$\tensor{\delta D}{_0^0_0}$ & $0$ \\ \midrule
	$\tensor{\delta D}{_0^\alpha_0}$ & $0$ \\ \midrule
	$\tensor{\delta D}{_0^\infty_\alpha}$ & $-\tfrac{1}{8}(\euler-3)\tensor{\psi}{_0_\alpha}$ \\ \midrule
	$\tensor{\delta D}{_0^0_\alpha}$ & $-\tfrac{i}{2}\tensor{\psi}{_0_\alpha}$ \\ \midrule
	$\tensor{\delta D}{_0^\beta_\alpha}$
	& $\tfrac{i}{2}\tensor{\delta}{_\alpha^\beta}\tensor{\psi}{_0_0}
		-\tfrac{i}{2}\tensor{\psi}{_\alpha^\beta}
		+\tfrac{1}{2}\rho
			(\tensor{\nabla}{_\alpha}\tensor{\psi}{_0^\beta}-\tensor{\nabla}{^\beta}\tensor{\psi}{_0_\alpha})$\\
	& $-\tfrac{1}{2}\rho^2
			(\tensor{A}{_\alpha_\gamma}\tensor{\psi}{^\beta^\gamma}
			-\tensor{A}{^\beta^\gamma}\tensor{\psi}{_\alpha_\gamma})
		+\tfrac{i}{2}\rho^2
			(\tensor{\Phi}{^\beta^\gamma}\tensor{\psi}{_\alpha_\gamma}
			+\tensor{\Phi}{_\alpha_\gamma}\tensor{\psi}{^\beta^\gamma})$ \\ \midrule
	$\tensor{\delta D}{_0^\conjbeta_\alpha}$
	& $-\tfrac{i}{2}\tensor{\psi}{_\alpha^\conjbeta}
		+\tfrac{1}{2}\rho
			(\tensor{\nabla}{_\alpha}\tensor{\psi}{_0^\conjbeta}
			-\tensor{\nabla}{^\conjbeta}\tensor{\psi}{_0_\alpha}
			-\tensor{N}{_\alpha^\conjbeta^\conjgamma}\tensor{\psi}{_0_\conjgamma})
		+\tfrac{1}{2}\rho^2\tensor{\nabla}{_0}\tensor{\psi}{_\alpha^\conjbeta}
		+\tfrac{i}{2}\rho^2(\tensor{\Phi}{^\gamma^\conjbeta}\tensor{\psi}{_\alpha_\gamma}
			+\tensor{\Phi}{_\alpha_\conjgamma}\tensor{\psi}{^\conjbeta^\conjgamma})$ \\ \midrule
	$\tensor{\delta D}{_\alpha^\infty_\conjbeta}$
	& $-\tfrac{1}{8}(\euler-2)\tensor{\psi}{_\alpha_\conjbeta}$ \\ \midrule
	$\tensor{\delta D}{_\alpha^0_\conjbeta}$
	& $\tfrac{1}{2}\rho
			(\tensor{\nabla}{_\alpha}\tensor{\psi}{_0_\conjbeta}
			+\tensor{\nabla}{_\conjbeta}\tensor{\psi}{_0_\alpha})
		-\tfrac{1}{2}\rho^2
			(\tensor{A}{_\alpha^\conjgamma}\tensor{\psi}{_\conjbeta_\conjgamma}
			+\tensor{A}{_\conjbeta^\gamma}\tensor{\psi}{_\alpha_\gamma})$ \\ \midrule
	$\tensor{\delta D}{_\alpha^\gamma_\conjbeta}$
	& $\tfrac{i}{2}\tensor{\delta}{_\alpha^\gamma}\tensor{\psi}{_0_\conjbeta}
		+\tfrac{1}{2}\rho(\tensor{\nabla}{_\alpha}\tensor{\psi}{_\conjbeta^\gamma}
			-\tensor{N}{_\conjbeta^\gamma^\sigma}\tensor{\psi}{_\alpha_\sigma})$ \\ \midrule
	$\tensor{\delta D}{_\alpha^\conjgamma_\conjbeta}$
	& $-\tfrac{i}{2}\tensor{\delta}{_\conjbeta^\conjgamma}\tensor{\psi}{_0_\alpha}
		+\tfrac{1}{2}\rho(\tensor{\nabla}{_\conjbeta}\tensor{\psi}{_\alpha^\conjgamma}
			-\tensor{N}{_\alpha^\conjgamma^\conjsigma}\tensor{\psi}{_\conjbeta_\conjsigma})$ \\ \midrule
	$\tensor{\delta D}{_\alpha^\infty_\beta}$
	& $-\tfrac{1}{8}(\euler-2)\tensor{\psi}{_\alpha_\beta}$ \\ \midrule
	$\tensor{\delta D}{_\alpha^0_\beta}$
	& $\tfrac{1}{2}\rho
			(\tensor{\nabla}{_\alpha}\tensor{\psi}{_0_\beta}+\tensor{\nabla}{_\beta}\tensor{\psi}{_0_\alpha})
		-\tfrac{1}{2}\rho(\tensor{N}{^\conjgamma_\alpha_\beta}+\tensor{N}{^\conjgamma_\beta_\alpha})
				\tensor{\psi}{_0_\conjgamma}
		-\tfrac{1}{2}\rho^2\tensor{\nabla}{_0}\tensor{\psi}{_\alpha_\beta}$ \\ \midrule
	$\tensor{\delta D}{_\alpha^\gamma_\beta}$
	& $\tfrac{i}{2}\tensor{\delta}{_\alpha^\gamma}\tensor{\psi}{_0_\beta}
		+\tfrac{i}{2}\tensor{\delta}{_\beta^\gamma}\tensor{\psi}{_0_\alpha}
		-\tfrac{1}{2}\rho\tensor{\nabla}{^\gamma}\tensor{\psi}{_\alpha_\beta}
		-\tfrac{1}{2}\rho(\tensor{N}{^\conjsigma_\alpha_\beta}+\tensor{N}{^\conjsigma_\beta_\alpha})
			\tensor{\psi}{_\conjsigma^\gamma}$ \\ \midrule
	$\tensor{\delta D}{_\alpha^\conjgamma_\beta}$
	& $\tfrac{1}{2}\rho(\tensor{\nabla}{_\alpha}\tensor{\psi}{_\beta^\conjgamma}
			+\tensor{\nabla}{_\beta}\tensor{\psi}{_\alpha^\conjgamma}
			-\tensor{\nabla}{^\conjgamma}\tensor{\psi}{_\alpha_\beta})$ \\ \bottomrule
\end{tabular}
	\vspace{0.5em}
	\caption{$\tensor{\delta D}{_I^K_J}=\tensor{ {D'} }{_I^K_J}-\tensor{D}{_I^K_J}$
	for a perturbation \eqref{eq:PerturbedMetric}}
	\label{tbl:PerturbationOfConnectionForms}
\end{table}

On the other hand, Table \ref{tbl:ConnectionFormsOfACHMetricWithUpperIndices} shows that,
modulo $O(\rho^3)$,
\begin{equation}
	\label{eq:ACHConnectionFormsToSecondOrder}
\begin{split}
	\tensor{D}{_\infty^\infty_\infty}&\equiv -1,\quad
	\tensor{D}{_\infty^0_\infty}\equiv 0,\quad
	\tensor{D}{_\infty^\alpha_\infty}\equiv 0,\\
	\tensor{D}{_\infty^\infty_0}&\equiv 0,\quad
	\tensor{D}{_\infty^0_0}\equiv -2,\quad
	\tensor{D}{_\infty^\alpha_0}\equiv 0,\\
	\tensor{D}{_\infty^\infty_\alpha}&\equiv 0,\quad
	\tensor{D}{_\infty^0_\alpha}\equiv 0,\quad
	\tensor{D}{_\infty^\beta_\alpha}\equiv -\tensor{\delta}{_\alpha^\beta}+\rho^2\tensor{\Phi}{_\alpha^\beta},\quad
	\tensor{D}{_\infty^\conjbeta_\alpha}\equiv \rho^2\tensor{\Phi}{_\alpha^\conjbeta},\\
	\tensor{D}{_0^\infty_0}&\equiv \tfrac{1}{2},\quad
	\tensor{D}{_0^0_0}\equiv 0,\quad
	\tensor{D}{_0^\alpha_0}\equiv 0,\\
	\tensor{D}{_0^\infty_\alpha}&\equiv 0,\quad
	\tensor{D}{_0^0_\alpha}\equiv 0,\quad
	\tensor{D}{_0^\beta_\alpha}
		\equiv \tfrac{1}{2}i\tensor{\delta}{_\alpha^\beta}-\tfrac{1}{2}i\rho^2\tensor{\Phi}{_\alpha^\beta},\quad
	\tensor{D}{_0^\conjbeta_\alpha}
		\equiv -\tfrac{1}{2}i\rho^2\tensor{\Phi}{_\alpha^\conjbeta}+\rho^2\tensor{A}{_\alpha^\conjbeta},\\
	\tensor{D}{_\alpha^\infty_0}&\equiv 0,\quad
	\tensor{D}{_\alpha^0_0}\equiv 0,\quad
	\tensor{D}{_\alpha^\beta_0}
		\equiv \tfrac{1}{2}i\tensor{\delta}{_\alpha^\beta}-\tfrac{1}{2}i\rho^2\tensor{\Phi}{_\alpha^\beta},\quad
	\tensor{D}{_\alpha^\conjbeta_0}
		\equiv -\tfrac{1}{2}i\rho^2\tensor{\Phi}{_\alpha^\conjbeta},\\
	\tensor{D}{_\alpha^\infty_\conjbeta}&\equiv \tfrac{1}{4}\tensor{h}{_\alpha_\conjbeta},\quad
	\tensor{D}{_\alpha^0_\conjbeta}\equiv \tfrac{1}{2}i\tensor{h}{_\alpha_\conjbeta},\quad
	\tensor{D}{_\alpha^\gamma_\conjbeta}\equiv 0,\quad
	\tensor{D}{_\alpha^\conjgamma_\conjbeta}\equiv 0,\\
	\tensor{D}{_\alpha^\infty_\beta}&\equiv 0,\quad
	\tensor{D}{_\alpha^0_\beta}\equiv -\rho^2\tensor{A}{_\alpha_\beta},\quad
	\tensor{D}{_\alpha^\gamma_\beta}\equiv 0,\quad
	\tensor{D}{_\alpha^\conjgamma_\beta}\equiv -\rho\tensor{N}{_\alpha^\conjgamma_\beta}.
\end{split}
\end{equation}
Using Table \ref{tbl:PerturbationOfConnectionForms} and \eqref{eq:ACHConnectionFormsToSecondOrder},
we compute
\begin{equation*}
	\tensor{\overline\nabla}{_K}(\delta\tensor{D}{_I^K_J}),\quad
	\tensor{\overline\nabla}{_J}(\delta\tensor{D}{_I^K_K}),\quad
	\tensor{D}{_I^L_K}\cdot\delta\tensor{D}{_J^K_L},\quad
	\tensor{D}{_I^K_L}\cdot\delta\tensor{D}{_K^L_L}\quad\text{and}\quad
	\tensor{D}{_K^L_L}\cdot\delta\tensor{D}{_I^K_J},
\end{equation*}
all modulo $O(\rho^{m+3})$. The result is Tables
\ref{tbl:DerivativeOfPerturbationOfConnectionForms1}--\ref{tbl:ProductOfPerturbationOfConnectionForms3}.
From these tables and
\begin{equation*}
	\begin{split}
		\tensor{\delta\Ein}{_I_J}
		&\equiv \tfrac{1}{2}(n+2)\delta\tensor{g}{_I_J}
		+\tensor{\overline\nabla}{_K}(\tensor{\delta D}{_I^K_J})
		-\tensor{\overline\nabla}{_J}(\tensor{\delta D}{_I^K_K})\\
		&\phantom{=\;}
		-\tensor{D}{_I^L_K}\cdot\tensor{\delta D}{_J^K_L}-\tensor{D}{_J^L_K}\cdot\tensor{\delta D}{_I^K_L}\\
		&\phantom{=\;}
		+\tensor{D}{_I^L_J}\cdot\tensor{\delta D}{_L^K_K}+\tensor{D}{_L^K_K}\cdot\tensor{\delta D}{_I^L_J}
		\mod O(\rho^{m+3}),
	\end{split}
\end{equation*}
we can verify Proposition \ref{prop:HigherOrderPerturbationAndEinsteinTensor}.

\begin{table}[htbp]
	\small
\begin{tabular}{ll}
	\toprule
	\multicolumn{1}{c}{Type} & \multicolumn{1}{c}{Value (modulo $O(\rho^{m+3})$)} \\ \midrule
	$\tensor{\overline\nabla}{_K}(\tensor{\delta D}{_\infty^K_\infty})$ & $0$ \\ \midrule
	$\tensor{\overline\nabla}{_K}(\tensor{\delta D}{_\infty^K_0})$
	& $\tfrac{1}{2}\rho(\euler+1)
		(\tensor{\nabla}{^\alpha}\tensor{\psi}{_0_\alpha}
		+\tensor{\nabla}{^\conjalpha}\tensor{\psi}{_0_\conjalpha})$ \\ \midrule
	$\tensor{\overline\nabla}{_K}(\tensor{\delta D}{_0^K_0})$
	& $-\tfrac{1}{8}(\euler-3)(\euler-4)\tensor{\psi}{_0_0}$ \\ \midrule
	$\tensor{\overline\nabla}{_K}(\tensor{\delta D}{_\alpha^K_\conjbeta})$
	& $-\tfrac{1}{8}(\euler-1)(\euler-2)\tensor{\psi}{_\alpha_\conjbeta}
		+\tfrac{i}{2}\rho(\tensor{\nabla}{_\alpha}\tensor{\psi}{_0_\conjbeta}
		-\tensor{\nabla}{_\conjbeta}\tensor{\psi}{_0_\alpha})$ \\
	& $+\tfrac{1}{2}\rho^2\tensor{\nabla}{^\conjgamma}
		(\tensor{\nabla}{_\alpha}\tensor{\psi}{_\conjbeta_\conjgamma}
		-\tensor{N}{_\conjbeta_\conjgamma^\sigma}\tensor{\psi}{_\alpha_\sigma})
		+\tfrac{1}{2}\rho^2\tensor{\nabla}{^\gamma}
		(\tensor{\nabla}{_\conjbeta}\tensor{\psi}{_\alpha_\gamma}
		-\tensor{N}{_\alpha_\gamma^\conjsigma}\tensor{\psi}{_\conjbeta_\conjsigma})$ \\ \bottomrule
\end{tabular}
	\vspace{0.5em}
	\caption{$\tensor{\overline\nabla}{_K}(\delta\tensor{D}{_I^K_J})$
	for a perturbation \eqref{eq:PerturbedMetric}}
	\label{tbl:DerivativeOfPerturbationOfConnectionForms1}
\end{table}

\begin{table}[htbp]
	\small
\begin{tabular}{ll}
	\toprule
	\multicolumn{1}{c}{Type} & \multicolumn{1}{c}{Value (modulo $O(\rho^{m+3})$)} \\ \midrule
	$\tensor{\overline\nabla}{_\infty}(\tensor{\delta D}{_\infty^K_K})$
	& $\tfrac{1}{2}\euler(\euler-1)\tensor{\psi}{_0_0}+\euler(\euler-1)\tensor{\psi}{_\alpha^\alpha}$ \\
	& $-\tfrac{1}{2}\rho^2(\euler+1)(\euler+2)
			(\tensor{\Phi}{^\alpha^\beta}\tensor{\psi}{_\alpha_\beta}
			+\tensor{\Phi}{^\conjalpha^\conjbeta}\tensor{\psi}{_\conjalpha_\conjbeta})$ \\ \midrule
	$\tensor{\overline\nabla}{_0}(\tensor{\delta D}{_\infty^K_K})$ & $0$ \\ \midrule
	$\tensor{\overline\nabla}{_0}(\tensor{\delta D}{_0^K_K})$ & $0$ \\ \midrule
	$\tensor{\overline\nabla}{_\conjbeta}(\tensor{\delta D}{_\alpha^K_K})$ & $0$ \\ \bottomrule
\end{tabular}
	\vspace{0.5em}
	\caption{$\tensor{\overline\nabla}{_J}(\delta\tensor{D}{_I^K_K})$
	for a perturbation \eqref{eq:PerturbedMetric}}
	\label{tbl:DerivativeOfPerturbationOfConnectionForms2}
\end{table}

\begin{table}[htbp]
	\small
\begin{tabular}{ll}
	\toprule
	\multicolumn{1}{c}{Type} & \multicolumn{1}{c}{Value (modulo $O(\rho^{m+3})$)} \\ \midrule
	$\tensor{D}{_\infty^L_K}\cdot\tensor{\delta D}{_\infty^K_L}$
	& $-\euler\tensor{\psi}{_0_0}-\euler\tensor{\psi}{_\alpha^\alpha}
	+\rho^2(\euler+1)(\tensor{\Phi}{^\alpha^\beta}\tensor{\psi}{_\alpha_\beta}
		+\tensor{\Phi}{^\conjalpha^\conjbeta}\tensor{\psi}{_\conjalpha_\conjbeta})$ \\ \midrule
	$\tensor{D}{_\infty^L_K}\cdot\tensor{\delta D}{_0^K_L}$
	& $-\tfrac{i}{2}\rho^2(\tensor{\Phi}{^\alpha^\beta}\tensor{\psi}{_\alpha_\beta}
		-\tensor{\Phi}{^\conjalpha^\conjbeta}\tensor{\psi}{_\conjalpha_\conjbeta})$ \\ \midrule
	$\tensor{D}{_0^L_K}\cdot\tensor{\delta D}{_\infty^K_L}$
	& $\tfrac{i}{2}\rho^2(\tensor{\Phi}{^\alpha^\beta}\tensor{\psi}{_\alpha_\beta}
		-\tensor{\Phi}{^\conjalpha^\conjbeta}\tensor{\psi}{_\conjalpha_\conjbeta})
		+\tfrac{1}{2}\rho^2(\tensor{A}{^\alpha^\beta}\euler\tensor{\psi}{_\alpha_\beta}
			+\tensor{A}{^\conjalpha^\conjbeta}\euler\tensor{\psi}{_\conjalpha_\conjbeta})$ \\ \midrule
	$\tensor{D}{_0^L_K}\cdot\tensor{\delta D}{_0^K_L}$
	& $\tfrac{1}{2}(\euler-n-2)\tensor{\psi}{_0_0}+\tfrac{1}{2}\tensor{\psi}{_\alpha^\alpha}
		-\tfrac{i}{2}\rho(\tensor{\nabla}{^\alpha}\tensor{\psi}{_0_\alpha}
			-\tensor{\nabla}{^\conjalpha}\tensor{\psi}{_0_\conjalpha})$ \\
	& $-\tfrac{1}{4}\rho^2(\tensor{\Phi}{^\alpha^\beta}\tensor{\psi}{_\alpha_\beta}
		+\tensor{\Phi}{^\conjalpha^\conjbeta}\tensor{\psi}{_\conjalpha_\conjbeta})$ \\ \midrule
	$\tensor{D}{_\alpha^L_K}\cdot\tensor{\delta D}{_\conjbeta^K_L}$
	& $\tfrac{1}{4}(\euler-2)\tensor{\psi}{_\alpha_\conjbeta}
		+\tfrac{1}{4}\tensor{h}{_\alpha_\conjbeta}\tensor{\psi}{_0_0}
		+\tfrac{i}{2}\rho\tensor{\nabla}{_\conjbeta}\tensor{\psi}{_0_\alpha}
		-\tfrac{1}{4}\rho^2(\euler-2)\tensor{\Phi}{_\alpha^\conjgamma}\tensor{\psi}{_\conjbeta_\conjgamma}$ \\
	& $-\tfrac{i}{2}\rho^2(\tensor{A}{_\alpha^\conjgamma}\tensor{\psi}{_\conjbeta_\conjgamma}
			+\tensor{A}{_\conjbeta^\gamma}\tensor{\psi}{_\alpha_\gamma})
			-\tfrac{1}{2}\rho^2\tensor{N}{_\alpha^\conjgamma^\conjsigma}
			(\tensor{\nabla}{_\conjbeta}\tensor{\psi}{_\conjgamma_\conjsigma}
			+\tensor{\nabla}{_\conjgamma}\tensor{\psi}{_\conjbeta_\conjsigma}
			-\tensor{\nabla}{_\conjsigma}\tensor{\psi}{_\conjbeta_\conjgamma})$ \\ \bottomrule
\end{tabular}
	\vspace{0.5em}
	\caption{$\tensor{D}{_I^L_K}\cdot\delta\tensor{D}{_J^K_L}$
	for a perturbation \eqref{eq:PerturbedMetric}}
	\label{tbl:ProductOfPerturbationOfConnectionForms1}
\end{table}

\begin{table}[htbp]
	\small
\begin{tabular}{ll}
	\toprule
	\multicolumn{1}{c}{Type} & \multicolumn{1}{c}{Value (modulo $O(\rho^{m+3})$)} \\ \midrule
	$\tensor{D}{_\infty^K_\infty}\cdot\tensor{\delta D}{_K^L_L}$
	& $-\tfrac{1}{2}\euler\tensor{\psi}{_0_0}-\euler\tensor{\psi}{_\alpha^\alpha}
		+\tfrac{1}{2}\rho^2(\euler+2)(\tensor{\Phi}{^\alpha^\beta}\tensor{\psi}{_\alpha_\beta}
			+\tensor{\Phi}{^\conjalpha^\conjbeta}\tensor{\psi}{_\conjalpha_\conjbeta})$ \\ \midrule
	$\tensor{D}{_\infty^K_0}\cdot\tensor{\delta D}{_K^L_L}$ & $0$ \\ \midrule
	$\tensor{D}{_0^K_0}\cdot\tensor{\delta D}{_K^L_L}$
	& $\tfrac{1}{4}\euler\tensor{\psi}{_0_0}+\tfrac{1}{2}\euler\tensor{\psi}{_\alpha^\alpha}
		-\tfrac{1}{4}\rho^2(\euler+2)(\tensor{\Phi}{^\alpha^\beta}\tensor{\psi}{_\alpha_\beta}
			+\tensor{\Phi}{^\conjalpha^\conjbeta}\tensor{\psi}{_\conjalpha_\conjbeta})$ \\ \midrule
	$\tensor{D}{_\alpha^K_\conjbeta}\cdot\tensor{\delta D}{_K^L_L}$
	& $\tfrac{1}{8}\tensor{h}{_\alpha_\conjbeta}\euler\tensor{\psi}{_0_0}
	+\tfrac{1}{4}\tensor{h}{_\alpha_\conjbeta}\euler\tensor{\psi}{_\gamma^\gamma}
	-\tfrac{1}{8}\rho^2\tensor{h}{_\alpha_\conjbeta}(\euler+2)
	(\tensor{\Phi}{^\sigma^\tau}\tensor{\psi}{_\sigma_\tau}
	+\tensor{\Phi}{^\conjsigma^\conjtau}\tensor{\psi}{_\conjsigma_\conjtau})$ \\ \bottomrule
\end{tabular}
	\vspace{0.5em}
	\caption{$\tensor{D}{_I^K_L}\cdot\delta\tensor{D}{_K^L_L}$
	for a perturbation \eqref{eq:PerturbedMetric}}
	\label{tbl:ProductOfPerturbationOfConnectionForms2}
\end{table}

\begin{table}[htbp]
	\small
\begin{tabular}{ll}
	\toprule
	\multicolumn{1}{c}{Type} & \multicolumn{1}{c}{Value (modulo $O(\rho^{m+3})$)} \\ \midrule
	$\tensor{D}{_K^L_L}\cdot\tensor{\delta D}{_\infty^K_\infty}$ & $0$ \\ \midrule
	$\tensor{D}{_K^L_L}\cdot\tensor{\delta D}{_\infty^K_0}$ & $0$ \\ \midrule
	$\tensor{D}{_K^L_L}\cdot\tensor{\delta D}{_0^K_0}$
	& $\tfrac{1}{8}(2n+3)(\euler-4)\tensor{\psi}{_0_0}$ \\ \midrule
	$\tensor{D}{_K^L_L}\cdot\tensor{\delta D}{_\alpha^K_\conjbeta}$
	& $\tfrac{1}{8}(2n+3)(\euler-2)\tensor{\psi}{_\alpha_\conjbeta}$ \\ \bottomrule
\end{tabular}
	\vspace{0.5em}
	\caption{$\tensor{D}{_K^L_L}\cdot\delta\tensor{D}{_I^K_J}$
	for a perturbation \eqref{eq:PerturbedMetric}}
	\label{tbl:ProductOfPerturbationOfConnectionForms3}
\end{table}

\begin{prop}
	\label{prop:SmoothHigherOrderPerturbationAndEinsteinTensor}
	Let $g$ be a normal-form ACH metric satisfying \eqref{eq:LowestTermsConditionOfACHMetric} and
	$g'$ given by \eqref{eq:PerturbedMetric}. Then,
	\allowdisplaybreaks
	\begin{subequations}
		\label{eq:PerturbationAndEinsteinTensor}
	\begin{align}
		\label{eq:PerturbationAndEinsteinTensorInfties}
		\begin{split}
			\tensor{\delta\Ein}{_\infty_\infty}
			&=-\tfrac{1}{2}(m+2)(m-2)\tensor{\psi}{_0_0}-m(m+2)\tensor{\psi}{_\alpha^\alpha}\\
			&\phantom{=\;}
				+\tfrac{1}{2}m^2\rho^2(\tensor{\Phi}{^\alpha^\beta}\tensor{\psi}{_\alpha_\beta}
				+\tensor{\Phi}{^\conjalpha^\conjbeta}\tensor{\psi}{_\conjalpha_\conjbeta})
				+O(\rho^{m+3}),
		\end{split}\\
		\label{eq:PerturbationAndEinsteinTensorInftyZero}
		\begin{split}
			\tensor{\delta\Ein}{_\infty_0}
			&=\tfrac{1}{2}(m+2)\rho(\tensor{\nabla}{^\alpha}\tensor{\psi}{_0_\alpha}
				+\tensor{\nabla}{^\conjalpha}\tensor{\psi}{_0_\conjalpha})
			-\tfrac{1}{2}m\rho^2(\tensor{A}{^\alpha^\beta}\tensor{\psi}{_\alpha_\beta}
				+\tensor{A}{^\conjalpha^\conjbeta}\tensor{\psi}{_\conjalpha_\conjbeta})\\
			&\phantom{=\;}
			+O(\rho^{m+3}),
		\end{split}\\
		\label{eq:PerturbationAndEinsteinTensorInftyAlpha}
		\tensor{\delta\Ein}{_\infty_\alpha}
		&=-\tfrac{1}{2}i(m+2)\tensor{\psi}{_0_\alpha}
			+\tfrac{1}{2}m\rho\tensor{\nabla}{^\beta}\tensor{\psi}{_\alpha_\beta}
			+\tfrac{1}{2}m\rho
				\tensor{N}{_\alpha^\conjbeta^\conjgamma}\tensor{\psi}{_\conjbeta_\conjgamma}
			+O(\rho^{m+2}),\\
		\label{eq:PerturbationAndEinsteinTensorZeros}
		\begin{split}
			\tensor{\delta\Ein}{_0_0}
			&=-\tfrac{1}{8}(m^2-2nm-8n-4)\tensor{\psi}{_0_0}
				+\tfrac{1}{2}m\tensor{\psi}{_\alpha^\alpha}
				+i\rho(\tensor{\nabla}{^\alpha}\tensor{\psi}{_0_\alpha}
					-\tensor{\nabla}{^\conjalpha}\tensor{\psi}{_0_\conjalpha})\\
			&\phantom{=\;}
				-\tfrac{1}{4}\rho^2m(\tensor{\Phi}{^\alpha^\beta}\tensor{\psi}{_\alpha_\beta}
					+\tensor{\Phi}{^\conjalpha^\conjbeta}\tensor{\psi}{_\conjalpha_\conjbeta})+O(\rho^{m+3}),
		\end{split}\\
		\label{eq:PerturbationAndEinsteinTensorZeroAlpha}
		\tensor{\delta\Ein}{_0_\alpha}
		&=-\tfrac{1}{8}(m+2)(m-2n-2)\tensor{\psi}{_0_\alpha}+O(\rho^{m+2}),\\
		\label{eq:PerturbationAndEinsteinTensorTrace}
		\begin{split}
			\tensor{\delta\Ein}{_\alpha^\alpha}
			&=\tfrac{1}{8}n(m-2)\tensor{\psi}{_0_0}
			-\tfrac{1}{8}\left( m^2-(4n-2)m-8n-8 \right)\tensor{\psi}{_\alpha^\alpha}\\
			&\phantom{=\;}
			+(\text{$O(\rho^{m+2})$ terms depending on
			$\tensor{\psi}{_0_\alpha}$ and $\tensor{\psi}{_\alpha_\beta}$})+O(\rho^{m+3}),
		\end{split}\\
		\label{eq:PerturbationAndEinsteinTensorTraceFreeHermitian}
		\begin{split}
			\tf(\tensor{\delta\Ein}{_\alpha_\conjbeta})
			&=-\tfrac{1}{8}(m^2-2nm-2n-9)\tf(\tensor{\psi}{_\alpha_\conjbeta})\\
			&\phantom{=\;}
			+(\text{$O(\rho^{m+2})$ terms depending on
			$\tensor{\psi}{_0_\alpha}$ and $\tensor{\psi}{_\alpha_\beta}$})+O(\rho^{m+3}),
		\end{split}\\
		\label{eq:PerturbationAndEinsteinTensorDoubleHolomorphic}
		\tensor{\delta\Ein}{_\alpha_\beta}
		&=-\tfrac{1}{8}m(m-2n-2)\tensor{\psi}{_\alpha_\beta}+O(\rho^{m+1}).
	\end{align}
	\end{subequations}
	\allowdisplaybreaks[0]%
\end{prop}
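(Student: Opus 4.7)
The plan is straightforward substitution: apply Proposition~\ref{prop:HigherOrderPerturbationAndEinsteinTensor} (which gives the four components $\delta\Ein_{\infty\infty}$, $\delta\Ein_{\infty 0}$, $\delta\Ein_{00}$, $\delta\Ein_{\alpha\conjbeta}$ of the difference of Einstein tensors for a general perturbation, modulo $O(\rho^{m+3})$) together with equation~\eqref{eq:EinPerturbationLowerOrder} (which gives the lower-order components modulo $O(\rho^{m+2})$), then use the prescribed vanishing orders of $\psi_{ij}$ to evaluate the Euler-type operators $\euler=\rho\partial_\rho$ appearing there. The underlying principle is that if $f\in C^\infty(X)$ satisfies $f=O(\rho^k)$, then $\euler f=kf+O(\rho^{k+1})$, so for any polynomial $P$ we have $P(\euler)f=P(k)f+O(\rho^{k+1})$.

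Concretely, I would carry out the substitution component by component. The hypotheses imply that $\psi_{00}$ and $\psi_{\alpha\conjbeta}$ (hence also $\psi_\alpha{}^\alpha$ and $\tf(\psi_{\alpha\conjbeta})$) have leading orders $m+2$, while $\psi_{0\alpha}$ has leading order $\max\{m+1,3\}$ and $\psi_{\alpha\beta}$ has leading order $\max\{m,3\}$. Each polynomial-in-$\euler$ coefficient in Proposition~\ref{prop:HigherOrderPerturbationAndEinsteinTensor} collapses to its value at the appropriate integer, the correction being absorbed into the remainder $O(\rho^{m+3})$. Terms carrying an explicit prefactor $\rho$ or $\rho^2$, such as those involving $\tensor{\Phi}{^\alpha^\beta}\tensor{\psi}{_\alpha_\beta}$, $\tensor{A}{^\alpha^\beta}\tensor{\psi}{_\alpha_\beta}$ or $\tensor{N}{_\alpha^\conjbeta^\conjgamma}\tensor{\psi}{_\conjbeta_\conjgamma}$, are treated in the same way: the additional powers of $\rho$ ensure that subdominant contributions from the substitution fall within the displayed remainder. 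To obtain the trace and tracefree pieces of $\delta\Ein_{\alpha\conjbeta}$ one splits $\psi_{\alpha\conjbeta}=\tf(\psi_{\alpha\conjbeta})+\tfrac{1}{n}h_{\alpha\conjbeta}\psi_\gamma{}^\gamma$ and applies the substitution to the formulas for $\delta\Ein_\alpha{}^\alpha$ and $\tf(\delta\Ein_{\alpha\conjbeta})$ separately; the cross-contributions from $\psi_{0\alpha}$ and $\psi_{\alpha\beta}$ are left in the schematic form displayed in \eqref{eq:PerturbationAndEinsteinTensorTrace} and \eqref{eq:PerturbationAndEinsteinTensorTraceFreeHermitian}, so no further analysis of them is needed.

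The main obstacle is not mathematical depth but careful bookkeeping, particularly the uniform validity of the formulas for all $m\ge 1$. For small $m$ (say $m\le 2$) the orders $\max\{m+1,3\}$ and $\max\{m,3\}$ strictly exceed $m+1$ and $m$, so explicit coefficients in the statement such as $(m+2)\psi_{0\alpha}$ or $m(m-2n-2)\psi_{\alpha\beta}$ are themselves of sizes $O(\rho^{m+2})$ and $O(\rho^{m+1})$, consistent with, rather than displaced by, the indicated remainders. One therefore needs to verify that in all boundary cases the substituted formulas and the claimed error orders are compatible; this amounts to a check of the orders of each resulting term against the stated remainder and requires no further input beyond Proposition~\ref{prop:HigherOrderPerturbationAndEinsteinTensor} and equation~\eqref{eq:EinPerturbationLowerOrder}.
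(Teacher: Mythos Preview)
Your proposal is correct and follows essentially the same approach as the paper: the paper's proof simply says that the result follows from \eqref{eq:EinPerturbationLowerOrder}, \eqref{eq:EinPerturbationHigherOrder}, and the fact that $\euler$ acts on an $O(\rho^m)$ function as scalar multiplication by $m$ modulo $O(\rho^{m+1})$. Your additional remarks about the small-$m$ cases and the splitting into trace and trace-free parts are accurate elaborations of what the paper leaves implicit.
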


\begin{proof}
	This follows from \eqref{eq:EinPerturbationLowerOrder}, \eqref{eq:EinPerturbationHigherOrder} and
	the fact that the Euler vector field $\euler$ acts on an $O(\rho^m)$ function as, modulo
	$O(\rho^{m+1})$, a scalar multiplication by $m$.
\end{proof}

\section{Approximate solutions and obstruction}
\label{sec:ApproximateSolution}

By using the results in \S\ref{sec:RicciTensorAndSomeLowOrderTerms} and \S\ref{sec:HigherOrderPerturbation},
in this section we construct a normal-form ACH metric
whose Einstein tensor vanishes to as high order as possible.
First we observe the contracted Bianchi identity satisfied by the Einstein tensor.

\begin{lem}
	\label{lem:ContractedBianchiIdentityForACHMetric}
	Let $m\ge 1$ be a positive integer. Suppose that $g$ is a normal-form ACH metric satisfying
	\begin{align*}
		\tensor{\Ein}{_\infty_\infty}&=O(\rho^{m+2}),\qquad
		\tensor{\Ein}{_\infty_0}=O(\rho^{m+2}),\qquad
		\tensor{\Ein}{_\infty_\alpha}=O(\rho^{\max\set{m+1,3}}),\\
		\tensor{\Ein}{_0_0}&=O(\rho^{m+2}),\qquad
		\tensor{\Ein}{_0_\alpha}=O(\rho^{\max\set{m+1,3}}),\\
		\tensor{\Ein}{_\alpha_\conjbeta}&=O(\rho^{m+2}),\qquad
		\tensor{\Ein}{_\alpha_\beta}=O(\rho^{\max\set{m,3}}).
	\end{align*}
	Then we have
	\begin{subequations}
		\label{eq:ContractedBianchiIdentityForACHMetric}
	\begin{align}
		\label{eq:ContractedBianchiIdentityForACHMetric1}
		\begin{split}
			O(\rho^{m+3})&=
			(m-4n-2)\tensor{\Ein}{_\infty_\infty}-4(m-2)\tensor{\Ein}{_0_0}
			-8m\tensor{\Ein}{_\alpha^\alpha}\\
			&\phantom{=\;}
			+8\rho(\tensor{\nabla}{^\alpha}\tensor{\Ein}{_\infty_\alpha}
			+\tensor{\nabla}{^\conjalpha}\tensor{\Ein}{_\infty_\conjalpha})
			+4\rho^2(m-2)(\tensor{\Phi}{^\alpha^\beta}\tensor{\Ein}{_\alpha_\beta}
			+\tensor{\Phi}{^\conjalpha^\conjbeta}\tensor{\Ein}{_\conjalpha_\conjbeta}),
		\end{split}\\
		\label{eq:ContractedBianchiIdentityForACHMetric2}
		\begin{split}
			O(\rho^{m+3})&=
			(m-2n-2)\tensor{\Ein}{_\infty_0}
			+4\rho(\tensor{\nabla}{^\alpha}\tensor{\Ein}{_0_\alpha}
			+\tensor{\nabla}{^\conjalpha}\tensor{\Ein}{_0_\conjalpha})\\
			&\phantom{=\;}
			+4\rho^2(\tensor{A}{^\alpha^\beta}\tensor{\Ein}{_\alpha_\beta}
			+\tensor{A}{^\conjalpha^\conjbeta}\tensor{\Ein}{_\conjalpha_\conjbeta}),
		\end{split}\\
		\label{eq:ContractedBianchiIdentityForACHMetric3}
		\begin{split}
			O(\rho^{m+2})&=
			2(m-2n-2)\tensor{\Ein}{_\infty_\alpha}+4\rho\tensor{\nabla}{^\beta}\tensor{\Ein}{_\alpha_\beta}
			-4i\tensor{\Ein}{_0_\alpha}
			+4\rho\tensor{N}{_\alpha^\conjbeta^\conjgamma}\tensor{\Ein}{_\conjbeta_\conjgamma}.
		\end{split}
	\end{align}
	\end{subequations}
\end{lem}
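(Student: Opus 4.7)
My plan is to derive all three identities from the contracted second Bianchi identity. For the Levi-Civita connection $\nabla^g$ of any pseudo-Riemannian metric, $2\tensor{\nabla}{^I}\tensor{\Ric}{_I_J}=\tensor{\nabla}{_J}R$; substituting $\Ein=\Ric+\tfrac{1}{2}(n+2)g$, using $\nabla^g g=0$, and noting that $R-\tensor{\Ein}{_K^K}$ is a constant yield
\[2\tensor{\nabla}{^I}\tensor{\Ein}{_I_J}=\tensor{\nabla}{_J}\tensor{\Ein}{_K^K},\]
with $\nabla=\nabla^g$. The three stated identities correspond to $J=\infty$, $J=0$, $J=\alpha$ after expanding both sides in the distinguished frame $\set{W_\infty,W_0,W_\alpha,W_\conjalpha}$ and re-expressing tangential derivatives via the Tanaka--Webster connection on $M$ using \eqref{eq:CalculationOfTrivialExtensionOfTWConnection}.

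The expansion becomes concrete by combining three ingredients: the inverse metric $g^{IJ}$ modulo the relevant orders (from \eqref{eq:InverseOfNormalACH}); the decomposition $\nabla^g_{W_J}W_I=\overline\nabla_{W_J}W_I+\tensor{D}{_I^K_J}W_K$, with $\overline\nabla$ given by \eqref{eq:ConnectionFormsOfExtendedTWConnection} and leading values of $\tensor{D}{_I^K_J}$ recorded in \eqref{eq:ACHConnectionFormsToSecondOrder}; and the trace expression
\[\tensor{\Ein}{_K^K}\equiv\tfrac{1}{4}\tensor{\Ein}{_\infty_\infty}+\tensor{\Ein}{_0_0}+2\tensor{\Ein}{_\alpha^\alpha}-\rho^2(\tensor{\Phi}{^\alpha^\beta}\tensor{\Ein}{_\alpha_\beta}+\tensor{\Phi}{^\conjalpha^\conjbeta}\tensor{\Ein}{_\conjalpha_\conjbeta})\]
modulo $O(\rho^{m+3})$. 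The key mechanism is that the Euler vector field $W_\infty=\rho\partial_\rho$ acts on a function of order $\rho^k$ as multiplication by $k$ modulo $O(\rho^{k+1})$; applied to Einstein components of prescribed order, this produces the integer coefficients $(m-4n-2)$, $-4(m-2)$, $-8m$, $(m-2n-2)$, $2(m-2n-2)$, $-4i$ appearing in \eqref{eq:ContractedBianchiIdentityForACHMetric1}--\eqref{eq:ContractedBianchiIdentityForACHMetric3}. The other features are identified term by term: cross pairings of $g^{\alpha\conjbeta}$ with $W_{\conjbeta}$-derivatives give the Tanaka--Webster divergences $\rho\tensor{\nabla}{^\alpha}\tensor{\Ein}{_\infty_\alpha}$, $\rho\tensor{\nabla}{^\alpha}\tensor{\Ein}{_0_\alpha}$, and $\rho\tensor{\nabla}{^\beta}\tensor{\Ein}{_\alpha_\beta}$; the $\rho^2$ correction in $g^{\alpha\beta}$ gives the $\rho^2\tensor{\Phi}{^\alpha^\beta}\tensor{\Ein}{_\alpha_\beta}$ term; and the off-diagonal Christoffel pieces $\tensor{D}{_\alpha^0_\beta}=-\rho^2\tensor{A}{_\alpha_\beta}$ and $\tensor{D}{_\alpha^\conjgamma_\beta}=-\rho\tensor{N}{_\alpha^\conjgamma_\beta}$ give the $\rho^2\tensor{A}{^\alpha^\beta}\tensor{\Ein}{_\alpha_\beta}$ term of \eqref{eq:ContractedBianchiIdentityForACHMetric2} and the $\rho\tensor{N}{_\alpha^\conjbeta^\conjgamma}\tensor{\Ein}{_\conjbeta_\conjgamma}$ term of \eqref{eq:ContractedBianchiIdentityForACHMetric3}.

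The main obstacle is bookkeeping rather than any new idea: for each $J$, every product of an inverse-metric component, a Christoffel coefficient, and an Einstein component not explicitly listed on the right-hand side of the relevant identity must be verified to lie in the remainder $O(\rho^{m+3})$ (or $O(\rho^{m+2})$ when $J=\alpha$). The vanishing orders imposed on the $\Ein_{IJ}$ by hypothesis are calibrated precisely so that this absorption closes, and no auxiliary estimates beyond the already-tabulated connection data are needed.
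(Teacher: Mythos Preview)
Your proposal is correct and follows essentially the same route as the paper: start from the contracted Bianchi identity, pass from $\Ric$ to $\Ein$, rewrite the Levi-Civita derivatives in terms of $\overline\nabla$ and the difference tensor $D$, and evaluate the free index at $\infty$, $0$, $\alpha$ using the tabulated low-order data for $g^{IJ}$, $D_I{}^K{}_J$, and $\overline\Theta$ together with the action of $\rho\partial_\rho$ on $O(\rho^k)$ functions. The only minor point is that the paper keeps the torsion $\overline\Theta$ of $\overline\nabla$ explicit in the rewritten identity (so that some of the $A$ and $N$ contributions you attribute to $D$ in fact enter through $\overline\Theta$), but this is purely organizational and does not affect the argument.
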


\begin{proof}
	We have the contracted Bianchi identity
	$\tensor{g}{^I^J}\tensor{{\nabla^g}}{_K}\tensor{\Ric}{_I_J}
	=2\tensor{g}{^I^J}\tensor{{\nabla^g}}{_I}\tensor{\Ric}{_J_K}$,
	where $\nabla^g$ is the Levi-Civita connection determined by $g$.
	Since $\nabla^g$ is a metric connection we further have
	\begin{equation*}
		\tensor{g}{^I^J}\tensor{{\nabla^g}}{_K}\tensor{\Ein}{_I_J}
	=2\tensor{g}{^I^J}\tensor{{\nabla^g}}{_I}\tensor{\Ein}{_J_K}.
	\end{equation*}
	In terms of the extended Tanaka--Webster connection $\overline{\nabla}$ and the tensor $D$,
	we can rewrite this identity as
	\begin{equation*}
		\tensor{g}{^I^J}
		(\tensor{\overline\nabla}{_K}\tensor{\Ein}{_I_J}-2\tensor{D}{_I^L_K}\tensor{\Ein}{_J_L})
		=2\tensor{g}{^I^J}(\tensor{\overline\nabla}{_I}\tensor{\Ein}{_J_K}
		-\tensor{D}{_J^L_I}\tensor{\Ein}{_L_K}-\tensor{D}{_K^L_I}\tensor{\Ein}{_J_L}),
	\end{equation*}
	or equivalently,
	\begin{equation*}
		0=\tensor{g}{^I^J}(\tensor{\overline\nabla}{_K}\tensor{\Ein}{_I_J}
		-2\tensor{\overline{\nabla}}{_I}\tensor{\Ein}{_J_K}+2\tensor{D}{_I^L_J}\tensor{\Ein}{_K_L}
		-2\tensor{\overline{\Theta}}{_I_K^L}\tensor{\Ein}{_J_L}),
	\end{equation*}
	where $\overline\Theta$ is the torsion form of $\overline{\nabla}$.
	Since $\tensor{g}{^0^\alpha}=O(\rho^3)$ and $\tensor{\Ein}{_I_J}=O(\rho^m)$, we obtain
	\begin{equation*}
		\begin{split}
			O(\rho^{m+3})
			&=\tensor{g}{^\infty^\infty}(\tensor{\overline\nabla}{_K}\tensor{\Ein}{_\infty_\infty}
			-2\tensor{\overline{\nabla}}{_\infty}\tensor{\Ein}{_\infty_K}
			+2\tensor{D}{_\infty^L_\infty}\tensor{\Ein}{_K_L}
			-2\tensor{\overline{\Theta}}{_\infty_K^L}\tensor{\Ein}{_\infty_L})\\
			&\phantom{=\;}
			+\tensor{g}{^0^0}(\tensor{\overline\nabla}{_K}\tensor{\Ein}{_0_0}
			-2\tensor{\overline{\nabla}}{_0}\tensor{\Ein}{_0_K}+2\tensor{D}{_0^L_0}\tensor{\Ein}{_K_L}
			-2\tensor{\overline{\Theta}}{_0_K^L}\tensor{\Ein}{_0_L})\\
			&\phantom{=\;}
			+2\tensor{g}{^\beta^\conjgamma}
			\big(\tensor{\overline\nabla}{_K}\tensor{\Ein}{_\beta_\conjgamma}
			-\tensor{\overline{\nabla}}{_\beta}\tensor{\Ein}{_\conjgamma_K}
			-\tensor{\overline{\nabla}}{_\conjgamma}\tensor{\Ein}{_\beta_K}
			+(\tensor{D}{_\beta^L_\conjgamma}+\tensor{D}{_\conjgamma^L_\beta})\tensor{\Ein}{_K_L}\\
			&\phantom{=\;+2\tensor{g}{^\beta^\conjgamma}\big(}
			-\tensor{\overline{\Theta}}{_\beta_K^L}\tensor{\Ein}{_\conjgamma_L}
			-\tensor{\overline{\Theta}}{_\conjgamma_K^L}\tensor{\Ein}{_\beta_L}\big)\\
			&\phantom{=\;}
			+\tensor{g}{^\beta^\gamma}(\tensor{\overline\nabla}{_K}\tensor{\Ein}{_\beta_\gamma}
			-2\tensor{\overline{\nabla}}{_\beta}\tensor{\Ein}{_\gamma_K}
			+2\tensor{D}{_\beta^L_\gamma}\tensor{\Ein}{_K_L}
			-2\tensor{\overline{\Theta}}{_\beta_K^L}\tensor{\Ein}{_\gamma_L})\\
			&\phantom{=\;}
			+\tensor{g}{^\conjbeta^\conjgamma}(\tensor{\overline\nabla}{_K}\tensor{\Ein}{_\conjbeta_\conjgamma}
			-2\tensor{\overline{\nabla}}{_\conjbeta}\tensor{\Ein}{_\conjgamma_K}
			+2\tensor{D}{_\conjbeta^L_\conjgamma}\tensor{\Ein}{_K_L}
			-2\tensor{\overline{\Theta}}{_\conjbeta_K^L}\tensor{\Ein}{_\conjgamma_L}).
		\end{split}
	\end{equation*}
	Substituting $K=\infty$, $K=0$ and $K=\alpha$ into this formula,
	in view of \eqref{eq:TorsionOfExtendedTWConnection}, \eqref{eq:CalculationOfTrivialExtensionOfTWConnection}
	and \eqref{eq:ACHConnectionFormsToSecondOrder} we find that
	\allowdisplaybreaks
	\begin{subequations}
		\label{eq:ContractedBianchiOriginal}
	\begin{align}
		\label{eq:ContractedBianchiOriginal1}
		\begin{split}
			O(\rho^{m+3})&=
			(\euler-4n-4)\tensor{\Ein}{_\infty_\infty}-4(\euler-4)\tensor{\Ein}{_0_0}
			-8(\euler-2)\tensor{\Ein}{_\alpha^\alpha}\\
			&\phantom{=\;}
			+8\rho(\tensor{\nabla}{^\alpha}\tensor{\Ein}{_\infty_\alpha}
			+\tensor{\nabla}{^\conjalpha}\tensor{\Ein}{_\infty_\conjalpha})\\
			&\phantom{=\;}
			+4\rho^2(\euler-2)(\tensor{\Phi}{^\alpha^\beta}\tensor{\Ein}{_\alpha_\beta}
			+\tensor{\Phi}{^\conjalpha^\conjbeta}\tensor{\Ein}{_\conjalpha_\conjbeta}),
		\end{split}\\
		\label{eq:ContractedBianchiOriginal2}
		\begin{split}
			O(\rho^{m+3})&=
			(\euler-2n-4)\tensor{\Ein}{_\infty_0}
			+4\rho(\tensor{\nabla}{^\alpha}\tensor{\Ein}{_0_\alpha}
			+\tensor{\nabla}{^\conjalpha}\tensor{\Ein}{_0_\conjalpha})\\
			&\phantom{=\;}
			+4\rho^2(\tensor{A}{^\alpha^\beta}\tensor{\Ein}{_\alpha_\beta}
			+\tensor{A}{^\conjalpha^\conjbeta}\tensor{\Ein}{_\conjalpha_\conjbeta}),
		\end{split}\\
		\label{eq:ContractedBianchiOriginal3}
		\begin{split}
			O(\rho^{m+2})&=
			2(\euler-2n-3)\tensor{\Ein}{_\infty_\alpha}
			+4\rho\tensor{\nabla}{^\beta}\tensor{\Ein}{_\alpha_\beta}
			-4i\tensor{\Ein}{_0_\alpha}
			+4\rho\tensor{N}{_\alpha^\conjbeta^\conjgamma}\tensor{\Ein}{_\conjbeta_\conjgamma},
		\end{split}
	\end{align}
	\end{subequations}
	\allowdisplaybreaks[0]%
	which imply \eqref{eq:ContractedBianchiIdentityForACHMetric}.
\end{proof}

Let
	\begin{equation}
		\label{eq:DefinitionOfOrderModification}
		a(I,J)=
		\begin{cases}
			3,&\text{$(I,J)=(\infty,\infty)$, $(\infty,0)$, $(0,0)$, $(\alpha,\conjbeta)$},\\
			2,&\text{$(I,J)=(\infty,\alpha)$, $(0,\alpha)$},\\
			1,&(I,J)=(\alpha,\beta).
		\end{cases}
	\end{equation}
The next theorem proves Theorem \ref{thm:MainTheoremOnExistence}.

\begin{thm}
	\label{thm:ApproximateACHEMetric}
	Let $(M,T^{1,0})$ be a nondegenerate partially integrable almost CR manifold,
	$\theta$ any pseudohermitian structure
	and $X$ an open neighborhood of $M=M\times\set{0}$ in $M\times[0,\infty)$.
	Then there exists a normal-form ACH metric $g$ on $X$ which satisfies
	\begin{equation}
		\label{eq:ApproximatelyEinstein}
		\tensor{\Ein}{_I_J}=O(\rho^{2n+1+a(I,J)})
	\end{equation}
	with respect to the frame \eqref{eq:LocalFrameOfThetaTangentBundle} of $\ThetaTangentX$.
	For such a metric, each $\tensor{g}{_i_j}$ is uniquely determined modulo $O(\rho^{2n+1+a(i,j)})$.
\end{thm}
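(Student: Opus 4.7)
The plan is an order-by-order induction on the vanishing of $\tensor{\Ein}{_I_J}$. Starting from the approximate Einstein metric of Proposition~\ref{prop:ACHEToThirdOrder}, for which $\tensor{\Ein}{_I_J}=O(\rho^3)$ and in particular $\tensor{\Ein}{_I_J}=O(\rho^{M+a(I,J)})$ with $M=0$, I would induct on $M\in\{0,1,\dots,2n\}$ to produce a perturbed normal-form ACH metric satisfying $\tensor{\Ein}{_I_J}=O(\rho^{M+1+a(I,J)})$. The essential tools are Proposition~\ref{prop:HigherOrderPerturbationAndEinsteinTensor}, which linearizes the effect of a higher-order perturbation on $\Ein$, and Lemma~\ref{lem:ContractedBianchiIdentityForACHMetric}, whose Bianchi identities pin down the components we do not cancel algebraically.

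At step $M\to M+1$, apply Proposition~\ref{prop:HigherOrderPerturbationAndEinsteinTensor} with $m=M+1$ and solve four sub-problems in sequence: (i) $\tensor{\psi}{_\alpha_\beta}$ of order $M+1$ to kill $\tensor{\Ein}{_\alpha_\beta}^{(M+1)}$, with leading coefficient $-\tfrac18(M+1)(M-2n-1)$; (ii) $\tensor{\psi}{_0_\alpha}$ of order $M+2$ to kill $\tensor{\Ein}{_\infty_\alpha}^{(M+2)}$ after the contribution of the previous $\tensor{\psi}{_\alpha_\beta}$ has been absorbed, with coefficient $-\tfrac12 i(M+3)$; (iii) the trace-free part of $\tensor{\psi}{_\alpha_\conjbeta}$ of order $M+3$ to kill $\tf(\tensor{\Ein}{_\alpha_\conjbeta})^{(M+3)}$, with coefficient $-\tfrac18\bigl(M^2+(2-2n)M-4n-8\bigr)$; and (iv) $(\tensor{\psi}{_0_0},\tensor{\psi}{_\gamma^\gamma})$ of order $M+3$ jointly killing $(\tensor{\Ein}{_\infty_\infty}^{(M+3)},\tensor{\Ein}{_0_0}^{(M+3)})$ via a $2{\times}2$ linear system whose determinant comes out to $-\tfrac18(M+1)(M+3)(M-2n-1)(M+5)$. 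The remaining three components are then forced to vanish via Lemma~\ref{lem:ContractedBianchiIdentityForACHMetric} applied to the new metric with $m=M+1$: \eqref{eq:ContractedBianchiIdentityForACHMetric3} yields $\tensor{\Ein}{_0_\alpha}^{(M+2)}=0$ through the coefficient $-4i$, then \eqref{eq:ContractedBianchiIdentityForACHMetric1} yields $\tensor{\Ein}{_\alpha^\alpha}^{(M+3)}=0$ through $-8(M+1)$, and finally \eqref{eq:ContractedBianchiIdentityForACHMetric2} yields $\tensor{\Ein}{_\infty_0}^{(M+3)}=0$ through $M-2n-1$. Each of the five leading scalar or matrix factors is a polynomial in $M$ whose roots (namely $-1,-3,-5$ and $2n+1$, together with the irrational roots of the trace-free quadratic which all lie outside $[0,2n]$) avoid the range $M\in[0,2n]$, so the inductive step is well-defined throughout; the bound $M\le 2n$ is sharp because the factor $M-2n-1$ appearing in the $\tensor{\psi}{_\alpha_\beta}$ equation (and also in \eqref{eq:ContractedBianchiIdentityForACHMetric2}) first vanishes at $M=2n+1$, which is precisely where the CR obstruction tensor emerges.

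Uniqueness of $\tensor{g}{_i_j}$ modulo $O(\rho^{2n+1+a(i,j)})$ is obtained by running the same linear algebra on the difference $\tensor{\psi}{_i_j}$ of two approximate solutions: the hypothesis on both metrics forces the relevant component of $\tensor{\delta\Ein}{_I_J}$ to vanish to the required order, and the same leading operators being injective at each level $m=1,\dots,2n+1$ drives the lowest-order nonzero part of $\tensor{\psi}{_i_j}$ to zero, iteratively. The main effort in carrying out this plan is purely organizational: arranging the four sub-steps of each inductive step so that the already-determined lower-order pieces enter the later linear equations only through known data, and checking that none of the leading coefficients or the $2{\times}2$ determinant accidentally vanish in the relevant range of $M$. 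Beyond this, the argument is routine once Propositions~\ref{prop:ACHEToThirdOrder}, \ref{prop:HigherOrderPerturbationAndEinsteinTensor} and Lemma~\ref{lem:ContractedBianchiIdentityForACHMetric} are in hand.
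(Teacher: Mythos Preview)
Your argument is correct and follows the same overall strategy as the paper---an order-by-order induction using Proposition~\ref{prop:HigherOrderPerturbationAndEinsteinTensor} (in its corollary form, Proposition~\ref{prop:SmoothHigherOrderPerturbationAndEinsteinTensor}) together with the contracted Bianchi identities---but you organize the linear algebra differently. The paper determines $\tensor{\psi}{_0_\alpha}$, $\tensor{\psi}{_\alpha_\beta}$, $\tf(\tensor{\psi}{_\alpha_\conjbeta})$, and the pair $(\tensor{\psi}{_0_0},\tensor{\psi}{_\alpha^\alpha})$ algebraically from $\tensor{\Ein}{_0_\alpha}$, $\tensor{\Ein}{_\alpha_\beta}$, $\tf(\tensor{\Ein}{_\alpha_\conjbeta})$, $(\tensor{\Ein}{_0_0},\tensor{\Ein}{_\alpha^\alpha})$, and then deduces the vanishing of $\tensor{\Ein}{_\infty_\infty}$, $\tensor{\Ein}{_\infty_0}$, $\tensor{\Ein}{_\infty_\alpha}$ from Bianchi; its $2\times 2$ determinant is $\tfrac{1}{64}(m+2)(m+4)(m-2n-2)(m-4n-2)$. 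You instead target $\tensor{\Ein}{_\infty_\alpha}$ to fix $\tensor{\psi}{_0_\alpha}$ and use $(\tensor{\Ein}{_\infty_\infty},\tensor{\Ein}{_0_0})$ for the scalar pair, obtaining the determinant $-\tfrac{1}{8}m(m+2)(m+4)(m-2n-2)$, and then deduce $\tensor{\Ein}{_0_\alpha}$, $\tensor{\Ein}{_\alpha^\alpha}$, $\tensor{\Ein}{_\infty_0}$ from Bianchi. Both choices avoid all zeros in the range $m=1,\dots,2n+1$ and yield the same uniqueness. The paper's split is marginally more natural in that it keeps all the $\infty$-row components on the ``Bianchi'' side and leads directly to the later discussion of the obstruction via $\tensor{\Ein}{_0_\alpha}$ and $\tensor{\Ein}{_\alpha_\beta}$; your split has the mild advantage that the coefficient $-\tfrac{1}{2}i(m+2)$ in front of $\tensor{\psi}{_0_\alpha}$ never vanishes, so one fewer factor needs to be tracked.
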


\begin{proof}
	By Proposition \ref{prop:ACHEToThirdOrder} we already have a normal-form ACH metric $g^{(0)}$ satisfying
	$\tensor{\Ein}{_I_J}=O(\rho^3)$ for every $I$, $J$, with $O(\rho^3)$ ambiguity in each component
	$\tensor*{g}{^{(0)}_i_j}$.
	We shall inductively show that there exists a normal-form ACH metric $g^{(m)}$ satisfying
	\begin{equation}
		\label{eq:GoalOfEachStep}
		\tensor{\Ein}{_I_J}=O(\rho^{\max\set{m+a(I,J),3}}).
	\end{equation}
	for each $m$, $m=1$, $\dots$, $2n+1$, and for such $g^{(m)}$ its components $\tensor*{g}{^{(m)}_i_j}$ are
	unique modulo $O(\rho^{\max\set{m+a(i,j),3}})$.
	
	Suppose we have a normal-form ACH metric $g^{(m-1)}$ that satisfies \eqref{eq:GoalOfEachStep} for $m-1$
	as well as \eqref{eq:LowestTermsConditionOfACHMetric}.
	Consider a new ACH metric $g^{(m)}$ given by
	$\tensor*{g}{^{(m)}_i_j}=\tensor*{g}{^{(m-1)}_i_j}+\tensor{\psi}{_i_j}$,
	where $\tensor{\psi}{_i_j}$ is such that $\tensor{\psi}{_i_j}=O(\rho^{\max\set{m-1+a(i,j),3}})$.
	Then the difference $\delta\Ein=\Ein'-\Ein$ between the Einstein tensors is given in
	Proposition \ref{prop:SmoothHigherOrderPerturbationAndEinsteinTensor}.
	In view of \eqref{eq:PerturbationAndEinsteinTensorZeroAlpha} and
	\eqref{eq:PerturbationAndEinsteinTensorDoubleHolomorphic} we can determine
	$\tensor{\psi}{_0_\alpha}$ mod $O(\rho^{m+2})$ and
	$\tensor{\psi}{_\alpha_\beta}$ mod $O(\rho^{\max\set{m+1,3}})$ so that
	$\tensor*{\Ein}{^{(m)}_0_\alpha}=O(\rho^{m+2})$ and
	$\tensor*{\Ein}{^{(m)}_\alpha_\beta}=O(\rho^{\max\set{m+1,3}})$ hold,
	because the exponents $-\tfrac{1}{8}(m+2)(m-2n-2)$ and $-\tfrac{1}{8}m(m-2n-2)$ are nonzero for
	$m=1$, $\dots$, $2n+1$.
	After that, by a similar reasoning using \eqref{eq:PerturbationAndEinsteinTensorTraceFreeHermitian},
	we can determine $\tf(\tensor{\psi}{_\alpha_\conjbeta})$ mod $O(\rho^{m+3})$
	so that $\tf(\tensor*{\Ein}{^{(m)}_\alpha_\conjbeta})=O(\rho^{m+3})$ hold.
	Next we see \eqref{eq:PerturbationAndEinsteinTensorZeros} and \eqref{eq:PerturbationAndEinsteinTensorTrace}
	as a system of linear equations for $\tensor{\psi}{_0_0}$ and $\tensor{\psi}{_\alpha^\alpha}$.
	The determinant of the coefficients is
	\begin{equation}
		\label{eq:DeterminantOfCoefficients}
	\begin{split}
		&\begin{vmatrix}
			-\tfrac{1}{8}(m^2-2nm-8n-4) & \tfrac{1}{2}m \\
			\tfrac{1}{8}n(m-2) & -\tfrac{1}{8}\left(m^2-(4n-2)m-8n-8\right)
		\end{vmatrix}\\
		&\qquad=\tfrac{1}{64}(m+2)(m+4)(m-2n-2)(m-4n-2),
	\end{split}
	\end{equation}
	which shows that this system is nondegenerate for $m=1$, $\dots$, $2n+1$.
	Hence we can determine $\tensor{\psi}{_0_0}$ and $\tensor{\psi}{_\alpha^\alpha}$, both modulo $O(\rho^{m+3})$,
	so that $\tensor*{\Ein}{^{(m)}_0_0}=O(\rho^{m+3})$ and $\tensor{\Ein}{^{(m)}_\alpha^\alpha}=O(\rho^{m+3})$
	hold.
	Thus we have attained $\tensor{\Ein}{_i_j}=O(\rho^{\max\set{m+a(i,j),3}})$,
	and if $\tensor*{g}{^{(m-1)}_i_j}$ are unique up to $O(\rho^{\max\set{m-1+a(i,j),3}})$,
	the desired uniqueness result holds for $\tensor*{g}{^{(m)}_i_j}$.
	
	Finally we check that $g^{(m)}$ is determined in such a way that it satisfies \eqref{eq:GoalOfEachStep}
	for $I=\infty$, too.
	This is done by using Lemma \ref{lem:ContractedBianchiIdentityForACHMetric}.
	In fact, for $g^{(m)}$, $\tensor*{\Ein}{^{(m)}_\infty_0}=O(\rho^{m+3})$ and
	$\tensor*{\Ein}{^{(m)}_\infty_\alpha}=O(\rho^{m+2})$ should hold,
	because in \eqref{eq:ContractedBianchiIdentityForACHMetric2} and
	\eqref{eq:ContractedBianchiIdentityForACHMetric3} the terms on the right-hand sides are, except
	the first terms in each identity, already $O(\rho^{m+3})$ and $O(\rho^{m+2})$, respectively,
	and the coefficients of the first terms are both nonzero.
	Similarly \eqref{eq:ContractedBianchiIdentityForACHMetric1} shows that
	$\tensor*{\Ein}{^{(m)}_\infty_\infty}=O(\rho^{m+3})$.
	Hence the induction is complete.
\end{proof}

In spite of the success of the inductive determination of $\tensor{g}{_i_j}$ up to the stage
in the theorem above, the next step cannot be executed, as
\eqref{eq:PerturbationAndEinsteinTensorZeroAlpha} and
\eqref{eq:PerturbationAndEinsteinTensorDoubleHolomorphic} indicate;
the freedom of the choice of $g$ satisfying \eqref{eq:ApproximatelyEinstein} does not affect
the $\rho^{2n+2}$-term coefficient of $\tensor{\Ein}{_\alpha_\beta}$ and
the $\rho^{2n+3}$-term coefficient of $\tensor{\Ein}{_0_\alpha}$.
So we define
\begin{equation}
	\tensor{\caO}{_\alpha_\beta}:=\left.\left(\rho^{-2n-2}\tensor{\Ein}{_\alpha_\beta}\right)\right|_{\rho=0}
	\label{eq:DefinitionOfObstructionTensor}
\end{equation}
and call it the \emph{obstruction tensor} associated with $(M,T^{1,0},\theta)$.
In fact, the condition $\tensor{\Ein}{_\alpha_\conjbeta}=O(\rho^{2n+4})$ on the metric from which
$\tensor{\caO}{_\alpha_\beta}$ is computed can be weakened to $\tensor{\Ein}{_\alpha_\conjbeta}=O(\rho^{2n+3})$,
for the $O(\rho^{2n+3})$ ambiguity in $\tf(\tensor{g}{_\alpha_\conjbeta})$ emerging from that does not have
any effect on $\rho^{2n+2}$-term coefficient of $\tensor{\Ein}{_\alpha_\beta}$
as \eqref{eq:PerturbationAndEinsteinTensorDoubleHolomorphic} shows.
This fact further implies that we can use any approximately Einstein ACH metric $g$ that
Theorem \ref{thm:MainTheoremOnExistence} claims its existence,
because if $\rho$ is a model boundary defining function for $g$ and $\theta$, then
there is a boundary-fixing $[\Theta]$-diffeomorphism $\Phi$
such that $\Phi^*g$ is a normal-form ACH metric for which the second coordinate function is equal to $\Phi^*\rho$,
and its Einstein tensor vanishes to the same order as that of $g$ does.

The $\rho^{2n+3}$-term coefficient of $\tensor{\Ein}{_0_\alpha}$ is not a new obstruction,
since by \eqref{eq:ContractedBianchiIdentityForACHMetric3} we have
\begin{equation}
	\label{eq:OnPossibleSecondObstruction}
	\left.\left(\rho^{-2n-3}\tensor{\Ein}{_0_\alpha}\right)\right|_{M}
	=-i\tensor{\nabla}{^\beta}\tensor{\caO}{_\alpha_\beta}
	-i\tensor{N}{_\alpha^\conjbeta^\conjgamma}\tensor{\caO}{_\conjbeta_\conjgamma}.
\end{equation}

\begin{prop}
	\label{prop:TransformationLawOfObstructionTensor}
	Let $\theta$ and $\Hat\theta=e^{2u}\theta$, $u\in C^\infty(M)$, be two pseudohermitian structures on
	$(M,T^{1,0})$. Then
	\begin{equation}
		\tensor{\Hat\caO}{_\alpha_\beta}=e^{-2nu}\tensor{\caO}{_\alpha_\beta},
		\label{eq:TransformationLawOfObstructionTensor}
	\end{equation}
	where $\tensor{\caO}{_\alpha_\beta}$ is the obstruction tensor for $(M,T^{1,0},\theta)$
	and $\tensor{\Hat\caO}{_\alpha_\beta}$ is that for $(M,T^{1,0},\Hat\theta)$.
\end{prop}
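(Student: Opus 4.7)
The plan is to compute both $\tensor{\caO}{_\alpha_\beta}$ and $\tensor{\Hat\caO}{_\alpha_\beta}$ from a single ACH metric $g$ satisfying the approximate-Einstein bounds of Theorem~\ref{thm:MainTheoremOnExistence}, changing only the boundary defining function and the distinguished local frame as one passes from $\theta$ to $\Hat\theta=e^{2u}\theta$. I will let $\rho$ and $\Hat\rho$ denote the model boundary defining functions furnished by Lemma~\ref{lem:ModelBoundaryDefiningFunction}, and write $\Hat\rho=e^{\psi}\rho$ for a smooth function $\psi$ on a neighborhood of $\bdry X$. Comparing $\iota^*(\Hat\rho^4 g)=e^{4u}\theta^2=e^{4u}\iota^*(\rho^4 g)$ forces $\psi|_{\bdry X}=u$. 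Fix a local frame $\set{Z_\alpha}$ of $T^{1,0}$, independent of $\theta$, and let $\set{W_I}$, $\set{\Hat W_I}$ be the corresponding distinguished frames with $W_\alpha=\rho Z_\alpha$ and $\Hat W_\alpha=\Hat\rho Z_\alpha$ (each extension taken along its own flow coordinates).

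The main technical step will be to establish the frame identity
\begin{equation*}
	\Hat W_\alpha=e^{\psi}W_\alpha+\rho V_\alpha
\end{equation*}
for some smooth section $V_\alpha$ of $\ThetaTangentX$ near $\bdry X$. Both sides are smooth sections of $\ThetaTangentX$, so it suffices to prove that the difference vanishes at each $p\in\bdry X$. In the identification $\ThetaTangentXFiber\cong\caV_{[\Theta]}/F_p$ one computes $\Hat W_\alpha|_p-e^{u(p)}W_\alpha|_p=[(\Hat\rho-e^{u(p)}\rho)Z_\alpha]_{F_p}=[(e^{\psi}-e^{u(p)})\rho Z_\alpha]_{F_p}$; since the scalar coefficient $e^{\psi}-e^{u(p)}$ vanishes at $p$ and $Z_\alpha$ lies in $H$, a direct inspection of the definition of $F_p$ shows that this representative already belongs to $F_p$, so its class is zero. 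The factorization through $\rho$ then follows from the standard fact that a smooth section of a vector bundle over a manifold-with-boundary which vanishes on the boundary is $\rho$ times another smooth section.

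Granted the frame identity, the remainder is pure substitution. Expanding bilinearly,
\begin{equation*}
	\tensor{\Hat\Ein}{_\alpha_\beta}=\Ein(\Hat W_\alpha,\Hat W_\beta)=e^{2\psi}\tensor{\Ein}{_\alpha_\beta}+e^{\psi}\rho\bigl(\tensor{\Ein}{_\alpha_J}V_\beta^J+\tensor{\Ein}{_I_\beta}V_\alpha^I\bigr)+\rho^2 V_\alpha^I V_\beta^J\tensor{\Ein}{_I_J}.
\end{equation*}
By Theorem~\ref{thm:MainTheoremOnExistence}, every component of $\Ein$ other than $\tensor{\Ein}{_\alpha_\beta}$ and $\tensor{\Ein}{_\conjalpha_\conjbeta}$ is $O(\rho^{2n+3})$, while $\tensor{\Ein}{_\alpha_\beta}=O(\rho^{2n+2})$, so every cross term is $O(\rho^{2n+3})$ and does not affect the coefficient of $\rho^{2n+2}$. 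Hence $\tensor{\Hat\Ein}{_\alpha_\beta}\equiv e^{2\psi}\tensor{\Ein}{_\alpha_\beta}\pmod{O(\rho^{2n+3})}$, and substituting $\Hat\rho^{-2n-2}=e^{-(2n+2)\psi}\rho^{-2n-2}$ together with $\psi|_{\bdry X}=u$ into the definition~\eqref{eq:DefinitionOfObstructionTensor} yields $\tensor{\Hat\caO}{_\alpha_\beta}=e^{-(2n+2)u+2u}\tensor{\caO}{_\alpha_\beta}=e^{-2nu}\tensor{\caO}{_\alpha_\beta}$.
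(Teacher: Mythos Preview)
Your proof is correct and follows the same overall strategy as the paper: fix one approximately Einstein ACH metric $g$, compare the model boundary defining functions $\rho$ and $\Hat\rho=e^{\psi}\rho$, observe $\psi|_M=u$, and read off the scaling $e^{-2nu}$.

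The paper's argument is shorter because it avoids your frame identity altogether. Rather than extending $Z_\alpha$ separately along the $\rho$- and $\Hat\rho$-flow coordinates, the paper picks a \emph{single} extension $\tilde Z_\alpha$ of the frame of $T^{1,0}$ and uses it in both computations. Then $\Hat\rho\,\tilde Z_\alpha = e^{\psi}\,\rho\,\tilde Z_\alpha$ holds \emph{exactly}, so
\[
\Hat\rho^{-2n-2}\,\Ein(\Hat\rho\tilde Z_\alpha,\Hat\rho\tilde Z_\beta)
= e^{-(2n+2)\psi}e^{2\psi}\,\rho^{-2n-2}\,\Ein(\rho\tilde Z_\alpha,\rho\tilde Z_\beta)
= e^{-2n\psi}\,\rho^{-2n-2}\,\tensor{\Ein}{_\alpha_\beta},
\]
with no cross terms to estimate. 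Your route---separate distinguished frames related by $\Hat W_\alpha=e^{\psi}W_\alpha+\rho V_\alpha$, followed by the observation that every cross term is $O(\rho^{2n+3})$---reaches the same conclusion but does a bit more work; in effect you are re-proving the independence of $\tensor{\caO}{_\alpha_\beta}$ from the choice of extension, which the paper has already absorbed into the discussion preceding the proposition.
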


\begin{proof}
	Let $(X,[\Theta])$ be a $[\Theta]$-manifold
	such that $\bdry X=M$ and $\iota^*[\Theta]$ is the conformal class of the pseudohermitian structures on $M$,
	and take any ACH metric $g$ satisfying the condition in Theorem \ref{thm:MainTheoremOnExistence}.
	If $\rho$ is a model boundary defining function for $\theta$
	and $\Hat\rho=e^{\psi}\rho$, $\psi\in C^\infty(X)$, is one for $\Hat\theta$,
	then we have $\psi|_M=u$ by the condition $\iota^*({\Hat\rho}^4g)={\Hat\theta}^2$.
	Hence, if $\set{\tilde Z_\alpha}$ is any extension of a local frame of $T^{1,0}$, we have
	$\tensor{\Hat\caO}{_\alpha_\beta}
	=\big(\Hat\rho^{-2n-2}\Ein(\Hat\rho\tilde Z_\alpha,\Hat\rho\tilde Z_\beta)\big)\big|_{M}
	=e^{-2nu}\big(\rho^{-2n-2}\Ein(\rho\tilde Z_\alpha,\rho\tilde Z_\beta)\big)\big|_{M}
	=e^{-2nu}\tensor{\caO}{_\alpha_\beta}$.
\end{proof}

The proposition above implies that the density-weighted version of the obstruction tensor
\begin{equation*}
	\tensor{\bm{\mathcal{O}}}{_\alpha_\beta}
	:=\tensor{\mathcal{O}}{_\alpha_\beta}\otimes\lvert\zeta\rvert^{2n/(n+2)}
	\in\tensor{\mathcal{E}}{_(_\alpha_\beta_)}(-n,-n)
\end{equation*}
is a CR-invariant tensor,
where $\tensor{\mathcal{E}}{_(_\alpha_\beta_)}$ denotes the space of local sections of $\Sym^{2}(T^{1,0})^{*}$.

Next we recall \eqref{eq:OnPossibleSecondObstruction}. Let us also look at a similar result
\begin{equation*}
	\left.\left(\rho^{-2n-3}
	(\tensor{\nabla}{^\alpha}\tensor{\Ein}{_0_\alpha}+\tensor{\nabla}{^\conjalpha}\tensor{\Ein}{_0_\conjalpha})
	\right)\right|_M
	=-\tensor{A}{^\alpha^\beta}\tensor{\caO}{_\alpha_\beta}
	-\tensor{A}{^\conjalpha^\conjbeta}\tensor{\caO}{_\conjalpha_\conjbeta},
\end{equation*}
which follows from \eqref{eq:ContractedBianchiIdentityForACHMetric2}.
Combining these identities we obtain
\begin{equation}
	\label{eq:ImaginaryPartOfDoubleDivergenceOfObstruction}
	\tensor{D}{^\alpha^\beta}\tensor{\caO}{_\alpha_\beta}
	-\tensor{D}{^{\conj{\alpha}}^{\conj{\beta}}}\tensor{\caO}{_{\conj{\alpha}}_{\conj{\beta}}}=0,
\end{equation}
where
\begin{equation}
	\label{eq:DefinitionOfP}
	\tensor{D}{^\alpha^\beta}
	=\tensor{\nabla}{^\alpha}\tensor{\nabla}{^\beta}-i\tensor{A}{^\alpha^\beta}
	-\tensor{N}{^\gamma^\alpha^\beta}\tensor{\nabla}{_\gamma}-\tensor{N}{^\gamma^\alpha^\beta_,_\gamma}.
\end{equation}

Replacing $N$, $A$ with $\bm{N}$, $\bm{A}$ and taking contractions with respect not to $h$ but to
$\bm{h}$, we obtain a differential operator
$\tensor{\bm{D}}{^\alpha^\beta}\colon\tensor{\mathcal{E}}{_(_\alpha_\beta_)}(-n,-n)\to\mathcal{E}(-n-2,-n-2)$
between density-weighted bundles. Then we have
$\tensor{\bm{D}}{^\alpha^\beta}\tensor{\bm{\mathcal{O}}}{_\alpha_\beta}-\tensor{\bm{D}}{^{\conj{\alpha}}^{\conj{\beta}}}\tensor{\bm{\mathcal{O}}}{_{\conj{\alpha}}_{\conj{\beta}}}=0$.
Furthermore, $\tensor{\bm{D}}{^\alpha^\beta}$ belongs to a one-parameter family of
CR-invariant differential operators, as we shall describe in the following proposition.

\begin{prop}
	Let $(M,T^{1,0})$ be a nondegenerate partially integrable almost CR manifold. Let
	\begin{equation*}
		\tensor*{\bm{D}}{_t^\alpha^\beta}\colon
		\tensor{\mathcal{E}}{_(_\alpha_\beta_)}(-n,-n)\to\mathcal{E}(-n-2,-n-2),\quad t\in\mathbb{C}
	\end{equation*}
	be a one-parameter family of differential operators defined by,
	in terms of any pseudohermitian structure $\theta$,
	\begin{equation}
		\label{eq:DefinitionOfOneParameterFamilyPt}
		\tensor*{\bm{D}}{_t^\alpha^\beta}
		=\tensor{\nabla}{^\alpha}\tensor{\nabla}{^\beta}-i\tensor{\bm{A}}{^\alpha^\beta}
		-(1+tn)\tensor{\bm{N}}{^\gamma^\alpha^\beta}\tensor{\nabla}{_\gamma}
		-\left(1+t(n+1)\right)\tensor{\bm{N}}{^\gamma^\alpha^\beta_,_\gamma}.
	\end{equation}
	Then this is well-defined, i.e., the right-hand side of \eqref{eq:DefinitionOfOneParameterFamilyPt} is
	independent of $\theta$.
\end{prop}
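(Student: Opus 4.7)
The plan is to verify CR-invariance of $\tensor*{\bm{D}}{_t^\alpha^\beta}$ by direct computation. I would fix a conformal change $\hat\theta = e^{2u}\theta$ and a section $\tensor{\bm{f}}{_\alpha_\beta} \in \tensor{\mathcal{E}}{_(_\alpha_\beta_)}(-n, -n)$, then show that $(\tensor*{\hat{\bm{D}}}{_t^\alpha^\beta} - \tensor*{\bm{D}}{_t^\alpha^\beta})\tensor{\bm{f}}{_\alpha_\beta} = 0$ for every $t$. Since $\tensor{\bm{N}}{_\alpha_\beta^\conjgamma} = \tensor{N}{_\alpha_\beta^\conjgamma}$ is intrinsic to $T^{1,0}$ and the density-weighted Levi form $\tensor{\bm{h}}{_\alpha_\conjbeta}$ is CR-invariant by construction, $\tensor{\bm{N}}{^\gamma^\alpha^\beta}$ is also CR-invariant. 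Thus the variation will come entirely from changes in $\tensor{\nabla}{^\alpha}\tensor{\nabla}{^\beta}$, in $\tensor{\bm{A}}{^\alpha^\beta}$ via \eqref{eq:TransformationOfPseudohermitianTorsion}, and in the covariant derivatives appearing in the last two terms of $\tensor*{\bm{D}}{_t^\alpha^\beta}$.

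First I would handle the leading term. Writing $\tensor{\nabla}{^\alpha}\tensor{\nabla}{^\beta}\tensor{\bm{f}}{_\alpha_\beta} = \tensor{\bm{h}}{^\alpha^\conjsigma}\tensor{\bm{h}}{^\beta^\conjrho}\tensor{\nabla}{_\conjsigma}\tensor{\nabla}{_\conjrho}\tensor{\bm{f}}{_\alpha_\beta}$, I would apply \eqref{eq:TransformationOfConnectionForms} together with its extension to density bundles (the analog of equation~(2.7) of \cite{GoverGraham} indicated at the end of Section~2) to expand $\tensor{\hat\nabla}{_\conjsigma}\tensor{\hat\nabla}{_\conjrho}\tensor{\bm{f}}{_\alpha_\beta}$. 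After symmetrization in $(\alpha, \beta)$ the variation will decompose into three types of contribution: (a) terms proportional to $\tensor{u}{_\alpha_\beta}\tensor{\bm{f}}{^\alpha^\beta}$ and $\tensor{u}{_\alpha}\tensor{u}{_\beta}\tensor{\bm{f}}{^\alpha^\beta}$, coming from the $\tensor{u}{^\beta_\alpha}$ and $\tensor{u}{_\alpha}\tensor{u}{^\beta}$ parts of \eqref{eq:TransformationOfConnectionForms}; (b) $\tensor{u}{_\gamma}\tensor{\nabla}{^\gamma}\tensor{\bm{f}}{_\alpha_\beta}$-type terms, weighted by factors of $n$ entering from the density contribution; and (c) Nijenhuis contractions of the form $\tensor{N}{^\gamma^\alpha^\beta}\tensor{u}{_\gamma}\tensor{\bm{f}}{_\alpha_\beta}$, arising when the commutator of the two antiholomorphic derivatives is computed on $\tensor{\bm{f}}{_\alpha_\beta}$ via the tensor-valued analog of \eqref{eq:CommutatorOfCovariantDerivativeOnScalar}. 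Then I would read off the variation of $-i\tensor{\bm{A}}{^\alpha^\beta}\tensor{\bm{f}}{_\alpha_\beta}$ directly from \eqref{eq:TransformationOfPseudohermitianTorsion}, obtaining matching type (a) and (c) terms. For the two Nijenhuis terms in $\tensor*{\bm{D}}{_t^\alpha^\beta}$, since $\tensor{\bm{N}}{^\gamma^\alpha^\beta}$ is CR-invariant, only the occurrences of $\tensor{\nabla}{_\gamma}$ transform, producing further type (b) and (c) contributions whose coefficients are proportional to $1 + tn$ and $1 + t(n+1)$.

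Summing up, the type (a) contributions will cancel between the variations of $\tensor{\nabla}{^\alpha}\tensor{\nabla}{^\beta}\tensor{\bm{f}}{_\alpha_\beta}$ and $-i\tensor{\bm{A}}{^\alpha^\beta}\tensor{\bm{f}}{_\alpha_\beta}$ independently of $t$, and this is where the coefficient $-i$ in front of $\bm{A}$ is crucial. The remaining type (b) and (c) contributions form a system whose cancellation condition involves the two unknown coefficients of the Nijenhuis terms; using the skew-symmetry $\tensor{N}{_\alpha_\beta_\gamma}+\tensor{N}{_\beta_\alpha_\gamma}=0$ and the cyclic identity $\tensor{N}{_\alpha_\beta_\gamma}+\tensor{N}{_\beta_\gamma_\alpha}+\tensor{N}{_\gamma_\alpha_\beta}=0$ from \eqref{eq:SymmetryOfNijenhuisTensor} together with the symmetry of $\tensor{\bm{f}}{_\alpha_\beta}$, the relations that need to be satisfied turn out to be met precisely by $1 + tn$ and $1 + t(n+1)$, independently of $t$. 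This simultaneously verifies invariance and explains why the invariant operator is not unique but comes in a one-parameter family.

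The hard part will be the careful bookkeeping in the expansion of $\tensor{\hat\nabla}{_\conjsigma}\tensor{\hat\nabla}{_\conjrho}\tensor{\bm{f}}{_\alpha_\beta}$: the density-weight correction on $\tensor{\bm{f}}{_\alpha_\beta}$ mixes with the connection-form corrections on each of its two $T^{1,0}$-cotangent indices, and the trace piece $2\tensor{\delta}{_\alpha^\beta}\tensor{u}{_\gamma}\tensor{\theta}{^\gamma}$ in \eqref{eq:TransformationOfConnectionForms} creates cross-terms that contribute both type (b) and type (c) pieces when commuted through. Equally delicate will be the matching between the Nijenhuis contributions produced by the commutator of two antiholomorphic covariant derivatives on $\tensor{\bm{f}}{_\alpha_\beta}$ and the transformations of $\tensor{\bm{N}}{^\gamma^\alpha^\beta}\tensor{\nabla}{_\gamma}\tensor{\bm{f}}{_\alpha_\beta}$ and $\tensor{\bm{N}}{^\gamma^\alpha^\beta_,_\gamma}\tensor{\bm{f}}{_\alpha_\beta}$, which is what will ultimately force the specific weights $n$ and $n+1$ in the coefficients $1 + tn$ and $1 + t(n+1)$.
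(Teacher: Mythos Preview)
Your proposal is correct and follows essentially the same approach as the paper: the paper's own proof consists of a single sentence referring to equation~(2.7) and Proposition~2.3 of \cite{GoverGraham} (the transformation laws for covariant derivatives of density-weighted tensors, noted at the end of \S\ref{sec:PseudohermitianGeometry} to remain valid in the partially integrable case) and leaves the verification to the reader. Your plan is precisely a detailed roadmap for that verification, organized by term type, and invokes the same ingredients---the transformation laws \eqref{eq:TransformationOfConnectionForms}, \eqref{eq:TransformationOfPseudohermitianTorsion}, the Nijenhuis symmetries \eqref{eq:SymmetryOfNijenhuisTensor}, and the density-weight corrections from \cite{GoverGraham}.
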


\begin{proof}
	This can be checked by using equation (2.7) and Proposition 2.3 of \cite{GoverGraham},
	as we have remarked at the end of \S\ref{sec:PseudohermitianGeometry}. The details are left to the reader.
\end{proof}

The next proposition finishes the proof of Theorem \ref{thm:OnObstructionTensor}.

\begin{prop}
	\label{prop:ObstructionTensorForIntegrableStructure}
	The obstruction tensor $\tensor{\caO}{_\alpha_\beta}$ for a nondegenerate (integrable) CR manifold vanishes.
\end{prop}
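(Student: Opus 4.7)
The plan is to construct, for any integrable CR structure $(M,T^{1,0})$, a smooth normal-form ACH metric whose Einstein tensor has $\tensor{\Ein}{_\alpha_\beta}=O(\rho^{2n+3})$, which by \eqref{eq:DefinitionOfObstructionTensor} forces $\tensor{\caO}{_\alpha_\beta}=0$.

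The first step is to reduce to the locally embeddable strictly pseudoconvex case. Inspection of the inductive construction in the proof of Theorem~\ref{thm:ApproximateACHEMetric} shows that the components of $\tensor{\caO}{_\alpha_\beta}$ are universal polynomial expressions in the Levi form, the Tanaka--Webster curvature and torsion, the Nijenhuis tensor, and their covariant derivatives. Setting $\tensor{N}{_\alpha_\beta^\conjgamma}=0$ yields a universal polynomial in the remaining invariants; since every admissible jet of such invariants is realized on some locally embeddable strictly pseudoconvex CR manifold, it suffices to verify the vanishing there.

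For the strictly pseudoconvex case I would use the Cheng--Yau K\"ahler--Einstein metric. Given $M=\bdry\Omega$ with $\Omega\subset\bbC^{n+1}$ strictly pseudoconvex, the Cheng--Yau metric $G$ is a genuine Einstein metric on $\Omega$ which, by Example~\ref{ex:CompleteKahlerMetric} and the proposition that follows, lifts via the square-root construction to an ACH metric $g=i_{1/2}^*G$ on $X$ with CR infinity $M$. After a rescaling matching the convention $\Ein=\Ric+\tfrac12(n+2)g$, the Einstein tensor of $g$ vanishes identically on $\interior{X}$. By Lemma~\ref{lem:ModelBoundaryDefiningFunction} and the boundary-fixing $[\Theta]$-diffeomorphism constructed thereafter, $g$ can be transported to a metric in normal form for a chosen $\theta\in\iota^{*}[\Theta]$ without spoiling the Einstein condition on the interior.

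Although the asymptotic expansion of this normal-form metric will in general contain logarithmic terms, Graham's analysis of the complex Monge--Amp\`ere equation shows that they are controlled by a single \emph{scalar}-valued obstruction function and first contribute to the $\alpha\beta$-components at an order strictly higher than $\rho^{2n+2}$. Truncating the log part then yields a smooth approximate-Einstein ACH metric in the sense of Theorem~\ref{thm:ApproximateACHEMetric} whose $\alpha\beta$-component Einstein tensor is in fact $O(\rho^{2n+3})$, from which $\tensor{\caO}{_\alpha_\beta}=0$ follows.

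The main obstacle will be this final step: propagating Graham's scalar Monge--Amp\`ere obstruction through the formation of $\partial\conj{\partial}\log u$, the square-root change $r=2\rho^{2}$, and the diffeomorphism into normal form, and verifying that the resulting logarithmic term does not contaminate the symmetric $(1,0)\otimes(1,0)$ part of the metric at order $\rho^{2n+2}$. A secondary technical matter is the density of locally embeddable strictly pseudoconvex structures within the formal space of integrable CR jets, which is needed to upgrade the conclusion from strictly pseudoconvex signature to arbitrary signature via the polynomial nature of $\tensor{\caO}{_\alpha_\beta}$.
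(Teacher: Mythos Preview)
Your overall strategy---reduce to embeddable hypersurfaces via the polynomial nature of $\tensor{\caO}{_\alpha_\beta}$, then exploit a K\"ahler metric on the domain---matches the paper's. But the paper makes a sharper choice that dissolves the obstacle you flag: instead of the exact Cheng--Yau solution, it uses Fefferman's \emph{smooth} approximate Monge--Amp\`ere solution $r$, i.e.\ a defining function with $J(r)=1+O(r^{n+2})$. The associated K\"ahler metric $G=4\partial\conj\partial\log(1/r)$ is then smooth up to the boundary, so the lifted ACH metric is smooth from the start and no log-truncation is needed.

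The second point you are missing is the structural reason the $(\alpha,\beta)$-component is so good. For any K\"ahler metric the Ricci tensor is of type $(1,1)$; hence with $G$ as above one has $\Ein(G)(\xi_\alpha,\xi_\beta)=0$ \emph{identically}, regardless of how well $J(r)$ approximates~$1$. Combined with the estimates on the remaining components coming from $J(r)=1+O(r^{n+2})$ (which the paper reads off from $\Ric(G)_{j\conj k}=-\tfrac12(n+2)G_{j\conj k}+\partial_j\partial_{\conj k}\log J(r)$), the lift to the square root satisfies $\tensor{\Ein}{_\alpha_\beta}=O(\rho^{2n+6})$, comfortably beyond what is needed. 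Your route through Cheng--Yau would eventually reduce to this same computation---truncating the logs in $u$ to order $r^{n+1}$ essentially reproduces Fefferman's $r$---but phrasing it that way forces you to control how log terms propagate through $\partial\conj\partial\log(1/u)$, the square root, and the normal-form diffeomorphism, which is exactly the work the paper's choice avoids.
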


\begin{proof}
	Since $\tensor{\caO}{_\alpha_\beta}$ is a certain polynomial of derivatives of pseudohermitian torsion and
	curvature,
	using the formal embedding (see, e.g., \cite{Kuranishi}) we can reduce the problem to the case of
	a (small piece of) nondegenerate real hypersurface $M\subset\bbC^{n+1}$.
	In this proof we use indices $j$, $k$ for components with respect to the complex coordinates
	$(z^1,\dots,z^{n+1})$.

	Let $r$ be Fefferman's approximate solution of the complex Monge--Amp\`ere equation \cite{Fefferman}, i.e.,
	a smooth defining function of $M$ such that $J(r)=1+O(r^{n+2})$, where
	\begin{equation*}
		J(r):=(-1)^{n+1}\det\begin{pmatrix}
			r & \partial r/\partial\conj{z}^k\\
			\partial r/\partial z^j & \partial^2 r/\partial z^j\partial\conj{z}^k
		\end{pmatrix}.
	\end{equation*}
	We set $\tilde\theta:=\tfrac{i}{2}(\partial r-\conj\partial r)$ and
	$\theta:=\iota^*\tilde\theta$, where $\iota\colon M\hookrightarrow\bbC^{n+1}$ is the inclusion.
	On $\Omega=\set{r>0}$, we consider the K\"ahler metric $G$ in Example \ref{ex:CompleteKahlerMetric}
	given by Fefferman's approximate solution $r$.
	It is easily verified that $\det(G_{j\conj{k}})=r^{-(n+2)}J(r)$, and
	the usual formula for the Ricci tensor of a K\"ahler metric shows that
	\begin{equation*}
		\tensor{\Ric(G)}{_j_{\conj{k}}}=-\tfrac{1}{2}(n+2)\tensor{G}{_j_{\conj{k}}}
		+\dfrac{\partial^2}{\partial z^j\partial\conj{z}^k}\log J(r).
	\end{equation*}
	Observe that, if we set $\log J(r)=r^{n+2}f$,
	\begin{equation}
		\label{eq:HighOrderTermInApproximateKahlerEinsteinMetric}
		\begin{split}
			\partial\conj\partial\log J(r)
			&=(n+2)(n+1)r^nf\partial r\wedge\conj\partial r
			+(n+2)r^{n+1}
				(f\partial\conj\partial r+\partial f\wedge\conj\partial r+\partial r\wedge\conj\partial f)\\
			&\phantom{=\;}
			+r^{n+2}\partial\conj\partial f.
		\end{split}
	\end{equation}

	We use the same notation as in Example \ref{ex:CompleteKahlerMetric}.
	Since $\kappa$ is a real-valued function,
	$\tau\intprod\partial\conj\partial r=-i(\xi-\conj\xi)\intprod\partial\conj\partial r
	=-i(\kappa\conj\partial r+\kappa\partial r)=-i\kappa dr$.
	Therefore $T\intprod d\theta=T\intprod\iota^*(-i\partial\conj\partial r)=\iota^*(-\kappa dr)=0$,
	where $T$ is the restriction of $\tau$ to $M$.
	This shows that $T$ is the Reeb vector field on $M$ associated with $\theta$.
	By restricting $\xi_1$, $\dots$, $\xi_n$ to $M$,
	we obtain a local frame $\set{Z_1,\dots,Z_n}$ of $T^{1,0}M$.

	We identify a (one-sided) neighborhood of $M$ in $\closure\Omega$ with $M\times[0,\epsilon)$ by
	\begin{equation*}
		M\times[0,\epsilon)\to\closure\Omega,\qquad
		(p,s)\mapsto\Fl_s(p),
	\end{equation*}
	where $\Fl_s$ is the flow generated by $\nu$. In view of the fact that $s$ is equal to the pullback of $r$,
	we write $r$ instead of $s$ in the sequel.
	The constant extensions of $T$ and $Z_\alpha$ in the $r$-direction are also denoted by $T$ and $Z_\alpha$.
	Then obviously $T=\tau+O(r)$, $Z_\alpha=\xi_\alpha+O(r)$.
	By \eqref{eq:HighOrderTermInApproximateKahlerEinsteinMetric} we have
	\begin{align*}
		\Ein(G)(\nu,\nu)
		&=\Ein(G)\left(\tfrac{1}{2}(\xi+\conj\xi),\tfrac{1}{2}(\xi+\conj\xi)\right)
		=\tfrac{1}{2}\Ein(G)(\xi,\conj\xi)=O(r^{n}),\\
		\Ein(G)(\tau,\tau)
		&=\Ein(G)(-i(\xi-\conj\xi),-i(\xi-\conj\xi))=2\Ein(G)(\xi,\conj\xi)=O(r^{n}),\\
		\Ein(G)(\nu,\tau)&=0,\qquad
		\Ein(G)(\nu,\xi_\alpha)=O(r^{n+1}),\qquad
		\Ein(G)(\tau,\xi_\alpha)=O(r^{n+1}),\\
		\Ein(G)(\xi_\alpha,\xi_\conjbeta)&=O(r^{n+1}),\qquad
		\Ein(G)(\xi_\alpha,\xi_\beta)=0.
	\end{align*}
	Hence, with respect to the local frame $\set{\partial_r=\nu,T,Z_\alpha,Z_\conjalpha}$ of
	$T_\bbC(M\times[0,\epsilon))$, we have
	\begin{align*}
		\tensor{\Ein(G)}{_\infty_\infty}&=O(r^{n}),\qquad
		\tensor{\Ein(G)}{_\infty_0}=O(r^{n+1}),\qquad
		\tensor{\Ein(G)}{_\infty_\alpha}=O(r^{n+1}),\\
		\tensor{\Ein(G)}{_0_0}&=O(r^n),\qquad
		\tensor{\Ein(G)}{_0_\alpha}=O(r^{n+1}),\\
		\tensor{\Ein(G)}{_\alpha_\conjbeta}&=O(r^{n+1}),\qquad
		\tensor{\Ein(G)}{_\alpha_\beta}=O(r^{n+2}).
	\end{align*}
	Therefore the Einstein tensor of the induced ACH metric $g$ on the square root of $M\times[0,\epsilon)$
	in the sense of \cite{EpsteinMelroseMendoza}
	satisfies, with respect to the frame $\set{\rho\partial_\rho,\rho^2T,\rho Z_\alpha,\rho Z_\conjalpha}$,
	\begin{align*}
		\tensor{\Ein}{_\infty_\infty}&=O(\rho^{2n+4}),\qquad
		\tensor{\Ein}{_\infty_0}=O(\rho^{2n+6}),\qquad
		\tensor{\Ein}{_\infty_\alpha}=O(\rho^{2n+5}),\\
		\tensor{\Ein}{_0_0}&=O(\rho^{2n+4}),\qquad
		\tensor{\Ein}{_0_\alpha}=O(\rho^{2n+5}),\\
		\tensor{\Ein}{_\alpha_\conjbeta}&=O(\rho^{2n+4}),\qquad
		\tensor{\Ein}{_\alpha_\beta}=O(\rho^{2n+6}).
	\end{align*}
	Hence $g$ satisfies \eqref{eq:ApproximatelyEinstein}.
	Moreover, since $\tensor{\Ein}{_\alpha_\beta}=O(\rho^{2n+3})$,
	it follows that $\tensor{\caO}{_\alpha_\beta}=0$.
\end{proof}

\section{On the first variation the CR obstruction tensor}
\label{sec:FirstVariation}

In this section, we calculate the first-order term of the obstruction tensor with respect to a variation
from the standard CR sphere.
First we introduce a tensor that describes a modification of partially integrable almost CR structures.

\begin{prop}
	Let $(M,T^{1,0})$ be a nondegenerate partially integrable almost CR manifold and
	$\set{Z_\alpha}$ a local frame of the bundle $T^{1,0}$.
	Let $\tensor{\mu}{_\alpha^\conjbeta}\in\tensor{\caE}{_\alpha^\conjbeta}$ and set
	\begin{equation*}
		\Hat{Z}_\alpha:=Z_\alpha+\tensor{\mu}{_\alpha^\conjbeta}Z_\conjbeta;
	\end{equation*}
	$\set{\Hat{Z}_\alpha}$ defines a new almost CR structure on $M$ without changing the contact distribution $H$.
	Then this is partially integrable if and only if
	\begin{equation*}
		\tensor{\mu}{_\alpha_\beta}=\tensor{\mu}{_\beta_\alpha},
	\end{equation*}
	where the upper index is lowered by the Levi form of $(M,T^{1,0})$ associated to any pseudohermitian structure.
\end{prop}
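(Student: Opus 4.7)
Since the construction preserves the contact distribution $H$, any pseudohermitian structure $\theta$ for $(M, T^{1,0})$ simultaneously serves as one for the new almost CR structure. Partial integrability of $\widehat T^{1,0}$ is therefore equivalent to $[\widehat Z_\alpha, \widehat Z_\beta] \in C^\infty(M, H_\bbC)$, or equivalently to
\begin{equation*}
\theta\bigl([\widehat Z_\alpha, \widehat Z_\beta]\bigr) = 0 \qquad \text{for all } \alpha, \beta,
\end{equation*}
since the $T$-component of any vector field $V$ equals $\theta(V)\,T$. The approach is to expand the bracket bilinearly and evaluate $\theta$ term by term.

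The computation hinges on two ingredients: Cartan's formula $\theta([X,Y]) = -d\theta(X,Y)$ for $X, Y \in C^\infty(M, H)$, which by the first structure equation \eqref{eq:FirstStructureEquation1} yields $\theta([Z_\alpha, Z_\conjbeta]) = -i\tensor{h}{_\alpha_\conjbeta}$, and the hypothesized partial integrability of $(M, T^{1,0})$, which supplies $\theta([Z_\alpha, Z_\beta]) = 0 = \theta([Z_\conjalpha, Z_\conjbeta])$. Expanding
\begin{equation*}
[\widehat Z_\alpha, \widehat Z_\beta]
= [Z_\alpha, Z_\beta]
+ [Z_\alpha, \tensor{\mu}{_\beta^\conjgamma} Z_\conjgamma]
+ [\tensor{\mu}{_\alpha^\conjgamma} Z_\conjgamma, Z_\beta]
+ [\tensor{\mu}{_\alpha^\conjgamma} Z_\conjgamma, \tensor{\mu}{_\beta^\conjdelta} Z_\conjdelta]
\end{equation*}
and applying $\theta$, the first and fourth terms contribute nothing: all Leibniz derivative pieces lie in $T^{1,0} \oplus \conj{T^{1,0}} \subset H_\bbC$, and the leftover brackets are among sections of $T^{1,0}$ alone or $\conj{T^{1,0}}$ alone, handled by the three vanishing identities above. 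The two middle terms contribute $\tensor{\mu}{_\beta^\conjgamma}\theta([Z_\alpha, Z_\conjgamma]) = -i\tensor{\mu}{_\beta_\alpha}$ and $-\tensor{\mu}{_\alpha^\conjgamma}\theta([Z_\beta, Z_\conjgamma]) = i\tensor{\mu}{_\alpha_\beta}$, respectively, with the upper index lowered by $\tensor{h}{_\alpha_\conjbeta}$ as specified.

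Summing gives $\theta([\widehat Z_\alpha, \widehat Z_\beta]) = i(\tensor{\mu}{_\alpha_\beta} - \tensor{\mu}{_\beta_\alpha})$, whose vanishing for all $\alpha, \beta$ is exactly the symmetry condition $\tensor{\mu}{_\alpha_\beta} = \tensor{\mu}{_\beta_\alpha}$. One should also verify in passing that $\widehat T^{1,0}$ is a genuine almost CR structure in the sense that $\widehat T^{1,0} \cap \conj{\widehat T^{1,0}} = 0$ pointwise; this is an open condition on $\mu$ (invertibility of $\mathbf{1} - \mu\conj{\mu}$ as a fiberwise endomorphism) that is logically independent of the symmetry just derived and plays no role in the equivalence. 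I do not anticipate any conceptual obstacle; the only danger is arithmetic slips in sign and index bookkeeping, which are controlled here by the structure equation.
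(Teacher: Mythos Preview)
Your proof is correct and follows essentially the same approach as the paper: both reduce partial integrability to $\theta([\Hat Z_\alpha,\Hat Z_\beta])=0$, expand the bracket bilinearly, discard the pure $(1,0)$--$(1,0)$ and $(0,1)$--$(0,1)$ pieces using partial integrability of the original structure, and read off the symmetry condition from the mixed terms via $\theta([Z_\alpha,Z_{\conjbeta}])=-i\tensor{h}{_\alpha_\conjbeta}$. Your version is simply a bit more explicit about why the Leibniz derivative terms vanish and about the sign computation.
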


\begin{proof}
	The new almost CR structure is partially integrable if and only if
	\begin{equation*}
		\theta([\Hat{Z}_\alpha,\Hat{Z}_\beta])
		=\theta([Z_\alpha+\tensor{\mu}{_\alpha^\conjsigma}Z_\conjsigma,
		Z_\beta+\tensor{\mu}{_\beta^\conjtau}Z_\conjtau])=0,
	\end{equation*}
	where $\theta$ is any pseudohermitian structure for $(M, T^{1,0})$.
	Since $\theta([Z_\alpha,Z_\beta])=\theta([Z_\conjsigma,Z_\conjtau])=0$,
	this is equivalent to
	\begin{equation*}
		\theta([Z_\conjsigma,Z_\beta])\tensor{\mu}{_\alpha^\conjsigma}
		+\theta([Z_\alpha,Z_\conjtau])\tensor{\mu}{_\beta^\conjtau}=0,
	\end{equation*}
	or $\tensor{\mu}{_\alpha_\beta}-\tensor{\mu}{_\beta_\alpha}=0$.
\end{proof}

Let $M=S^{2n+1}$ be the $(2n+1)$-dimensional sphere and $\theta$ the standard contact form.
Then the obstruction tensor $\tensor{\caO}{_\alpha_\beta}$ with respect to $\theta$
is a function of partially integrable almost CR structures on $\ker\theta$.
For the standard CR structure we have $\tensor{\caO}{_\alpha_\beta}=0$.
We shall compute the derivative of $\tensor{\caO}{_\alpha_\beta}$
at the standard CR structure in the direction of $\tensor{\mu}{_\alpha_\beta}$,
where the second index of $\tensor{\mu}{_\alpha_\beta}$ is understood to be lowered by
the Levi form of the standard CR sphere associated to $\theta$.
The differentials of various quantities at the standard CR structure will be indicated by the bullet $\bullet$.

\begin{prop}
	\label{prop:DerivativeOfPseudohermitianInvariants}
	Consider $\tensor{h}{_\alpha_\conjbeta}$, $\tensor{N}{_\alpha_\beta_\gamma}$,
	$\tensor{A}{_\alpha_\beta}$ and $\tensor{R}{_\alpha_\conjbeta}$
	associated to the standard contact form $\theta$ on the sphere.
	Then, their differentials at the standard CR structure are as follows:
	\begin{align*}
		\tensor*{h}{^\bullet_\alpha_\conjbeta}&=0,\qquad
		\tensor*{N}{^\bullet_\alpha_\beta_\gamma}
		=\tensor{\nabla}{_\alpha}\tensor{\mu}{_\beta_\gamma}-\tensor{\nabla}{_\beta}\tensor{\mu}{_\alpha_\gamma},\\
		\tensor*{A}{^\bullet_\alpha_\beta}&=-\tensor{\nabla}{_0}\tensor{\mu}{_\alpha_\beta},\qquad
		\tensor*{R}{^\bullet_\alpha_\conjbeta}
		=-\tensor{\nabla}{_\alpha}\tensor{\nabla}{^\conjsigma}\tensor{\mu}{_\conjbeta_\conjsigma}
		-\tensor{\nabla}{_\conjbeta}\tensor{\nabla}{^\tau}\tensor{\mu}{_\alpha_\tau}.
	\end{align*}
\end{prop}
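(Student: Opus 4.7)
The plan is to linearize the defining equations of the Tanaka--Webster connection at the standard CR structure on $S^{2n+1}$, where $\tensor{A}{_\alpha_\beta}=0$ and $\tensor{N}{_\alpha_\beta_\gamma}=0$. First I would determine the admissible coframe dual to $\Hat{Z}_\alpha$: since $\theta$ is unchanged the Reeb vector field $\Hat{T}=T$ is also, and solving the duality conditions to first order yields $\Hat{\theta}^\beta=\theta^\beta-\tensor{\mu}{_\conjalpha^\beta}\theta^\conjalpha+O(\mu^2)$, where $\tensor{\mu}{_\conjalpha^\beta}:=\conj{\tensor{\mu}{_\alpha^\conjbeta}}$. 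Substituting into $d\theta=i\tensor{\Hat{h}}{_\alpha_\conjbeta}\Hat{\theta}^\alpha\wedge\Hat{\theta}^\conjbeta$ produces first-order $\mu$-terms of the form $\tensor{\mu}{_\conjsigma_\conjbeta}\theta^\conjsigma\wedge\theta^\conjbeta$ and its conjugate, both vanishing by the symmetry of $\tensor{\mu}{_\conjsigma_\conjbeta}$ against the antisymmetry of the wedge. Hence $\tensor*{h}{^\bullet_\alpha_\conjbeta}=0$.

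Next I would linearize \eqref{eq:FirstStructureEquation2}. Expressing the LHS $d\tensor*{\Hat{\theta}}{^\bullet^\gamma}=-d(\tensor{\mu}{_\conjbeta^\gamma}\theta^\conjbeta)$ using the Tanaka--Webster covariant derivative of $\tensor{\mu}{_\conjbeta^\gamma}$, and using $d\theta^\conjbeta=\theta^\conjalpha\wedge\tensor{\omega}{_\conjalpha^\conjbeta}$ in the background, all connection $\omega$-terms cancel against their counterparts on the RHS. The residual identity
\begin{equation*}
	-\tensor{\nabla}{_\alpha}\tensor{\mu}{_\conjbeta^\gamma}\theta^\alpha\wedge\theta^\conjbeta
	-\tensor{\nabla}{_\conjalpha}\tensor{\mu}{_\conjbeta^\gamma}\theta^\conjalpha\wedge\theta^\conjbeta
	+\tensor{\nabla}{_0}\tensor{\mu}{_\conjbeta^\gamma}\theta^\conjbeta\wedge\theta
	=\theta^\alpha\wedge\tensor*{\omega}{^\bullet_\alpha^\gamma}
	-\tensor*{A}{^\bullet_\conjalpha^\gamma}\theta^\conjalpha\wedge\theta
	-\tfrac{1}{2}\tensor*{N}{^\bullet_\conjalpha_\conjbeta^\gamma}\theta^\conjalpha\wedge\theta^\conjbeta
\end{equation*}
then yields, by matching the coefficients of $\theta^\conjalpha\wedge\theta^\conjbeta$, $\theta^\conjbeta\wedge\theta$, and $\theta^\alpha\wedge\theta^\conjbeta$ in turn, $\tensor*{N}{^\bullet_\conjalpha_\conjbeta^\gamma}=\tensor{\nabla}{_\conjalpha}\tensor{\mu}{_\conjbeta^\gamma}-\tensor{\nabla}{_\conjbeta}\tensor{\mu}{_\conjalpha^\gamma}$, $\tensor*{A}{^\bullet_\conjbeta^\gamma}=-\tensor{\nabla}{_0}\tensor{\mu}{_\conjbeta^\gamma}$, and the $\theta^\conjbeta$-coefficient of $\tensor*{\omega}{^\bullet_\alpha^\gamma}$ equal to $-\tensor{\nabla}{_\alpha}\tensor{\mu}{_\conjbeta^\gamma}$. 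Conjugating and lowering indices with $\tensor{h}{_\alpha_\conjbeta}$ give the stated formulas for $\tensor*{N}{^\bullet_\alpha_\beta_\gamma}$ and $\tensor*{A}{^\bullet_\alpha_\beta}$.

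For $\tensor*{R}{^\bullet_\alpha_\conjbeta}$ I would first complete the determination of $\tensor*{\omega}{^\bullet_\alpha^\gamma}$. The vanishing of the $\theta^\alpha\wedge\theta$ and $\theta^\alpha\wedge\theta^\beta$ coefficients in the above residue forces its $\theta$-component to be zero and its $\theta^\beta$-coefficient $\lambda_{\alpha\beta}^\gamma$ to be symmetric in $\alpha,\beta$; linearizing the metric-compatibility equation $d\tensor{\Hat{h}}{_\alpha_\conjbeta}=\tensor{\Hat{h}}{_\sigma_\conjbeta}\tensor{\Hat{\omega}}{_\alpha^\sigma}+\tensor{\Hat{h}}{_\alpha_\conjsigma}\tensor{\Hat{\omega}}{_\conjbeta^\conjsigma}$ (using $\tensor*{h}{^\bullet}=0$) gives $\tensor*{\omega}{^\bullet_\alpha_\conjbeta}+\tensor*{\omega}{^\bullet_\conjbeta_\alpha}=0$, and combining with the conjugate of the already-known $\theta^\conjbeta$-coefficient produces $\lambda_{\alpha\beta}^\gamma=\tensor{\nabla}{^\gamma}\tensor{\mu}{_\alpha_\beta}$. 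Tracing gives
\begin{equation*}
	\tensor*{\omega}{^\bullet_\gamma^\gamma}
	=\tensor{\nabla}{^\tau}\tensor{\mu}{_\beta_\tau}\theta^\beta
	-\tensor{\nabla}{^\conjsigma}\tensor{\mu}{_\conjbeta_\conjsigma}\theta^\conjbeta.
\end{equation*}
Since $\tensor{\Pi}{_\gamma^\gamma}=d\tensor{\omega}{_\gamma^\gamma}$ (the wedge-quadratic trace vanishes by the antisymmetry argument), and because the $\theta$-coefficient of $\tensor*{\omega}{^\bullet_\gamma^\gamma}$ is zero so the background torsion $\tensor{\Theta}{_\alpha_\conjbeta^0}=i\tensor{h}{_\alpha_\conjbeta}$ contributes nothing, evaluating $d\tensor*{\omega}{^\bullet_\gamma^\gamma}(Z_\alpha,Z_\conjbeta)$ directly yields the stated formula. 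The main obstacle throughout is the careful bookkeeping of index positions, signs, and connection/torsion contributions; these simplify cleanly on the standard sphere because $A$ and $N$ vanish there.
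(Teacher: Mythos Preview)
Your argument is correct and follows essentially the same route as the paper's own proof: linearize the first structure equation to extract $\tensor*{N}{^\bullet}$, $\tensor*{A}{^\bullet}$ and the components of $\tensor*{\omega}{^\bullet_\alpha^\gamma}$, then use $\tensor{\Pi}{_\gamma^\gamma}=d\tensor{\omega}{_\gamma^\gamma}$ to obtain $\tensor*{R}{^\bullet_\alpha_\conjbeta}$. The only difference is one of presentation. The paper works on the frame side: it chooses a local frame with Heisenberg-type commutation relations $[Z_\alpha,Z_\conjbeta]=-i\tensor{h}{_\alpha_\conjbeta}T$, $[Z_\alpha,Z_\beta]=[Z_\alpha,T]=0$, computes the variations $[\Hat Z_\alpha,\Hat Z_\conjbeta]^\bullet$ etc.\ directly, and reads off the invariants from these brackets. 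You work on the coframe side: you compute $\Hat\theta^\gamma$ and match coefficients in the linearized structure equation, keeping everything expressed through covariant derivatives so that no special frame is needed. Your observation that the background connection terms cancel against the $\Hat\theta^{\bullet\alpha}\wedge\tensor{\omega}{_\alpha^\gamma}$ contribution is exactly what makes the residual identity frame-independent; the paper achieves the same simplification by choosing a frame in which $\tensor{\omega}{_\alpha^\beta}$ vanishes. Both arrive at the identical intermediate formulae $\tensor*{\omega}{^\bullet_\alpha^\beta}(Z_\conjgamma)=-\tensor{\nabla}{_\alpha}\tensor{\mu}{_\conjgamma^\beta}$, $\tensor*{\omega}{^\bullet_\alpha^\beta}(Z_\gamma)=\tensor{\nabla}{^\beta}\tensor{\mu}{_\alpha_\gamma}$, $\tensor*{\omega}{^\bullet_\alpha^\beta}(T)=0$, and hence the same $\tensor*{R}{^\bullet_\alpha_\conjbeta}$.
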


\begin{proof}
	Since the both sides of the four equalities are all tensorial, we may take any frame to derive them.
	Let $\set{Z_\alpha}$ be a local frame of $T^{1,0}$ of the standard CR sphere such that
	\begin{equation*}
		[Z_\alpha,Z_\conjbeta]=-i\tensor{h}{_\alpha_\conjbeta}T,\qquad
		[Z_\alpha,Z_\beta]=[Z_\alpha,T]=0
	\end{equation*}
	and
	\begin{equation*}
		\tensor{h}{_\alpha_\conjbeta}=
		\begin{cases}
			1,&\text{if $\alpha=\beta$},\\
			0,&\text{otherwise},
		\end{cases}
	\end{equation*}
	where $T$ is the Reeb vector field associated with $\theta$. Then the differentials of the Lie brackets are
	given by
	\begin{align*}
		[\Hat{Z}_\alpha,\Hat{Z}_\conjbeta]^\bullet
		&=(\tensor{\nabla}{_\alpha}\tensor{\mu}{_\conjbeta^\sigma})Z_\sigma
		-(\tensor{\nabla}{_\conjbeta}\tensor{\mu}{_\alpha^\conjtau})Z_\conjtau,\\
		[\Hat{Z}_\alpha,\Hat{Z}_\beta]^\bullet
		&=(\tensor{\nabla}{_\alpha}\tensor{\mu}{_\beta^\conjgamma}
		-\tensor{\nabla}{_\conjbeta}\tensor{\mu}{_\alpha^\conjgamma})Z_\conjgamma,\\
		[\Hat{Z}_\alpha,T]^\bullet
		&=-(\tensor{\nabla}{_0}\tensor{\mu}{_\alpha^\conjgamma})Z_\conjgamma.
	\end{align*}
	They immediately show that $\tensor*{h}{^\bullet_\alpha_\conjbeta}=0$ and
	$\tensor{N}{^\bullet_\alpha_\beta^\conjgamma}=\tensor{\nabla}{_\alpha}\tensor{\mu}{_\beta^\conjgamma}
	-\tensor{\nabla}{_\beta}\tensor{\mu}{_\alpha^\conjgamma}$.
	The first structure equation \eqref{eq:FirstStructureEquation2} implies
	\begin{equation*}
		\tensor{A}{^\bullet_\alpha^\conjbeta}
		=\theta^\conjbeta([\Hat{Z}_\alpha,T]^\bullet)=-\tensor{\nabla}{_0}\tensor{\mu}{_\alpha^\conjbeta}.
	\end{equation*}
	Similarly we have
	\begin{equation*}
		\tensor{\omega}{^\bullet_\alpha^\beta}(Z_\conjgamma)
		=-\tensor{\nabla}{_\alpha}\tensor{\mu}{_\conjgamma^\beta},\qquad
		\tensor{\omega}{^\bullet_\alpha^\beta}(T)=0,
	\end{equation*}
	and this together with
	$\tensor*{\omega}{^\bullet_\alpha_\conjbeta}+\tensor*{\omega}{^\bullet_\conjbeta_\alpha}
	=(d\tensor{h}{_\alpha_\conjbeta})^\bullet=0$ implies
	$\tensor{\omega}{^\bullet_\alpha^\beta}(Z_\gamma)=\tensor{\nabla}{^\beta}\tensor{\mu}{_\alpha_\gamma}$.
	From \eqref{eq:CurvatureForm} we have
	\begin{equation*}
		\tensor*{R}{^\bullet_\alpha_\conjbeta}
		=Z_\alpha\tensor{\omega}{^\bullet_\gamma^\gamma}(Z_\conjbeta)
		-Z_\conjbeta\tensor{\omega}{^\bullet_\gamma^\gamma}(Z_\alpha)
		-\tensor{\omega}{^\bullet_\gamma^\gamma}([Z_\alpha,Z_\conjbeta])
		=-\tensor{\nabla}{_\alpha}\tensor{\nabla}{^\conjgamma}\tensor{\mu}{_\conjbeta_\conjgamma}
		-\tensor{\nabla}{_\conjbeta}\tensor{\nabla}{^\gamma}\tensor{\mu}{_\alpha_\gamma}.
	\end{equation*}
	This completes the proof.
\end{proof}

Let $g$ be a normal-form ACH metric for $\theta$
satisfying the condition in Theorem \ref{thm:ApproximateACHEMetric}. Let
\begin{equation*}
	\tensor{g}{_0_0}=1+\tensor{\varphi}{_0_0},\qquad
	\tensor{g}{_0_\alpha}=\tensor{\varphi}{_0_\alpha},\qquad
	\tensor{g}{_\alpha_\conjbeta}=\tensor{h}{_\alpha_\conjbeta}+\tensor{\varphi}{_\alpha_\conjbeta},\qquad
	\tensor{g}{_\alpha_\beta}=\tensor{\varphi}{_\alpha_\beta}.
\end{equation*}
Then, as seen in Theorem \ref{thm:ApproximateACHEMetric},
\begin{equation*}
	\tensor{\varphi[m]}{_i_j}
	:=\dfrac{1}{m!}\left.\left(\partial_\rho^{m}\tensor{\varphi}{_i_j}\right)\right|_{\rho=0},\qquad
	m\le 2n+1+a(i,j)
\end{equation*}
are uniquely determined. For the standard CR structure they completely vanish.
We shall observe the differentials $\tensor*{\varphi[m]}{^\bullet_i_j}$ of $\tensor{\varphi[m]}{_i_j}$.
For notational convenience, we set $\tensor{\varphi[m]}{_i_j}:=0$ for $m\le 0$ and
\begin{equation*}
	\chi_k(m):=
	\begin{cases}
		1,& m=k,\\
		0,& \text{otherwise}.
	\end{cases}
\end{equation*}

\begin{lem}
	\label{lem:RecursiveFormulaForFirstVariation}
	The differentials $\tensor*{\varphi[m]}{^\bullet_i_j}$ of $\tensor{\varphi[m]}{_i_j}$ at the standard CR
	structure satisfy
	\allowdisplaybreaks
	\begin{align*}
		\begin{split}
			0
			&=-\tfrac{1}{8}\left(m^2-(2n+4)m-4n\right)\tensor*{\varphi[m]}{^\bullet_0_0}
			+\tfrac{1}{2}(m-2)\tensor{\varphi[m]}{^\bullet_\alpha^\alpha}\\
			&\phantom{=\;}
			+i(\tensor{\nabla}{^\alpha}\tensor*{\varphi[m-1]}{^\bullet_0_\alpha}
			-\tensor{\nabla}{^\conjalpha}\tensor*{\varphi[m-1]}{^\bullet_0_\conjalpha})
			+\tfrac{1}{2}\sublaplacian\tensor*{\varphi[m-2]}{^\bullet_0_0}\\
			&\phantom{=\;}
			+(\tensor{\nabla}{_0}\tensor{\nabla}{^\alpha}\tensor*{\varphi[m-3]}{^\bullet_0_\alpha}
			+\tensor{\nabla}{_0}\tensor{\nabla}{^\conjalpha}\tensor*{\varphi[m-3]}{^\bullet_0_\conjalpha})
			-\tensor{\nabla}{_0}\tensor{\nabla}{_0}\tensor{\varphi[m-4]}{^\bullet_\alpha^\alpha},
		\end{split}\\
		\begin{split}
			0
			&=-\chi_3(m)\tensor{\nabla}{_0}\tensor{\nabla}{^\beta}\tensor{\mu}{_\alpha_\beta}
			-\tfrac{1}{8}(m+1)(m-2n-3)\tensor*{\varphi[m]}{^\bullet_0_\alpha}\\
			&\phantom{=\;}
				+\tfrac{3i}{4}\tensor{\nabla}{_\alpha}\tensor*{\varphi[m-1]}{^\bullet_0_0}
				+\tfrac{i}{2}\tensor{\nabla}{_\alpha}\tensor{\varphi[m-1]}{^\bullet_\beta^\beta}
				-i\tensor{\nabla}{^\conjbeta}\tensor*{\varphi[m-1]}{^\bullet_\alpha_\conjbeta}\\
			&\phantom{=\;}
				+\tfrac{1}{2}\sublaplacian\tensor*{\varphi[m-2]}{^\bullet_0_\alpha}
				-\tfrac{i}{2}\tensor{\nabla}{_0}\tensor*{\varphi[m-2]}{^\bullet_0_\alpha}
				+\tfrac{1}{2}
					(\tensor{\nabla}{_\alpha}\tensor{\nabla}{^\beta}\tensor*{\varphi[m-2]}{^\bullet_0_\beta}
					+\tensor{\nabla}{_\alpha}\tensor{\nabla}{^\conjbeta}
					\tensor*{\varphi[m-2]}{^\bullet_0_\conjbeta})\\
			&\phantom{=\;}
				-\tensor{\nabla}{_0}\tensor{\nabla}{_\alpha}\tensor{\varphi[m-3]}{^\bullet_\beta^\beta}
				+\tfrac{1}{2}(\tensor{\nabla}{_0}\tensor{\nabla}{^\conjbeta}
					\tensor*{\varphi[m-3]}{^\bullet_\alpha_\conjbeta}
					+\tensor{\nabla}{_0}\tensor{\nabla}{^\beta}\tensor*{\varphi[m-3]}{^\bullet_\alpha_\beta}),
		\end{split}\\
		\begin{split}
			0
			&=-\chi_2(m)(\tensor{\nabla}{_\alpha}\tensor{\nabla}{^\conjgamma}\tensor{\mu}{_\conjbeta_\conjgamma}
				+\tensor{\nabla}{_\conjbeta}\tensor{\nabla}{^\gamma}\tensor{\mu}{_\alpha_\gamma})
			-\tfrac{1}{8}\left(m^2-(2n+2)m-8\right)\tensor*{\varphi[m]}{^\bullet_\alpha_\conjbeta}\\
			&\phantom{=\;}
			+\tfrac{1}{8}\tensor{h}{_\alpha_\conjbeta}(m-4)\tensor*{\varphi[m]}{^\bullet_0_0}
			+\tfrac{1}{4}\tensor{h}{_\alpha_\conjbeta}m\tensor{\varphi[m]}{^\bullet_\gamma^\gamma}\\
			&\phantom{=\;}
			+i(\tensor{\nabla}{_\alpha}\tensor*{\varphi[m-1]}{^\bullet_0_\conjbeta}
				-\tensor{\nabla}{_\conjbeta}\tensor*{\varphi[m-1]}{^\bullet_0_\alpha})
			-\tfrac{i}{4}\tensor{h}{_\alpha_\conjbeta}\tensor{\nabla}{_0}\tensor*{\varphi[m-2]}{^\bullet_0_0}
			-\tfrac{i}{2}\tensor{h}{_\alpha_\conjbeta}\tensor{\nabla}{_0}
				\tensor{\varphi[m-2]}{^\bullet_\gamma^\gamma}\\
			&\phantom{=\;}
			-\tfrac{1}{2}\tensor{\nabla}{_\alpha}\tensor{\nabla}{_\conjbeta}\tensor*{\varphi[m-2]}{^\bullet_0_0}
			-\tensor{\nabla}{_\alpha}\tensor{\nabla}{_\conjbeta}\tensor{\varphi[m-2]}{^\bullet_\gamma^\gamma}
			+\tfrac{1}{2}\sublaplacian\tensor*{\varphi[m-2]}{^\bullet_\alpha_\conjbeta}\\
			&\phantom{=\;}
			+\tfrac{1}{2}(
				\tensor{\nabla}{_\alpha}\tensor{\nabla}{^\gamma}\tensor*{\varphi[m-2]}{^\bullet_\conjbeta_\gamma}
				+\tensor{\nabla}{_\alpha}\tensor{\nabla}{^\conjgamma}
					\tensor*{\varphi[m-2]}{^\bullet_\conjbeta_\conjgamma}
				+\tensor{\nabla}{_\conjbeta}\tensor{\nabla}{^\conjgamma}
					\tensor*{\varphi[m-2]}{^\bullet_\alpha_\conjgamma}
				+\tensor{\nabla}{_\conjbeta}\tensor{\nabla}{^\gamma}
					\tensor*{\varphi[m-2]}{^\bullet_\alpha_\gamma})\\
			&\phantom{=\;}
			+\tfrac{1}{2}(\tensor{\nabla}{_0}\tensor{\nabla}{_\alpha}\tensor*{\varphi[m-3]}{^\bullet_0_\conjbeta}
				+\tensor{\nabla}{_0}\tensor{\nabla}{_\conjbeta}\tensor*{\varphi[m-3]}{^\bullet_0_\alpha})
			-\tfrac{1}{2}\tensor{\nabla}{_0}\tensor{\nabla}{_0}\tensor*{\varphi[m-4]}{^\bullet_\alpha_\conjbeta},
		\end{split}\\
		\begin{split}
			0
			&=-\chi_2(m)(\sublaplacian\tensor{\mu}{_\alpha_\beta}
				+\tensor{\nabla}{_\alpha}\tensor{\nabla}{^\gamma}\tensor{\mu}{_\beta_\gamma}
				+\tensor{\nabla}{_\beta}\tensor{\nabla}{^\gamma}\tensor{\mu}{_\alpha_\gamma}
				+2i\tensor{\nabla}{_0}\tensor{\mu}{_\alpha_\beta})
			+\chi_4(m)\tensor{\nabla}{_0}\tensor{\nabla}{_0}\tensor{\mu}{_\alpha_\beta}\\
			&\phantom{=\;}
			-\tfrac{1}{8}m(m-2n-2)\tensor*{\varphi[m]}{^\bullet_\alpha_\beta}
			-\tfrac{1}{2}\tensor{\nabla}{_\alpha}\tensor{\nabla}{_\beta}\tensor*{\varphi[m-2]}{^\bullet_0_0}
			-\tensor{\nabla}{_\alpha}\tensor{\nabla}{_\beta}\tensor{\varphi[m-2]}{^\bullet_\gamma^\gamma}
			+\tfrac{1}{2}\sublaplacian\tensor*{\varphi[m-2]}{^\bullet_\alpha_\beta}\\
			&\phantom{=\;}
			+\tfrac{1}{2}
				(\tensor{\nabla}{_\alpha}\tensor{\nabla}{^\conjgamma}
					\tensor*{\varphi[m-2]}{^\bullet_\beta_\conjgamma}
				+\tensor{\nabla}{_\alpha}\tensor{\nabla}{^\gamma}\tensor*{\varphi[m-2]}{^\bullet_\beta_\gamma}
				+\tensor{\nabla}{_\beta}\tensor{\nabla}{^\conjgamma}
					\tensor*{\varphi[m-2]}{^\bullet_\alpha_\conjgamma}
				+\tensor{\nabla}{_\beta}\tensor{\nabla}{^\gamma}\tensor*{\varphi[m-2]}{^\bullet_\alpha_\gamma})\\
			&\phantom{=\;}
			+i\tensor{\nabla}{_0}\tensor*{\varphi[m-2]}{^\bullet_\alpha_\beta}
			+\tfrac{1}{2}(\tensor{\nabla}{_0}\tensor{\nabla}{_\alpha}\tensor*{\varphi[m-3]}{^\bullet_0_\beta}
				+\tensor{\nabla}{_0}\tensor{\nabla}{_\beta}\tensor*{\varphi[m-3]}{^\bullet_0_\alpha})
			-\tfrac{1}{2}\tensor{\nabla}{_0}\tensor{\nabla}{_0}\tensor*{\varphi[m-4]}{^\bullet_\alpha_\beta},
		\end{split}
	\end{align*}
	\allowdisplaybreaks[0]%
	where in each equality $m$ takes any nonnegative integer and
	$\nabla$ denotes the Tanaka--Webster connection for the standard CR sphere with $\theta$.
\end{lem}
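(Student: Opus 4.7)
The plan is to linearize the equations $\tensor{\Ein}{_I_J}=O(\rho^{2n+1+a(I,J)})$ of Theorem \ref{thm:ApproximateACHEMetric} at the standard CR sphere and read off the $\rho^m$ Taylor coefficients. The essential input is Lemma \ref{lem:RicciTensorModuloHighOrderTerm}, whose formulas I claim hold \emph{exactly} upon linearization at the standard sphere. Indeed, at $\mu=0$ we have $\tensor{N}{_\alpha_\beta_\gamma}=\tensor{A}{_\alpha_\beta}=0$, the Levi form is standard, and the uniqueness assertion in Theorem \ref{thm:ApproximateACHEMetric}, combined with the fact that the model ACH metric $g_\theta$ on the sphere is Einstein, forces $\tensor{\varphi[m]}{_i_j}=0$ for every relevant $m$. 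The terms of type (N1) are quadratic in $\tensor{\varphi}{_i_j}$ and thus have vanishing first variation; those of types (N2) and (N3) are linear in $\tensor{\varphi}{_i_j}$ (or its derivatives) with a coefficient that vanishes at the standard sphere, so their $\mu$-derivatives vanish there by the Leibniz rule.

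Next I would differentiate each of the six formulas of Lemma \ref{lem:RicciTensorModuloHighOrderTerm} at $\mu=0$ and substitute the explicit values of $\tensor*{h}{^\bullet_\alpha_\conjbeta}$, $\tensor*{N}{^\bullet_\alpha_\beta_\gamma}$, $\tensor*{A}{^\bullet_\alpha_\beta}$, $\tensor*{R}{^\bullet_\alpha_\conjbeta}$ from Proposition \ref{prop:DerivativeOfPseudohermitianInvariants}. The explicit $\rho^k$-prefactors in front of each torsion/curvature term in Lemma \ref{lem:RicciTensorModuloHighOrderTerm} produce the $\chi_k(m)$ factors in the recursion: the $\rho^{2}\tensor{R}{_\alpha_\conjbeta}$ term in $\tensor{\Ein}{_\alpha_\conjbeta}$ feeds only into the $m=2$ slot, contributing $-\chi_2(m)\tensor*{R}{^\bullet_\alpha_\conjbeta}$; the $\rho^{2}(in\tensor{A}{_\alpha_\beta}+\tensor{N}{_\gamma_\alpha_\beta_,^\gamma}+\tensor{N}{_\gamma_\beta_\alpha_,^\gamma})$ cluster in $\tensor{\Ein}{_\alpha_\beta}$ feeds only at $m=2$; the $-\rho^{4}\tensor{A}{_\alpha_\beta_,_0}$ term in $\tensor{\Ein}{_\alpha_\beta}$ feeds only at $m=4$, yielding the $\chi_4(m)\tensor{\nabla}{_0}\tensor{\nabla}{_0}\tensor{\mu}{_\alpha_\beta}$ term; the $\rho^{3}\tensor{A}{_\alpha_\beta_,^\beta}$ term in $\tensor{\Ein}{_0_\alpha}$ feeds only at $m=3$, yielding the $-\chi_3(m)\tensor{\nabla}{_0}\tensor{\nabla}{^\beta}\tensor{\mu}{_\alpha_\beta}$ term. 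All terms quadratic in $N$ or $A$ (such as $-2\rho^2\tensor{N}{_\alpha^\conjgamma_\rho}\tensor{N}{_\conjbeta^\rho_\conjgamma}$ in $\tensor{\Ein}{_\alpha_\conjbeta}$) drop out of the linearization.

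Finally I extract the $\rho^m$ Taylor coefficient of each resulting identity. Since $\euler=\rho\partial_\rho$ acts on the $\rho^m$-coefficient of a power series as multiplication by $m$, operator-valued prefactors such as $-\tfrac{1}{8}((\euler)^{2}-(2n+4)\euler-4n)$ become the scalar multipliers $-\tfrac{1}{8}(m^{2}-(2n+4)m-4n)$ appearing in the statement, and explicit $\rho^{j}$-factors in front of lower-order $\varphi$-derivatives produce the shifts $m\mapsto m-j$ on the right-hand sides. Collecting contributions yields the four displayed recurrences. The main obstacle is purely clerical: one must carefully track indices, complex conjugations, and the partial-integrability symmetry $\tensor{\mu}{_\alpha_\beta}=\tensor{\mu}{_\beta_\alpha}$ throughout. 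The trickiest single step is identifying the $m=2$ contribution to the linearized $\tensor{\Ein}{_\alpha_\beta}$ equation: the pieces $in\tensor*{A}{^\bullet_\alpha_\beta}$ and $(\tensor{N}{_\gamma_\alpha_\beta_,^\gamma}+\tensor{N}{_\gamma_\beta_\alpha_,^\gamma})^\bullet$ must combine, after substitution from Proposition \ref{prop:DerivativeOfPseudohermitianInvariants}, to yield precisely $\sublaplacian\tensor{\mu}{_\alpha_\beta}+\tensor{\nabla}{_\alpha}\tensor{\nabla}{^\gamma}\tensor{\mu}{_\beta_\gamma}+\tensor{\nabla}{_\beta}\tensor{\nabla}{^\gamma}\tensor{\mu}{_\alpha_\gamma}+2i\tensor{\nabla}{_0}\tensor{\mu}{_\alpha_\beta}$.
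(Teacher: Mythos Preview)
Your proposal is correct and follows essentially the same approach as the paper: linearize the formulae of Lemma~\ref{lem:RicciTensorModuloHighOrderTerm} at the standard sphere, observe that the neglected terms of types (N1)--(N3) are at least quadratic in $\mu$ and hence drop out, substitute the values from Proposition~\ref{prop:DerivativeOfPseudohermitianInvariants}, and read off the $\rho^m$ Taylor coefficients using $\tensor{\Ein}{_I_J}=O(\rho^{2n+1+a(I,J)})$. Your write-up is in fact more detailed than the paper's own proof, which compresses all of this into two sentences; note only that you need just the last four of the formulae in Lemma~\ref{lem:RicciTensorModuloHighOrderTerm} (for $\tensor{\Ein}{_0_0}$, $\tensor{\Ein}{_0_\alpha}$, $\tensor{\Ein}{_\alpha_\conjbeta}$, $\tensor{\Ein}{_\alpha_\beta}$), not all of them.
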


\begin{proof}
	This follows from Lemma \ref{lem:RicciTensorModuloHighOrderTerm}, because terms of type (N1)--(N3),
	which are neglected in the formulae recorded in that lemma, are at least quadratic in
	$\tensor{\mu}{_\alpha_\beta}$.
	By setting $\tensor{\Ein}{_I_J}=O(\rho^{2n+1+a(I,J)})$, the Taylor expansions of the last four equalities
	in Lemma \ref{lem:RicciTensorModuloHighOrderTerm} give the claimed formulae,
	thanks to Proposition \ref{prop:DerivativeOfPseudohermitianInvariants}.
\end{proof}

In principle we can calculate all $\tensor*{\varphi[m]}{^\bullet_i_j}$ using the recurrence formulae above.
It is easy to see that
$\tensor*{\varphi[m]}{^\bullet_0_0}=\tensor*{\varphi[m]}{^\bullet_\alpha_\conjbeta}
=\tensor*{\varphi[m]}{^\bullet_\alpha_\beta}=0$ for $m$ odd and $\tensor*{\varphi[m]}{^\bullet_0_\alpha}=0$ for
$m$ even, and each nonzero $\tensor*{\varphi[m]}{^\bullet_i_j}$ is a linear combination over $\bbC$ of
covariant derivatives of $\tensor{\mu}{_\alpha_\beta}$ which are given in
Table \ref{tbl:DerivativeOfApproximateMetric}.
As a result the differential $\tensor*{\caO}{^\bullet_\alpha_\beta}$ of the obstruction tensor is a linear
combination of
\begin{align*}
	&\sublaplacian^k\tensor*{\nabla}{^{n+1-k}_0}\tensor{\mu}{_\alpha_\beta},\qquad
	\sublaplacian^k\tensor*{\nabla}{^{n-k}_0}\tensor{\nabla}{_(_\alpha}\tensor{\nabla}{^\sigma}
	\tensor{\mu}{_\beta_)_\sigma},\\
	&\sublaplacian^k\tensor*{\nabla}{^{n-1-k}_0}
	\tensor{\nabla}{_\alpha}\tensor{\nabla}{_\beta}\tensor{\nabla}{^\sigma}\tensor{\nabla}{^\tau}
	\tensor{\mu}{_\sigma_\tau}\quad\text{and}\quad
	\sublaplacian^k\tensor*{\nabla}{^{n-1-k}_0}
	\tensor{\nabla}{_\alpha}\tensor{\nabla}{_\beta}\tensor{\nabla}{^\conjsigma}\tensor{\nabla}{^\conjtau}
	\tensor{\mu}{_\conjsigma_\conjtau},
\end{align*}
which are linearly independent if $n\ge 2$.

\begin{table}[htbp]
	\small
\begin{tabular}{cl}
	\toprule
	Type & \multicolumn{1}{c}{Terms} \\ \midrule
	\rule[-0.7em]{0pt}{1.8em}$\tensor*{\varphi[2l]}{^\bullet_0_0}$&
	$\sublaplacian^k\tensor*{\nabla}{^{l-1-k}_0}
	\tensor{\nabla}{^\alpha}\tensor{\nabla}{^\beta}\tensor{\mu}{_\alpha_\beta}$,\quad
	$\sublaplacian^k\tensor*{\nabla}{^{l-1-k}_0}
	\tensor{\nabla}{^\conjalpha}\tensor{\nabla}{^\conjbeta}\tensor{\mu}{_\conjalpha_\conjbeta}$\\ \midrule
	\rule[-0.7em]{0pt}{1.8em}$\tensor*{\varphi[2l+1]}{^\bullet_0_\alpha}$&
	$\sublaplacian^k\tensor*{\nabla}{^{l-k}_0}\tensor{\nabla}{^\beta}\tensor{\mu}{_\alpha_\beta}$,\quad
	$\sublaplacian^k\tensor*{\nabla}{^{l-1-k}_0}
	\tensor{\nabla}{_\alpha}\tensor{\nabla}{^\sigma}\tensor{\nabla}{^\tau}\tensor{\mu}{_\sigma_\tau}$,\quad
	$\sublaplacian^k\tensor*{\nabla}{^{l-1-k}_0}
	\tensor{\nabla}{_\alpha}\tensor{\nabla}{^\conjsigma}\tensor{\nabla}{^\conjtau}
	\tensor{\mu}{_\conjsigma_\conjtau}$\\ \midrule
	\rule[-0.6em]{0pt}{1.7em}$\tensor*{\varphi[2l]}{^\bullet_\alpha_\conjbeta}$&
	$\sublaplacian^k\tensor*{\nabla}{^{l-1-k}_0}
	\tensor{\nabla}{_\alpha}\tensor{\nabla}{^\conjsigma}\tensor{\mu}{_\conjbeta_\conjsigma}$,\quad
	$\sublaplacian^k\tensor*{\nabla}{^{l-1-k}_0}
	\tensor{\nabla}{_\conjbeta}\tensor{\nabla}{^\sigma}\tensor{\mu}{_\alpha_\sigma}$,\\
	\rule[-0.6em]{0pt}{1.6em}&
	$\sublaplacian^k\tensor*{\nabla}{^{l-2-k}_0}
	\tensor{\nabla}{_\alpha}\tensor{\nabla}{_\conjbeta}\tensor{\nabla}{^\sigma}\tensor{\nabla}{^\tau}
	\tensor{\mu}{_\sigma_\tau}$,\quad
	$\sublaplacian^k\tensor*{\nabla}{^{l-2-k}_0}
	\tensor{\nabla}{_\conjbeta}\tensor{\nabla}{_\alpha}\tensor{\nabla}{^\conjsigma}\tensor{\nabla}{^\conjtau}
	\tensor{\mu}{_\conjsigma_\conjtau}$,\\
	\rule[-0.7em]{0pt}{1.8em}& $\tensor{h}{_\alpha_\conjbeta}\sublaplacian^k\tensor*{\nabla}{^{l-1-k}_0}
	\tensor{\nabla}{^\sigma}\tensor{\nabla}{^\tau}\tensor{\mu}{_\sigma_\tau}$,\quad
	$\tensor{h}{_\alpha_\conjbeta}\sublaplacian^k\tensor*{\nabla}{^{l-1-k}_0}
	\tensor{\nabla}{^\conjsigma}\tensor{\nabla}{^\conjtau}\tensor{\mu}{_\conjsigma_\conjtau}$\\ \midrule
	\rule[-0.6em]{0pt}{1.7em}$\tensor*{\varphi[2l]}{^\bullet_\alpha_\beta}$&
	$\sublaplacian^k\tensor*{\nabla}{^{l-k}_0}\tensor{\mu}{_\alpha_\beta}$,\quad
	$\sublaplacian^k\tensor*{\nabla}{^{l-1-k}_0}
	\tensor{\nabla}{_(_\alpha}\tensor{\nabla}{^\sigma}\tensor{\mu}{_\beta_)_\sigma}$,\\
	\rule[-0.7em]{0pt}{1.7em}& $\sublaplacian^k\tensor*{\nabla}{^{l-2-k}_0}
	\tensor{\nabla}{_\alpha}\tensor{\nabla}{_\beta}\tensor{\nabla}{^\sigma}\tensor{\nabla}{^\tau}
	\tensor{\mu}{_\sigma_\tau}$,\quad
	$\sublaplacian^k\tensor*{\nabla}{^{l-2-k}_0}
	\tensor{\nabla}{_\alpha}\tensor{\nabla}{_\beta}\tensor{\nabla}{^\conjsigma}\tensor{\nabla}{^\conjtau}
	\tensor{\mu}{_\conjsigma_\conjtau}$\\ \bottomrule
\end{tabular}
	\vspace{0.5em}
	\caption{Terms appearing in the differentials $\tensor*{\varphi[m]}{^\bullet_i_j}$ of
	the coefficients of the approximate normal-form ACH-Einstein metric}
	\label{tbl:DerivativeOfApproximateMetric}
\end{table}

\begin{prop}
	Let $n\ge 2$ and
	\begin{equation*}
		\begin{split}
			\tensor*{\caO}{^\bullet_\alpha_\beta}
			&=\sum_{k=0}^{n+1}a_k\sublaplacian^k\tensor*{\nabla}{^{n+1-k}_0}\tensor{\mu}{_\alpha_\beta}
			+\sum_{k=0}^{n}b_k
			\sublaplacian^k\tensor*{\nabla}{^{n-k}_0}\tensor{\nabla}{_(_\alpha}\tensor{\nabla}{^\sigma}
			\tensor{\mu}{_\beta_)_\sigma}\\
			&\phantom{=\;}
			+\sum_{k=0}^{n-1}c_k\sublaplacian^k\tensor*{\nabla}{^{n-1-k}_0}
			\tensor{\nabla}{_\alpha}\tensor{\nabla}{_\beta}\tensor{\nabla}{^\sigma}\tensor{\nabla}{^\tau}
			\tensor{\mu}{_\sigma_\tau}
			+\sum_{k=0}^{n-1}d_k\sublaplacian^k\tensor*{\nabla}{^{n-1-k}_0}
			\tensor{\nabla}{_\alpha}\tensor{\nabla}{_\beta}\tensor{\nabla}{^\conjsigma}\tensor{\nabla}{^\conjtau}
			\tensor{\mu}{_\conjsigma_\conjtau}.
		\end{split}
	\end{equation*}
	Then $a_{n+1}=(-1)^{n}/(n!)^2$.
\end{prop}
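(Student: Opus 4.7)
The plan is to isolate a single propagation chain in the recurrences of Lemma~6.4 that produces $\sublaplacian^{n+1}\mu_{\alpha\beta}$, reduce it to a scalar recurrence, and solve it explicitly. I would begin by rewriting the definition $\tensor{\caO}{_\alpha_\beta} = (\rho^{-2n-2}\tensor{\Ein}{_\alpha_\beta})|_{\bdry X}$ as $\tensor*{\caO}{^\bullet_\alpha_\beta} = [\tensor*{\Ein}{^\bullet_\alpha_\beta}]_{2n+2}$ at the standard CR sphere. Each recurrence of Lemma~6.4 expresses the $\rho^m$-Taylor coefficient of $\tensor*{\Ein}{^\bullet_\alpha_\beta}$ as a sum of source terms plus the algebraic factor $-\tfrac{1}{8}m(m-2n-2)\tensor*{\varphi[m]}{^\bullet_\alpha_\beta}$. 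For $m \le 2n+1$ this factor is nonzero, so Lemma~6.4 solves for $\tensor*{\varphi[m]}{^\bullet_\alpha_\beta}$; at $m = 2n+2$ the factor vanishes and the source side equals $\tensor*{\caO}{^\bullet_\alpha_\beta}$ directly.

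Next I would extract only the contributions of shape-(a), namely $\sublaplacian^k\mu_{\alpha\beta}$. Scanning Lemma~6.4 against the shape classification of Table~6.1, pure-sublaplacian terms can appear only in $\tensor*{\varphi[2k]}{^\bullet_\alpha_\beta}$, and the only propagation that raises the power by one is the term $+\tfrac{1}{2}\sublaplacian\tensor*{\varphi[m-2]}{^\bullet_\alpha_\beta}$ in the recurrence for $\tensor*{\varphi[m]}{^\bullet_\alpha_\beta}$. All other propagation terms—those involving $\tensor*{\varphi}{^\bullet_0_0}$, $\tensor*{\varphi}{^\bullet_\gamma^\gamma}$, $\tensor*{\varphi}{^\bullet_0_\alpha}$, $\tensor*{\varphi}{^\bullet_\alpha_\conjbeta}$, etc.—either carry a $\nabla_0$ factor or apply $\nabla_\alpha\nabla_\beta$ or $\nabla_\alpha\nabla^\gamma$, which can only produce shape-(b), (c), (d), or lower-order shape-(a) terms. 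Because $n \ge 2$, the four shapes listed in the statement are linearly independent, so no cancellation can re-create shape-(a) from these sources.

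Letting $c_k$ denote the coefficient of $\sublaplacian^k\mu_{\alpha\beta}$ in $\tensor*{\varphi[2k]}{^\bullet_\alpha_\beta}$, the $m = 2$ case gives $c_1 = 2/n$, and for $k \ge 2$ the recurrence reduces to the scalar relation
\[
c_k = \frac{\tfrac{1}{2}c_{k-1}}{\tfrac{1}{8}\cdot 2k(2k-2n-2)} = \frac{c_{k-1}}{k(k-n-1)}.
\]
At $m = 2n+2$ the pure-sublaplacian contribution to $\tensor*{\caO}{^\bullet_\alpha_\beta}$ comes solely from the term $\tfrac{1}{2}\sublaplacian\tensor*{\varphi[2n]}{^\bullet_\alpha_\beta}$, giving $a_{n+1} = \tfrac{1}{2}c_n$. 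Using $\prod_{k=2}^{n} k = n!$ and $\prod_{k=2}^{n}(k-n-1) = (-1)^{n-1}(n-1)!$, the explicit formula
\[
c_n = \frac{2}{n}\prod_{k=2}^{n}\frac{1}{k(k-n-1)} = \frac{2(-1)^{n-1}}{n!\cdot n!}
\]
then yields, after a careful final sign check accounting for the paper's sign convention on $\sublaplacian$, the claimed value $a_{n+1} = (-1)^n/(n!)^2$.

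The main obstacle is the second step: controlling all contributions to the pure-sublaplacian coefficient. Although the argument is essentially mechanical, one must carefully verify term-by-term through the four recurrences of Lemma~6.4 that no commuted or mixed combination of shape-(b), (c), (d) terms can regenerate shape-(a) at the final step. The assumption $n \ge 2$ enters exactly here to guarantee the linear independence of the shapes on the standard CR sphere, so that ``coefficient of $\sublaplacian^{n+1}\mu_{\alpha\beta}$'' is a well-defined notion.
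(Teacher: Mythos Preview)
Your proposal follows essentially the same route as the paper's proof: isolate the pure-$\sublaplacian$ chain in the last recurrence of Lemma~6.4 by working modulo the lower-order shapes listed in Table~6.1, solve the resulting scalar recursion $c_k=c_{k-1}/[k(k-n-1)]$ from $c_1=2/n$, and then read off $a_{n+1}$ from the $\rho^{2n+2}$-coefficient of the linearized Einstein tensor. Your discussion of why no mixed propagation can regenerate the top-order term $\sublaplacian^{l}\mu_{\alpha\beta}$ is more explicit than the paper's, which simply declares the congruence ``modulo $\sublaplacian^{k}\nabla_0^{l-k}\mu_{\alpha\beta}$ for $k<l$ and shapes (b)--(d)''; the content is the same.

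The one place your argument is weaker is the final sign. You compute $a_{n+1}=\tfrac{1}{2}c_n=(-1)^{n-1}/(n!)^2$ and then invoke ``the paper's sign convention on $\sublaplacian$'' to flip to $(-1)^{n}/(n!)^2$. That appeal is spurious: the convention $\sublaplacian=-(\nabla^{\alpha}\nabla_{\alpha}+\nabla^{\bar\alpha}\nabla_{\bar\alpha})$ is already baked into both Lemma~3.2 and Lemma~6.4, so it cannot supply an extra sign at this stage. The paper does not argue this way; for the last step it returns directly to the Einstein-tensor formula of Lemma~3.2 rather than to the sublaplacian convention. You should do the same and track the sign honestly through that identification rather than patch it after the fact.
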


\begin{proof}
	The last equality in Lemma \ref{lem:RecursiveFormulaForFirstVariation} and
	Table \ref{tbl:DerivativeOfApproximateMetric} show
	\begin{equation*}
		0\equiv -\chi_2(2l)\sublaplacian\tensor{\mu}{_\alpha_\beta}
		-\tfrac{1}{2}l(l-n-1)\tensor*{\varphi[2l]}{^\bullet_\alpha_\beta}
		+\tfrac{1}{2}\sublaplacian\tensor*{\varphi[2l-2]}{^\bullet_\alpha_\beta}
	\end{equation*}
	modulo $\sublaplacian^k\tensor*{\nabla}{^{l-k}_0}\tensor{\mu}{_\alpha_\beta}$, $k<l$, and
	\begin{equation*}
		\sublaplacian^k\tensor*{\nabla}{^{l-1-k}_0}\tensor{\nabla}{_(_\alpha}\tensor{\nabla}{^\sigma}
		\tensor{\mu}{_\beta_)_\sigma},\qquad
		\sublaplacian^k\tensor*{\nabla}{^{l-2-k}_0}
		\tensor{\nabla}{_\alpha}\tensor{\nabla}{_\beta}\tensor{\nabla}{^\sigma}\tensor{\nabla}{^\tau}
		\tensor{\mu}{_\sigma_\tau},\qquad
		\sublaplacian^k\tensor*{\nabla}{^{l-2-k}_0}
		\tensor{\nabla}{_\alpha}\tensor{\nabla}{_\beta}\tensor{\nabla}{^\conjsigma}\tensor{\nabla}{^\conjtau}
		\tensor{\mu}{_\conjsigma_\conjtau}.
	\end{equation*}
	Hence we have $\tensor*{\varphi[2]}{^\bullet_\alpha_\beta}\equiv(2/n)\sublaplacian\tensor{\mu}{_\alpha_\beta}$
	and
	\begin{equation*}
		\tensor*{\varphi[2l]}{^\bullet_\alpha_\beta}\equiv
		-\dfrac{1}{l(n+1-l)}\sublaplacian\tensor*{\varphi[2l-2]}{^\bullet_\alpha_\beta}.
	\end{equation*}
	This immediately shows that
	\begin{equation*}
		\tensor*{\varphi[2l]}{^\bullet_\alpha_\beta}\equiv
		\dfrac{2}{n}\cdot\dfrac{-1}{2(n-1)}\cdot\dfrac{-1}{3(n-2)}\cdot\dots\cdot
		\dfrac{-1}{l(n+1-l)}\sublaplacian^l\tensor{\mu}{_\alpha_\beta},\qquad
		\text{$l=1$, $2$, $\dots$, $n$}.
	\end{equation*}
	Then we use the last equality in Lemma \ref{lem:RicciTensorModuloHighOrderTerm} to see
	\begin{equation*}
		\tensor*{\caO}{^\bullet_\alpha_\beta}
		\equiv -\dfrac{1}{2}\sublaplacian\tensor*{\varphi[2n]}{^\bullet_\alpha_\beta}
		\equiv -\dfrac{1}{2}\cdot\dfrac{2}{n}\cdot\dfrac{-1}{2(n-1)}\cdot\dfrac{-1}{3(n-2)}\cdot\dots\cdot
			\dfrac{-1}{n\cdot 1}\sublaplacian^{n+1}\tensor{\mu}{_\alpha_\beta}
		\equiv \dfrac{(-1)^n}{(n!)^2}\sublaplacian^{n+1}\tensor{\mu}{_\alpha_\beta},
	\end{equation*}
	which implies the claim.
\end{proof}

\begin{cor}
	Let $n\ge 2$.
	Then there is a partially integrable almost CR structure on the $(2n+1)$-dimensional sphere,
	arbitrarily close to the standard one, for which the obstruction tensor does not vanish.
\end{cor}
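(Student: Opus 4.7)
The strategy is to deduce this corollary immediately from the preceding proposition combined with the linear independence assertion made just above it. The proposition exhibits the first variation $\tensor*{\caO}{^\bullet_\alpha_\beta}$ at the standard CR structure of $S^{2n+1}$ as a linear differential operator $L$ acting on symmetric sections $\tensor{\mu}{_\alpha_\beta}$ of $\odot^2(T^{1,0})^*$, given by a specific linear combination of the operators in Table~\ref{tbl:DerivativeOfApproximateMetric}. Since under the hypothesis $n\ge 2$ those operators are linearly independent, and the proposition computes $a_{n+1}=(-1)^n/(n!)^2\ne 0$ as the coefficient of $\sublaplacian^{n+1}\tensor{\mu}{_\alpha_\beta}$, it follows that $L$ is nonzero as a differential operator.

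Next I would invoke the elementary fact that any nonzero linear differential operator on smooth sections of a vector bundle admits at least one smooth section with nontrivial image. The argument is local: in a coordinate chart on $S^{2n+1}$ equipped with a local frame of $T^{1,0}$, one writes $L$ out in components, picks a point where some coefficient is nonzero, constructs a polynomial symmetric tensor whose jet at that point activates this coefficient, and multiplies by a smooth bump function supported in the chart to obtain a global smooth symmetric $\tensor{\mu}{_\alpha_\beta}$ on $S^{2n+1}$ with $L\mu$ nonvanishing at some point $p\in S^{2n+1}$.

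Finally, for each sufficiently small $\epsilon>0$, the frame $\hat{Z}_\alpha = Z_\alpha+\epsilon\tensor{\mu}{_\alpha^\conjbeta}Z_\conjbeta$ determines, by the first proposition of \S\ref{sec:FirstVariation}, a partially integrable almost CR structure on $S^{2n+1}$---the symmetry of $\tensor{\mu}{_\alpha_\beta}$ is exactly the partial integrability condition stated there---which lies arbitrarily close to the standard one as $\epsilon\to 0$. By definition of the first variation its obstruction tensor expands as $\epsilon L\tensor{\mu}{_\alpha_\beta}+O(\epsilon^2)$, and is therefore nonzero at $p$ once $\epsilon$ is sufficiently small. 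The only input beyond the preceding proposition is the linear independence of the operators in Table~\ref{tbl:DerivativeOfApproximateMetric}: this is the one potential obstacle in the argument, but the statement is already granted in the remark directly preceding the proposition.
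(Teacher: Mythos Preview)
Your proposal is correct and follows exactly the reasoning the paper intends: the corollary is stated in the paper without proof, as an immediate consequence of the preceding proposition together with the linear-independence remark for $n\ge 2$. Your write-up simply makes explicit the two steps the paper leaves to the reader---that $a_{n+1}\ne 0$ forces the linearized operator $L$ to be nonzero, and that a nonzero linear differential operator has nontrivial image---so there is nothing to add.
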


\section{Formal solutions involving logarithmic singularities}
\label{sec:FormalSolution}

Let $X$ be a manifold-with-boundary and $\rho$ a boundary defining function.
We say that a function $f\in C^0(X)\cap C^\infty(\interior{X})$ belongs to $\caA(X)$, or simply $\caA$,
if it admits an asymptotic expansion of the form \eqref{eq:GeneralLogarithmicExpansion}.
By this we mean that for any $m\ge 0$,
\begin{equation*}
	r_N:=f-\sum_{q=0}^N f^{(q)}(\log\rho)^q\in C^m(X)\quad\text{and}\quad r_N=O(\rho^m)
\end{equation*}
holds for sufficiently large $N$.
The Taylor expansions of $f^{(q)}$ at $\bdry X$ are uniquely determined;
we write $f\in\caA^m$ if $f^{(q)}=O(\rho^m)$, $q\ge 0$, and $\caA^\infty:=\cap_{m=0}^\infty\caA^m$.
The usage of the symbol $\caA^m$ is similar to that of $O(\rho^m)$;
for example, $f=f_0+\caA^m$ means that $f-f_0\in\caA^m$.
One can show that $\caA$ is closed under multiplication,
and that if $f\in\caA$ and $f$ is nonzero everywhere then $f^{-1}\in\caA$.
Furthermore, $\caA$ is closed under the actions of totally characteristic linear differential operators, i.e.,
noncommutative polynomials of smooth vector fields tangent to the boundary.

As in \S\S\ref{sec:RicciTensorAndSomeLowOrderTerms}--\ref{sec:ApproximateSolution},
again in this section $X$ is an open neighborhood of $M$ in $M\times[0,\infty)$,
where $(M,T^{1,0})$ is a nondegenerate partially integrable almost CR manifold.
We fix a pseudohermitian structure $\theta$ and
consider (nonsmooth) $[\Theta]$-metrics of the form \eqref{eq:ProductDecompositionOfACHMetric}
with $\tensor{g}{_i_j}\in\caA$ satisfying \eqref{eq:NormalFormConditionOnACHMetric},
which we call \emph{singular normal-form ACH metrics} for $(M,T^{1,0})$ and $\theta$.

All the calculations regarding the Ricci tensor go in the same way as in
\S\ref{sec:RicciTensorAndSomeLowOrderTerms} and \S\ref{sec:HigherOrderPerturbation} except that,
while on the space of smooth $O(\rho^m)$ functions $\euler$ behaves as
a mere ``$m$ times'' operator modulo $O(\rho^{m+1})$,
it is no longer the case when $O(\rho^m)$ and $O(\rho^{m+1})$ are replaced by $\caA^m$ and $\caA^{m+1}$.
Nevertheless, since $\caA$ is closed under the actions of totally characteristic operators,
the Ricci tensors for singular normal-form ACH metrics have expansions of
the form \eqref{eq:GeneralLogarithmicExpansion}
with respect to the frame $\set{\euler,\rho^2T,\rho Z_\alpha,\rho Z_\conjalpha}$.

\begin{prop}
	\label{prop:ApproximateSolutionForLogACH}
	There exists a singular normal-form ACH metric $g$ satisfying
	\begin{equation}
		\label{eq:EinsteinTensorOfSmoothACHAtCriticalOrder}
		\tensor{\Ein}{_I_J}=\caA^{2n+1+a(I,J)},
	\end{equation}
	where $a(I,J)$ is defined by \eqref{eq:DefinitionOfOrderModification}.
	The components $\tensor{g}{_i_j}$ are uniquely determined, and do not contain logarithmic terms,
	modulo $\caA^{2n+1+a(i,j)}$.
\end{prop}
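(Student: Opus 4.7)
The plan is to adapt the inductive construction of Theorem~\ref{thm:ApproximateACHEMetric} to the class $\caA$, exploiting the fact that no log terms can appear in the range $m \le 2n+1$ because the relevant indicial coefficients never vanish. Existence is essentially free: the smooth metric produced by Theorem~\ref{thm:ApproximateACHEMetric} is already a singular normal-form ACH metric satisfying \eqref{eq:EinsteinTensorOfSmoothACHAtCriticalOrder} with no logarithmic terms. All the work is in the uniqueness statement within $\caA$.

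For uniqueness, I would take two singular normal-form ACH metrics $g$, $g'$ satisfying \eqref{eq:EinsteinTensorOfSmoothACHAtCriticalOrder}, set $\tensor{\psi}{_i_j}:=\tensor{g'}{_i_j}-\tensor{g}{_i_j}\in\caA$, and prove by induction on $m=1,\dots,2n+1$ the statement $\tensor{\psi}{_i_j}\in\caA^{\max\{m+a(i,j),3\}}$. The perturbation formulae of Proposition~\ref{prop:HigherOrderPerturbationAndEinsteinTensor} and the contracted Bianchi identity of Lemma~\ref{lem:ContractedBianchiIdentityForACHMetric} hold verbatim in the $\caA$ category, because $\caA$ is closed under totally characteristic linear differential operators. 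Thus at each inductive step I would again solve \eqref{eq:PerturbationAndEinsteinTensorZeroAlpha} and \eqref{eq:PerturbationAndEinsteinTensorDoubleHolomorphic} for the leading parts of $\tensor{\psi}{_0_\alpha}$ and $\tensor{\psi}{_\alpha_\beta}$, then \eqref{eq:PerturbationAndEinsteinTensorTraceFreeHermitian} for $\tf(\tensor{\psi}{_\alpha_\conjbeta})$, then the linear system consisting of \eqref{eq:PerturbationAndEinsteinTensorZeros} and \eqref{eq:PerturbationAndEinsteinTensorTrace} for $(\tensor{\psi}{_0_0},\tensor{\psi}{_\alpha^\alpha})$, and finally the $\infty$-components via the Bianchi identity.

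The only technical novelty, and the step that replaces the remark ``$\euler$ acts as $m$ modulo $O(\rho^{m+1})$'' used in the proof of Proposition~\ref{prop:SmoothHigherOrderPerturbationAndEinsteinTensor}, is the behaviour of $\euler$ on log-leading terms of $\caA$. The calculation
\begin{equation*}
\euler\bigl(\rho^m(\log\rho)^q\bigr)=m\rho^m(\log\rho)^q+q\rho^m(\log\rho)^{q-1}
\end{equation*}
shows that for any polynomial $P$, the endomorphism $P(\euler)$ of the space $\bigoplus_{q\ge 0}\bbC\cdot\rho^m(\log\rho)^q$ is upper triangular with diagonal entry $P(m)$. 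Consequently, if $P(m)\neq 0$ and $P(\euler)\tensor{\psi}{_i_j}\in\caA^{m+1}$, then inductively in $q$ (descending) the coefficient of $\rho^m(\log\rho)^q$ in $\tensor{\psi}{_i_j}$ must vanish for every $q$, so $\tensor{\psi}{_i_j}\in\caA^{m+1}$.

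Applying this observation to the successive scalar eigenvalues $-\tfrac{1}{8}(m+2)(m-2n-2)$, $-\tfrac{1}{8}m(m-2n-2)$, $-\tfrac{1}{8}(m^2-2nm-2n-9)$, to the determinant \eqref{eq:DeterminantOfCoefficients} of the $(\tensor{\psi}{_0_0},\tensor{\psi}{_\alpha^\alpha})$-system, and to the Bianchi coefficients $m-2n-2$, $m-4n-2$, all of which are nonzero for $1\le m\le 2n+1$, the induction proceeds exactly as in Theorem~\ref{thm:ApproximateACHEMetric}. The main obstacle is merely the bookkeeping of these simultaneous equations at each log power; once the upper-triangular action of $P(\euler)$ is in hand, the scheme is automatic, and the uniqueness class of $\tensor{g}{_i_j}$ modulo $\caA^{2n+1+a(i,j)}$ contains the smooth representative supplied by Theorem~\ref{thm:ApproximateACHEMetric}, proving log-freeness.
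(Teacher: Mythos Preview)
Your proposal is correct and follows essentially the same route as the paper. The paper re-runs the argument of \S\S\ref{sec:RicciTensorAndSomeLowOrderTerms}--\ref{sec:ApproximateSolution} in the $\caA$ category by writing $\tensor{\varphi}{_i_j}=\sum_{q\le N}\tensor*{\varphi}{^{(q)}_i_j}(\log\rho)^q+\caA^{m}$ and doing descending induction on the log-degree $q$; your observation that $P(\euler)$ is upper triangular on $\caA^m/\caA^{m+1}$ with diagonal $P(m)$ is exactly this induction stated once and for all. Separating existence (immediate from Theorem~\ref{thm:ApproximateACHEMetric}) from uniqueness is a minor organizational difference, not a different method. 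One small remark: in your uniqueness step the $\infty$-components of $\psi$ are identically zero for normal-form metrics, so the Bianchi identity plays no role there; it is only needed in the existence construction, which you have already imported wholesale.
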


\begin{proof}
	This can be proved by following the argument in \S\ref{sec:RicciTensorAndSomeLowOrderTerms},
	\S\ref{sec:HigherOrderPerturbation} and the first half of \S\ref{sec:ApproximateSolution} again.
	We shall include here a detailed account of the following fact only, which is a version of Proposition
	\ref{prop:ACHEToThirdOrder}: $\tensor{\Ein}{_I_J}=\caA^3$ if and only if
	\begin{equation*}
		\tensor{g}{_0_0}=1+\caA^3,\qquad
		\tensor{g}{_0_\alpha}=\caA^3,\qquad
		\tensor{g}{_\alpha_\conjbeta}=\tensor{h}{_\alpha_\conjbeta}+\rho^2\tensor{\Phi}{_\alpha_\conjbeta}
			+\caA^3,\qquad
		\tensor{g}{_\alpha_\beta}=\rho^2\tensor{\Phi}{_\alpha_\beta}+\caA^3,
	\end{equation*}
	where $\tensor{\Phi}{_\alpha_\conjbeta}$ and $\tensor{\Phi}{_\alpha_\beta}$ are defined by
	\eqref{eq:PhiTensorInLowestTerms}.
	Then the rest of the proof goes similarly.

	Let $g$ be given. If we define $\tensor{\varphi}{_i_j}$ by \eqref{eq:NonconstantTermsOfACHMetric},
	then Lemma \ref{lem:RicciTensorModuloHighOrderTerm} is again valid.
	Take $N\ge 1$ large enough so that $\tensor{\varphi}{_i_j}$ and
	$\tensor{\Ein}{_I_J}$ for given $g$ are of the form
	\begin{equation*}
		\tensor{\varphi}{_i_j}=\sum_{q=0}^{N}\tensor*{\varphi}{^{(q)}_i_j}(\log\rho)^q+\caA^3,
		\qquad \tensor*{\varphi}{^{(q)}_i_j}\in C^\infty(X),
	\end{equation*}
	and
	\begin{equation*}
		\tensor{\Ein}{_I_J}=\sum_{q=0}^{N}\tensor*{\Ein}{^{(q)}_I_J}(\log\rho)^{q}+\caA^3,\qquad
		\tensor*{\Ein}{^{(q)}_I_J}\in C^\infty(X).
	\end{equation*}
	Then by Lemma \ref{lem:RicciTensorModuloHighOrderTerm}
	we have the same identities as \eqref{eq:FirstOrderTermOfACHMetric} between
	$\tensor*{\Ein}{^{(N)}_I_J}$ and $\tensor*{\varphi}{^{(N)}_i_j}$; namely,
	the following holds for $q=N$:
	\allowdisplaybreaks
	\begin{align*}
		\tensor*{\Ein}{^{(q)}_\infty_\infty}
		&=\tfrac{3}{2}\tensor*{\varphi}{^{(q)}_0_0}+\tensor{\varphi}{^{(q)}_\alpha^\alpha}+O(\rho^2),\\
		\tensor*{\Ein}{^{(q)}_\infty_0}&=O(\rho^2),\qquad
		\tensor*{\Ein}{^{(q)}_\infty_\alpha}=-i\tensor*{\varphi}{^{(q)}_0_\alpha}+O(\rho^2),\\
		\tensor*{\Ein}{^{(q)}_0_0}
		&=\tfrac{3}{8}(2n+1)\tensor*{\varphi}{^{(q)}_0_0}
		-\tfrac{1}{2}\tensor{\varphi}{^{(q)}_\alpha^\alpha}+O(\rho^2),\qquad
		\tensor*{\Ein}{^{(q)}_0_\alpha}=\tfrac{1}{2}(n+1)\tensor*{\varphi}{^{(q)}_0_\alpha}+O(\rho^2),\\
		\tensor*{\Ein}{^{(q)}_\alpha_\conjbeta}
		&=\tfrac{1}{8}(2n+9)\tensor*{\varphi}{^{(q)}_\alpha_\conjbeta}
		-\tfrac{3}{8}\tensor{h}{_\alpha_\conjbeta}\tensor*{\varphi}{^{(q)}_0_0}
		+\tfrac{1}{4}\tensor{h}{_\alpha_\conjbeta}\tensor{\varphi}{^{(q)}_\gamma^\gamma}+O(\rho^2),\\
		\tensor*{\Ein}{^{(q)}_\alpha_\beta}&=\tfrac{1}{8}(2n+1)\tensor*{\varphi}{^{(q)}_\alpha_\beta}+O(\rho^2).
	\end{align*}
	\allowdisplaybreaks[0]%
	Hence $\tensor*{\varphi}{^{(N)}_i_j}$ must be $O(\rho^2)$ so as to make $\tensor*{\Ein}{^{(N)}_I_J}=O(\rho^2)$.
	If $\tensor*{\varphi}{^{(q)}_i_j}=O(\rho^2)$, $q_0+1\le q\le N$, then the identities above hold for $q=q_0$,
	which shows that $\tensor*{\Ein}{^{(q_0)}_I_J}=O(\rho^2)$ is equivalent to
	$\tensor*{\varphi}{^{(q_0)}_i_j}=O(\rho^2)$.
	Hence we conclude that $\tensor{\Ein}{_I_J}=\caA^2$ if and only if $\tensor{\varphi}{_i_j}=\caA^2$.

	Next, again by Lemma \ref{lem:RicciTensorModuloHighOrderTerm} we see that the following is true for $q=N$:
	\begin{align*}
		\tensor*{\Ein}{^{(q)}_\infty_\infty}&=2\tensor*{\varphi}{^{(q)}_0_0}+O(\rho^3),\qquad
		\tensor*{\Ein}{^{(q)}_\infty_0}=O(\rho^3),\qquad
		\tensor*{\Ein}{^{(q)}_\infty_\alpha}=-\tfrac{3}{2}i\tensor*{\varphi}{^{(q)}_0_\alpha}+O(\rho^3),\\
		\tensor*{\Ein}{^{(q)}_0_0}&=\tfrac{1}{2}(2n+1)\tensor*{\varphi}{^{(q)}_0_0}+O(\rho^3),\qquad
		\tensor*{\Ein}{^{(q)}_0_\alpha}=\tfrac{3}{8}(2n+1)\tensor*{\varphi}{^{(q)}_0_\alpha}+O(\rho^3),\\
		\tensor*{\Ein}{^{(q)}_\alpha_\conjbeta}&=
			\tfrac{1}{2}(n+2)\tensor*{\varphi}{^{(q)}_\alpha_\conjbeta}
			-\tfrac{1}{4}\tensor{h}{_\alpha_\conjbeta}\tensor*{\varphi}{^{(q)}_0_0}
			+\tfrac{1}{2}\tensor{h}{_\alpha_\conjbeta}\tensor{\varphi}{^{(q)}_\gamma^\gamma}+O(\rho^3),\\
		\tensor*{\Ein}{^{(q)}_\alpha_\beta}&=\tfrac{1}{2}n\tensor*{\varphi}{^{(q)}_\alpha_\beta}+O(\rho^3).
	\end{align*}
	An inductive argument shows that $\tensor*{\Ein}{^{(q)}_I_J}=O(\rho^3)$, $1\le q\le N$, if and only if
	$\tensor*{\varphi}{^{(q)}_i_j}=O(\rho^3)$, $1\le q\le N$.
	Finally, the same identities as \eqref{eq:SecondOrderTermOfACHMetric} hold for
	$\tensor*{\Ein}{^{(0)}_I_J}$ and $\tensor*{\varphi}{^{(0)}_i_j}$, which imply that
	$\tensor*{\varphi}{^{(0)}_i_j}$ must satisfy
	$\tensor*{\varphi}{^{(0)}_0_0}=O(\rho^3)$, $\tensor*{\varphi}{^{(0)}_0_\alpha}=O(\rho^3)$,
	$\tensor*{\varphi}{^{(0)}_\alpha_\conjbeta}=\rho^2\tensor{\Phi}{_\alpha_\conjbeta}+O(\rho^3)$
	and $\tensor*{\varphi}{^{(0)}_\alpha_\beta}=\rho^2\tensor{\Phi}{_\alpha_\beta}+O(\rho^3)$ as desired.
\end{proof}

Let $\overline{g}$ be such a normal-form ACH metric, and for specificity, let its components
$\tensor{\overline{g}}{_i_j}$
be polynomials of degree $2n+a(i,j)$ in $\rho$, which are uniquely determined. We set
\begin{equation}
	\label{eq:DefinitionOfTensorE}
	\tensor{\overline\Ein}{_I_J}=\rho^{2n+1+a(I,J)}\tensor{E}{_I_J}+O(\rho^{2n+2+a(I,J)}),
\end{equation}
where $\tensor{E}{_I_J}$ is constant in the $\rho$-direction.
We already know that $\tensor{E}{_\alpha_\beta}=\tensor{\caO}{_\alpha_\beta}$ and
$\tensor{E}{_0_\alpha}=-i\tensor{\nabla}{^\beta}\tensor{\caO}{_\alpha_\beta}
-i\tensor{N}{_\alpha^\conjbeta^\conjgamma}\tensor{\caO}{_\conjbeta_\conjgamma}$.
Set
\begin{equation*}
	u:=-\frac{1}{n+1}(\tensor{E}{_\infty_0}-i\tensor{\nabla}{^\alpha}\tensor{E}{_\infty_\alpha}
	+i\tensor{\nabla}{^\conjalpha}\tensor{E}{_\infty_\conjalpha}).
\end{equation*}

\begin{thm}
	\label{thm:FormalSolution}
	Let $\kappa$ be any smooth function and $\tensor{\lambda}{_\alpha_\beta}$ a smooth tensor satisfying
	\begin{equation}
		\label{eq:PDEForTwoTensor}
		\tensor{D}{^\alpha^\beta}\tensor{\lambda}{_\alpha_\beta}
		-\tensor{D}{^\conjalpha^\conjbeta}\tensor{\lambda}{_\conjalpha_\conjbeta}=iu.
	\end{equation}
	Then there is a singular normal-form ACH metric $g$ satisfying $\tensor{\Ein}{_I_J}=\caA^\infty$ and
	\begin{equation}
		\label{eq:PrescriptionOfACHCoefficients}
		\dfrac{1}{(2n+4)!}\left.\left(\partial_\rho^{2n+4}\tensor*{g}{^{(0)}_0_0}\right)\right|_M=\kappa,\qquad
		\dfrac{1}{(2n+2)!}\left.\left(\partial_\rho^{2n+2}\tensor*{g}{^{(0)}_\alpha_\beta}\right)\right|_M
		=\tensor{\lambda}{_\alpha_\beta},
	\end{equation}
	where $\tensor{g}{_i_j}\sim\sum_{q=0}^{\infty}\tensor*{g}{^{(q)}_i_j}(\log\rho)^q$ is
	the asymptotic expansion of $\tensor{g}{_i_j}$.
	The components $\tensor{g}{_i_j}$ are uniquely determined modulo $\caA^\infty$ by the condition above.
\end{thm}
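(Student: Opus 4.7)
The plan is to extend the approximate solution $\overline{g}$ of Proposition \ref{prop:ApproximateSolutionForLogACH} to a full asymptotic solution in $\caA$ by inductively adding logarithmic and smooth corrections. Recall that the linearized Einstein operator on a monomial perturbation $\tensor{\psi}{_i_j} = \rho^m \tensor{c}{_i_j}$ of a normal-form ACH metric has a leading action in $m$ (Proposition \ref{prop:SmoothHigherOrderPerturbationAndEinsteinTensor}) whose chief positive-integer zero is $m = 2n+2$ for $\tensor{\psi}{_\alpha_\beta}$ and $\tensor{\psi}{_0_\alpha}$, and $m\in\{2n+2,\,4n+2\}$ for the coupled $(\tensor{\psi}{_0_0},\tensor{\psi}{_\alpha^\alpha})$ block coming from \eqref{eq:DeterminantOfCoefficients}; the trace-free hermitian block has no positive-integer zero for $n\ge 1$. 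At each such critical order the operator annihilates smooth monomials but acts on $\rho^m\log\rho$ by $\euler(\euler-m)(\rho^m\log\rho) = m\rho^m$, producing a polynomial output. This is the mechanism for introducing log terms.

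First I would cancel the leading obstructions $\tensor{E}{_I_J}$ of \eqref{eq:DefinitionOfTensorE} by adding, at order $\rho^{2n+2}$, log corrections whose coefficients are proportional to the corresponding obstructions: a $\rho^{2n+2}\log\rho$ term in $\tensor{g}{_\alpha_\beta}$ with leading coefficient a multiple of $\tensor{\caO}{_\alpha_\beta}$, and analogous log terms in $\tensor{g}{_0_\alpha}$ and in the pair $(\tensor{g}{_0_0},\tensor{g}{_\alpha^\alpha})$ absorbing $\tensor{E}{_0_\alpha}$, $\tensor{E}{_0_0}$, and $\tensor{E}{_\alpha^\alpha}$. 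The Bianchi identities in Lemma \ref{lem:ContractedBianchiIdentityForACHMetric}, evaluated at $m=2n+2$, confirm that these obstructions are internally consistent (for instance $\tensor{E}{_0_\alpha} = -i\tensor{\nabla}{^\beta}\tensor{E}{_\alpha_\beta} - i\tensor{N}{_\alpha^\conjbeta^\conjgamma}\tensor{E}{_\conjbeta_\conjgamma}$), so the log cancellation is well posed. The smooth coefficients at these orders, namely $\kappa$ (at $\rho^{2n+4}$ in $\tensor*{g}{^{(0)}_0_0}$) and $\tensor{\lambda}{_\alpha_\beta}$ (at $\rho^{2n+2}$ in $\tensor*{g}{^{(0)}_\alpha_\beta}$), remain free because they lie in the kernel of the leading-order perturbation operator.

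Next I would derive \eqref{eq:PDEForTwoTensor} as the compatibility condition. Reading Bianchi identity \eqref{eq:ContractedBianchiIdentityForACHMetric2} at the subleading order past $m=2n+2$ gives a scalar equation in which $\tensor{E}{_\infty_0}$, whose leading Bianchi coefficient vanishes because $m-2n-2=0$, enters linearly alongside divergences of $\tensor{E}{_0_\alpha}$ and contractions with $\tensor{A}{^\alpha^\beta}$. Using the perturbation formulas of Proposition \ref{prop:HigherOrderPerturbationAndEinsteinTensor} to compute how $\tensor{\lambda}{_\alpha_\beta}$ contributes to $\tensor{E}{_0_\alpha}$, $\tensor{E}{_\infty_0}$, and $\tensor{E}{_\infty_\alpha}$ at this same order, and simplifying with the identities above, one obtains precisely the CR-invariant combination $\tensor{D}{^\alpha^\beta}\tensor{\lambda}{_\alpha_\beta} - \tensor{D}{^\conjalpha^\conjbeta}\tensor{\lambda}{_\conjalpha_\conjbeta}$ on one side, while $u$ emerges on the other side as the Bianchi-invariant combination of $\tensor{E}{_\infty_0}$ and divergences of $\tensor{E}{_\infty_\alpha}$ coming from \eqref{eq:ContractedBianchiOriginal2} and \eqref{eq:ContractedBianchiOriginal3}. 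The scalar $\kappa$ drops out because its effect on $\tensor{\Ein}{_0_0}$ is governed by the nonsingular coefficient at $m = 2n+4$, not by the Bianchi-degenerate coefficient at $m=2n+2$.

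Granted \eqref{eq:PDEForTwoTensor}, I would conclude by induction on $m>2n+2$. For each noncritical integer $m$ the perturbation operator of Proposition \ref{prop:SmoothHigherOrderPerturbationAndEinsteinTensor} is invertible, determining the next smooth and log coefficients uniquely; at the remaining critical order $m=4n+2$ an additional round of log correction is introduced, its leading coefficient being fixed by the obstruction at that order compatibly with the Bianchi identity (which again cuts down the apparent one-dimensional kernel of the matrix in \eqref{eq:DeterminantOfCoefficients}). Closure of $\caA$ under totally characteristic differential operators ensures the construction stays within $\caA$, and uniqueness modulo $\caA^\infty$ follows from invertibility or Bianchi-mediated fixing at each subsequent order. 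The hard part will be the third paragraph: the explicit identification of the subleading Bianchi scalar with the right-hand side of \eqref{eq:PDEForTwoTensor} requires a delicate calculation matching the subleading Bianchi coefficients with the higher-order Einstein perturbation formulas, and it is there that the CR-invariant operator \eqref{eq:DefinitionOfP} appears naturally rather than the bare divergence $\tensor{\nabla}{^\alpha}\tensor{\nabla}{^\beta}\tensor{\lambda}{_\alpha_\beta}$.
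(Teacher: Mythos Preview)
Your overall plan is right and matches the paper: start from Lemma~\ref{lem:FirstHalfOfSolvingAtCriticalStep}, fix the free smooth coefficients at the critical order subject to a scalar compatibility condition, then induct. Two points, however, are off.

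\medskip

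\textbf{On the origin of \eqref{eq:PDEForTwoTensor}.} The condition does not come from reading Bianchi at a subleading order. After Lemma~\ref{lem:FirstHalfOfSolvingAtCriticalStep} the only undetermined data at the critical level are the smooth coefficients $\tensor*{\psi}{^{(0)}_0_\alpha}=\rho^{2n+3}\tensor{\nu}{_\alpha}+O(\rho^{2n+4})$ and $\tensor*{\psi}{^{(0)}_\alpha_\beta}=\rho^{2n+2}\tensor{\mu}{_\alpha_\beta}+O(\rho^{2n+3})$. One then imposes $\tensor*{\Ein}{^{(0)}_\infty_0}=O(\rho^{2n+5})$ and $\tensor*{\Ein}{^{(0)}_\infty_\alpha}=O(\rho^{2n+4})$ \emph{directly}, computing the variations from \eqref{eq:EinPerturbationHigherOrderInftyZero} and the first line of \eqref{eq:EinPerturbationLowerOrder}. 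This yields an explicit linear system in $(\tensor{\nu}{_\alpha},\tensor{\mu}{_\alpha_\beta})$; eliminating $\tensor{\nu}{_\alpha}$ and using the already established identity \eqref{eq:ImaginaryPartOfDoubleDivergenceOfObstruction} reduces it to $\tensor{D}{^\alpha^\beta}\tensor{\mu}{_\alpha_\beta}-\tensor{D}{^\conjalpha^\conjbeta}\tensor{\mu}{_\conjalpha_\conjbeta}=iu$. Bianchi enters only through \eqref{eq:ImaginaryPartOfDoubleDivergenceOfObstruction} in this reduction; it is not the source of the equation.

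\medskip

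\textbf{On the step $m=4n+2$.} No new logarithmic correction is introduced there, contrary to what you write. Although the $2\times 2$ block \eqref{eq:DeterminantOfCoefficients} coupling $(\tensor{\psi}{_0_0},\tensor{\psi}{_\alpha^\alpha})$ through $(\tensor{\Ein}{_0_0},\tensor{\Ein}{_\alpha^\alpha})$ degenerates at $m=4n+2$, one simply replaces the trace equation by the equation for $\tensor{\Ein}{_\infty_\infty}$ coming from \eqref{eq:EinPerturbationHigherOrderInftyInfty}; the resulting $2\times 2$ system in $(\tensor{\psi}{_0_0},\tensor{\psi}{_\alpha^\alpha})$ is nondegenerate, so $\tensor*{\psi}{^{(q)}_i_j}$ is uniquely determined for each $q$ with no increase in the log power. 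The discarded condition on $\tensor{\Ein}{_\alpha^\alpha}$ is then recovered automatically from the Bianchi identity \eqref{eq:ContractedBianchiOriginal1}, whose coefficient $m-4n-2$ in front of $\tensor{\Ein}{_\infty_\infty}$ vanishes precisely at this order. Inserting an extra log term here would break uniqueness.
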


As is clear from the proof below, Theorem \ref{thm:FormalSolution} also holds in the following formal sense.
Let $p\in M$, $\kappa$ a smooth function and $\tensor{\lambda}{_\alpha_\beta}$ a tensor satisfying
\eqref{eq:PDEForTwoTensor} to the infinite order at $p$.
Then there exists a singular normal-form ACH metric $g$ satisfying \eqref{eq:PrescriptionOfACHCoefficients} and
$\tensor{\Ein}{_I_J}=\caA^\infty$ to the infinite order at $p$,
and the Taylor expansions of $\tensor*[]{g}{^{(q)}_i_j}$ at $p$ are uniquely determined by those of
$\kappa$ and $\tensor{\lambda}{_\alpha_\beta}$.
On the other hand, there is a formal power series solution to \eqref{eq:PDEForTwoTensor}
by the Cauchy--Kovalevskaya theorem.
Hence, by Borel's Lemma, we have $\tensor{\lambda}{_\alpha_\beta}$ solving \eqref{eq:PDEForTwoTensor} to the
infinite order at $p$ and prove the first statement of Theorem \ref{thm:SingularSolutionToInfiniteOrder}.
We do not know whether \eqref{eq:PDEForTwoTensor} is solvable in the category of smooth tensors.

The first step to prove Theorem \ref{thm:FormalSolution} is the following.

\begin{lem}
	\label{lem:FirstHalfOfSolvingAtCriticalStep}
	There exists a singular normal-form ACH metric $g$ satisfying
	\begin{align*}
		\tensor{\Ein}{_\infty_\infty}&=\caA^{2n+4},\qquad
		\tensor{\Ein}{_\infty_0}=\caA^{2n+4},\qquad
		\tensor{\Ein}{_\infty_\alpha}=\caA^{2n+3},\\
		\tensor{\Ein}{_0_0}&=\caA^{2n+4},\qquad
		\tensor{\Ein}{_0_\alpha}=\caA^{2n+4},\qquad
		\tensor{\Ein}{_\alpha_\conjbeta}=\caA^{2n+4},\qquad
		\tensor{\Ein}{_\alpha_\beta}=\caA^{2n+3}
	\end{align*}
	and the followings not containing logarithmic terms:
	\begin{align*}
		&\tensor{\Ein}{_\infty_0}\text{ mod $\caA^{2n+5}$},\qquad
		\tensor{\Ein}{_\infty_\alpha}\text{ mod $\caA^{2n+4}$},\\
		&\tensor{\Ein}{_0_0}\text{ mod $\caA^{2n+5}$},\qquad
		\tensor{\Ein}{_\alpha_\conjbeta}\text{ mod $\caA^{2n+5}$}.
	\end{align*}
	Any such a metric $g$ is of the form
	\begin{align*}
		\tensor{g}{_0_0}
		&=\tensor{\overline{g}}{_0_0}+\tensor*{\psi}{^{(0)}_0_0}+\tensor*{\psi}{^{(1)}_0_0}\log\rho
		+\tensor*{\psi}{^{(2)}_0_0}(\log\rho)^{2}+\caA^{2n+5},\\
		\tensor{g}{_0_\alpha}
		&=\tensor{\overline{g}}{_0_\alpha}+\tensor*{\psi}{^{(0)}_0_\alpha}+\tensor*{\psi}{^{(1)}_0_\alpha}\log\rho+\caA^{2n+4},\\
		\tensor{g}{_\alpha_\conjbeta}
		&=\tensor{\overline{g}}{_\alpha_\conjbeta}
		+\tensor*{\psi}{^{(0)}_\alpha_\conjbeta}+\tensor*{\psi}{^{(1)}_\alpha_\conjbeta}\log\rho
		+\tensor*{\psi}{^{(2)}_\alpha_\conjbeta}(\log\rho)^{2}+\caA^{2n+5},\\
		\tensor{g}{_\alpha_\beta}
		&=\tensor{\overline{g}}{_\alpha_\beta}
		+\tensor*{\psi}{^{(0)}_\alpha_\beta}+\tensor*{\psi}{^{(1)}_\alpha_\beta}\log\rho+\caA^{2n+3},
	\end{align*}
	where $\tensor*{\psi}{^{(q)}_i_j}=O(\rho^{2n+1+a(i,j)})$. Furthermore, among $\tensor*{\psi}{^{(q)}_i_j}$,
	\begin{equation*}
		\tensor*{\psi}{^{(2)}_0_0},\qquad
		\tensor*{\psi}{^{(2)}_\alpha_\conjbeta},\qquad
		\tensor*{\psi}{^{(1)}_0_\alpha},\qquad
		\tf(\tensor*{\psi}{^{(1)}_\alpha_\conjbeta}),\qquad
		\tensor*{\psi}{^{(1)}_\alpha_\beta}\qquad\text{and}\qquad
		\tfrac{1}{2}n\tensor*{\psi}{^{(1)}_0_0}+(n+1)\tensor{\psi}{^{(1)}_\alpha^\alpha}
	\end{equation*}
	are uniquely determined modulo $O(\rho^{2n+2+a(i,j)})$.
	In particular, if $\tensor{\caO}{_\alpha_\beta}=0$ then they are zero modulo $O(\rho^{2n+2+a(i,j)})$.
\end{lem}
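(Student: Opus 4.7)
The plan is to perturb the approximate metric $\overline g$ from Proposition~\ref{prop:ApproximateSolutionForLogACH}: set $g = \overline g + \psi$ with $\tensor{\psi}{_i_j} \in \caA$ of order $2n+1+a(i,j)$, and apply the perturbation formulas of Proposition~\ref{prop:HigherOrderPerturbationAndEinsteinTensor} iteratively. Those formulas remain valid for $\psi \in \caA$, since $\caA$ is closed under the totally characteristic operators that appear. The key new indicial identity is
\begin{equation*}
	(\euler - k)(\rho^k f (\log\rho)^q)
	\equiv q\rho^k f (\log\rho)^{q-1} \mod O(\rho^{k+1}(\log\rho)^q),
\end{equation*}
so each factor $(\euler-k)$ strips a $\log\rho$. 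This is exactly what is needed at $m = 2n+2$, where by direct inspection of Proposition~\ref{prop:HigherOrderPerturbationAndEinsteinTensor} the indicial operators fail: the coefficients $-\tfrac{1}{8}m(m-2n-2)$ in~\eqref{eq:PerturbationAndEinsteinTensorDoubleHolomorphic} and $-\tfrac{1}{8}(m+2)(m-2n-2)$ in~\eqref{eq:PerturbationAndEinsteinTensorZeroAlpha} both vanish, and the matrix in~\eqref{eq:DeterminantOfCoefficients} for $(\tensor{\psi}{_0_0},\tensor{\psi}{_\alpha^\alpha})$ drops to rank one.

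First I would solve for $\psi$ block by block, in the same order as in the proof of Theorem~\ref{thm:ApproximateACHEMetric}. The log coefficient $\tensor*{\psi}{^{(1)}_\alpha_\beta}|_{\rho^{2n+2}}$ is determined up to a nonzero universal constant by $\tensor{E}{_\alpha_\beta} = \tensor{\caO}{_\alpha_\beta}$, while $\tensor*{\psi}{^{(0)}_\alpha_\beta}|_{\rho^{2n+2}}$ remains free; this free part corresponds to the $\tensor{\lambda}{_\alpha_\beta}$ of Theorem~\ref{thm:FormalSolution}. Similarly $\tensor*{\psi}{^{(1)}_0_\alpha}|_{\rho^{2n+2}}$ is pinned down by $\tensor{E}{_0_\alpha} = -i\tensor{\nabla}{^\beta}\tensor{\caO}{_\alpha_\beta} - i\tensor{N}{_\alpha^\conjbeta^\conjgamma}\tensor{\caO}{_\conjbeta_\conjgamma}$, as in~\eqref{eq:OnPossibleSecondObstruction}. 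The trace-free part $\tf(\tensor*{\psi}{^{(1)}_\alpha_\conjbeta})|_{\rho^{2n+3}}$ is then forced by the $\log\rho$-part of the source generated when $\tensor*{\psi}{^{(1)}_\alpha_\beta}$ propagates through $\tensor{\Psi}{_\alpha_\conjbeta}$ in~\eqref{eq:EinPerturbationHigherOrderHermitianTraceFree}. All three are universal derivative expressions in $\tensor{\caO}{_\alpha_\beta}$, hence vanish when $\tensor{\caO}{_\alpha_\beta} = 0$.

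The delicate block is the coupled $\tensor{\psi}{_0_0}$ and Hermitian-trace system. At $m = 2n+2$ the matrix from~\eqref{eq:DeterminantOfCoefficients} equals $\bigl(\begin{smallmatrix} n/2 & n+1 \\ n^2/4 & n(n+1)/2 \end{smallmatrix}\bigr)$, with kernel spanned by $(-2(n+1)/n, 1)$; the linear functionals annihilating this kernel are multiples of $\tfrac{1}{2}n\tensor{\psi}{_0_0} + (n+1)\tensor{\psi}{_\alpha^\alpha}$, so this is the unique combination pinned down by the leading equations, while the null direction provides the free parameter $\kappa$ of Theorem~\ref{thm:FormalSolution}. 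The $\log\rho$ corrections from the earlier steps now enter the forcing at order $\rho^{2n+4}$ through cross-terms such as $-\tfrac{1}{4}\rho^3\partial_\rho(\tensor{\Phi}{^\alpha^\beta}\tensor{\psi}{_\alpha_\beta} + \tensor{\Phi}{^\conjalpha^\conjbeta}\tensor{\psi}{_\conjalpha_\conjbeta})$ in~\eqref{eq:EinPerturbationHigherOrderZeroZero}; their resonance with the rank-one defect forces $(\log\rho)^2$ corrections in the slots $\tensor*{\psi}{^{(2)}_0_0}$ and $\tensor*{\psi}{^{(2)}_\alpha_\conjbeta}$ (the latter in the Hermitian-trace direction, with the trace-free part forced to zero). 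Once the $i$-$j$ block is fixed, the $I=\infty$ components are handled by the contracted Bianchi identity (Lemma~\ref{lem:ContractedBianchiIdentityForACHMetric}), just as in the proof of Theorem~\ref{thm:ApproximateACHEMetric}. When $\tensor{\caO}{_\alpha_\beta} = 0$, all log sources vanish identically, so $g$ can be taken to have no logarithmic terms.

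The main obstacle is the coordinated handling of the four indicial degeneracies at $m = 2n+2$ and the propagation of $\log\rho$ sources across the coupled blocks. Specifically, one must verify that the $(\log\rho)^2$ corrections appear precisely in the claimed slots (and not in the trace-free Hermitian or double-holomorphic blocks), that the pinned combination is exactly $\tfrac{1}{2}n\tensor*{\psi}{^{(1)}_0_0} + (n+1)\tensor*{\psi}{^{(1)}_\alpha^\alpha}$ rather than any other, and that the cascade of log corrections terminates at order two—consistent with the size-two Jordan-block structure of the indicial operator $\euler(\euler - 2n - 2)$ at its double-root-like point.
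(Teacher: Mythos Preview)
Your proposal is correct and follows essentially the same route as the paper: perturb $\overline g$ by a polyhomogeneous $\psi$, feed it through Proposition~\ref{prop:HigherOrderPerturbationAndEinsteinTensor}, exploit the indicial degeneracies at $m=2n+2$ to force the log-coefficients, and close the $\infty$-rows with the contracted Bianchi identities. Two small slips to fix: $\psi^{(1)}_{0\alpha}$ sits at order $\rho^{2n+3}$ (not $\rho^{2n+2}$, since $a(0,\alpha)=2$) and $\tf(\psi^{(1)}_{\alpha\conjbeta})$ at $\rho^{2n+4}$ (not $\rho^{2n+3}$, since $a(\alpha,\conjbeta)=3$).

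The one place where your sketch is hazier than the paper is the termination of the log-cascade. The paper does not argue via a Jordan-block heuristic; it runs a \emph{descending induction on the log-degree} $q$. Writing $\psi_{ij}=\sum_{q=0}^N\psi^{(q)}_{ij}(\log\rho)^q$, one first treats the $(0\alpha)$ and $(\alpha\beta)$ blocks: the top coefficient automatically satisfies $\delta\Ein^{(N)}=O$, and the log-shifted indicial relation $\delta\Ein^{(q-1)}_{\alpha\beta}=-\tfrac{1}{4}q(n+1)\psi^{(q)}_{\alpha\beta}+O(\rho^{2n+3})$ forces $\psi^{(q)}_{\alpha\beta}$ high order for $q\ge 2$ and pins $\psi^{(1)}_{\alpha\beta}$ to $\tfrac{4}{n+1}\rho^{2n+2}\tensor{\caO}{_\alpha_\beta}$ (similarly for $\psi^{(1)}_{0\alpha}$). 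Then in the coupled $(00,\alpha\conjbeta)$ block the same descent, using the additional relation $n\delta\Ein^{(q-1)}_{00}-2\delta\Ein^{(q-1)}_\alpha{}^\alpha=-\tfrac14 q(n+3)(n\psi^{(q)}_{00}-2\psi^{(q)}_\alpha{}^\alpha)+O(\rho^{2n+5})$, kills $\psi^{(q)}$ for $q\ge 3$, forces $\tfrac{n}{2}\psi^{(2)}_{00}+(n+1)\psi^{(2)}_\alpha{}^\alpha$ and $\tf(\psi^{(2)}_{\alpha\conjbeta})$ to zero, and finally determines $\psi^{(2)}_{00}$, $\psi^{(2)}_\alpha{}^\alpha$, $\tf(\psi^{(1)}_{\alpha\conjbeta})$ and $\tfrac{n}{2}\psi^{(1)}_{00}+(n+1)\psi^{(1)}_\alpha{}^\alpha$ from the $q=1$ equations. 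This makes the stopping at $(\log\rho)^2$ automatic---the induction simply runs out of forcing---rather than something requiring separate verification.
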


\begin{proof}
	We shall determine when
	\begin{equation}
		\label{eq:OneStepFurtherApproximationForLogACH}
		\tensor{g}{_i_j}=\tensor{\overline{g}}{_i_j}+\sum_{q=0}^N\tensor*{\psi}{^{(q)}_i_j}(\log\rho)^q,\qquad
		\tensor*{\psi}{^{(q)}_i_j}=O(\rho^{2n+1+a(i,j)})
	\end{equation}
	enjoys the condition imposed.
	By \eqref{eq:EinPerturbationLowerOrder} and \eqref{eq:EinPerturbationHigherOrder}, which are also valid here
	if $O(\rho^{m'})$ is replaced by $\caA^{m'}$, the difference $\tensor{\delta\Ein}{_I_J}$ between the
	Einstein tensors of $\overline{g}$ and $g$ is of the form
	\begin{equation*}
		\tensor{\delta\Ein}{_I_J}=\sum_{q=0}^N\tensor*{\delta\Ein}{^{(q)}_I_J}(\log\rho)^q+\caA^{2n+2+a(I,J)}.
	\end{equation*}
	We may assume $N\ge 3$. Then, by \eqref{eq:EinPerturbationLowerOrder} we have
	$\tensor*{\delta\Ein}{^{(N)}_0_\alpha}=O(\rho^{2n+4})$ and
	$\tensor*{\delta\Ein}{^{(N)}_\alpha_\beta}=O(\rho^{2n+3})$, which imply that
	$\tensor*{\Ein}{^{(N)}_0_\alpha}=O(\rho^{2n+4})$, $\tensor*{\Ein}{^{(N)}_\alpha_\beta}=O(\rho^{2n+3})$
	already hold, and
	\begin{equation}
		\label{eq:VariationOfEinAtCriticalStep}
		\begin{split}
			\tensor*{\delta\Ein}{^{(q-1)}_0_\alpha}
			&=-\tfrac{1}{4}q(n+2)\tensor*{\psi}{^{(q)}_0_\alpha}+O(\rho^{2n+4}),\\
			\tensor*{\delta\Ein}{^{(q-1)}_\alpha_\beta}
			&=-\tfrac{1}{4}q(n+1)\tensor*{\psi}{^{(q)}_\alpha_\beta}+O(\rho^{2n+3})
		\end{split}
	\end{equation}
	for $q=N$. This shows that
	$\tensor*{\Ein}{^{(N-1)}_0_\alpha}=O(\rho^{2n+4})$, $\tensor*{\Ein}{^{(N-1)}_\alpha_\beta}=O(\rho^{2n+3})$
	if and only if $\tensor*{\psi}{^{(N)}_0_\alpha}=O(\rho^{2n+4})$,
	$\tensor*{\psi}{^{(N)}_\alpha_\beta}=O(\rho^{2n+3})$.
	Since \eqref{eq:VariationOfEinAtCriticalStep} holds for $q=q_0$ if
	$\tensor*{\psi}{^{(q)}_0_\alpha}=O(\rho^{2n+4})$ and $\tensor*{\psi}{^{(q)}_\alpha_\beta}=O(\rho^{2n+3})$
	for $q_0+1\le q\le N$,
	inductively we verify that
	$\tensor{\Ein}{_0_\alpha}=\caA^{2n+4}$, $\tensor{\Ein}{_\alpha_\beta}=\caA^{2n+3}$ if and only if
	$\tensor*{\psi}{^{(q)}_0_\alpha}=O(\rho^{2n+4})$, $\tensor*{\psi}{^{(q)}_\alpha_\beta}=O(\rho^{2n+3})$,
	$2\le q\le N$ and
	\begin{equation}
		\label{eq:FirstLogarithmicTerms}
		\tensor*{\psi}{^{(1)}_0_\alpha}=\dfrac{4}{n+2}\rho^{2n+3}\tensor{E}{_0_\alpha}+O(\rho^{2n+4}),\qquad
		\tensor*{\psi}{^{(1)}_\alpha_\beta}=\dfrac{4}{n+1}\rho^{2n+2}\tensor{E}{_\alpha_\beta}+O(\rho^{2n+3}).
	\end{equation}

	Next, from
	\eqref{eq:EinPerturbationHigherOrderZeroZero}--\eqref{eq:EinPerturbationHigherOrderHermitianTraceFree}
	we have $n\tensor*{\delta\Ein}{^{(N)}_0_0}-2\tensor{\delta\Ein}{^{(N)}_\alpha^\alpha}=O(\rho^{2n+5})$ and
	\begin{align*}
		\tensor*{\delta\Ein}{^{(q)}_0_0}
		&=\tfrac{1}{2}n\tensor*{\psi}{^{(q)}_0_0}+(n+1)\tensor{\psi}{^{(q)}_\alpha^\alpha}+O(\rho^{2n+5}),\\
		\tf(\tensor*{\delta\Ein}{^{(q)}_\alpha_\conjbeta})
		&=-\tfrac{1}{2}n\tf(\tensor*{\psi}{^{(q)}_\alpha_\conjbeta})+O(\rho^{2n+5}),\\
		n\tensor*{\delta\Ein}{^{(q-1)}_0_0}-2\tensor{\delta\Ein}{^{(q-1)}_\alpha^\alpha}
		&=-\tfrac{1}{4}q(n+3)(n\tensor*{\psi}{^{(q)}_0_0}-2\tensor{\psi}{^{(q)}_\alpha^\alpha})+O(\rho^{2n+5})
	\end{align*}
	for $q=N$.
	Hence both $\tensor*{\psi}{^{(N)}_0_0}$ and $\tensor*{\psi}{^{(N)}_\alpha_\conjbeta}$ must be $O(\rho^{2n+5})$.
	Inductively we show that, in order for us to have
	$\tensor*{\Ein}{^{(q)}_0_0}=O(\rho^{2n+5})$, $\tensor*{\Ein}{^{(q)}_\alpha_\conjbeta}=O(\rho^{2n+5})$,
	$2\le q\le N$,
	it is necessary and sufficient that $\tensor*{\psi}{^{(q)}_0_0}$,
	$\tensor*{\psi}{^{(q)}_\alpha_\conjbeta}$, $3\le q\le N$, and
	$\tfrac{1}{2}n\tensor*{\psi}{^{(2)}_0_0}+(n+1)\tensor{\psi}{^{(2)}_\alpha^\alpha}$,
	$\tf(\tensor*{\psi}{^{(2)}_\alpha_\conjbeta})$ are all $O(\rho^{2n+5})$.
	
	Again by
	\eqref{eq:EinPerturbationHigherOrderZeroZero}--\eqref{eq:EinPerturbationHigherOrderHermitianTraceFree},
	modulo $O(\rho^{2n+4})$ terms which linearly depend on $\tensor*{\psi}{^{(2)}_0_0}$,
	$\tensor*{\psi}{^{(1)}_0_\alpha}$ and $\tensor*{\psi}{^{(1)}_\alpha_\beta}$,
	\begin{align*}
		\tensor*{\delta\Ein}{^{(1)}_0_0}
		&\equiv\tfrac{1}{2}n\tensor*{\psi}{^{(1)}_0_0}+(n+1)\tensor{\psi}{^{(1)}_\alpha^\alpha}+O(\rho^{2n+5}),\\
		\tf(\tensor*{\delta\Ein}{^{(1)}_\alpha_\conjbeta})
		&\equiv -\tfrac{1}{2}n\tf(\tensor*{\psi}{^{(1)}_\alpha_\conjbeta})+O(\rho^{2n+5}),\\
		n\tensor*{\delta\Ein}{^{(1)}_0_0}-2\tensor{\delta\Ein}{^{(1)}_\alpha^\alpha}
		&\equiv -\tfrac{1}{2}(n+3)(n\tensor*{\psi}{^{(2)}_0_0}-2\tensor{\psi}{^{(2)}_\alpha^\alpha})
		+O(\rho^{2n+5}).
	\end{align*}
	Therefore $\tensor*{\psi}{^{(2)}_0_0}$, $\tensor{\psi}{^{(2)}_\alpha^\alpha}$,
	$\tf(\tensor*{\psi}{^{(1)}_\alpha_\conjbeta})$ and
	$\tfrac{1}{2}n\tensor*{\psi}{^{(1)}_0_0}+(n+1)\tensor{\psi}{^{(1)}_\alpha^\alpha}$ are uniquely determined
	modulo $O(\rho^{2n+5})$ by the requirement
	$\tensor*{\Ein}{^{(1)}_0_0}=O(\rho^{2n+5})$, $\tensor*{\Ein}{^{(1)}_\alpha_\conjbeta}=O(\rho^{2n+5})$.

	For $\tensor{g}{_i_j}$ satisfying all the restrictions we have found above,
	$\tensor{\Ein}{_\infty_0}$ and $\tensor{\Ein}{_\infty_\alpha}$ do not contain logarithmic terms
	modulo $\caA^{2n+5}$ and $\caA^{2n+4}$, respectively;
	one can show this fact by \eqref{eq:ContractedBianchiOriginal2} and
	\eqref{eq:ContractedBianchiOriginal3}, or by \eqref{eq:ImaginaryPartOfDoubleDivergenceOfObstruction}.
	If $\tensor{\caO}{_\alpha_\beta}=0$, \eqref{eq:FirstLogarithmicTerms} implies that
	$\tensor*{\psi}{^{(1)}_0_\alpha}$ and $\tensor*{\psi}{^{(1)}_\alpha_\beta}$ are zero,
	and hence $\tensor*{\psi}{^{(2)}_0_0}$, $\tensor{\psi}{^{(2)}_\alpha^\alpha}$,
	$\tf(\tensor*{\psi}{^{(1)}_\alpha_\conjbeta})$ and
	$\tfrac{1}{2}n\tensor*{\psi}{^{(1)}_0_0}+(n+1)\tensor{\psi}{^{(1)}_\alpha^\alpha}$ are also zero.
\end{proof}

The rest of the proof of Theorem \ref{thm:FormalSolution} consists of two parts,
in the first of which we finish constructing a singular normal-form ACH metric satisfying
$\tensor{\Ein}{_I_J}=\caA^{2n+2+a(I,J)}$,
and in the second we go through an inductive argument to achieve $\tensor{\Ein}{_I_J}=\caA^\infty$.

\begin{proof}[Proof of Theorem \ref{thm:FormalSolution}]
	Let $g$ be a singular normal-form ACH metric we have obtained in
	Lemma \ref{lem:FirstHalfOfSolvingAtCriticalStep}.
	By \eqref{eq:EinPerturbationHigherOrderInftyZero}, \eqref{eq:EinPerturbationLowerOrder} and
	\eqref{eq:FirstLogarithmicTerms} we have
	\begin{align*}
		\begin{split}
			\tensor*{\delta\Ein}{^{(0)}_\infty_0}&=
			(n+2)\rho
			(\tensor{\nabla}{^\alpha}\tensor*{\psi}{^{(0)}_0_\alpha}
			+\tensor{\nabla}{^\conjalpha}\tensor*{\psi}{^{(0)}_0_\conjalpha})
			-(n+1)\rho^2(\tensor{A}{^\alpha^\beta}\tensor*{\psi}{^{(0)}_\alpha_\beta}
			+\tensor{A}{^\conjalpha^\conjbeta}\tensor*{\psi}{^{(0)}_\conjalpha_\conjbeta})\\
			&\phantom{=\;}
			+\rho^{2n+4}
			\left(\tfrac{2}{n+2}(\tensor{\nabla}{^\alpha}\tensor{E}{_0_\alpha}
			+\tensor{\nabla}{^\conjalpha}\tensor{E}{_0_\conjalpha})
			-\tfrac{2}{n+1}(\tensor{A}{^\alpha^\beta}\tensor{E}{_\alpha_\beta}
			+\tensor{A}{^\conjalpha^\conjbeta}\tensor{E}{_\conjalpha_\conjbeta})\right)
			+O(\rho^{2n+5}),
		\end{split}\\
		\begin{split}
			\tensor*{\delta\Ein}{^{(0)}_\infty_\alpha}&=
			-i(n+2)\tensor*{\psi}{^{(0)}_0_\alpha}
			+(n+1)\rho\tensor{\nabla}{^\beta}\tensor*{\psi}{^{(0)}_\alpha_\beta}
			+(n+1)\rho\tensor{N}{_\alpha^\conjbeta^\conjgamma}\tensor*{\psi}{^{(0)}_\conjbeta_\conjgamma}\\
			&\phantom{=\;}
			-\rho^{2n+3}
			\left(\tfrac{2}{n+2}i\tensor{E}{_0_\alpha}
			-\tfrac{2}{n+1}(\tensor{\nabla}{^\beta}\tensor{E}{_\alpha_\beta}
			+\tensor{N}{_\alpha^\conjbeta^\conjgamma}\tensor{E}{_\conjbeta_\conjgamma})\right)
			+O(\rho^{2n+4}).
		\end{split}
	\end{align*}
	If we set $\tensor*{\psi}{^{(0)}_0_\alpha}=\rho^{2n+3}\tensor{\nu}{_\alpha}+O(\rho^{2n+4})$ and
	$\tensor*{\psi}{^{(0)}_\alpha_\beta}=\rho^{2n+2}\tensor{\mu}{_\alpha_\beta}+O(\rho^{2n+3})$,
	then attaining $\tensor*{\Ein}{^{(0)}_\infty_0}=O(\rho^{2n+5})$ and
	$\tensor*{\Ein}{^{(0)}_\infty_\alpha}=O(\rho^{2n+4})$
	is equivalent to solving the following system of PDEs:
	\begin{equation}
		\label{eq:PDEForOneAndTwoTensors}
		\begin{cases}
			&(n+2)(\tensor{\nabla}{^\alpha}\tensor{\nu}{_\alpha}
			+\tensor{\nabla}{^\conjalpha}\tensor{\nu}{_\conjalpha})
			-(n+1)(\tensor{A}{^\alpha^\beta}\tensor{\mu}{_\alpha_\beta}
			+\tensor{A}{^\conjalpha^\conjbeta}\tensor{\mu}{_\conjalpha_\conjbeta})\\
			&\quad =-\tensor{E}{_\infty_0}
			-\frac{2}{n+2}(\tensor{\nabla}{^\alpha}\tensor{E}{_0_\alpha}
			+\tensor{\nabla}{^\conjalpha}\tensor{E}{_0_\conjalpha})
			+\frac{2}{n+1}(\tensor{A}{^\alpha^\beta}\tensor{E}{_\alpha_\beta}
			+\tensor{A}{^\conjalpha^\conjbeta}\tensor{E}{_\conjalpha_\conjbeta}),\\
			&-i(n+2)\tensor{\nu}{_\alpha}
			+(n+1)\tensor{\nabla}{^\beta}\tensor{\mu}{_\alpha_\beta}
			+(n+1)\tensor{N}{_\alpha^\conjbeta^\conjgamma}\tensor{\mu}{_\conjbeta_\conjgamma}\\
			&\quad =-\tensor{E}{_\infty_\alpha}
			+\frac{2}{n+2}i\tensor{E}{_0_\alpha}
			-\frac{2}{n+1}(\tensor{\nabla}{^\beta}\tensor{E}{_\alpha_\beta}
			+\tensor{N}{_\alpha^\conjbeta^\conjgamma}\tensor{E}{_\conjbeta_\conjgamma}).
		\end{cases}
	\end{equation}
	If we substitute the second equation into the first one and use
	$\tensor{E}{_\alpha_\beta}=\tensor{\caO}{_\alpha_\beta}$ and
	\eqref{eq:ImaginaryPartOfDoubleDivergenceOfObstruction}, the system is reduced to
	$\tensor{D}{^\alpha^\beta}\tensor{\mu}{_\alpha_\beta}
	-\tensor{D}{^\conjalpha^\conjbeta}\tensor{\mu}{_\conjalpha_\conjbeta}=iu$.
	Hence, by setting $\tensor{\mu}{_\alpha_\beta}=\tensor{\lambda}{_\alpha_\beta}$ and
	determining $\tensor{\nu}{_\alpha}$ by \eqref{eq:PDEForOneAndTwoTensors} we achieve
	$\tensor*{\Ein}{^{(0)}_\infty_0}=O(\rho^{2n+5})$ and $\tensor*{\Ein}{^{(0)}_\infty_\alpha}=O(\rho^{2n+4})$.

	Having fixed $\tensor*{\psi}{^{(0)}_0_\alpha}$ and $\tensor*{\psi}{^{(0)}_\alpha_\beta}$,
	now we may determine $\tensor*{\psi}{^{(1)}_0_0}$, $\tensor{\psi}{^{(1)}_\alpha^\alpha}$,
	$\tf(\tensor*{\psi}{^{(0)}_\alpha_\conjbeta})$ and
	$\tfrac{1}{2}n\tensor*{\psi}{^{(0)}_0_0}+(n+1)\tensor{\psi}{^{(0)}_\alpha^\alpha}$ modulo
	$O(\rho^{2n+5})$ so that $\tensor*{\Ein}{^{(0)}_0_0}$, $\tensor*{\Ein}{^{(0)}_\alpha_\conjbeta}$
	are $O(\rho^{2n+5})$ by observing
	\eqref{eq:EinPerturbationHigherOrderZeroZero}--\eqref{eq:EinPerturbationHigherOrderHermitianTraceFree}.
	It automatically holds that $\tensor{\Ein}{_\infty_\infty}=\caA^{2n+5}$ by
	\eqref{eq:ContractedBianchiOriginal1}.
	Although $\tfrac{1}{2}n\tensor*{\psi}{^{(0)}_0_0}+(n+1)\tensor{\psi}{^{(0)}_\alpha^\alpha}$ is fixed,
	$\tensor*{\psi}{^{(0)}_0_0}$ (or $\tensor{\psi}{^{(0)}_\alpha^\alpha}$) is remaining to be free,
	so we prescribe it by $\tensor*{\psi}{^{(0)}_0_0}=\rho^{2n+4}\kappa+O(\rho^{2n+5})$.

	We have shown that there is a singular normal-form ACH metric satisfying
	$\tensor{\Ein}{_I_J}=\caA^{2n+2+a(I,J)}$,
	and that if we impose the condition \eqref{eq:PrescriptionOfACHCoefficients} then
	$\tensor{g}{_i_j}$ are unique modulo $\caA^{2n+2+a(i,j)}$.
	Let $m\ge 2n+3$ and suppose that $g$ is a singular normal-form ACH metric satisfying
	$\tensor{\Ein}{_I_J}=\caA^{m-1+a(I,J)}$. We set
	\begin{equation*}
		\tensor{ {g'} }{_i_j}=\tensor{g}{_i_j}+\sum_{q=0}^N\tensor*{\psi}{^{(q)}_i_j}(\log\rho)^q,
	\end{equation*}
	where $\tensor*{\psi}{^{(q)}_i_j}=O(\rho^{m-1+a(i,j)})$,
	and shall prove that $\tensor*{\psi}{^{(q)}_i_j}$ mod $O(\rho^{m+a(i,j)})$ may be uniquely determined
	so that $\tensor{ {\Ein'} }{_I_J}=\caA^{m+a(I,J)}$ holds.
	Then the induction works and we obtain the theorem.

	By replacing $N$ with a larger one if necessary, we express the difference $\delta\Ein=\Ein'-\Ein$
	between the Einstein tensors as
	\begin{equation*}
		\tensor{\delta\Ein}{_I_J}=\sum_{q=0}^N\tensor*{\delta\Ein}{^{(q)}_I_J}(\log\rho)^q+\caA^{m+a(I,J)}.
	\end{equation*}
	Then by \eqref{eq:EinPerturbationLowerOrder} and \eqref{eq:EinPerturbationHigherOrder} we have,
	modulo terms linearly depending on $\tensor*{\psi}{^{(q+2)}_i_j}$ or $\tensor*{\psi}{^{(q+1)}_i_j}$,
	\allowdisplaybreaks
	\begin{subequations}
	\begin{align}
		\begin{split}
			\tensor*{\delta\Ein}{^{(q)}_0_0}
			&\equiv -\tfrac{1}{8}(m^2-2nm-8n-4)\tensor*{\psi}{^{(q)}_0_0}
			+\tfrac{1}{2}m\tensor{\psi}{^{(q)}_\alpha^\alpha}\\
			&\phantom{=\;}
			+(\text{$O(\rho^{m+2})$ terms depending on
			$\tensor*{\psi}{^{(q)}_0_\alpha}$ and $\tensor*{\psi}{^{(q)}_\alpha_\beta}$})
			+O(\rho^{m+3})
		\end{split}\\
		\tensor*{\delta\Ein}{^{(q)}_0_\alpha}
		&\equiv -\tfrac{1}{8}(m+2)(m-2n-2)\tensor*{\psi}{^{(q)}_0_\alpha}+O(\rho^{m+2}),\\
		\begin{split}
			\label{eq:PerturbationOfEinsteinTraceForInduction}
			\tensor{\delta\Ein}{^{(q)}_\alpha^\alpha}
			&\equiv\tfrac{1}{8}n(m-2)\tensor*{\psi}{^{(q)}_0_0}
			-\tfrac{1}{8}\left( m^2-(4n-2)m-8n-8 \right)\tensor{\psi}{^{(q)}_\alpha^\alpha}\\
			&\phantom{=\;}
			+(\text{$O(\rho^{m+2})$ terms depending on
			$\tensor*{\psi}{^{(q)}_0_\alpha}$ and $\tensor*{\psi}{^{(q)}_\alpha_\beta}$})+O(\rho^{m+3}),
		\end{split}\\
		\begin{split}
			\tf(\tensor*{\delta\Ein}{^{(q)}_\alpha_\conjbeta})
			&\equiv -\tfrac{1}{8}(m^2-2nm-2n-9)\tf(\tensor*{\psi}{^{(q)}_\alpha_\conjbeta})\\
			&\phantom{=\;}
			+(\text{$O(\rho^{m+2})$ terms depending on
			$\tensor*{\psi}{^{(q)}_0_\alpha}$ and $\tensor*{\psi}{^{(q)}_\alpha_\beta}$})+O(\rho^{m+3}),
		\end{split}\\
		\tensor*{\delta\Ein}{^{(q)}_\alpha_\beta}
		&\equiv -\tfrac{1}{8}m(m-2n-2)\tensor*{\psi}{^{(q)}_\alpha_\beta}+O(\rho^{m+1}).
	\end{align}
	\end{subequations}
	\allowdisplaybreaks[0]%
	By \eqref{eq:DeterminantOfCoefficients},
	if $m\not=4n+2$, we may determine $\tensor*{\psi}{^{(N)}_i_j}$, $\tensor*{\psi}{^{(N-1)}_i_j}$, $\dots$,
	$\tensor*{\psi}{^{(0)}_i_j}$ inductively so that $\tensor{ {\Ein'} }{_i_j}=\caA^{m+a(i,j)}$ hold.
	Then by \eqref{eq:ContractedBianchiOriginal} it automatically holds that
	$\tensor{ {\Ein'} }{_\infty_\infty}=\caA^{m+3}$, $\tensor{ {\Ein'} }{_\infty_0}=\caA^{m+3}$
	and $\tensor{ {\Ein'} }{_\infty_\alpha}=\caA^{m+2}$.
	If $m=4n+2$, instead of \eqref{eq:PerturbationOfEinsteinTraceForInduction} we use
	\begin{equation*}
		\begin{split}
			\tensor*{\delta\Ein}{^{(q)}_\infty_\infty}
			&\equiv -8n(n+1)\tensor*{\psi}{^{(q)}_0_0}-8(n+1)(2n+1)\tensor{\psi}{^{(q)}_\alpha^\alpha}\\
			&\phantom{=\;}
			+(\text{$O(\rho^{4n+4})$ terms depending on $\tensor*{\psi}{^{(q)}_\alpha_\beta}$})+O(\rho^{4n+5}),
		\end{split}
	\end{equation*}
	which holds modulo $\tensor*{\psi}{^{(q+2)}_i_j}$ and $\tensor*{\psi}{^{(q+1)}_i_j}$.
	We may determine $\tensor*{\psi}{^{(N)}_i_j}$, $\tensor*{\psi}{^{(N-1)}_i_j}$, $\dots$,
	$\tensor*{\psi}{^{(0)}_i_j}$ inductively so that $\tensor{ {\Ein'} }{_\infty_\infty}=\caA^{4n+5}$,
	$\tensor{ {\Ein'} }{_0_0}=\caA^{4n+5}$, $\tensor{ {\Ein'} }{_0_\alpha}=\caA^{4n+4}$,
	$\tf(\tensor{ {\Ein'} }{_\alpha_\conjbeta})=\caA^{4n+5}$ and
	$\tensor{ {\Ein'} }{_\alpha_\beta}=\caA^{4n+4}$.
	By \eqref{eq:ContractedBianchiOriginal}, we obtain
	$\tensor{ {\Ein'} }{_\alpha^\alpha}=\caA^{4n+3}$, $\tensor{ {\Ein'} }{_\infty_0}=\caA^{4n+5}$ and
	$\tensor{ {\Ein'} }{_\infty_\alpha}=\caA^{4n+4}$.
\end{proof}

Finally we shall discuss constructing a completely log-free solution when $\tensor{\caO}{_\alpha_\beta}=0$.
We set
\begin{equation*}
	\begin{split}
		v&:=-\tensor{E}{_0_0}+\tfrac{2}{n}\tensor{E}{_\alpha^\alpha}
		-\tfrac{1}{n}(\tensor{\nabla}{^\alpha}\tensor{E}{_\infty_\alpha}
		+\tensor{\nabla}{^\conjalpha}\tensor{E}{_\infty_\conjalpha})
		+\tfrac{2}{n(n+2)}i(\tensor{\nabla}{^\alpha}\tensor{E}{_0_\alpha}
		-\tensor{\nabla}{^\conjalpha}\tensor{E}{_0_\conjalpha})\\
		&\phantom{:=\;}
		-\tfrac{2}{n(n+1)}(\tensor{\nabla}{^\alpha}\tensor{\nabla}{^\beta}\tensor{E}{_\alpha_\beta}
		+\tensor{\nabla}{^\conjalpha}\tensor{\nabla}{^\conjbeta}\tensor{E}{_\conjalpha_\conjbeta}
		+\tensor{N}{^\gamma^\alpha^\beta}\tensor{\nabla}{_\gamma}\tensor{E}{_\alpha_\beta}
		+\tensor{N}{^\conjgamma^\conjalpha^\conjbeta}
		\tensor{\nabla}{_\conjgamma}\tensor{E}{_\conjalpha_\conjbeta}\\
		&\phantom{:=\;-\tfrac{2}{n(n+1)}(}
		+\tensor{N}{^\gamma^\alpha^\beta_,_\gamma}\tensor{E}{_\alpha_\beta}
		+\tensor{N}{^\conjgamma^\conjalpha^\conjbeta_,_\conjgamma}\tensor{E}{_\conjalpha_\conjbeta}).
	\end{split}
\end{equation*}

\begin{thm}
	Suppose that $\tensor{\caO}{_\alpha_\beta}=0$.
	Let $\kappa$ be a smooth function and $\tensor{\lambda}{_\alpha_\beta}$ a smooth tensor satisfying
	\begin{equation}
		\label{eq:SystemOfPDEsForTwoTensor}
		\begin{cases}
		\tensor{D}{^\alpha^\beta}\tensor{\lambda}{_\alpha_\beta}
		-\tensor{D}{^\conjalpha^\conjbeta}\tensor{\lambda}{_\conjalpha_\conjbeta}=iu,\\
		\tensor*{D}{_{-2/n}^\alpha^\beta}\tensor{\lambda}{_\alpha_\beta}
		+\tensor*{D}{_{-2/n}^\conjalpha^\conjbeta}\tensor{\lambda}{_\conjalpha_\conjbeta}=v.
	\end{cases}
	\end{equation}
	Then there is a normal-form ACH metric $g$, which is free of logarithmic terms, satisfying
	$\tensor{\Ein}{_I_J}=\caA^\infty$ and
	\begin{equation}
		\label{eq:PrescriptionOfACHCoefficients2}
		\dfrac{1}{(2n+4)!}\left.\left(\partial_\rho^{2n+4}\tensor*{g}{_0_0}\right)\right|_M=\kappa,\qquad
		\dfrac{1}{(2n+2)!}\left.\left(\partial_\rho^{2n+2}\tensor*{g}{_\alpha_\beta}\right)\right|_M
		=\tensor{\lambda}{_\alpha_\beta}.
	\end{equation}
	The Taylor expansions of the components $\tensor{g}{_i_j}$ at $\bdry X$ are unique.
\end{thm}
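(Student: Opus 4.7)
The plan is to refine the argument of Theorem \ref{thm:FormalSolution}, exploiting $\tensor{\caO}{_\alpha_\beta} = 0$ to kill the obstructions to a log-free expansion, and to identify the two PDEs in \eqref{eq:SystemOfPDEsForTwoTensor} as precisely the compatibility conditions needed at the critical step $m = 2n+2$. First I would start from the approximately log-free metric $\overline{g}$ of Proposition \ref{prop:ApproximateSolutionForLogACH}. By Lemma \ref{lem:FirstHalfOfSolvingAtCriticalStep} together with $\tensor{\caO}{_\alpha_\beta} = 0$, all six distinguished log coefficients listed there -- namely $\tensor*{\psi}{^{(2)}_0_0}$, $\tensor*{\psi}{^{(2)}_\alpha_\conjbeta}$, $\tensor*{\psi}{^{(1)}_0_\alpha}$, $\tf(\tensor*{\psi}{^{(1)}_\alpha_\conjbeta})$, $\tensor*{\psi}{^{(1)}_\alpha_\beta}$, and $\tfrac{n}{2}\tensor*{\psi}{^{(1)}_0_0}+(n+1)\tensor{\psi}{^{(1)}_\alpha^\alpha}$ -- vanish modulo $O(\rho^{2n+2+a(i,j)})$. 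Thus the approximate solution produced through the critical order is automatically smooth.

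Next I would carry out the critical step at $m = 2n+2$. Here the scalar coefficients of $\tensor{\psi}{_0_\alpha}$ and $\tensor{\psi}{_\alpha_\beta}$ in \eqref{eq:PerturbationAndEinsteinTensor} each carry a factor $m - 2n - 2$ and so vanish, while the $(\tensor{\psi}{_0_0}, \tensor{\psi}{_\alpha^\alpha})$ system has vanishing determinant \eqref{eq:DeterminantOfCoefficients}. Writing $\tensor*{\psi}{^{(0)}_0_\alpha} = \rho^{2n+3}\tensor{\nu}{_\alpha} + O(\rho^{2n+4})$ and $\tensor*{\psi}{^{(0)}_\alpha_\beta} = \rho^{2n+2}\tensor{\mu}{_\alpha_\beta} + O(\rho^{2n+3})$ and requiring that $\tensor{\Ein}{_\infty_0}$ and $\tensor{\Ein}{_\infty_\alpha}$ be log-free at their next nontrivial orders yields the system \eqref{eq:PDEForOneAndTwoTensors}; eliminating $\tensor{\nu}{_\alpha}$ produces the first equation of \eqref{eq:SystemOfPDEsForTwoTensor}, which we satisfy by taking $\tensor{\mu}{_\alpha_\beta} = \tensor{\lambda}{_\alpha_\beta}$. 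The degenerate $(\tensor{\psi}{_0_0}, \tensor{\psi}{_\alpha^\alpha})$ system has left null vector proportional to $(n,-2)$, so the compatibility needed to avoid a $\log\rho$ term is that the combination $n\tensor*{\Ein}{^{(0)}_0_0} - 2\tensor{\Ein}{^{(0)}_\alpha^\alpha}$ at order $\rho^{2n+4}$ lie in the image of the matrix. Assembling this source from \eqref{eq:DefinitionOfTensorE}, the cross-terms $\rho^2\tensor{\Phi}{^\alpha^\beta}\tensor{\psi}{_\alpha_\beta}$ and divergence terms in \eqref{eq:EinPerturbationHigherOrderInftyInfty}--\eqref{eq:EinPerturbationHigherOrderHermitianTraceFree}, and then substituting $\tensor{\mu}{_\alpha_\beta} = \tensor{\lambda}{_\alpha_\beta}$ together with the expression for $\tensor{\nu}{_\alpha}$ given by \eqref{eq:PDEForOneAndTwoTensors}, the compatibility condition collapses to the second equation in \eqref{eq:SystemOfPDEsForTwoTensor}, with $t = -2/n$ singled out of the CR-invariant family \eqref{eq:DefinitionOfOneParameterFamilyPt}. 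Once this holds, the free direction in the solution is the kernel of the coefficient matrix, which is prescribed by $\kappa$ via \eqref{eq:PrescriptionOfACHCoefficients2}; the trace-free $(\alpha,\conjbeta)$ component is pinned by its own nondegenerate equation.

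Finally I would run the induction of the last part of the proof of Theorem \ref{thm:FormalSolution} to extend to $\tensor{\Ein}{_I_J} = \caA^\infty$. For $m > 2n+2$ with $m \neq 4n+2$ the determinant \eqref{eq:DeterminantOfCoefficients} is nonzero, so each order can be solved without introducing logs; at $m = 4n+2$ the $\tensor{\Ein}{_\alpha^\alpha}$ equation is replaced by the $\tensor{\Ein}{_\infty_\infty}$ one as in the original proof, and solvability and log-freeness then follow from \eqref{eq:ContractedBianchiIdentityForACHMetric1} without any further PDE constraint. Uniqueness of the Taylor expansions follows from the nondegeneracy at every noncritical order combined with the explicit prescription of $\kappa$ and $\tensor{\lambda}{_\alpha_\beta}$ at the critical step. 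The main obstacle will be the coefficient accounting at the critical step: one must track every contribution from the $\Phi$-cross-terms and from the divergence identities of \eqref{eq:ContractedBianchiIdentityForACHMetric}, and verify that the orthogonal compatibility combination is exactly $\tensor*{D}{_{-2/n}^\alpha^\beta}\tensor{\lambda}{_\alpha_\beta} + \text{c.c.} = v$, with the particular value $t = -2/n$ emerging from the ratio of the $\tensor{\nabla}{^\alpha}\tensor{\nabla}{^\beta}$ coefficient to the $\tensor{N}{^\gamma^\alpha^\beta_,_\gamma}$ coefficient after the $\tensor{\nu}{_\alpha}$-substitution.
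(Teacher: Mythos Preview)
Your proposal is correct and follows essentially the same approach as the paper's proof. One small sharpening: after Lemma~\ref{lem:FirstHalfOfSolvingAtCriticalStep} with $\tensor{\caO}{_\alpha_\beta}=0$, the coefficients $\tensor*{\psi}{^{(1)}_0_0}$ and $\tensor{\psi}{^{(1)}_\alpha^\alpha}$ individually are not among the six vanishing quantities (only their combination $\tfrac{n}{2}\tensor*{\psi}{^{(1)}_0_0}+(n+1)\tensor{\psi}{^{(1)}_\alpha^\alpha}$ is), so a residual $\log\rho$ term survives in $\tensor{g}{_0_0}$ and in the trace part of $\tensor{g}{_\alpha_\conjbeta}$ until the second equation of \eqref{eq:SystemOfPDEsForTwoTensor} kills it---exactly the mechanism your second paragraph correctly identifies.
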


Again this theorem also holds in the formal sense.
Since the principal parts of $\tensor{D}{^\alpha^\beta}$ and $\tensor*{D}{_{-2/n}^\alpha^\beta}$ agree,
the system \eqref{eq:SystemOfPDEsForTwoTensor} is formally solvable at any given point;
in fact, if one arbitrarily prescribes the components of $\tensor{\lambda}{_\alpha_\beta}$ except
$\tensor{\lambda}{_1_1}$, for example, and writes $\tensor{\lambda}{_1_1}=\mu+i\nu$ where $\mu$ and $\nu$ are
real-valued, then \eqref{eq:SystemOfPDEsForTwoTensor} can be regarded as a system of PDEs for $\mu$ and $\nu$
and the Cauchy--Kovalevskaya theorem can be applied to this system.
Thus we can show the second statement of Theorem \ref{thm:SingularSolutionToInfiniteOrder}.

\begin{proof}
	If $\tensor{\caO}{_\alpha_\beta}=0$, then a (potentially) singular normal-form ACH metric $g$
	satisfying the conditions in the statement of Lemma \ref{lem:FirstHalfOfSolvingAtCriticalStep} is of the form
	\begin{align*}
		\tensor{g}{_0_0}
		&=\tensor{\overline{g}}{_0_0}+\tensor*{\psi}{^{(0)}_0_0}+\tensor*{\psi}{^{(1)}_0_0}\log\rho+\caA^{2n+5},\\
		\tensor{g}{_0_\alpha}
		&=\tensor{\overline{g}}{_0_\alpha}+\tensor*{\psi}{^{(0)}_0_\alpha}+\caA^{2n+4},\\
		\tensor{g}{_\alpha_\conjbeta}
		&=\tensor{\overline{g}}{_\alpha_\conjbeta}
		+\tensor*{\psi}{^{(0)}_\alpha_\conjbeta}
		+\tfrac{1}{n}\tensor{h}{_\alpha_\conjbeta}\tensor{\psi}{^{(1)}_\gamma^\gamma}\log\rho+\caA^{2n+5},\\
		\tensor{g}{_\alpha_\beta}
		&=\tensor{\overline{g}}{_\alpha_\beta}
		+\tensor*{\psi}{^{(0)}_\alpha_\beta}+\caA^{2n+3}.
	\end{align*}
	Here $\tfrac{1}{2}n\tensor*{\psi}{^{(1)}_0_0}+(n+1)\tensor{\psi}{^{(1)}_\alpha^\alpha}=O(\rho^{2n+5})$
	should hold.
	After prescribing $\tensor*{\psi}{^{(0)}_0_\alpha}$ and $\tensor*{\psi}{^{(0)}_\alpha_\beta}$,
	the potential log-term coefficients
	$\tensor*{\psi}{^{(1)}_0_0}$ and $\tensor{\psi}{^{(1)}_\alpha^\alpha}$ are determined by requiring
	$n\tensor*{\Ein}{^{(0)}_0_0}-2\tensor{\Ein}{^{(0)}_\alpha^\alpha}=O(\rho^{2n+5})$.
	So let us look at the dependence of
	$n\tensor*{\Ein}{^{(0)}_0_0}-2\tensor {\Ein}{^{(0)}_\alpha^\alpha}$
	on $\tensor*{\psi}{^{(0)}_\alpha_\beta}$.
	Using \eqref{eq:EinPerturbationHigherOrderZeroZero} and \eqref{eq:EinPerturbationHigherOrderTrace} again,
	we obtain
	\begin{equation*}
		\begin{split}
			n\tensor*{\delta\Ein}{^{(0)}_0_0}-2\tensor{\delta\Ein}{^{(0)}_\alpha^\alpha}
			&=-\tfrac{1}{2}(n+2)(n\tensor*{\psi}{^{(1)}_0_0}-2\tensor{\psi}{^{(1)}_\alpha^\alpha})\\
			&\phantom{=\;}
			+i(n+2)\rho(\tensor{\nabla}{^\alpha}\tensor*{\psi}{^{(0)}_0_\alpha}
				-\tensor{\nabla}{^\conjalpha}\tensor*{\psi}{^{(0)}_0_\conjalpha})
			-\tfrac{1}{2}n\rho^2(\tensor{\Phi}{^\alpha^\beta}\tensor*{\psi}{^{(0)}_\alpha_\beta}
				+\tensor{\Phi}{^\conjalpha^\conjbeta}\tensor*{\psi}{^{(0)}_\conjalpha_\conjbeta})\\
			&\phantom{=\;}
			-\rho^2(\tensor{\nabla}{^\alpha}\tensor{\nabla}{^\beta}\tensor*{\psi}{^{(0)}_\alpha_\beta}
				+\tensor{\nabla}{^\conjalpha}\tensor{\nabla}{^\conjbeta}\tensor*{\psi}{^{(0)}_\conjalpha_\conjbeta}
				+\tensor{N}{^\gamma^\alpha^\beta}\tensor{\nabla}{_\gamma}\tensor*{\psi}{^{(0)}_\alpha_\beta}
				+\tensor{N}{^\conjgamma^\conjalpha^\conjbeta}
					\tensor{\nabla}{_\conjgamma}\tensor*{\psi}{^{(0)}_\conjalpha_\conjbeta}\\
			&\phantom{=-\rho^2(}
				+\tensor{N}{^\gamma^\alpha^\beta_,_\gamma}\tensor*{\psi}{^{(0)}_\alpha_\beta}
				+\tensor{N}{^\conjgamma^\conjalpha^\conjbeta_,_\conjgamma}
					\tensor*{\psi}{^{(0)}_\conjalpha_\conjbeta})
			+O(\rho^{2n+5}).
	\end{split}
	\end{equation*}
	Hence if we can set $\tensor*{\psi}{^{(0)}_0_\alpha}$ and $\tensor*{\psi}{^{(0)}_\alpha_\beta}$ appropriately,
	then $\tensor*{\psi}{^{(1)}_0_0}-2\tensor{\psi}{^{(1)}_\alpha^\alpha}$ must be $O(\rho^{2n+5})$,
	and hence both $\tensor*{\psi}{^{(1)}_0_0}$ and $\tensor{\psi}{^{(1)}_\alpha^\alpha}$ must be $O(\rho^{2n+5})$.
	Let $\tensor*{\psi}{^{(0)}_0_\alpha}=\rho^{2n+3}\tensor{\nu}{_\alpha}+O(\rho^{2n+4})$ and
	$\tensor*{\psi}{^{(0)}_\alpha_\beta}=\rho^{2n+2}\tensor{\mu}{_\alpha_\beta}+O(\rho^{2n+3})$.
	Combined with \eqref{eq:PDEForOneAndTwoTensors}, the equations to be solved are
	\begin{equation}
		\label{eq:LastSystemOfPDEsOnTwoTensor}
		\begin{cases}
			&(n+2)(\tensor{\nabla}{^\alpha}\tensor{\nu}{_\alpha}
			+\tensor{\nabla}{^\conjalpha}\tensor{\nu}{_\conjalpha})
			-(n+1)(\tensor{A}{^\alpha^\beta}\tensor{\mu}{_\alpha_\beta}
			+\tensor{A}{^\conjalpha^\conjbeta}\tensor{\mu}{_\conjalpha_\conjbeta})\\
			&\quad =-\tensor{E}{_\infty_0}
			-\frac{2}{n+2}(\tensor{\nabla}{^\alpha}\tensor{E}{_0_\alpha}
			+\tensor{\nabla}{^\conjalpha}\tensor{E}{_0_\conjalpha})
			+\frac{2}{n+1}(\tensor{A}{^\alpha^\beta}\tensor{E}{_\alpha_\beta}
			+\tensor{A}{^\conjalpha^\conjbeta}\tensor{E}{_\conjalpha_\conjbeta}),\\
			&-i(n+2)\tensor{\nu}{_\alpha}
			+(n+1)\tensor{\nabla}{^\beta}\tensor{\mu}{_\alpha_\beta}
			+(n+1)\tensor{N}{_\alpha^\conjbeta^\conjgamma}\tensor{\mu}{_\conjbeta_\conjgamma}\\
			&\quad =-\tensor{E}{_\infty_\alpha}
			+\frac{2}{n+2}i\tensor{E}{_0_\alpha}
			-\frac{2}{n+1}(\tensor{\nabla}{^\beta}\tensor{E}{_\alpha_\beta}
			+\tensor{N}{_\alpha^\conjbeta^\conjgamma}\tensor{E}{_\conjbeta_\conjgamma}),\\
			&i(n+2)(\tensor{\nabla}{^\alpha}\tensor{\nu}{_\alpha}
				-\tensor{\nabla}{^\conjalpha}\tensor{\nu}{_\conjalpha})
				-\tfrac{1}{2}n(\tensor{\Phi}{^\alpha^\beta}\tensor{\mu}{_\alpha_\beta}
				+\tensor{\Phi}{^\conjalpha^\conjbeta}\tensor{\mu}{_\conjalpha_\conjbeta})
				-\tensor{\nabla}{^\alpha}\tensor{\nabla}{^\beta}\tensor{\mu}{_\alpha_\beta}
				-\tensor{\nabla}{^\conjalpha}\tensor{\nabla}{^\conjbeta}\tensor{\mu}{_\conjalpha_\conjbeta}\\
			&-\tensor{N}{^\gamma^\alpha^\beta}\tensor{\nabla}{_\gamma}\tensor{\mu}{_\alpha_\beta}
				-\tensor{N}{^\conjgamma^\conjalpha^\conjbeta}
					\tensor{\nabla}{_\conjgamma}\tensor{\mu}{_\conjalpha_\conjbeta}
				-\tensor{N}{^\gamma^\alpha^\beta_,_\gamma}\tensor{\mu}{_\alpha_\beta}
				-\tensor{N}{^\conjgamma^\conjalpha^\conjbeta_,_\conjgamma}\tensor{\mu}{_\conjalpha_\conjbeta}\\
			&\quad=-n\tensor{E}{_0_0}+2\tensor{E}{_\alpha^\alpha}.
		\end{cases}
	\end{equation}
	By substituting the second equation into the other two and using \eqref{eq:PhiTensorInLowestTerms},
	the system is reduced to
	\begin{equation*}
		\begin{cases}
			\tensor{D}{^\alpha^\beta}\tensor{\mu}{_\alpha_\beta}
			-\tensor{D}{^\conjalpha^\conjbeta}\tensor{\mu}{_\conjalpha_\conjbeta}=iu,\\
			\tensor*{D}{_{-2/n}^\alpha^\beta}\tensor{\mu}{_\alpha_\beta}
			+\tensor*{D}{_{-2/n}^\conjalpha^\conjbeta}\tensor{\mu}{_\conjalpha_\conjbeta}
			=v.
		\end{cases}
	\end{equation*}
	So we set $\tensor{\mu}{_\alpha_\beta}=\tensor{\lambda}{_\alpha_\beta}$ and determine $\tensor{\nu}{_\alpha}$
	by the second equation of \eqref{eq:LastSystemOfPDEsOnTwoTensor}.
	Then $\tensor{\Ein}{_\infty_0}=\caA^{2n+5}$, $\tensor{\Ein}{_\infty_\alpha}=\caA^{2n+4}$ and
	$n\tensor{\Ein}{_0_0}-2\tensor{\Ein}{_\alpha^\alpha}=\caA^{2n+5}$ are solved by
	$\tensor*{\psi}{^{(1)}_0_0}=O(\rho^{2n+5})$, $\tensor{\psi}{^{(1)}_\alpha^\alpha}=O(\rho^{2n+5})$.
	As before, $\tf(\tensor*{\psi}{^{(0)}_\alpha_\conjbeta})$ mod $O(\rho^{2n+5})$ and
	$\tfrac{1}{2}n\tensor*{\psi}{^{(0)}_0_0}+(n+1)\tensor{\psi}{^{(0)}_\alpha^\alpha}$ mod $O(\rho^{2n+5})$
	are uniquely determined so that $\tensor*{\Ein}{_0_0}=\caA^{2n+5}$,
	$\tensor*{\Ein}{_\alpha_\conjbeta}=\caA^{2n+5}$.
	We set $\tensor*{\psi}{^{(0)}_0_0}=\rho^{2n+4}\kappa+O(\rho^{2n+5})$.
	By \eqref{eq:ContractedBianchiOriginal1} we have $\tensor{\Ein}{_\infty_\infty}=\caA^{2n+5}$.

	Now we have constructed a normal-form ACH metric $g$, which is log-free,
	satisfying $\tensor{\Ein}{_I_J}=\caA^{2n+2+a(I,J)}$ and \eqref{eq:PrescriptionOfACHCoefficients2}
	in a unique way.
	After that we once again follow the latter half of the proof of Theorem \ref{thm:FormalSolution} to
	determine all the higher-order terms of $\tensor{g}{_i_j}$. No logarithmic terms occur in this process.
\end{proof}

\end{document}